\newtheorem{lemma}{Lemma}
\newtheorem*{lemma*}{Lemma}
\newtheorem{prop}{Proposition}
\newtheorem{theorem}{Theorem}
\newtheorem*{theorem*}{Theorem}
\theoremstyle{definition}
\newcommand\dotminus{\mathbin{\dot{-}}}
\newcommand\dotoplus{\mathbin{\dot{\oplus}}}
\DeclareMathOperator\elem{E}
\DeclareMathOperator\stlin{St}
\DeclareMathOperator\Cent{C}
\DeclareMathOperator\Ker{Ker}
\newcommand\e{{\mathrm e}}
\newcommand\R{{\mathcal R}}
\newcommand\eps{\varepsilon}
\newcommand\leqt{\trianglelefteq}
\DeclareMathOperator\Aut{Aut}
\DeclareMathOperator\Image{Im}
\DeclareMathOperator\Spec{Spec}
\newcommand\fp{{\mathrm{fp}}}
\DeclareMathOperator\ev{ev}
\DeclareMathOperator\colim{colim}
\DeclareMathOperator\schur{M}
\newcommand{\up}[2]{{^{#1}\!{#2}}}
\newcommand{\interval}[2]{\,{]}{#1}, {#2}{[}\,}
\newcommand{\Set}{\mathbf{Set}}
\newcommand{\Group}{\mathbf{Grp}}
\newcommand{\Ring}{\mathbf{Ring}}
\DeclareMathOperator\Pro{Pro}
\DeclareMathOperator\Ind{Ind}
\DeclareMathOperator\Ex{Ex}
\newcommand{\Cat}{\mathbf{Cat}}
\title{
	Locally isotropic Steinberg groups II. \\
	Schur multipliers
}
\author{
	Egor Voronetsky\thanks{
		Research is supported by the Russian Science Foundation grant 19-71-30002.
	} \\
	Chebyshev Laboratory, \\
	St. Petersburg State University, \\
	14th Line V.O., 29B, \\
	Saint Petersburg 199178 Russia \\
}
\begin{document}
\maketitle

\begin{abstract}
	We compute Schur multipliers of locally isotropic Steinberg groups and of all root graded Steinberg groups with root systems of rank at least
	\( 3 \). As an application, we show that locally isotropic Steinberg groups are well defined as abstract groups.
\end{abstract}


\section{Introduction}

Let
\( G \) be a sufficiently isotropic reductive group. Its elementary subgroup
\( \elem_G \leqt G \) is normal and perfect, so it is natural to study its universal central extension. As an approximation, it is relatively easy to define the Steinberg group
\( \stlin_G \to \elem_G \). This group is a central perfect extension with simple presentation, but not necessarily centrally closed.

For Chevalley groups the Schur multipliers of
\( \stlin_G \) are found by van der Kallen and Stein \cite{schur-mult} (see also the earlier work \cite{centr-closed} of Stein). They also computed the answer for the
\( \mathsf A_\ell \)-graded case, i.e. for linear Steinberg groups over arbitrary unital associative rings, even though such groups are not necessarily central extensions of the elementary groups. It turns out that
\( \stlin_G \) is centrally closed unless the root system is small and the base ring has small residue fields. Such Steinberg groups are actually central extensions of the elementary groups, see \cite{f4-central} and further references therein.

The only other known results for sufficiently general rings (i.e. not local ones) are that under some assumptions odd unitary Steinberg groups are centrally closed \cite{schur-u-odd, schur-u-even} by Lavrenov and Steinberg groups over Jordan pairs are centrally closed \cite{loos-neher} by Loos and Neher. Note that our result does not cover all odd unitary groups because in general they have only
\( \mathsf D_\ell \)-Weyl elements, but
\( \mathsf B_\ell \)-root subgroups.

We started to study locally isotropic Steinberg groups in \cite{iso-st-k2}. These groups are defined not only for Chevalley groups and some their globally isotropic variations such as general linear groups over Azumaya algebras, but also for groups without globally defined proper parabolic subgroups at all. We proved that
\( \stlin_G \to G \) is a crossed module, in particular, the Steinberg group is a central extension of its image.

For example, let
\( P \) be a finite projective module over a unital commutative ring
\( K \) of rank at least
\( 4 \) at every maximal ideal. Then group
\( \Aut_K(P) \) is locally isotropic (of local isotropic rank at least
\( 3 \)), but in general it has no globally defined proper parabolic subgroups. It is easy to construct explicit such projective modules at least without unimodular vectors, e.g. if
\[
	K
	=
	\mathbb R[x_0^2 + \ldots + x_n^2]
	/
	(x_0^2 + \ldots + x_n^2 - 1)
\]
is the ring of rational functions on the unit sphere and
\[
	P
	=
	\bigl\{
		\vec v \in K^{n + 1}
		\mid
		\sum_{i = 0}^n v_i x_i = 0
	\bigr\}
\]
is the tangent bundle, then it is well known that
\( P \) has no unimodular vectors for even
\( n \). On the other hand, by Serre's theorem \cite[\S 8]{bass} if the rank of
\( P \) at every maximal ideal is more than the Bass--Serre dimension
\( \delta(K) \), then
\( P \) has a unimodular vector, so in this setting our construction is interesting only for high-dimensional rings.

In this paper we compute the Schur multiplier of
\( \stlin_G \) for any locally isotropic reductive group
\( G \) (theorem \ref{schur-loc}). Again it turns out that the Steinberg group is centrally closed unless the rank of
\( G \) is small and the base ring has small residue fields. So our result generalizes not only the case of Chevalley groups, but also the classical computation of Schur multipliers of finite simple groups of Lie type \cite{griess, steinberg-1, steinberg-2} (for rank at least
\( 3 \)).

Moreover, we compute in theorem \ref{schur-graded} Schur multipliers of all abstract Steinberg groups graded by root systems of rank at least
\( 3 \). Such groups have nice algebraic description \cite{h-graded, root-graded, wiedemann}, and we use this setting to almost uniformly treat all possible Tits indices of reductive groups.

Finally, in theorem \ref{compactness} we apply the knowledge of Schur multiplier to deduce that locally isotropic Steinberg groups actually exist as ordinary groups.

\section{Preliminaries}

\subsection{Locally isotropic reductive groups and root graded groups}

In this paper we deal with both crystallographic and ``spherical'' root systems. Recall that a \textit{spherical root system}
\(
	\Phi
	\subseteq
	\mathbb R^\ell \setminus \{ 0 \}
\) is a finite spanning subset closed under reflections through the orthogonal complements to the roots and such that the only multiples of a root
\( \alpha \) belonging to
\( \Phi \) are
\( \pm \alpha \). We usually consider spherical root systems up to the projection on the unit sphere.

On the other hand, in a \textit{crystallographic root system} we remove the condition on multiples, but require that
\(
	2 \frac{(\alpha, \beta)}{(\alpha, \alpha)}
	\in
	\mathbb Z
\) for all roots
\( \alpha, \beta \in \Phi \), so the roots span a lattice closed under the action of the Weyl group
\( \mathrm W(\Phi) \) (generated by the reflections). In particular, for every root
\( \alpha \) the only its multiples from
\( \Phi \) may be
\( \pm \alpha \),
\( \pm \frac 1 2 \alpha \), or
\( \pm 2 \alpha \). Every crystallographic root system determines a corresponding spherical root system, e.g. by the projection on the unit sphere.

Every root system has unique decomposition into disjoint union of pairwise orthogonal indecomposable root subsystems (its \textit{components}). Up to isomorphism (and the projection on the unit sphere) the only indecomposable spherical root systems are
\( \mathsf A_\ell \) (%
\( \ell \geq 1 \)),
\( \mathsf B_\ell \) (%
\( \ell \geq 2 \)),
\( \mathsf D_\ell \) (%
\( \ell \geq 4 \)),
\( \mathsf E_\ell \) (%
\( \ell \in \{ 6, 7, 8 \} \)),
\( \mathsf F_4 \),
\( \mathsf G_2 \),
\( \mathsf H_\ell \) (%
\( \ell \in \{ 2, 3, 4 \} \)), and
\( \mathsf I_2^m \) (%
\( m \geq 7 \)). Their \textit{rank}, i.e. the dimension of the span, is the lower index. The only indecomposable crystallographic root systems (up to isomorphism and scaling) are
\( \mathsf A_\ell \) (%
\( \ell \geq 1 \)),
\( \mathsf B_\ell \) (%
\( \ell \geq 2 \)),
\( \mathsf C_\ell \) (%
\( \ell \geq 3 \)),
\( \mathsf{BC}_\ell \) (%
\( \ell \geq 1 \)),
\( \mathsf D_\ell \) (%
\( \ell \geq 4 \)),
\( \mathsf E_\ell \) (%
\( \ell \in \{ 6, 7, 8 \} \)),
\( \mathsf F_4 \), and
\( \mathsf G_2 \). After projection on the unit sphere all of them become indecomposable spherical root systems with the same notation except
\(
	\mathsf C_\ell,
	\mathrm{BC}_\ell
	\mapsto
	\mathsf B_\ell
\), where
\( \mathsf B_1 = \mathsf A_1 \).

A subset
\( \Sigma \subseteq \Phi \) of a spherical root system is called \textit{unipotent} if it is an intersection of
\( \Phi \) with a convex cone not containing a line, so in particular it does not contain opposite roots. A root
\( \alpha \in \Sigma \) in a unipotent subset is called \textit{extreme} if the ray
\( \mathbb R_{\geq 0} \alpha \) is extreme in the convex cone spanned by
\( \Sigma \), i.e. if
\( \alpha \) does not lie in any open angle spanned by two linearly independent points of
\( \mathrm{conv}(\Sigma) \). For example, if
\( \alpha \) and
\( \beta \) are linearly independent roots, then the \textit{interval}
\[
	\interval \alpha \beta
	=
	\Phi
	\cap
	(
		\mathbb R_{> 0} \alpha
		+ \mathbb R_{> 0} \beta
	)
\]
is unipotent. Note that the usual definition of unipotent subsets of crystallographic root systems (i.e. that
\( (\Sigma + \Sigma) \cap \Phi \subseteq \Sigma \) and
\( \Sigma \cap (-\Sigma) = \varnothing \)) is strictly weaker than our ``convexity'' condition, e.g. a pair of orthogonal long roots in
\( \mathsf B_2 \) is not unipotent by our definition.

Recall the notation from \cite[\S 2.2]{iso-st-k2}. An \textit{isotropic pinning} of reductive group scheme
\( G \) over a unital commutative ring
\( K \) consists of a split torus
\( T = \mathbb G^\ell_{\mathrm m} \leq G \), a crystallographic root system
\( \Phi \leq \mathbb Z^\ell \) in its group of constant characters (with respect to some dot product, not necessarily the standard one), and a base
\( \Delta \subseteq \Phi \) such that
\( \Phi \) is the set of non-zero weights of the Lie algebra
\( \mathfrak g \) of
\( G \) with respect to
\( T \) (or
\( K = 0 \)), all root subspaces are free
\( K \)-modules, and there are
\( \alpha \)-Weyl elements in
\( G(K) \) for all
\( \alpha \in \Phi \). If
\( K \) is local (or an LG-ring with connected spectrum \cite{gille}), then
\( G \) has a maximal isotropic pinning and all such isotropic pinnings are conjugate.

The \textit{rank} of an isotropic pinning is the smallest rank of components of
\( \Phi \) unless
\( T \) commutes with some non-abelian simple factor-group in a geometric fiber, in this exceptional case the rank is zero. The \textit{isotropic rank} of a reductive group scheme
\( G \) over a local ring is the rank of its maximal isotropic pinnings, and the \textit{local isotropic rank} of a reductive group scheme
\( G \) over arbitrary unital commutative ring is the minimum of the isotropic ranks of the localizations
\( G_{\mathfrak m} \) for all maximal ideals
\( \mathfrak m \).

An abstract group
\( G \) is called \textit{root graded} by a spherical root system
\( \Phi \) if there are subgroups
\( G_\alpha \leq G \) for
\( \alpha \in \Phi \) such that
\begin{itemize}

	\item
	\(
		[G_\alpha, G_\beta]
		\leq
		\bigl\langle
			G_\gamma
			\mid
			\gamma \in \interval \alpha \beta
		\bigr\rangle
	\) for linearly independent
	\( \alpha \) and
	\( \beta \);

	\item
	\(
		G_\alpha
		\cap
		\bigl\langle
			G_\beta
			\mid
			\beta \in \Sigma \setminus \{ \alpha \}
		\bigr\rangle
		=
		1
	\) for all unipotent subsets
	\( \Sigma \subseteq \Phi \) with an extreme root
	\( \alpha \);

	\item
	for every root
	\( \alpha \) there exists an element
	\( n \in G_\alpha G_{-\alpha} G_\alpha \) with the property
	\(
		\up n {G_\beta}
		=
		G_{
			\beta
			-
			2
			\frac{(\alpha, \beta)}{(\alpha, \alpha)}
			\alpha
		}
	\) for all roots
	\( \beta \) (it is called an \textit{%
		\( \alpha \)-Weyl element%
	}).

\end{itemize}

For example, if
\( G \) is a reductive group scheme with an isotropic pinning, then the point group
\( G(K) \) is root graded by the corresponding spherical root system. In
\cite{h-graded, wiedemann} a stronger variant of the second condition is used,
\[
	G_\alpha
	\cap
	\bigl\langle G_\beta \mid \beta \in \Pi \bigr\rangle
	=
	1
\]
for all subsystems of positive roots
\( \Pi \subseteq \Phi \) and all
\( \alpha \in \Phi \setminus \Pi \), so we formally need this condition in theorem
\ref{schur-graded} for the
\( \mathsf H_\ell \)-graded case.

\subsection{%
	Unital
	\( \Phi \)-rings%
}

We need the explicit descriptions from \cite{root-graded, wiedemann} of root graded groups up to isogeny for all indecomposable spherical root systems of rank
\( \geq 3 \) excluding
\( \mathsf H_3 \) and
\( \mathsf H_4 \) (for these exceptional cases see \cite{h-graded}, they reduce to the cases
\( \mathsf D_6 \) and
\( \mathsf E_8 \) respectively). Here ``up to isogeny'' means that we actually describe only the isomorphism classes of the subgroups
\( G_\alpha \), the maps
\( G_\alpha \times G_\beta \to G_\gamma \) for
\( \gamma \in \interval \alpha \beta \) from the commutator relations, and the action of a distinguished family of Weyl elements on these subgroups. For all root systems there is a one-to-one correspondence between such isogeny classes and algebras
\( A \) from a certain variety (two-sorted in the doubly laced cases
\( \mathsf B_\ell \) and
\( \mathsf F_4 \)), namely, root subgroups from each orbit under the action of the Weyl group are isomorphic to the corresponding part of
\( A \). We call these algebras \textit{%
	unital
	\( \Phi \)-rings%
}, in \cite{root-graded} they are called
\( (\Phi, \Phi) \)-rings. For some root systems the varieties of such algebras coincide, there are only
\( 5 \) distinct varieties in total (see below). It is convenient to choose the canonical group in the isogeny class corresponding to a unital
\( \Phi \)-ring
\( A \), namely, the \textit{Steinberg group}
\( \stlin(\Phi, A) \) is the group generated by the subgroups
\( G_\alpha \) and the only additional relations taken from the commutator formula.

Unital
\( \mathsf A_\ell \)-rings are just unital associative rings. For every such ring
\( R \) the Steinberg group
\( \stlin(\mathsf A_\ell, R) \) is generated by
\( x_{ij}(p) \) for distinct
\( i, j \in \{ 1, \ldots, \ell + 1 \} \) and
\( p \in R \). The relations are
\begin{align*}
	x_{ij}(p)\, x_{ij}(q) = x_{ij}(p + q),
	\quad
	[x_{ij}(p), x_{jk}(q)] = x_{ik}(p q)
	\text{ for }
	i \neq k,
	\quad
	[x_{ij}(p), x_{kl}(q)] = 1
	\text{ for }
	j \neq k
	\text{ and }
	i \neq l,
\end{align*}
and the root subgroup with the root
\( \e_i - \e_j \) is just
\( x_{ij}(R) \).

Unital
\( \mathsf D_\ell \)- and
\( \mathsf E_\ell \)-rings are unital commutative rings. For every such ring
\( K \) the Steinberg group
\( \stlin(\Phi, K) \) is the usual Steinberg group constructed by the split reductive group scheme over
\( K \). Namely, it is generated by
\( x_\alpha(p) \) for
\( \alpha \in \Phi \),
\( p \in K \) with the only relations
\begin{align*}
	x_\alpha(p) x_\alpha(q)
	&=
	x_\alpha(p + q),
	\\
	[x_\alpha(p), x_\beta(q)]
	&=
	x_{\alpha + \beta}(N_{\alpha \beta} p q)
	\text{ for }
	\alpha + \beta \in \Phi,
	\\
	[x_\alpha(p), x_\beta(q)]
	&=
	1
	\text{ for }
	\alpha + \beta \notin \Phi \cup \{ 0 \},
\end{align*}
where
\( N_{\alpha \beta} \in \{ \pm 1 \} \) are the structure constants. The root subgroup with the root
\( \alpha \) is precisely
\( x_\alpha(K) \).

Before proceeding with the doubly laced cases, let us recall some non-associative ring theory necessary for unital
\( \mathsf B_3 \)- and
\( \mathsf F_4 \)-rings. Let
\( R \) be an arbitrary ring, i.e. an abelian group with biadditive multiplication. The \textit{associator} of elements
\( p, q, r \in R \) is
\( [p, q, r] = (p q) r - p (q r) \). The \textit{nucleus} of
\( R \) is the set
\[
	\mathrm N(R)
	=
	\{
		\nu \in R
		\mid
		[\nu, R, R] = [R, \nu, R] = [R, R, \nu] = 0
	\},
\]
and the \textit{center} of
\( R \) is
\[
	\mathrm C(R)
	=
	\{
		\zeta \in \mathrm N(R)
		\mid
		\zeta r = r \zeta
		\text{ for all }
		r \in R
	\}
\]
We say that an element of
\( R \) is \textit{nuclear} if it lies in the nucleus, i.e. if it associates with everything, and similar for \textit{centrality}. A ring
\( R \) is called \textit{alternative} if it satisfies the identities
\( [p, p, q] = [p, q, q] = 0 \), so the associator is skew-symmetric. Associative rings and composition algebras over unital commutative rings are alternative.

Every unital
\( \mathsf B_\ell \)-ring
\( (R, \Delta) \) consists of \cite[\S 7]{root-graded}
\begin{itemize}

	\item
	an alternative (associative if
	\( \ell \geq 4 \)) unital ring
	\( R \) with a distinguished nuclear invertible element
	\( \lambda \in R \) such that
	\[
		\lambda^{\pm 1} [p, q, r]
		=
		[\lambda^{\pm 1} p, q, r]
		=
		[p \lambda^{\pm 1}, q, r]
		=
		[p, q, r] \lambda^{\pm 1}
		=
		-[p, q, r];
	\]

	\item
	an anti-endomorphism
	\( (-)^* \colon R \to R \), i.e.
	\( (p + q)^* = p^* + q^* \),
	\( 1^* = 1 \), and
	\( (p q)^* = q^* p^* \), such that
	\begin{align*}
		\lambda^* &= \lambda^{-1},
		&
		p^{**} &= \lambda p \lambda^{-1},
		&
		[p^*, q, r] &= [p, q, r]^* = -[p, q, r];
	\end{align*}

	\item
	a group
	\( \Delta \) with the group operation
	\( \dotplus \);

	\item
	an additive map
	\( \phi \colon R \to \Delta \) with central image satisfying the identities
	\[
		\phi(p + p^* \lambda) = \dot 0,
		\quad
		\phi((p q) r) = \phi(p (q r));
	\]

	\item
	a biadditive map
	\(
		\langle {-}, {=} \rangle
		\colon
		\Delta \times \Delta
		\to
		R
	\) with nuclear image satisfying the identities
	\[
		\langle v, u \rangle
		=
		\langle u, v \rangle^* \lambda,
		\quad
		\langle u, \phi(p) \rangle = 0,
		\quad
		u \dotplus v
		=
		v
		\dotplus
		u
		\dotminus
		\phi(\langle u, v \rangle);
	\]

	\item
	a map
	\( \rho \colon \Delta \to R \) with nuclear image satisfying the identities
	\[
		\rho(u \dotplus v)
		=
		\rho(u) - \langle u, v \rangle + \rho(v),
		\quad
		\rho(\phi(p)) = p - p^* \lambda,
		\quad
		0
		=
		\rho(u)
		+
		\langle u, u \rangle
		+
		\rho(u)^* \lambda;
	\]

	\item
	a distinguished element
	\( \iota \in \Delta \) such that
	\( \rho(\iota) = 1 \);

	\item
	a map
	\(
		\Delta \times R \to \Delta,\,
		(u, p) \mapsto u \cdot p
	\) additive on the first argument and such that
	\begin{align*}
		u \cdot (p + q)
		&=
		u \cdot p
		\dotplus
		\phi(q^* \rho(u) p)
		\dotplus
		u \cdot q,
		&
		\langle u, v \cdot p \rangle
		&=
		\langle u, v \rangle p,
		\\
		u \cdot 1 &= u,
		&
		\phi(p) \cdot q &= \phi(q^* p q),
		\\
		(u \cdot p) \cdot q &= u \cdot p q,
		&
		\rho(u \cdot p) &= p^* \rho(u) p.
	\end{align*}

\end{itemize}

In unital
\( \mathsf B_\ell \)-rings the operations satisfy additional identities
\begin{align*}
	[p^*, p, q] &= 0,
	&
	\langle \iota, \iota \rangle &= -1 - \lambda,
	\\
	\lambda^{**} &= \lambda,
	&
	u \cdot 0 &= \dot 0,
	\\
	\langle \phi(p), u \rangle &= 0,
	&
	u \dotplus u \cdot (-1) &= \phi(\rho(u)),
	\\
	\rho(\dot 0) &= 0,
	&
	u \cdot (p q) r &= u \cdot p (q r),
	\\
	\rho(\dotminus u) &= \rho(u)^* \lambda,
	&
	\langle u \cdot p, v \rangle
	&=
	p^* \langle u, v \rangle.
\end{align*}

The initial unital
\( \mathsf B_\ell \)-ring is
\[
	R
	=
	\mathbb Z[\lambda, \lambda^{-1}],
	\quad
	\Delta
	=
	\phi(\mathbb Z[\lambda^{-1}])
	\dotoplus
	\bigoplus_{m \in \mathbb Z}^\cdot
		\iota \cdot \mathbb Z \lambda^m
\]
independent of
\( \ell \), see \cite[example 2]{root-graded} for details. Non-associative
\( \mathsf B_3 \)-algebras arise e.g. in twisted forms of the Chevalley group scheme
\( \mathsf G(\mathsf E_7, {-}) \), see \cite[example 3]{root-graded}.

The following lemma implies the two remaining axioms
\[
	p (q^* \rho(u) q) = (p q^*) \rho(u) q,
	\quad
	(p^* \rho(u) p) q = p^* \rho(u) (p q)
\]
from \cite[\S 7]{root-graded}.
\begin{lemma}
	\label{inv-artin}
	Let
	\( (R, \Delta) \) be a unital
	\( \mathsf B_3 \)-ring and
	\( x, y \in R \) be two elements. Then the subring
	\( R' \) generated by
	\( x \),
	\( y \),
	\( x^* \),
	\( y^* \), and some nuclear elements is associative. Moreover, if
	\( z \in R \) is another element and
	\( [x, y, z] = 0 \), then the subring
	\( R'' \) generated by
	\( R' \),
	\( z \),
	\( z^* \) is also associative. For any
	\( * \)-invariant subring
	\(
		\rho(\Delta)
		\subseteq
		\widetilde R
		\subseteq
		R
	\) (e.g.
	\( \widetilde R = R' \) or
	\( \widetilde R = R'' \) if we take at least all
	\( \rho(u) \) and
	\( \lambda^{\pm 1} \) as generators) the pair
	\( (\widetilde R, \Delta) \) is a unital
	\( \mathsf B_3 \)-subring of
	\( (R, \Delta) \).
\end{lemma}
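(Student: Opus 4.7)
The plan is to exploit the involution to reduce each associativity claim to Artin's theorem for alternative rings or its three-generator extension, then to deduce the two ``missing'' identities as immediate corollaries and verify the subring assertion by inheritance of axioms.

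The first step is to observe that the identity $[p^*, q, r] = -[p, q, r]$ gives $[p + p^*, q, r] = 0$, so $p + p^* \in \mathrm N(R)$ for every $p \in R$. Hence $x^* = -x + (x + x^*)$ differs from $-x$ by a nuclear element, and likewise for $y^*$ and $z^*$. Consequently $R'$ coincides with the subring generated by $\{x, y\}$ together with an enlarged collection of nuclear elements, and $R''$ with the subring generated by $\{x, y, z\}$ together with nuclear elements.

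Next I would invoke the classical associativity theorems for alternative rings: Artin's theorem that any two elements generate an associative subring, and its strengthening that three elements $x, y, z$ with $[x, y, z] = 0$ do the same. Adjoining nuclear elements preserves associativity since $[a, b, \nu] = 0$ whenever $\nu$ is nuclear. Combined with the reduction above, this gives associativity of $R'$ unconditionally and of $R''$ under the hypothesis $[x, y, z] = 0$. The two ``missing'' identities
\[
	p(q^* \rho(u) q) = (p q^*) \rho(u) q,
	\quad
	(p^* \rho(u) p) q = p^* \rho(u) (p q)
\]
are then immediate, since in each case all entries lie in the associative subring generated by $\{p, q, p^*, q^*\}$ together with the nuclear element $\rho(u)$.

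For the final assertion, the identity $\rho(u \dotplus v) = \rho(u) - \langle u, v \rangle + \rho(v)$ rearranges to $\langle u, v \rangle = \rho(u) + \rho(v) - \rho(u \dotplus v)$, so any $*$-invariant subring $\widetilde R$ containing $\rho(\Delta)$ automatically contains $\langle \Delta, \Delta \rangle$. If we also include $\lambda^{\pm 1}$ among the generators, then the structural maps $\phi$, $\rho$, $\langle -, - \rangle$ and the action $\cdot$ all restrict to $(\widetilde R, \Delta)$, and every axiom of a unital $\mathsf B_3$-ring is a universally quantified identity that transfers directly from $(R, \Delta)$. The main obstacle I anticipate is the three-generator strengthening of Artin's theorem; if no convenient citation is at hand, it can be recovered from Teichm\"uller's identity and the skew-symmetry of the alternative associator, which together propagate $[x, y, z] = 0$ through arbitrary monomials in $x, y, z$.
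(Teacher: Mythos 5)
Your overall plan matches the paper's, and your route for removing $x^*$, $y^*$ from the generating set is a slight simplification: you use the axiom $[p^*, q, r] = -[p, q, r]$ to conclude directly that $p + p^*$ is nuclear (the alternating associator then gives all permutations), whereas the paper instead uses $x - x^*\lambda = \rho(\phi(x))$ together with the nuclearity of $\rho(\Delta)$ and $\lambda^{\pm 1}$. Both reductions rewrite $R'$ as the subring generated by $x$, $y$, and nuclear elements, which is exactly where the paper starts.

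The gap is the assertion that "adjoining nuclear elements preserves associativity since $[a, b, \nu] = 0$ whenever $\nu$ is nuclear." This is the technical heart of the lemma and does not follow from that one identity: if a nuclear factor sits in the middle of a monomial, say $a = a'\nu a''$, the nuclearity of $\nu$ gives no direct control over $[a, b, c]$, since none of the arguments is $\nu$. What is needed, and what the paper supplies, is an induction on total word length showing that any three monomials in $x$, $y$, and the nuclear generators associate: a nuclear factor at a boundary of one of the three words is peeled off using nuclearity and the inductive hypothesis (with skew-symmetry of the associator to cover the boundaries where direct peeling is unavailable); a nuclear factor in the interior is handled by splitting the word there and combining the peeling case with the inductive hypothesis; the residual nuclear-free case is Artin's theorem. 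The same remark applies to $R''$ with Artin replaced by the three-generator version. As for the obstacle you flag at the end: the three-generator statement (a $3$-generated subring of an alternative ring is associative iff the generators associate) is exactly what the paper cites from Bruck, so that step is covered by a reference rather than needing a Teichm\"uller-identity argument.
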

\begin{proof}
	Since
	\( x - x^* \lambda = \rho(\phi(x)) \) and
	\( y - y^* \lambda = \rho(\phi(y)) \), we may assume that the ring
	\( R' \) is generated by
	\( x \),
	\( y \), and nuclear elements. It suffices to check that
	\( (a b) c = a (b c) \), where
	\( a \),
	\( b \),
	\( c \) are products of
	\( x \),
	\( y \), and nuclear elements. We prove this by induction on the sum of lengths of their decompositions. If one of
	\( a \),
	\( b \), or
	\( c \) is trivial (i.e.
	\( 1 \)), then there is nothing to prove, otherwise the decompositions of
	\( a \),
	\( b \), and
	\( c \) are independent on the brackets by the induction hypothesis. If
	\( a = \nu a' \) begins with a nuclear factor
	\( \nu \), then
	\[
		(a b) c
		=
		((\nu a') b) c
		=
		(\nu (a' b)) c
		=
		\nu ((a' b) c)
		=
		\nu (a' (b c))
		=
		(\nu a') (b c)
		=
		a (b c),
	\]
	and similarly if
	\( c \) ends by a nuclear factor. But the associator is skew-symmetric, so the elements
	\( a \),
	\( b \),
	\( c \) associate if at least one of them begins or ends by a nuclear factor.

	If
	\( a = a' \nu a'' \) has a nuclear factor in the middle, then
	\[
		(a b) c
		=
		(a' \nu a'' b) c
		=
		(a' \nu) (a'' b c)
		=
		(a' \nu a'') (b c)
		=
		a (b c)
	\]
	by the induction hypothesis and the already proved case. So the only remaining case is where all elements
	\( a \),
	\( b \),
	\( c \) are products of
	\( x \) and
	\( y \). This is precisely Artin's theorem \cite[theorem 2.3.2]{zhevlakov}.

	The second claim may be proved in the same way using that a
	\( 3 \)-generated subring of an alternative ring is associative if and only if the generators associate \cite[theorem I.2]{bruck}. The third claim is obvious because
	\( \lambda = \rho(\dotminus \iota) \),
	\( \lambda^{-1} = \lambda^* \), and
	\(
		\langle u, v \rangle
		=
		\rho(u) + \rho(v) - \rho(u \dotplus v)
	\).
\end{proof}

In order to describe the Steinberg group
\( \stlin(\mathsf B_\ell, R, \Delta) \) let
\( \lambda_i = \lambda \) for
\( -\ell \leq i \leq -1 \) and
\( \lambda_i = 1 \) for
\( 1 \leq i \leq \ell \), so always
\( \lambda_i \in \{ 1, \lambda \} \) are invertible nuclear elements and
\( \lambda_i \lambda_{-i} = \lambda \) (in \cite{root-graded} we used the alternative notation
\( \lambda_i = \lambda^{(1 - \eps_i) / 2} \), where
\( \eps_i = 1 \) for
\( i > 0 \) and
\( \eps_i = -1 \) for
\( i < 0 \)). The Steinberg group is generated by
\( x_{ij}(p) \) for
\( 1 \leq |i|, |j| \leq \ell \),
\( i \neq \pm j \),
\( p \in R \) (with the corresponding root
\( \e_j - \e_i \)) and by
\( x_i(u) \) for
\( 1 \leq |i| \leq \ell \),
\( u \in \Delta \) (with the corresponding root
\( \e_i \)), here
\( \e_{-i} = -\e_i \). The relations are
\begin{align*}
	x_{ij}(p)
	&=
	x_{-j, -i}(-\lambda_j^* p^* \lambda_i),
	&&
	\\
	x_{ij}(p)\, x_{ij}(q) &= x_{ij}(p + q),
	&
	x_i(u)\, x_i(v) &= x_i(u \dotplus v),
	\\
	[x_{-i, j}(p), x_{ji}(q)]
	&=
	x_i(\phi(\lambda_i p q)),
	&
	[x_i(u), x_j(v)]
	&=
	x_{-i, j}(-\lambda_i^* \langle u, v \rangle)
	\text{ for }
	i \neq \pm j,
	\\
	[x_{ij}(p), x_{kl}(q)] &= 1
	\text{ for }
	\{ -i, j \} \cap \{ k, -l \} = \varnothing,
	&
	[x_i(u), x_{jk}(p)] &= 1
	\text{ for }
	i \notin \{ j, -k \},
	\\
	[x_{ij}(p), x_{jk}(q)] &= x_{ik}(p q)
	\text{ for }
	i \neq \pm k,
	&
	[x_i(u), x_{ij}(p)]
	&=
	x_{-i, j}(\lambda_i^* \rho(u) p)\,
	x_j(\dotminus u \cdot (-p)).
\end{align*}

Finally, a unital
\( \mathsf F_4 \)-ring
\( (R, S) \) consists of \cite[\S 8]{root-graded}
\begin{itemize}

	\item
	unital alternative rings
	\( R \) and
	\( S \);

	\item
	involutions
	\( p \mapsto p^* \) on both
	\( R \) and
	\( S \) (i.e. anti-endomorphisms such that
	\( p^{**} = p \)) with the properties
	\[
		[p^*, q, r] = -[p, q, r],
		\quad
		[p, q, r]^* = -[p, q, r];
	\]

	\item
	additive homomorphisms
	\( \phi \colon R \to \mathrm C(S) \) and
	\( \phi \colon S \to \mathrm C(R) \) such that
	\begin{align*}
		\phi(p q) &= \phi(q p),
		&
		\phi(p^*) &= \phi(p)^* = \phi(p),
		\\
		\phi((p q) r) &= \phi(p (q r)),
		&
		\phi(\phi(p)) &= 0;
	\end{align*}

	\item
	multiplicative homomorphisms
	\( \rho \colon R \to \mathrm C(S) \) and
	\( \rho \colon S \to \mathrm C(R) \) (i.e.
	\( \rho(1) = 1 \) and
	\( \rho(u v) = \rho(u) \rho(v) \)) such that
	\begin{align*}
		u \phi(p) &= \phi(\rho(u) p),
		&&
		\\
		\rho(u^*) &= \rho(u)^* = \rho(u),
		&
		\rho(u \dotplus v)
		&=
		\rho(u) + \phi(u v^*) + \rho(v),
		\\
		\rho(\rho(u)) &= u u^*,
		&
		u + u^* &= \rho(\phi(u)) + \phi(\rho(u)).
	\end{align*}

\end{itemize}
This variety is symmetric because
\( \mathsf F_4 \) has an outer automorphism. The initial
\( \mathsf F_4 \)-ring is
\( R = S = \mathbb Z[\lambda] / (\lambda^2 - 1) \), where
\[
	\phi(p + q \lambda)
	=
	(p - q) + (p - q) \lambda,
	\quad
	\rho(u + v \lambda)
	=
	\textstyle\frac {(u - v)^2 + u + v} 2
	+
	\frac {(u - v)^2 - u - v} 2 \lambda,
	\quad
	\rho(-1) = \lambda.
\]
Every
\( \mathsf F_4 \)-ring
\( (R, S) \) may be considered as an
\( \mathsf B_3 \)-ring with
\( \iota = 1_S \),
\( u \cdot p = \rho(p) u \), and
\( \langle u, v \rangle = -\phi(u v^*) \).

We consider
\( \mathsf F_4 \) as a crystallographic root system in order to refer to is roots as ``long'' and ''short'' ones. The Steinberg group is generated by
\( x_\alpha(p) \), where
\( p \in R \) for long
\( \alpha \) and
\( p \in S \) for short
\( \alpha \). The relations are
\begin{align*}
	x_\alpha(p)\, x_\alpha(q) &= x_\alpha(p + q),
	\\
	[x_\alpha(p), x_\beta(q)]
	&=
	\prod_{\gamma \in \interval \alpha \beta}
		x_\gamma(C_{\alpha \beta}^\gamma(p, q))
	\text{ for linearly independent }
	\alpha
	\text{ and }
	\beta,
\end{align*}
where
\( C_{\alpha \beta}^\gamma(p, q) \) are certain ``structure terms'' from the following table.

\begin{center}
	\begin{tabular}{|c|c|}
		\hline
		
		the set of possible
		\( C_{\alpha \beta}^\gamma \)
		&
		conditions
		\\ \hline \hline
		
		\( \pm \phi(p q) \),
		\( \pm \phi(p^* q) \)
		&
		\( \angle(\alpha, \beta) = \frac \pi 2 \),
		\( |\alpha| = |\beta| \)
		\\ \hline
		
		\( \upsilon \widehat p \widehat q \),
		\( \upsilon \widehat q \widehat p \),
		where
		\( \upsilon \in \Upsilon \),
		\( \widehat p \in \{ p, p^* \} \),
		\( \widehat q \in \{ q, q^* \} \)
		&
		\( \angle(\alpha, \beta) = 2 \pi / 3 \),
		\( |\alpha| = |\beta| \)
		\\ \hline
		
		\( \upsilon \rho(q) p \),
		\( \upsilon \rho(q) p^* \),
		where
		\( \upsilon \in \Upsilon \)
		&
		\( \angle(\alpha, \beta) = 3 \pi / 4 \),
		\( |\alpha| = |\gamma| \neq |\beta| \)
		\\ \hline
		
	\end{tabular}
\end{center}

Here
\( \Upsilon = \{ \pm 1, \pm \lambda \} \) is the group of invertible elements of both components of the free unital
\( \mathsf F_4 \)-ring and
\( \lambda = \rho(-1) \). Similarly to the cases of
\( \mathsf D_\ell \) and
\( \mathsf E_\ell \) (where the signs of
\( x_\alpha \) may be changed), the structure terms depend on the choice of parametrizations
\( x_\alpha \), each of them may be precomposed with a term from the set
\(
	\{
		\upsilon p,
		\upsilon p^*
		\mid
		\upsilon \in \Upsilon
	\}
\). Non-associative
\( \mathsf F_4 \)-rings arise in descriptions of some twisted forms of
\( \mathrm G(\mathsf E_8, {-}) \), see \cite[example 6]{root-graded}.

\subsection{Colocalization and infinitary pretopoi}

Since our ultimate goal is to calculate Schur multipliers of locally isotropic Steinberg groups, let us briefly recall their construction from \cite{iso-st-k2}. Let
\( G \) be a reductive group scheme of local isotropic rank
\( \geq 3 \) over a unital commutative ring
\( K \).

By definition and a direct limit argument, there is an open covering
\( \Spec(K) = \bigcup_i \mathcal D(s_i) \) such that every group scheme
\( G_{s_i} \) over
\( K_{s_i} \) has an isotropic pinning of rank
\( \geq 3 \). Clearly,
\( G(K_{s_i}) = G_{s_i}(K_{s_i}) \) is root graded and thus has a corresponding Steinberg group
\( \stlin_G(K_{s_i}) \). Unfortunately, Steinberg groups do not form a sheaf in Zariski topology and we cannot reconstruct a group
\( \stlin_G(K) \) by these local parts.

On the other hand, let us consider the formal direct limit
\[
	K^{(s_i^\infty)}
	=
	\varprojlim\nolimits^{\Pro}(
		\ldots
		\xrightarrow{s_i} K
		\xrightarrow{s_i}
	)
\]
in the pro-completion
\( \Pro(\Set) \) of the category of sets, the \textit{colocalization} of
\( K \) at
\( s_i \). Such object is in a way dual to the localization
\( K_{s_i} \), in particular, it is a non-unital
\( K_{s_i} \)-algebra and together they form a cosheaf in the Zariski topology.

Since the Steinberg group is well-defined over non-unital rings (we shortly generalize this observation to all
\( \Phi \)-rings), there is a group object
\( \stlin_G(K^{(s_i^\infty)}) \) given by explicit generators and relations. But the category
\( \Pro(\Set) \) is not suitable for this construction, so we embed it into a larger category
\( \Ex(\Ind(\Pro(\Set))) \) by ind-completion and exact completion. To handle functorial properties and to apply the generic element method, it is also useful to replace
\( \Set \) by the category
\[
	\mathbf P_K = \Cat(\Ring_K^\fp, \Set)
\]
of presheaves on the category of all finitely presented unital commutative
\( K \)-algebras. The base ring object in this category is the presheaf
\( \R \colon R \mapsto R \), i.e. sheaf of points of the affine line. The colocal Steinberg groups
\( \stlin_G(\R^{(s_i^\infty)}) \) are thus constructed in
\[
	\mathbf U_K = \Ex(\Ind(\Pro(\mathbf P_K))).
\]

The category
\( \mathbf U_K \) is convenient because it is an \textit{infinitary pretopos} just as
\( \Set \). We refer to \cite[A1.4, D1.3]{elephant} for a thorough treatment of the necessary categorical logic. A quick summary and future references relating infinitary pretopoi and ind- and pro-completions may be found in our previous paper \cite[\S 3]{iso-st-k2}. Every infinitary pretopos
\( \mathbf C \) has finite limits and all small colimits, every morphism
\( f \) is a composition of an epimorphism and a monomorphism (in a unique way up to a unique isomorphism), for every object
\( X \) the class of its subobjects is a bounded lattice, and
\( X \) has factor-objects by all equivalence relations.

If we have a collection of morphisms and subobjects in an infinitary pretopos, then new subobjects may be defined using first order formulae of geometric logic. We write
\( [\![ \varphi(\vec x) ]\!] \) for the subobject corresponding to the formula
\( \varphi \) and similarly
\( [\![ t(\vec x) ]\!] \) for the morphism corresponding to the term
\( t \). Inclusions between definable subobjects correspond to sequents and may be proved using natural deduction. For example, every variety of algebras determines the category of such algebras in an infinitary pretopos
\( \mathbf C \). This category has all small colimits (and finite limits), in particular, the initial object. Lemma \ref{inv-artin} holds for all unital
\( \mathsf B_3 \)-ring objects
\( (R, \Delta) \) in
\( \mathbf C \) with literally the same proof translated into the geometric logic. Since the geometric logic lacks universal quantifiers, we cannot define the subobjects
\( \mathrm N(R) \) and
\( \mathrm C(R) \) by geometric first order formulae (at least in the obvious way), but we still can express that a given subobject
\( X \subseteq R \) is nuclear or central.

Another useful technique to prove various results is to apply Yoneda lemma and reduce the problem to
\( \Set \), but only if the problem is expressible in Cartesian logic. For example, our future applications of lemma \ref{inv-artin} to unital
\( \mathsf B_3 \)-ring objects in categories can be easily obtained in this way because we need the associativity identities between pairs of morphisms
\( p, q \colon T \to R \) for some domain
\( T \) and several other nuclear morphisms
\( \nu_i \colon T \to R \). The categorical version of the lemma instead gives the associativity of the subring generated by a family of global elements, i.e. the particular case of the terminal object
\( T = \{ * \} \). On the other hand, the categorical version may be applied to the slice category
\( \mathbf C / T \) (and slice categories of an infinitary pretopos are also infinitary pretopoi) to get the same general associativity result.

Now let us return to Steinberg groups. Recall that a crossed module of group objects in an infinitary pretopos
\( \mathbf C \) is a homomorphism
\( \delta \colon X \to G \) together with an action of
\( G \) on
\( X \) (i.e. a morphism
\( \up{({-})}{({=})} \colon G \times X \to X \) satisfying the obvious identities) such that
\( \delta(\up g x) = g \delta(x) g^{-1} \) and
\( x y x^{-1} = \up {\delta(x)} y \) for
\( x, y \colon X \) and
\( g \colon G \). These equalities actually define certain subobjects of the domains
\( G \times X \) and
\( X \times X \), and the axioms are that the subobjects coincides with the whole domains (i.e. the equalities are ``identities'').

It turns out that Steinberg groups
\( \stlin_G(\R^{(s_i^\infty)}) \) are crossed modules over
\( G(\R) \) in a canonical way and in the category of crossed modules they form a cosheaf in Zariski topology. Thus
\( \stlin_G(\R) \) may be defined by the cosheaf property as a certain finite colimit. Note that
\( \stlin_G(\R^{(s_i^\infty)}) \) definitely do not form a cosheaf of group objects because the Steinberg group over a product of rings is the product of the Steinberg groups over the factors, not the free product.

In general we do not know yet whether
\( \stlin_G(\R) \) lies in the full subcategory
\( \Ex(\Ind(\mathbf P_K)) \subseteq \mathbf U_K \) (but see theorem \ref{compactness} below). Suppose that this holds, e.g. that
\( G \) has an isotropic pinning of rank
\( \geq 3 \). Since
\( \mathbf P_K \) is already an infinitary pretopos, there is a canonical ``evaluation'' functor
\(
	\ev_*
	\colon
	\Ex(\Ind(\mathbf P_K))
	\to
	\mathbf P_K
\). The value of the presheaf
\( \ev_*(\stlin_G(\R)) \) on
\( K \) is
\( \stlin_G(K) = \ev_K(\stlin_G(\R)) \), by definition this is the set-theoretic locally isotropic Steinberg group.

\subsection{%
	Non-unital
	\( \Phi \)-rings
}

Let us generalize Steinberg groups to a non-unital variation of
\( \Phi \)-rings, where
\( \Phi \) is an irreducible spherical root system of rank
\( \geq 3 \), but not from the
\( \mathsf H_\ell \) series. We say that a \textit{%
	non-unital
	\( \Phi \)-ring%
} is the kernel of a homomorphism from a unital
\( \Phi \)-ring
\( A \) to the initial
\( \Phi \)-ring
\( A_0 \) in the corresponding variety. Since any
\( \Phi \)-ring is a group or a pair of groups under addition (though the addition may be non-commutative in the case
\( \Phi = \mathsf B_\ell \)), such kernels are well-defined and they are also groups or pairs of groups. Note that the homomorphism
\( A \to A_0 \) is surjective and has a unique section, so
\( A \) is completely determined by the kernel and the ``action'' of
\( A_0 \) on the kernel. In all cases non-unital
\( \Phi \)-rings form a variety of algebras with finitely many operations and axioms, as a category it is equivalent to the slice category of unital
\( \Phi \)-rings over the initial one.

Namely, non-unital
\( \mathsf A_\ell \)-rings are associative non-unital rings, non-unital
\( \mathsf D_\ell \) and
\( \mathsf E_\ell \)-rings are commutative non-unital rings.

A non-unital
\( \mathsf B_\ell \)-ring
\( (R, \Delta) \)
consists of two groups with the same operations and axioms as in the unital case, but excluding the elements
\( 1 \),
\( \lambda \),
\( \lambda^{-1} \),
\( \iota \) and the axioms involving them. Instead this variety has the additional operations
\begin{align*}
	R &\to R,\,
	p \mapsto p \lambda^{\pm 1},
	&
	\Delta &\to \Delta,\,
	u \mapsto u \cdot \lambda^{\pm 1},
	\\
	R &\to R,\,
	p \mapsto \lambda^{\pm 1} p,
	&
	R &\to \Delta,\,
	p \mapsto \iota \cdot p,
	\\
	\Delta &\to R,\,
	u \mapsto \langle \iota, u \rangle
\end{align*}
satisfying certain axioms.

Finally, a non-unital
\( \mathsf F_4 \)-ring
\( (R, S) \) consists of two alternative non-unital rings with involutions and maps
\( \phi \) and
\( \rho \) between them satisfying the axioms from the unital case. It also has four additional operations
\[
	R \to R,\, p \mapsto p \lambda^{\pm 1}
	\quad
	S \to S,\, u \mapsto u \lambda^{\pm 1}
\]
satisfying some axioms, where
\( \lambda = \rho(-1) \) are the two distinguished elements from the initial unital
\( \mathsf F_4 \)-ring.

For every such root system
\( \Phi \) and non-unital
\( \Phi \)-ring object
\( A \) in an infinitary pretopos
\( \mathbf C \) we may construct the \textit{unrelativized} Steinberg group
\( \stlin(\Phi, A) \) by the usual generators and relations. But without further assumptions this group object does not have usual properties expected from a Steinberg group, e.g. it is abelian if
\( \Phi = \mathsf A_\ell \) and the ring object
\( A \) has zero multiplication.

We say that a non-unital
\( \Phi \)-ring object
\( A \) in an infinitary pretopos
\( \mathbf C \) is \textit{weakly unital} if there is a \textit{weak unit}
\( \mathcal E \subseteq R \) (where
\( R = A \) in the simply laced cases and
\( R \) is the first component in the doubly laced cases) such that

\begin{itemize}
	
	\item
	\( \mathcal E \) is a multiplicative sub-semigroup, i.e.
	\( \mathcal E \mathcal E \subseteq \mathcal E \);

	\item
	\( \mathcal E \) is central and hermitian (i.e.
	\( \eps^* = \eps \) for
	\( \eps \colon \mathcal E \) in the doubly laced cases);
	
	\item
	the first order formula
	\(
		\exists
			\zeta, \eps', \eta' \colon \mathcal E
		\enskip
			\eps = \eps' \zeta
			\wedge
			\eta = \eta' \zeta
	\) holds for
	\( \eps, \eta \colon \mathcal E \) (informally, in the semigroup
	\( \mathcal E \) every two elements have a common divisor);
	
	\item
	the multiplication morphisms
	\begin{align*}
		[\![ \eta \eps ]\!]
		&\colon
		\mathcal E \times \mathcal E \to \mathcal E,
		&
		[\![ u \cdot \eps ]\!]
		&\colon
		\Delta \times \mathcal E \to \Delta
		\text{ for }
		\Phi = \mathsf B_\ell,
		\\
		[\![ p \eps ]\!]
		&\colon
		R \times \mathcal E \to R,
		&
		[\![ s \rho(\eps) ]\!]
		&\colon
		S \times \mathcal E \to S
		\text{ for }
		\Phi = \mathsf F_4
	\end{align*}
	are epimorphisms;
	
	\item
	if
	\( \mu \cdot \eps \) is a multiplication morphism from the previous condition (for
	\( \mu \colon R \),
	\( \mu \colon \Delta \), or
	\( \mu \colon S \)) and
	\( \dot 0 \) is the zero element of the domain of
	\( \mu \), then the first order sequent
	\[
		\mu \cdot \eps = \dot 0
		\vdash
		\exists \eps', \eps'' \colon \mathcal E \enskip
			\eps = \eps' \eps''
			\wedge
			\mu \cdot \eps' = \dot 0
	\]
	holds for the variables
	\( \mu \) and
	\( \eps \colon \mathcal E \).
	
\end{itemize}

\begin{lemma}
	\label{weak-unit}
	For every positive integer
	\( n \) the monomial morphisms
	\begin{align*}
		[\![ \eta \eps^n ]\!]
		&\colon
		\mathcal E \times \mathcal E \to \mathcal E,
		&
		[\![ u \cdot \eps^n ]\!]
		&\colon
		\Delta \times \mathcal E \to \Delta
		\text{ for }
		\Phi = \mathsf B_\ell,
		\\
		[\![ p \eps^n ]\!]
		&\colon
		R \times \mathcal E \to R,
		&
		[\![ s \rho(\eps^n) ]\!]
		&\colon
		S \times \mathcal E \to S
		\text{ for }
		\Phi = \mathsf F_4
	\end{align*}
	are epimorphisms. Moreover, if
	\( \mu \cdot \eps^n \) is one of these morphisms with
	\( \mu \colon R \),
	\( \mu \colon \Delta \), or
	\( \mu \colon S \) and
	\( \dot 0 \) is the zero element of the domain of
	\( \mu \), then the first order sequent
	\[
		\mu \cdot \eps^n = \dot 0
		\vdash
			\exists \eps', \eps'' \colon \mathcal E \enskip
				\eps = \eps' \eps''
				\wedge
				\mu \cdot (\eps')^n = \dot 0
	\]
	holds for the variables
	\( \mu \) and
	\( \eps \colon \mathcal E \).
\end{lemma}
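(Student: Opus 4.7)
My plan is to prove both claims by a single induction on $n$, with the base case $n = 1$ being exactly the definition of weak unit. Throughout I would work in the internal language of the ambient infinitary pretopos, so ``elements'' denote generic morphisms and ``existence'' means existence on a suitable epimorphic cover.

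For the epimorphism step $n \to n+1$, consider the representative morphism $[\![p\eps^{n+1}]\!] \colon R \times \mathcal E \to R$. Given a generic $p \colon R$, the induction hypothesis supplies $p = q_0 \eta_0^n$ for some $q_0 \colon R$, $\eta_0 \colon \mathcal E$, and the base case applied to $q_0$ supplies $q_0 = q_1 \eta_1$ with $\eta_1 \colon \mathcal E$. The common divisor axiom applied to $\eta_0, \eta_1 \colon \mathcal E$ yields $\eta_0 = \tilde\eta_0 \zeta$ and $\eta_1 = \tilde\eta_1 \zeta$ with all factors in $\mathcal E$. Centrality of $\mathcal E$ places its elements in the nucleus and forces them to commute pairwise, so the rewriting
\[
	p
	= q_1 \eta_1 \eta_0^n
	= q_1 \tilde\eta_1 \zeta \tilde\eta_0^n \zeta^n
	= q_1 \tilde\eta_1 \tilde\eta_0^n \zeta^{n+1}
\]
is unambiguous and exhibits $p$ in the image. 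The sibling morphisms $\mathcal E \times \mathcal E \to \mathcal E$, $\Delta \times \mathcal E \to \Delta$ via $u \cdot \eps^n$, and $S \times \mathcal E \to S$ via $s \rho(\eps)^n$ are handled by the same argument, with $\cdot$-action or multiplication by $\rho(\eps)$ replacing right multiplication and $\rho$ being multiplicative.

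For the divisibility step $n \to n+1$, I start from $\mu \cdot \eps^{n+1} = \dot 0$, rewritten as $(\mu \cdot \eps^n) \cdot \eps = \dot 0$ via $(\mu \cdot p) \cdot q = \mu \cdot (pq)$. The base case divisibility gives $\eps = \eps_1 \eps_2$ with $\mu \cdot (\eps^n \eps_1) = \dot 0$, and commutativity of $\mathcal E$ rewrites $\eps^n \eps_1 = \eps_1^{n+1} \eps_2^n$, so $(\mu \cdot \eps_1^{n+1}) \cdot \eps_2^n = \dot 0$. Applying the induction hypothesis to $(\mu \cdot \eps_1^{n+1}, \eps_2)$ produces $\eps_2 = \eps_2' \eps_2''$ with $\mu \cdot \eps_1^{n+1} (\eps_2')^n = \dot 0$. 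Setting $\eps' = \eps_1 \eps_2'$ and $\eps'' = \eps_2''$, I have $\eps = \eps' \eps''$, and
\[
	\mu \cdot (\eps')^{n+1}
	= \mu \cdot \eps_1^{n+1} (\eps_2')^{n+1}
	= \bigl( \mu \cdot \eps_1^{n+1} (\eps_2')^n \bigr) \cdot \eps_2'
	= \dot 0 \cdot \eps_2'
	= \dot 0
\]
closes the induction; the final equality follows in the $\mathsf B_\ell$ case from $\dot 0 = \phi(0)$ and $\phi(p) \cdot q = \phi(q^* p q)$, and is the trivial ``zero absorbs'' identity in the ring-multiplication cases.

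The only delicate point is that $\mathsf B_\ell$- and $\mathsf F_4$-rings may be genuinely non-associative (alternative), so identities such as $(\eps_1 \eps_2)^n = \eps_1^n \eps_2^n$ are not automatic. This is resolved uniformly by centrality of $\mathcal E$: every $\eps \in \mathcal E$ lies in the nucleus and commutes with everything, so $\mathcal E$ is a commutative subsemigroup on which unbracketed power notation is unambiguous, and all rearrangements above are legal. Apart from this, the proof is a formal manipulation of existential statements in geometric logic and applies to all four morphism types listed in the lemma in exactly the same way.
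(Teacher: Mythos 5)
Your proof is correct and reaches the paper's conclusions by a genuinely different (though related) route. The paper argues directly: it applies the base case $n$ times in a row, producing $n$ decompositions $\eps = \eps'_i \eta_i$, then takes a common divisor $\zeta$ of the $\eta_i$ and, for the divisibility claim, applies the base-case divisibility to the differences $\eps'_i \eta'_i - \eps'_j \eta'_j$ in order to reach a single decomposition $\zeta = \zeta' \zeta''$ for which $\eps'_i\eta'_i\zeta'$ is independent of $i$. You instead run an explicit induction on $n$, invoking the base case and the inductive hypothesis once each per step and using the common-divisor axiom only on pairs; this keeps every axiom application in exactly the binary form in which it is stated and entirely sidesteps the paper's ``reconcile $n$ decompositions via differences'' step, which is the subtlest part of the published argument. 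Both proofs dispose of potential non-associativity in the same way, via centrality of $\mathcal E$, and both handle the four morphism types uniformly. The paper's direct version makes explicit which decompositions the weak-unit axioms yield after $n$ applications, which is useful for intuition; your inductive version is more economical and would be somewhat easier to render as a formal derivation in geometric logic.
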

\begin{proof}
	We give informal arguments, they can be straightforwardly translated to a formal proof in geometric logic. For the first claim take
	\( \mu \) with one of the domains
	\( \mathcal E \),
	\( R \),
	\( \Delta \), or
	\( S \). It is of the form
	\( \mu = \mu' \cdot \prod_{i = 1}^n \eps_i \) for some
	\( \eps_i \colon \mathcal E \). Now all
	\( \eps_i \) have a common divisor, i.e.
	\( \eps_i = \eps'_i \eta \) for some
	\( \eps'_i, \eta \colon \mathcal E \), so
	\(
		\mu
		=
		\bigl( \mu' \cdot \prod_{i = 1}^n \eps'_i \bigr)
		\cdot
		\eta^n
	\).
	
	To prove the second claim suppose that
	\( \mu \cdot \eps^n = \dot 0 \). Then there are decompositions
	\( \eps = \eps'_i \eta_i \) for
	\( 1 \leq i \leq n \) such that
	\( \mu \cdot \prod_{i = 1}^n \eps'_i = \dot 0 \). Choose a common divisor of
	\( \eta_i \), i.e.
	\( \eta_i = \eta'_i \zeta \) for
	\( \eta'_i, \zeta \colon \mathcal E \). Since
	\( (\eps'_i \eta'_i - \eps'_j \eta'_j) \zeta = 0 \) for all
	\( i \neq j \), we can finally choose a decomposition
	\( \zeta = \zeta' \zeta'' \) for
	\( \zeta', \zeta'' \colon \mathcal E \) such that
	\( \eps'_i \eta'_i \zeta' = \eps' \) is independent of
	\( i \). In other words,
	\( \eps = \eps' \zeta'' \) and
	\( \mu \cdot (\eps')^n = \dot 0 \).
\end{proof}

For example, if
\( A \) is unital, then we can take
\( \mathcal E = \{ 1 \} \). For the colocalized Steinberg groups
\( \stlin_G(\R^{(s^\infty)}) \) in the locally isotropic case we take
\( \mathcal E = \R^{(\infty, s)} \), see \cite[lemma 5]{iso-st-k2} and its proof. If
\( \mathcal E \) is a weak unit for a non-unital
\( \Phi \)-ring object
\( A \), then the image of
\( [\![ \eps^n ]\!] \colon \mathcal E \to \mathcal E \)
is also a weak unit for every positive integer
\( n \) by lemma \ref{weak-unit}. Note that for
\( \Phi = \mathsf F_4 \) our definition is not symmetric with respect to exchanging
\( R \) and
\( S \), though if
\( \mathcal E \subseteq R \) is a weak unit for
\( (R, S) \), then
\( \rho(\mathcal E) \subseteq S \) is a weak unit for
\( (S, R) \) again by lemma \ref{weak-unit}.

Recall that if
\( A \) is a non-unital
\( \Phi \)-ring in an infinitary pretopos
\( \mathbf C \) and
\( X \in \mathbf C \) is any object, then
\( A_X = A \times X \) is a non-unital
\( \Phi \)-ring in the infinitary pretopos
\( \mathbf C / X \) (this holds for all algebraic structures). If
\( \mathcal E \) is a weak unit of
\( A \), then
\( \mathcal E_X \) is clearly a weak unit of
\( A_X \).

\begin{lemma}
	\label{wunit-artin}
	Let
	\( (R, \Delta) \) be a non-unital
	\( \mathsf B_3 \)-ring object in an infinitary pretopos
	\( \mathbf C \) with a weak unit
	\( \mathcal E \) and
	\( p, q, r \colon X \to R \) be morphisms such that
	\( \bigl[ p(x), q(x), r(x) \bigr] = 0 \). Then there exist an epimorphism
	\( X' \to X \) and an associative subring
	\( R' \subseteq R_{X'} \) such that
	\( (R', \Delta_{X'}) \) is still a non-unital
	\( \mathsf B_3 \)-ring object with a weak unit
	\( \mathcal E_{X'} \) inside
	\( \mathbf C / X' \) and the morphisms
	\( p|_{X'} \),
	\( q|_{X'} \),
	\( r|_{X'} \) factor through
	\( R' \).
\end{lemma}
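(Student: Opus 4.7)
The plan is to mimic the proof of Lemma \ref{inv-artin} in the non-unital, weakly-unital setting. Following the Yoneda/slice reduction discussed before Lemma \ref{inv-artin}, I pass to the slice $\mathbf C/X$ so that $p, q, r$ become global elements of $R_X$, and then construct $R' \subseteq R_{X'}$ after a suitable epimorphism $X' \to X$.

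Take $R'$ to be the subobject of $R_{X'}$ generated, under the operations of the non-unital $\mathsf B_3$-ring signature valued in $R$ (namely $+$, $\cdot$, $*$, and left/right multiplication by $\lambda^{\pm 1}$), by the pullbacks of $p, q, r$ together with $\rho(\Delta_{X'})$ and $\mathcal E_{X'}$. Closure under these operations makes $(R', \Delta_{X'})$ a sub-$\mathsf B_3$-ring object, since the only $\Delta \to R$ operations are $\rho$ and $\langle u, v \rangle = \rho(u) + \rho(v) - \rho(u \dotplus v)$, both landing in $R'$. The nuclear subobject $\mathcal N' \subseteq R'$ generated by $\rho(\Delta_{X'}) \cup \mathcal E_{X'}$ and closed under $*$ and $\lambda^{\pm 1}$-multiplication stays inside $\mathrm N(R)$, because the nucleus is a $*$- and $\lambda^{\pm 1}$-stable subring of an alternative $\mathsf B_3$-ring.

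Associativity of $R'$ is then obtained by transcribing the argument of Lemma \ref{inv-artin}. The identity $p^* \lambda = p - \rho(\phi(p))$, rewritten in the non-unital case as $p^* = (p - \rho(\phi(p))) \cdot \lambda^{-1}$ via the unary operation $\cdot \lambda^{-1}$, shows that the $*$-conjugates of $p, q, r$ are expressible from $p, q, r$ and nuclear adjustments. Consequently every element of $R'$ is a sum of monomials in $\{p, q, r\}$ interspersed with factors from $\mathcal N'$. Peeling off nuclear factors at the endpoints of the three monomials in a triple product by skew-symmetry of the associator, and splitting middle nuclear factors via $(a' \nu a'') b c = (a' \nu)(a'' b c)$ as in the proof of Lemma \ref{inv-artin}, reduces associativity to triple products of pure monomials in $\{p, q, r\}$. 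This last case follows from $[p, q, r] = 0$ together with Bruck's 3-generator theorem already invoked in Lemma \ref{inv-artin}.

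The main obstacle is verifying the weak-unit axioms for $\mathcal E_{X'} \subseteq R'$. The sub-semigroup, centrality, hermiticity, common-divisor, and zero-divisor conditions pull back directly from $(R, \Delta, \mathcal E)$, and the epi $\Delta_{X'} \times \mathcal E_{X'} \twoheadrightarrow \Delta_{X'}$ is the pullback of the corresponding epi for $\Delta$. The delicate point is the epi $R' \times \mathcal E_{X'} \twoheadrightarrow R'$: every element of $R'$ must be locally of the form $r'\eps$ with $r' \in R'$. I would handle this by further refining $X'$, using the epi $R \times \mathcal E \to R$ on the generating data of $R'$, extracting a common $\eps$ via the common-divisor axiom and Lemma \ref{weak-unit}, and propagating the factorizations through the generation process; the zero-divisor half of Lemma \ref{weak-unit} is what keeps the extracted preimages inside $R'$ compatibly with the associativity reduction. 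I expect this factorization bookkeeping to be the technical heart of the argument.
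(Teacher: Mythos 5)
Your overall plan is close to the paper's, but you chose an order of operations that leaves the weak-unitality of $R'$ genuinely unresolved, and you say so yourself. If you generate $R'$ from the pullbacks of $p, q, r$ together with $\rho(\Delta_{X'})$ and $\mathcal E_{X'}$, then for $\mathcal E_{X'}$ to be a weak unit of $R'$ you need the epi $R' \times \mathcal E_{X'} \to R'$, and in particular, factorizations $p = p''\eps$ with $p''$ \emph{in $R'$}. Such a $p''$ exists in $R$ by the weak-unitality of $R$, but there is no reason it should land in the subring generated by $p, q, r$ and nuclear data. Your remark about ``propagating the factorizations through the generation process'' amounts to enlarging (i.e.\ re-choosing) $R'$ after the fact, but as written this is circular.

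The paper sidesteps this by performing the factorization \emph{before} generating anything. After a suitable epimorphism $X' \to X$, write $p = p'\eps$, $q = q'\eps$, $r = r'\eps$ with a single $\eps : X' \to \mathcal E$ (common-divisor axiom). The hypothesis $[p, q, r] = 0$ then gives $\eps^3[p', q', r'] = 0$ since $\mathcal E$ is central, and the zero-divisor axiom via Lemma \ref{weak-unit} (with $n = 3$) allows one to adjust $\eps$ so that moreover $[p', q', r'] = 0$. Only then define $R'$ as the non-unital $\mathsf B_3$-subring generated by $\mathcal E_{X'}$, $p'\mathcal E_{X'}$, $q'\mathcal E_{X'}$, $r'\mathcal E_{X'}$, and $\Delta_{X'}$ (in particular $\rho(\Delta_{X'})$ and $\langle \iota, \Delta_{X'} \rangle$, the latter of which your generating set omits). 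Now every generator of $R'$ is visibly a product of an element of $R'$ with an element of $\mathcal E$, so the weak-unit axioms for $(R', \Delta_{X'})$ are inherited essentially formally, while associativity is Lemma \ref{inv-artin} applied to $p', q', r'$ with the nuclear elements from $\mathcal E \cup \rho(\Delta) \cup \langle\iota, \Delta\rangle$, exactly as you intended. Your proof becomes correct if you make this reordering explicit; as stated, the weak-unit verification is a real gap.
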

\begin{proof}
	Replacing
	\( X \) by a suitable cover we may assume that
	\( p(x) = p'(x)\, \eps(x) \),
	\( q(x) = q'(x)\, \eps(x) \), and
	\( r(x) = r'(x)\, \eps(x) \) for some
	\( \eps \colon X \to \mathcal E \). Moreover, without loss of generality
	\( \bigl[ p'(x), q'(x), r'(x) \bigr] = 0 \). Passing to the slice category we may further assume that
	\( X = \{ * \} \) is the terminal object. Let
	\( (R', \Delta) \subseteq (R, \Delta) \) be the non-unital
	\( \mathsf B_3 \)-subring generated by
	\( \mathcal E \),
	\( p' \mathcal E \),
	\( q' \mathcal E \),
	\( r' \mathcal E \), and
	\( \Delta \). It is easy to see that
	\( \mathcal E \) is a weak unit for
	\( (R', \Delta) \) and
	\( R' \) is associative by lemma \ref{inv-artin}.
\end{proof}

\subsection{Universal central extensions}

All results in this subsection are well known for the category of sets, see \cite[\S 6.9]{weibel} and \cite[\S 2]{brown-loday}. We still give the proofs so he reader can check that all arguments hold in any infinitary pretopos.

Let
\( H \) and
\( G \) be group objects in an infinitary pretopos
\( \mathbf C \). A homomorphism
\( f \colon H \to G \) is called a \textit{central extension} if it is an epimorphism (in the category
\( \mathsf C \)) with central kernel. If
\( H \) is perfect (so
\( G \) is also perfect), then
\( f \) is called a \textit{perfect central extension}. Any central extension
\( f \) is a crossed module in a unique way and there exists a unique morphism
\( \langle {-}, {=} \rangle \colon G \times G \to H \) such that
\( f\bigl( \langle x, y \rangle \bigr) = [x, y] \) and
\( [x, y] = \bigl\langle f(x), f(y) \bigr\rangle \). This morphism
\( \langle {-}, {=} \rangle \) satisfies the obvious identities
\[
	\langle x, x \rangle = 1,
	\quad
	\langle x, 1 \rangle
	=
	\langle 1, x \rangle
	=
	1,
	\quad
	\langle x, y \rangle
	=
	\langle y, x \rangle^{-1},
\]
the \textit{crossed pairing identities}
\[
	\tag{XP}
	\label{XP}
	\langle x y, z \rangle
	=
	\up x {\langle y, z \rangle}\,
	\langle x, z \rangle,
	\quad
	\langle x, y z \rangle
	=
	\langle x, y \rangle\,
	\up y {\langle x, z \rangle},
\]
and the \textit{Hall--Witt identity}
\[
	\tag{HW}
	\label{HW}
	\up y {
		\bigl\langle x, [y^{-1}, z] \bigr\rangle
	}\,
	\up z {
		\bigl\langle y, [z^{-1}, x] \bigr\rangle
	}\,
	\up x {
		\bigl\langle z, [x^{-1}, y] \bigr\rangle
	}
	=
	1.
\]
Also,
\[
	\up g h
	=
	\bigl\langle g, f(h) \bigr\rangle\, h,
	\quad
	\up g {\langle x, y \rangle}
	=
	\langle \up g x, \up g y \rangle
\]
for
\( g \colon G \) and
\( x, y, h \colon H \).

\begin{lemma}
	\label{c-ext-perf}
	Let
	\( f \colon G \to H \) and
	\( g \colon H \to K \) be homomorphisms between perfect group objects. If two of
	\( f \),
	\( g \),
	\( g \circ f \) are central extensions, then the third one is also a central extension.

	If
	\( f, f' \colon G \to H\) and
	\( g \colon H \to K \) are perfect central extensions such that
	\( g \circ f = g \circ f' \), then
	\( f = f' \).
\end{lemma}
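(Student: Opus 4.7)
The plan is to run the classical arguments from \cite[\S 6.9]{weibel} step by step, verifying at each step that the manipulation stays within the expressive power of geometric logic so that it transfers to the infinitary pretopos $\mathbf C$.

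For the first claim, consider first the direction ``$f$ central extension implies $g\circ f$ central extension''. I would observe that $g\circ f$ is epi as a composite of two epimorphisms, and then set $N = \Ker(g\circ f)$ and argue that $N$ is central in $G$. Since $f(N)\subseteq \Ker(g)$ is central in $H$, we get $f([n,y])=[f(n),f(y)]=1$, so $[N,G]\subseteq \Ker(f)$, which is itself central in $G$. Centrality of $\Ker(f)$ in turn makes $y\mapsto [n,y]$ a homomorphism $G\to \Ker(f)$ (the identity $[n,y_1y_2]=[n,y_1][n,y_2]$ uses $y_1[n,y_2]y_1^{-1}=[n,y_2]$); since $G$ is perfect and the target is abelian, this morphism is trivial, hence every $n\colon N$ is central. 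For the converse direction, $\Ker(f)\subseteq \Ker(g\circ f)$ is automatically central, so I only need to show $f$ is epi; letting $I\hookrightarrow H$ be the image of $f$, I have $g(I)=K$, hence $I\cdot\Ker(g)=H$ as subobjects of $H$, and since $\Ker(g)$ is central every commutator in $H$ can be rewritten $[iz,i'z']=[i,i']\in I$, so $[H,H]\subseteq I$ and perfectness of $H$ gives $I=H$.

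For the second claim, I would form the equalizer $E\hookrightarrow G$ of $f$ and $f'$ and show $E=G$. For any $x\colon G$ the element $f(x)\,f'(x)^{-1}$ lies in $\Ker(g)$ and is thus central in $H$, so writing $z_x = f(x)\,f'(x)^{-1}$ and expanding yields $f([x,y]) = [f'(x)z_x,f'(y)z_y] = [f'(x),f'(y)] = f'([x,y])$, the central factors cancelling inside the commutator. Hence the commutator morphism $[-,-]\colon G\times G\to G$ factors through $E$; since $E$ is a subgroup of $G$ containing every commutator and $G$ is perfect, $E=G$ and therefore $f=f'$.

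The main obstacle is the bookkeeping required to turn these arguments into geometric logic proofs: I would need to check that ``image'', ``kernel'', ``join of two subgroups'', and ``subgroup generated by the image of a morphism'' behave in an infinitary pretopos as they do in $\Set$, and that ``perfect'' is correctly interpreted as the statement that any subobject of $G$ containing the image of $[-,-]\colon G\times G\to G$ and closed under the group operations equals $G$. Once these translations are in place the arguments above go through verbatim; no new ideas beyond those of \cite[\S 6.9]{weibel} and \cite[\S 2]{brown-loday} are needed.
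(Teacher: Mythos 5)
Your proof is correct and follows essentially the same route as the paper: centrality of $\Ker(g\circ f)$ is reduced to perfectness, surjectivity of $f$ is established by hitting all commutators of $H$, and the uniqueness claim comes from the same commutator computation (the paper phrases it via the crossed pairing $\langle g(h),g(h')\rangle_g=[h,h']$, which is exactly your cancellation of central factors). The only cosmetic difference is in the forward direction, where you replace the paper's crossed-pairing computation $\up{x}{\langle y,z\rangle}=\langle y,z\rangle$ with the equivalent ``a homomorphism from a perfect group to a central subgroup is trivial'' argument; both are standard and translate to geometric logic without additional ideas, as the paper itself implicitly assumes.
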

\begin{proof}
	Let
	\( f \colon G \to H \) and
	\( g \colon H \to K \) be perfect central extensions. If
	\( g(f(x)) = 1 \) for
	\( x \colon G \), then
	\( f(x) \) is central and
	\[
		\up x {\langle y, z \rangle}
		=
		\up{f(x)}{\langle y, z \rangle}
		=
		\bigl\langle
			\up {f(x)} y,
			\up {f(x)} z
		\bigr\rangle
		=
		1
	\]
	for
	\( y, z \colon H \). Since
	\( G \) is generated by the image of
	\( \langle {-}, {=} \rangle \colon H \times H \to G \),
	\( x \) is central.
	
	Now suppose that
	\( g \) and
	\( g \circ f \) are central extensions and take
	\( x, y \colon H \). Choose
	\( \widehat x, \widehat y \colon G \) such that
	\( g(x) = g(f(\widehat x)) \) and
	\( g(y) = g(f(\widehat y)) \), then
	\( f(\widehat x) x^{-1} \) and
	\( f(\widehat y) y^{-1} \) are central. Now
	\[
		[x, y]
		=
		\bigl[ f(\widehat x), f(\widehat y) \bigr]
		=
		f\bigl( [\widehat x, \widehat y] \bigr)
	\]
	i.e.
	\( f \) is an epimorphism.
	
	Next assume that
	\( f \) and
	\( g \circ f \) are central extensions. If
	\( g(x) = 1 \) for
	\( x \colon H \), then choose
	\( \widehat x \colon G \) such that
	\( x = f(\widehat x)\), so
	\( g(f(\widehat x)) = 1 \) and
	\( \widehat x \) is central. Since
	\( f \) is an epimorphism,
	\( f(\widehat x) \) is also central.
	
	To prove the last claim take
	\( x, y \colon G \). We have
	\[
		f\bigl( [x, y] \bigr)
		=
		\bigl\langle g(f(x)), g(f(y)) \bigr\rangle_g
		=
		\bigl\langle g(f'(x)), g(f'(y)) \bigr\rangle_g
		=
		f'\bigl( [x, y] \bigr),
	\]
	so
	\( f = f' \).
\end{proof}

A central extension
\( \kappa \colon \widetilde G \to G \) is called \textit{universal} if for every central extension
\( \delta \colon X \to G \) there is a unique homomorphism
\( f \colon \widetilde G \to X \) such that
\( \kappa = \delta \circ f \) (if all group objects are perfect, then this
\( f \) is also a central extension by lemma \ref{c-ext-perf}). For given
\( G \) such extension is unique up to a unique isomorphism if only it exists. The kernel
\( \schur(G) \) of
\( \kappa \) is called the \textit{Schur multiplier} of
\( G \). It is well known that
\( \schur(G) \cong \mathrm H_2(G) \) is the second integral homology group for perfect groups in the category of sets.

\begin{lemma}
	\label{c-ext-univ}
	A group object
	\( G \) has a universal central extension if and only if it is perfect. In the perfect case the universal central extension is also perfect and it can be constructed by the generators
	\( \langle x, y \rangle \) for
	\( x, y \colon G \) with the relations
	\begin{align*}
		\langle x y, z \rangle
		&=
		\langle \up x y, \up x z \rangle\,
		\langle x, z \rangle,
		&
		\up{\langle x, y \rangle}{\langle z, w \rangle}
		&=
		\bigl\langle
			\up {[x, y]} z,
			\up {[x, y]} w
		\bigr\rangle,
		\\
		\langle x, y z \rangle
		&=
		\langle x, y \rangle\,
		\langle \up y x, \up y z \rangle,
		&
		\bigl[
			\langle x, y \rangle,
			\langle z, w \rangle
		\bigr]
		&=
		\bigl\langle [x, y], [z, w] \bigr\rangle.
	\end{align*}
	The action of
	\( G \) on
	\( \widetilde G \) is given by
	\(
		\up x {\langle y, z \rangle}
		=
		\langle \up x y, \up x z \rangle
	\) and
	\(
		\kappa\bigl( \langle x, y \rangle \bigr)
		=
		[x, y]
	\).
\end{lemma}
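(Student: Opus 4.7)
The plan is to adapt the classical arguments of \cite[\S 6.9]{weibel} and \cite[\S 2]{brown-loday} to the infinitary pretopos setting, using the generic element method throughout and relying on the existence in $\mathbf C$ of free group objects on any object together with quotients by normal subobjects.

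For necessity, suppose $\kappa \colon \widetilde G \to G$ is a universal central extension. For any abelian group object $A$ the projection $\widetilde G \times A \to \widetilde G$ is a central extension, and composing with $\kappa$ gives a central extension over $G$ through which any homomorphism $\widetilde G \to A$ determines a lift of $\kappa$. Uniqueness of the lift forces $\operatorname{Hom}(\widetilde G, A) = \ast$, and taking $A = \widetilde G / [\widetilde G, \widetilde G]$ shows that $\widetilde G$ is perfect. Since $\kappa$ is an epimorphism, $G$ is perfect too.

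For sufficiency, assume $G$ is perfect. Form $\widetilde G$ as the quotient of the free group object on $G \times G$ by the normal closure of the subobjects cut out by the four listed relations, and write $\langle x, y \rangle$ for the class of a generator. Direct computation using only the standard identities $[ab, c] = \up a{[b, c]} [a, c]$ and $[a, bc] = [a, b]\, \up b{[a, c]}$ shows that $\kappa(\langle x, y \rangle) := [x, y]$ and $\up x{\langle y, z \rangle} := \langle \up x y, \up x z \rangle$ respect each of the four relations, yielding a homomorphism $\kappa \colon \widetilde G \to G$ (surjective since $G$ is perfect) together with a $G$-action on $\widetilde G$. The third relation asserts precisely that inner conjugation in $\widetilde G$ factors through $\kappa$, which is the crossed-module axiom and therefore forces $\ker \kappa$ to be central. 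Perfection of $\widetilde G$ comes from the fourth relation: writing each element of $G$ as a product of commutators and iterating the first two relations, each generator $\langle x, y \rangle$ is rewritten as a product of expressions of the form $\langle [u, v], [w, z] \rangle = [\langle u, v \rangle, \langle w, z \rangle]$.

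For the universal property, let $\delta \colon X \to G$ be any central extension with its canonical crossed pairing $\langle -, = \rangle_\delta \colon G \times G \to X$ satisfying (\ref{XP}), (\ref{HW}), $\up g h = \langle g, \delta(h) \rangle_\delta\, h$, and $[h_1, h_2] = \langle \delta(h_1), \delta(h_2) \rangle_\delta$. Define $f \colon \widetilde G \to X$ on generators by $f(\langle x, y \rangle) = \langle x, y \rangle_\delta$; the first two relations of $\widetilde G$ are precisely (\ref{XP}) for $\langle -, = \rangle_\delta$, the fourth is $[h_1, h_2] = \langle \delta(h_1), \delta(h_2) \rangle_\delta$ applied with $h_i = \langle \cdot, \cdot \rangle_\delta$, and the third follows from the crossed-module identity $h_1 h_2 h_1^{-1} = \up{\delta(h_1)}{h_2}$ combined with $\up g{\langle z, w \rangle_\delta} = \langle \up g z, \up g w \rangle_\delta$. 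Thus $f$ is a well-defined homomorphism. For uniqueness, any two such lifts $f, f'$ satisfy $\delta \circ f = \delta \circ f'$, so $\tilde g \mapsto f(\tilde g) f'(\tilde g)^{-1}$ takes values in the central subobject $\ker \delta$ and is a homomorphism from the perfect group $\widetilde G$ to an abelian group, hence trivial. The main difficulty is securing the delicate balance of the four relations: each must hold automatically for the crossed pairing of any target central extension (so that $f$ descends from the free group) while being strong enough that $\ker \kappa$ is central in $\widetilde G$; the third relation carries the crossed-module axiom and does both jobs. All computations are formally group-theoretic and so transport verbatim to $\mathbf C$ by the generic element method, with no obstructions beyond the existence of free group objects and normal closures in an infinitary pretopos.
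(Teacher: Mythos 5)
Your proof is correct and follows essentially the same route as the paper: the same two-homomorphism trick for necessity, the same generators-and-relations construction for sufficiency, and the observation that the relation $\up{\langle x, y \rangle}{\langle z, w \rangle} = \langle \up{[x,y]} z, \up{[x,y]} w \rangle$ encodes the crossed-module axiom and hence centrality of $\Ker\kappa$. You simply spell out more of the verifications the paper dismisses as ``easy to see'' or delegates to \cite[lemma 4]{iso-st-k2} (that $\kappa$ and the $G$-action preserve each relator, and that perfection follows by expanding $\langle x, y \rangle$ via biadditivity and the commutator relation over commutator decompositions of $x$ and $y$ in $G$). Your uniqueness argument --- the difference $f f'^{-1}$ is a homomorphism from a perfect group to a central abelian kernel --- differs cosmetically from the paper's direct computation of $f$ on $[x,y]$, but it is the same content and is in fact already covered by lemma~\ref{c-ext-perf}. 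One small remark: the expansion of $\langle x, y \rangle$ into commutators produces terms conjugated by the $G$-action; these still land in $[\widetilde G, \widetilde G]$ because $\kappa$ is onto and the crossed-module axiom turns $G$-conjugation into $\widetilde G$-conjugation, and it is worth being explicit about this when working in geometric logic where ``each element is a product of commutators'' is an infinitary-disjunction statement.
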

\begin{proof}
	If
	\( \kappa \colon \widetilde G \to G \) is a universal central extension, then there are two homomorphisms
	\(
		\widetilde G
		\to
		\widetilde G / [\widetilde G, \widetilde G]
		\times
		G
	\) over
	\( G \), namely,
	\( f(x) = (1, \kappa(x)) \) and
	\( g(x) = \bigl( [x], \kappa(x) \bigr) \). By the universal property
	\( f = g \), i.e.
	\( \widetilde G \) and
	\( G \) are perfect.
	
	Now suppose that
	\( G \) is perfect and construct the group object
	\( \widetilde G \) by generators and relations as in the statement. The action of
	\( G \) on
	\( \widetilde G \) is well-defined by e.g. \cite[lemma 4]{iso-st-k2}. It is easy to see that
	\( \kappa \colon \widetilde G \to G \) is a crossed module and
	\( \widetilde G \) is perfect. If
	\( \delta \colon H \to G \) is a central extension and
	\( \kappa = \delta \circ f \), then
	\[
		f\bigl( [x, y] \bigr)
		=
		\bigl[ f(x), f(y) \bigr]
		=
		\bigl\langle
			\kappa(x),
			\kappa(y)
		\bigr\rangle_\delta,
	\]
	i.e.
	\(
		f\bigl( \langle x, y \rangle \bigr)
		=
		\langle x, y \rangle_\delta
	\) is unique and clearly well defined.
\end{proof}

Actually, the right two relations from lemma \ref{c-ext-univ} follow from the left ones, see \cite[proposition 2.3]{brown-loday}. The universal central extension of perfect group objects is functorial in a unique way. By lemma \ref{c-ext-perf} every universal central extension
\( \widetilde G \)
is \textit{centrally closed}, i.e. all central extensions of
\( \widetilde G \) split.

\begin{lemma}
	\label{c-ext-xmod}
	If
	\( X \to G \) is perfect crossed module over a group object
	\( G \), then the universal central extension
	\( \widetilde X \) of
	\( X \) is also a crossed module over
	\( G \) in a unique way. If
	\( X = \colim_{i \in \mathbf I} X_i \) is a colimit of perfect crossed modules by a finite diagram
	\( \mathbf I \), then
	\( X \) itself is perfect and
	\(
		\widetilde X
		\cong
		\colim_{i \in \mathbf I}
			\widetilde{X_i}
	\).
\end{lemma}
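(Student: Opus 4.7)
For the crossed-module structure I would use Lemma~\ref{c-ext-univ}'s explicit presentation of $\widetilde X$: generators $\langle x, y\rangle$ for $x, y \colon X$, subject to relations expressible purely from the group operations and conjugation in $X$. Setting $\up{g}{\langle x, y\rangle} := \langle \up{g}{x}, \up{g}{y}\rangle$ internally in the slice $\mathbf C / G$ gives a well-defined action, because each $\up{g}{(-)}$ is a group automorphism of $X$ and hence preserves the relations. Equivariance of $\kappa$ is immediate on generators, and the Peiffer identity $\up{\delta(\kappa(\widetilde x))}{\widetilde y} = \widetilde x\,\widetilde y\,\widetilde x^{-1}$ for $\widetilde X \to G$ reduces to the corresponding identity for $\widetilde X \to X$ from Lemma~\ref{c-ext-univ}, via $\up{\delta(x)}{(-)} = \up{x}{(-)}$ on $X$, which extends to $\widetilde X$ by construction. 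For uniqueness, any two candidate actions agreeing with $\kappa$ produce $c(g, \widetilde x) := (g *_1 \widetilde x)(g *_2 \widetilde x)^{-1}$ lying in $\Ker \kappa \subseteq Z(\widetilde X)$; reorganizing central factors shows $c(g, -) \colon \widetilde X \to Z(\widetilde X)$ is a group homomorphism, forced trivial because $\widetilde X$ is perfect and $Z(\widetilde X)$ abelian.

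For the colimit statement, set $Y := \colim_{i \in \mathbf I} \widetilde{X_i}$ in the category of crossed modules over $G$. Perfectness of $X$ is easy: $[X, X]$ contains each image of $X_i = [X_i, X_i]$, and those images jointly generate the colimit. Functoriality of the universal central extension on perfect group objects (combine the universal property with Lemma~\ref{c-ext-perf}) makes $i \mapsto \widetilde{X_i}$ a genuine diagram indexed by $\mathbf I$ and produces a canonical morphism $\varphi \colon Y \to \widetilde X$ from the universal property of $Y$ together with the lifted maps $\widetilde{X_i} \to \widetilde X$. To prove $\varphi$ is an isomorphism, I would establish a universal-lifting property of $Y$: given any central extension $Z \to X$, pull back along each $X_i \to X$ to obtain a central extension $Z_i \to X_i$ (central extensions are stable under pullback in an infinitary pretopos), apply the universal property of each $\widetilde{X_i}$ to lift uniquely to $\widetilde{X_i} \to Z_i \to Z$, and assemble the lifts by uniqueness into a unique map $Y \to Z$ over $X$.

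Granted this universal-lifting property, the argument concludes once one verifies that $Y \to X$ is itself a central extension: the universal property of $\widetilde X$ then furnishes $\psi \colon \widetilde X \to Y$ over $X$, and $\psi\varphi = \id_Y$, $\varphi\psi = \id_{\widetilde X}$ both follow from the uniqueness clauses (applied to $Y$ and $\widetilde X$ as central extensions of $X$ respectively). The main obstacle is precisely the centrality of $\Ker(Y \to X)$: unlike each individual $\Ker(\widetilde{X_i} \to X_i)$, this kernel may acquire additional generators from the identifications introduced by the colimit, and these need not be visibly central. The classical resolution (cf.\ \cite[\S 6.9]{weibel} and \cite[\S 2]{brown-loday}) uses the quotient $Y / [Y, \Ker(Y \to X)]$ as an auxiliary central extension of $X$ and plays its universal-lifting consequences off against the perfectness of $Y$ to force $[Y, \Ker(Y \to X)] = 1$. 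Translating the argument from sets to the infinitary pretopos $\mathbf C$ requires only replacing element-wise reasoning by its geometric-logic counterpart, for which the preliminaries of the previous subsections suffice.
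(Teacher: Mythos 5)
Your treatment of the first claim is essentially correct and matches the paper's approach (set the action on generators via the $G$-action on $X$, check it respects the relations, deduce uniqueness from perfectness and centrality of the kernel).

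For the colimit claim, however, you have missed the observation that makes the paper's proof short and clean, and as a result you leave a genuine gap. You correctly identify centrality of $\Ker(Y \to X)$, where $Y = \colim_{i}\widetilde{X_i}$ taken in crossed modules over $G$, as the crux, but then hand-wave it by appealing to the Weibel/Brown--Loday quotient trick, without carrying it out. In fact no such trick is needed: $Y$ is by construction a crossed module over $G$, and for any crossed module $\delta \colon Y \to G$ the Peiffer identity gives $y z y^{-1} = \up{\delta(y)} z = z$ for $y \colon \Ker \delta$, so $\Ker \delta$ is central in $Y$; since $Y \to G$ factors through $Y \to X$, we have $\Ker(Y \to X) \subseteq \Ker(Y \to G)$, and centrality of $\Ker(Y \to X)$ is automatic. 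This is precisely why the paper takes the colimit in the category of crossed modules over $G$, and it lets the paper conclude at once that $Y \to X$ is a perfect central extension, whence a map $Y \to \widetilde X$ over $X$ (lifting each $\widetilde{X_i} \to \widetilde X \times_X X_i \to \widetilde X$) together with the canonical $\widetilde X \to Y$, and both composites are the identity by the uniqueness part of Lemma~\ref{c-ext-perf}. Your alternative route via a universal-lifting property of $Y$ also has a secondary gap you do not address: the lifts $\widetilde{X_i} \to Z$ only form a cocone in groups, whereas $Y$ is a colimit in crossed modules over $G$, so a map $Y \to Z$ does not automatically assemble; by contrast, the paper never needs $Y$ to be universal, only to be a perfect central extension, which it manifestly is.
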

\begin{proof}
	The first part is easy, the action of
	\( G \) on
	\( \widetilde X \) is given by
	\(
		\up g {\langle x, y \rangle}
		=
		\langle \up g x, \up g y \rangle
	\) and the uniqueness follows from e.g. \cite[lemma 9(3)]{iso-st-k2}.

	To prove the second claim note that the colimit of perfect crossed modules is perfect. Since
	\( \colim_{i \in \mathbf I} \widetilde X_i \) is a crossed module over
	\( G \), it is a perfect central extension of
	\( X = \colim_{i \in \mathbf I} X_i \). Now take the universal central extension
	\( \widetilde X \to X \). For every
	\( i \) there is the evident homomorphism
	\[
		\widetilde X_i
		\to
		\widetilde X \times_X X_i
		\to
		\widetilde X
	\]
	of crossed modules over
	\( G \) lifting the homomorphism
	\( X_i \to X \), it is unique with these properties by lemma \ref{c-ext-perf}. By uniqueness these homomorphisms induce a homomorphism
	\(
		\colim_{i \in \mathbf I} \widetilde X_i
		\to
		\widetilde X
	\) of crossed modules over
	\( G \) making a commutative triangle together with
	\( X \). Lemma \ref{c-ext-perf} implies that it is a central extension, i.e. an isomorphism.
\end{proof}

\subsection{%
	Some varieties of non-unital
	\( \Phi \)-rings%
}

Let us briefly discuss certain varieties naturally appearing in the descriptions of
\( \schur(\stlin(\Phi, A)) \) for weakly unital
\( \Phi \)-rings
\( A \) below. Some of them already appear in \cite{schur-mult} in the unital case. We denote by
\( \langle X \rangle \) the ideal of a
\( \Phi \)-ring generated by
\( X \).

Recall that a non-unital associative ring
\( R \) (or a ring object in an infinitary pretopos) is called \textit{Boolean} if it satisfies the axioms
\[
	2 p = 0,
	\quad
	p q = q p,
	\quad
	p^2 = p.
\]
Finitely generated Boolean rings are just finite products of copies of the field
\( \mathbb F_2 \), and arbitrary Boolean rings in
\( \Set \) are subdirect products of copies of
\( \mathbb F_2 \). Every Boolean ring is weakly unital (even in an infinitary pretopos) because
\( p = p (p + q + p q) \) and
\( q = q (p + q + p q) \) for all
\( p, q \colon R \). For any associative ring
\( R \) let
\[
	R_2
	=
	R / \langle 2 p, p q - q p, p^2 - p \rangle
\]
be the universal Boolean ring generated by
\( R \).

Next consider the variety of associative rings with the additional axioms
\[
	2 p = 0,
	\quad
	p q = q p,
	\quad
	(p^2 - p) (q^2 - q) = 0.
\]
If
\( R \) is an algebra from this variety (even in an infinitary pretopos), then
\( R = I_\eps \rtimes R_2 \), where
\( I_\eps = \langle p^2 - p \rangle \leqt R \) is an ideal. Namely, the projection
\( R \to I_\eps \) is given by the polynomial
\( p^2 - p \), and the inclusion
\( R_2 \to R \) is given by the polynomial
\( p^2 \). Conversely, every semi-direct product of a Boolean ring and its (non-unital) module lies in the variety. From this description we see that indecomposable finitely generated rings from the variety are just
\( V \rtimes \mathbb F_2 \) for a finite vector space
\( V_{\mathbb F_2} \) and the cyclic additive group
\( \mathrm C_2 \) with zero multiplication, so all rings are subdirect products of copies of
\( \mathbb F_2 \),
\( \mathrm C_2 \), and
\( \mathbb F_2[\eps] / (\eps^2) \). A ring
\( R \) from the variety (even in an infinitary pretopos) is weakly unital if and only if
\( I_\eps R_2 = I_\eps \), in this case
\( \mathcal E = R_2 \) is always a valid weak unit by the same argument as for Boolean rings. For any associative ring
\( R \) let
\[
	R_{2 \eps}
	=
	R
	/
	\bigl\langle
		2 p,
		p q - q p,
		(p^2 - p) (q^2 - q)
	\bigr\rangle,
\]
and if
\( R \) is weakly unital, then
\( R_{2 \eps} \) is also weakly unital. If
\( R \) is an involution ring (as the first component of a
\( \mathsf B_\ell \)- or an
\( \mathsf F_4 \)-ring), then we redefine
\( R_{2 \eps} \) as
\[
	R
	/
	\bigl\langle
		2 p,
		p q - q p,
		(p^2 - p) (q^2 - q),
		p - p^*
	\bigr\rangle.
\]

We also need several varieties of
\( \mathsf B_3 \)-rings. The first one is given by the axioms
\[
	(p q) r = p (q r),
	\quad
	p q = q p,
	\quad
	\lambda^{\pm 1} p = p,
	\quad
	p^* = p,
	\quad
	3 p = 0,
	\quad
	p^3 = p,
	\quad
	u = \iota \cdot \langle \iota, u \rangle.
\]
These axioms have easy corollaries
\[
	\phi(p) = \dot 0,
	\quad
	u \cdot 3 = \dot 0,
	\quad
	u \cdot \lambda^{\pm 1} = u,
	\quad
	\rho(u) = \langle \iota, u \rangle^2,
	\quad
	\langle u, v \rangle
	=
	\langle \iota, u \rangle \langle \iota, v \rangle.
\]
Clearly, the operations
\(
	\Delta \to R,\, u \mapsto \langle \iota, u \rangle
\) and
\(
	R \to \Delta,\, p \mapsto \iota \cdot p
\) are inverses of each other, so we may identify
\( \Delta \) with
\( R \). Under this identification finitely generated algebras from the variety are finite powers of
\( \mathbb F_3 \) and all algebras are subdirect products of copies of
\( \mathbb F_3 \). Every
\( \mathsf B_3 \)-ring from this variety (even in an infinitary pretopos) is weakly unital with
\(
	\mathcal E
	=
	\Image\bigl( [\![ p^2 ]\!] \colon R \to R \bigr)
	=
	[\![ p^2 = p ]\!]
\) because
\( p = p (p^2 + q^2 - p^2 q^2) \) and
\( q = q (p^2 + q^2 - p^2 q^2) \) for
\( p, q \colon R \). For any
\( \mathsf B_3 \)-ring
\( (R, \Delta) \) let
\( (R_3, \Delta_3) \) be its universal factor-algebra from this variety.

Next there is the variety with the axioms
\[
	(p q) r = p (q r),
	\quad
	p q = q p,
	\quad
	2 p = 0,
	\quad
	\lambda^{\pm 1} p = p,
	\quad
	(p p^*)^2 = p p^*,
	\quad
	u \cdot 2 = \dot 0,
	\quad
	u \cdot \lambda^{\pm 1} = u,
	\quad
	\langle u, u \rangle = 0.
\]
An algebra from this variety (even in an infinitary pretopos) is weakly unital if and only if
\( R = R\, \Image\bigl( [\![ p p^* ]\!] \bigr) \) and
\( \Delta = \phi(R) \dotplus \Delta \cdot R \), under these conditions the object
\(
	\mathcal E
	=
	\Image\bigl( [\![ p p^* ]\!] \bigr)
	=
	[\![ p = p^2 = p^* ]\!]
\) is always a weak unit and a Boolean subring. For arbitrary
\( \mathsf B_3 \)-ring
\( (R, \Delta) \) let
\( (R_{2 *}, \Delta_{2 *}) \) be the universal factor-algebra from this variety, so if
\( (R, \Delta) \) is weakly unital, then
\( (R_{2 *}, \Delta_{2 *}) \) is also weakly unital. The remaining varieties of
\( \mathsf B_3 \)-rings are its subvarieties.

The third variety of
\( \mathsf B_3 \)-rings is determined by the additional axioms (to the previous variety)
\[
	p^* = p^2,
	\quad
	u = \iota \cdot \rho(u).
\]
In this variety
\[
	\langle u, v \rangle = 0,
	\quad
	p^4 = p,
\]
and there is an isomorphism between
\( \Delta \) and
\(
	[\![ p = p^* ]\!]
	=
	\Image\bigl( [\![ p p^* ]\!] \bigr)
	\subseteq
	R
\) given by
\( [\![ \iota \cdot p ]\!] \) and
\( \rho \). Finitely generated algebras from this variety are products of
\( (\mathbb F_4, \mathbb F_2) \) and
\( (\mathbb F_2, \mathbb F_2) \), these two algebras are the only subdirectly irreducible ones. It is easy to see that every algebra from this variety (even in an infinitary pretopos) is weakly unital with
\( \mathcal E = [\![ p = p^* ]\!] \), such
\( \mathcal E \) is a Boolean ring. For arbitrary
\( \mathsf B_3 \)-ring
\( (R, \Delta) \) let
\( (R_4, \Delta_4) \) be the universal factor-algebra from this variety. In applications we also need the ideal
\( I_4 \leq R_4 \) generated by
\( [\![ p - p^* ]\!] \), i.e.
\( R_4 / I_4 \) is the universal Boolean factor-ring. Note that
\( I_4 = \Image\bigl( [\![ p (q - q^*) ]\!] \bigr)\) because
\[
	p (q + q^*) + r (s + s^*)
	=
	\bigl( p (q + q^*) + r (s + s^*) \bigr)
	\bigl( q + q^* + s + s^* + (q + q^*) (s + s^*) \bigr).
\]

The fourth variety of
\( \mathsf B_3 \)-rings has the additional axioms
\[
	p = p^*,
	\quad
	p^2 = p,
	\quad
	\langle \iota, u \rangle \rho(u) = 0,
	\quad
	u \cdot \langle \iota, u \rangle
	\dotplus
	\iota \cdot \rho(u)
	=
	u.
\]
It follows that
\[
	\phi(p) = \dot 0,
	\quad
	u \cdot \langle \iota, v \rangle
	\dotplus
	v \cdot \langle \iota, u \rangle
	=
	\iota \cdot \langle u, v \rangle.
\]
Finitely generated algebras are products of
\( (\mathbb F_2, \mathbb F_2) \) and
\( (\mathbb F_2, \mathbb F_2 \times \mathbb F_2) \) with
\( \rho(x, y) = x y \), and these two algebras are the only subdirectly irreducible ones. Every algebra from this variety in an infinitary pretopos is weakly unital with
\( \mathcal E = R \). For arbitrary
\( \mathsf B_3 \)-ring
\( (R, \Delta) \) let
\( (R_{2 \mathrm b}, \Delta_{2 \mathrm b}) \) be the universal factor-algebra from this variety.

The last variety of
\( \mathsf B_3 \)-rings is given by the axioms
\begin{align*}
	p &= p^*,
	&
	(p^2 - p) (q^2 - q) &= 0,
	&
	u \cdot \langle \iota, v \rangle
	\dotplus
	v \cdot \langle \iota, u \rangle
	&=
	\iota \cdot \langle u, v \rangle,
	\\
	\phi(p) &= \dot 0,
	&
	\langle \iota, u \rangle \rho(u)
	&=
	\rho(u)^2 - \rho(u),
	&
	\langle u, v \rangle^2
	+
	\langle u, v \rangle^2 \langle \iota, u \rangle^2
	+
	\langle \iota, v \rangle^2 \rho(u)^2
	&=
	0.
\end{align*}
An algebra from this variety is weakly unital if and only if
\( R = R R \) and
\( \Delta = \Delta \cdot R \) (even in an infinitary pretopos), under these conditions we can take
\( \mathcal E = [\![ p = p^2 ]\!] \). For any
\( \mathsf B_3 \)-ring
\( (R, \Delta) \) let
\( (R_{2 \eps \delta}, \Delta_{2 \eps \delta}) \) be the universal factor-algebra from this variety. In applications we also need the ideal
\(
	I_{2 \eps \delta}
	=
	\langle p^2 - p \rangle
	\leqt
	R_{2 \eps \delta}
\).

Finally, we need the variety of
\( \mathsf F_4 \)-rings with the axioms
\begin{align*}
	(p q) r &= p (q r),
	&
	p q &= q p,
	&
	2 p &= 0,
	&
	\lambda p &= p,
	&
	p^* &= p^2,
	\\
	(u v) w &= u (v w),
	&
	u v &= v u,
	&
	2 u &= 0,
	&
	\lambda u &= u,
	&
	u^* &= u^2
\end{align*}
for
\( p, q, r \colon R \) and
\( u, v, w \colon S \). All algebras from this variety are weakly unital with
\( \mathcal E = \rho(S) \), this is a Boolean subring (even in infinitary pretopoi). Finitely generated algebras are products of
\( (\mathbb F_2, \mathbb F_2) \),
\( (\mathbb F_2, \mathbb F_4) \) (with
\( \phi(p) = 0 \),
\( \rho(p) = p \),
\( \phi(u) = u + u^* \),
\( \rho(u) = u u^* \)), and
\( (\mathbb F_4, \mathbb F_2) \) (with
\( \phi(p) = p + p^* \),
\( \rho(p) = p p^* \),
\( \phi(u) = 0 \),
\( \rho(u) = u \)), and these algebras are the only subdirecly irreducible ones. The universal factor-ring from this variety of an arbitrary
\( \mathsf F_4 \)-ring
\( (R, S) \) is denoted by
\( (R_{44}, S_{44}) \). In applications we also need the ideal
\(
	I_4
	=
	\Image\bigl( [\![ p (q - q^*) ]\!] \bigr)
	\leqt
	R_{44}
\).

\section{Upper bounds on Schur multipliers}

In this section we fix an infinitary pretopos
\( \mathbf C \), an indecomposable spherical root system
\( \Phi \) of rank
\( \geq 3 \) not of type
\( \mathsf H_\ell \), and a weakly unital
\( \Phi \)-ring object
\( A \) in
\( \mathbf C \). We are going to find generators of the Schur multiplier
\( \schur(\stlin(\Phi, A)) \) and at least some relations between them. The idea is to write down all instances of (\ref{XP}) and (\ref{HW}) applied to root subgroups (such that the convex hull of roots does not contain
\( 0 \)), and then derive all consequences.

\subsection{Simply laced cases}

Let us begin with the case
\( \Phi = \mathsf A_3 \). Take a weakly unital associative ring object
\( R \) in
\( \mathbf C \). Below we assume that all index variable
\( i, j, k, \ldots \) are distinct.

\begin{lemma}
	\label{a3-1}
	\(
		\bigl\langle x_{ij}(p), x_{ij}(q) \bigr\rangle
		=
		1
	\),
	\(
		\bigl\langle x_{ij}(p), x_{ik}(q) \bigr\rangle
		=
		\bigl\langle x_{ik}(p), x_{jk}(q) \bigr\rangle
		=
		1
	\).
\end{lemma}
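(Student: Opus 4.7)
The main lever is the following specialization of the Hall--Witt identity (\ref{HW}). If elements $z, u, v$ of $G$ satisfy $[z, u] = [z, v] = 1$, then substituting $x \mapsto u$ and $y \mapsto v$ in (\ref{HW}) collapses the first two factors to $1$ (their inner commutators become trivial, so the pairings degenerate to $\langle -, 1 \rangle = 1$) and leaves $\up{u}{\langle z, [u^{-1}, v] \rangle} = 1$. Since the action of $G$ on $\widetilde G$ is by automorphisms, this forces $\langle z, [u^{-1}, v] \rangle = 1$. It therefore suffices, for each of the three identities in the lemma, to express the second argument of the pairing as a commutator $[u^{-1}, v]$ whose two factors both commute with the first argument.

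To produce such commutator presentations, I would use weak unitality: after passing to the cover provided by the epimorphism $[\![ p\eps ]\!] \colon R \times \mathcal E \to R$, the element $q$ decomposes as $q = q'\eps$ with $q' \colon R$ and $\eps \colon \mathcal E$. The rank-$3$ hypothesis (which gives $\mathsf A_3$ access to four indices) lets me pick an auxiliary index $l$ disjoint from $\{i, j\}$ for the first identity and from $\{i, j, k\}$ for the other two. The $\mathsf A_\ell$-commutator formula then furnishes $x_{ij}(q) = [x_{il}(q'), x_{lj}(\eps)]$, $x_{ik}(q) = [x_{il}(q'), x_{lk}(\eps)]$, and $x_{jk}(q) = [x_{jl}(q'), x_{lk}(\eps)]$ respectively. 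A direct inspection of roots confirms that in each case both commutator factors commute with the respective first argument $x_{ij}(p)$, $x_{ij}(p)$, $x_{ik}(p)$ via the vanishing $\mathsf A_\ell$-commutator relation: all the relevant root sums take the form $2\e_a - \e_b - \e_c$ or $\e_a + \e_b - \e_c - \e_d$ with pairwise distinct indices, and neither shape is a root of $\mathsf A_3$. The Hall--Witt specialization then yields the three identities.

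The only delicate point is geometric-logic bookkeeping: the factorization $q = q'\eps$ holds only after pulling back along the epimorphism from weak unitality, and the identities $\langle -, - \rangle = 1$ must be interpreted as subobject equations, which are reflected by that epimorphism. Once that set-up is written out in line with the conventions of Section~2, the rest of the argument is routine manipulation of the defining relations of $\stlin(\mathsf A_3, R)$. I do not foresee a substantive obstacle beyond this bookkeeping.
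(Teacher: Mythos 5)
Your proof is correct and is essentially the paper's argument: the paper applies (HW) directly to the triples $(x_{ij}(p), x_{ik}(-q), x_{kj}(r))$ and $(x_{ij}(p), x_{il}(-q), x_{lk}(r))$ and then invokes weak unitality, which is exactly your degenerate-Hall--Witt lemma plus the commutator presentation $x_{ab}(q'\eps) = [x_{ac}(q'), x_{cb}(\eps)]$ written out the other way around. The only cosmetic difference is that you isolate the "two factors collapse" observation as a reusable lemma and decompose $q = q'\eps$ up front, whereas the paper keeps the product $qr$ and defers the weak-unitality step to the end; the geometric-logic bookkeeping you flag is standard and works just as you anticipate.
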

\begin{proof}
	The identity (\ref{HW}) applied to
	\( x_{ij}(p) \),
	\( x_{ik}(-q) \),
	\( x_{kj}(r) \) implies
	\(
		\bigl\langle
			x_{ij}(p),
			x_{ij}(q r)
		\bigr\rangle
		=
		1
	\), so
	\(
		\bigl\langle x_{ij}(p), x_{ij}(q) \bigr\rangle
		=
		1
	\) by weak unitality. From the same identity for
	\( x_{ij}(p) \),
	\( x_{il}(-q) \),
	\( x_{lk}(r) \) and weak unitality we get
	\(
		\bigl\langle x_{ij}(p), x_{ik}(q) \bigr\rangle
		=
		1
	\), and similarly (or just applying the outer automorphism of
	\( \Phi \)) for
	\(
		\bigl\langle x_{ik}(p), x_{jk}(q) \bigr\rangle
		=
		1
	\).
\end{proof}

The next lemma (together with weak unitality) actually says that every
\( c_{ij \mid kl} \) factors through the initial commutative
\( \mathbb F_2 \)-algebra
\( R / \langle 2 p, p q - q p \rangle \) with a homomorphism from
\( R \).

\begin{lemma}
	\label{a3-2}
	For all permutations
	\( i j k l \in \mathrm S_4 \) there exists a unique homomorphism
	\(
		c_{ij \mid kl}
		\colon
		R
		\to
		\schur(\stlin(\mathsf A_3, R))
	\) such that
	\begin{align*}
		\bigl\langle x_{ij}(p), x_{kl}(q) \bigr\rangle
		&=
		c_{ij \mid kl}(p q),
		&
		c_{ij \mid kl}(p a b q)
		&=
		c_{ij \mid kl}(p b a q),
		\\
		c_{ij \mid kl}(2 p) &= 1,
		&
		c_{ij \mid kl}(p)
		=
		c_{il \mid kj}(p)
		&=
		c_{kl \mid ij}(p)
		=
		c_{kj \mid il}(p).
	\end{align*}
\end{lemma}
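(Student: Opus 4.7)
I would set $\psi_{ij \mid kl}(p, q) := \langle x_{ij}(p), x_{kl}(q) \rangle$. Since $\{ i, j \} \cap \{ k, l \} = \varnothing$ the commutator $[x_{ij}(p), x_{kl}(q)] = 1$, so $\psi_{ij \mid kl}(p, q) \in \schur(\stlin(\mathsf A_3, R))$ and is biadditive in $(p, q)$ by (\ref{XP}) together with centrality. The plan is to realise $c_{ij \mid kl}$ as the unique additive lift of $\psi_{ij \mid kl}$ along multiplication $R \otimes_{\mathbb Z} R \to R$ (surjective by weak unitality); the main work is verifying well-definedness.

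The essential input is two instances of (\ref{HW}) applied to the triples $(x, y, z) = (x_{ij}(p), x_{kj}(-a), x_{jl}(b))$ and $(x_{ij}(p), x_{ki}(-a), x_{il}(b))$. In each case all three outer commutators $[x, [y^{-1}, z]]$, $[y, [z^{-1}, x]]$, $[z, [x^{-1}, y]]$ are trivial (either because the relevant index pairs are disjoint or because one of the inner commutators is already $1$), so the three brackets appearing in (\ref{HW}) all lie in $\schur$ and the conjugations act trivially. This gives
\[
	\psi_{ij \mid kl}(p, ab) = \psi_{kj \mid il}(a, pb),
	\qquad
	\psi_{ij \mid kl}(p, ab) = \psi_{il \mid kj}(b, ap).
\]
Applying the second identity to $\psi_{kj \mid il}(a, pb)$ (reading it off with indices $(k, j, i, l)$ in place of $(i, j, k, l)$) and then using antisymmetry $\langle x, y \rangle = \langle y, x \rangle^{-1}$ produces the ``movement'' relation $\psi_{ij \mid kl}(p, ab) \, \psi_{ij \mid kl}(pa, b) = 1$. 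Expanding $\psi_{ij \mid kl}(p, abc)$ via the two bracketings $a(bc)$ and $(ab)c$ then forces $\psi_{ij \mid kl}(pab, c)^2 = 1$; since iterated weak unitality writes every element of $R$ as a triple product, the image of $\psi_{ij \mid kl}$ is $2$-torsion and movement degenerates to true associativity $\psi_{ij \mid kl}(p, ab) = \psi_{ij \mid kl}(pa, b)$.

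Well-definedness reduces to: $\sum p_\alpha q_\alpha = 0$ in $R$ implies $\sum \psi_{ij \mid kl}(p_\alpha, q_\alpha) = 1$. Writing $q_\alpha = q'_\alpha \eps$ with a common $\eps \in \mathcal{E}$ (obtained by iterating the common-divisor axiom), associativity lets me rewrite the sum as $\psi_{ij \mid kl}(s, \eps)$ for $s = \sum p_\alpha q'_\alpha$, and the hypothesis becomes $s \eps = 0$. The annihilator clause of lemma~\ref{weak-unit} now factors $\eps = \eps' \eps''$ with $s \eps' = 0$, so $\psi_{ij \mid kl}(s, \eps) = \psi_{ij \mid kl}(s \eps', \eps'') = \psi_{ij \mid kl}(0, \eps'') = 1$. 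The $2$-torsion $c_{ij \mid kl}(2 p) = 1$ is then immediate, and middle-commutativity is obtained by applying the first HW identity to $\psi_{ij \mid kl}(p, abq)$ with the two bracketings $a(bq)$ and $(ab)q$; the resulting $c_{kj \mid il}(apbq) = c_{kj \mid il}(abpq)$ transfers to $c_{ij \mid kl}(p a b q) = c_{ij \mid kl}(p b a q)$ by the same derivation run with permuted indices.

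Finally, middle-commutativity together with centrality of $\mathcal{E}$ implies that every $c_{ij \mid kl}$ kills the two-sided ideal $R [R, R] R$: indeed $R = \mathcal{E} R \mathcal{E}$ by weak unitality, so any $[a, p] b$ is of the form $\eps \, [a_0, p_0] \, \eta b \in R [R, R] R$. The first HW identity then gives $c_{ij \mid kl}(pab) = c_{kj \mid il}(apb) = c_{kj \mid il}(pab)$, since $apb - pab = [a, p] b$; the second similarly gives $c_{ij \mid kl} = c_{il \mid kj}$; and antisymmetry combined with movement and the vanishing on $R [R, R] R$ yields $c_{ij \mid kl} = c_{kl \mid ij}$. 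Uniqueness of $c_{ij \mid kl}$ is immediate from surjectivity of multiplication. The main obstacle I foresee is the well-definedness step, which genuinely needs both halves of lemma~\ref{weak-unit} rather than only the basic weak-unit axioms.
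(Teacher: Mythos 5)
Your proof is correct, and while it uses the same two instances of (\ref{HW}) as the paper (after relabelling the triples), the subsequent organization is genuinely different. The paper iterates its first (\ref{HW}) identity to obtain a twisted relation $c_{ij\mid kl}(a, pbc) = c_{ij\mid kl}(ba, pc)$, uses it together with weak unitality to produce the homomorphism $c_{ij \mid kl}$ rather tersely, and only at the end extracts $2$-torsion from antisymmetry combined with the second (\ref{HW}) instance. You instead combine the two (\ref{HW}) identities with antisymmetry to get the \emph{movement} relation $\psi(p, ab)\,\psi(pa, b) = 1$, obtain $2$-torsion cleanly from movement plus associativity of $R$ (re-bracketing $abc$ and using that every element of $R$ is a triple product by iterated weak unitality), upgrade movement to true associativity $\psi(p, ab) = \psi(pa, b)$, and then build the homomorphism as the lift of $\psi$ through $R \otimes_{\mathbb Z} R \to R$ --- which makes the role of the annihilator clause of lemma~\ref{weak-unit} completely explicit, whereas the paper's construction glosses over this step. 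Your middle-commutativity and index-symmetry arguments then mirror the paper's but are downstream of $2$-torsion rather than upstream of it. The net gain of your route is conceptual transparency (especially in the well-definedness step); the paper's route avoids the tensor-product universal property and is perhaps closer in spirit to the more elaborate $\mathsf B_3$ computations that follow.
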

\begin{proof}
	Let
	\(
		c_{ij \mid kl}(p, q)
		=
		\bigl\langle x_{ij}(p), x_{kl}(q) \bigr\rangle
	\), this is a biadditive morphism
	\(
		R \times R
		\to
		\schur(\stlin(\mathsf A_3, R))
	\) by (\ref{XP}) satisfying
	\( c_{ij \mid kl}(p, q) = c_{kl \mid ij}(-q, p) \). The identity (\ref{HW}) applied to
	\( x_{ki}(p) \),
	\( x_{il}(b) \),
	\( x_{ij}(a) \) means that
	\(
		c_{ij \mid kl}(a, p b)
		=
		c_{il \mid kj}(b, p a)
	\), so
	\[
		\tag{
			\( \diamondsuit \)
		}
		\label{a3-comm}
		c_{ij \mid kl}(a, p b c)
		=
		c_{il \mid kj}(c, p b a)
		=
		c_{ij \mid kl}(b a, p c).
	\]
	Taking
	\( a, p \colon \mathcal E \) and using weak unitality, we obtain a unique homomorphism
	\(
		c_{ij \mid kl}
		\colon
		R
		\to
		\schur(\stlin(\mathsf A_3, R))
	\) such that
	\(c_{ij \mid kl}(p, q) = c_{ij \mid kl}(p q) \). Namely, first take
	\( b \colon \mathcal E \) in (\ref{a3-comm}) and construct the morphism such that the defining identity holds for
	\( p \colon \mathcal E \), and then use (\ref{a3-comm}) with arbitrary
	\( b \) to conclude that the defining identity always holds. To sum up, there are homomorphisms
	\(
		c_{ij \mid kl}
		\colon
		R
		\to
		\schur(\stlin(\mathsf A_3, R))
	\) such that
	\( c_{ij \mid kl}(p q) = c_{kl \mid ij}(-q p) \),
	\(
		c_{ij \mid kl}(a p b)
		=
		c_{il \mid kj}(b p a)
	\), and
	\(
		c_{ij \mid kl}(a b q)
		=
		c_{ij \mid kl}(b a q)
	\) by (\ref{a3-comm}) and weak unitality. It follows that
	\[
		c_{ij \mid kl}(p a b q)
		=
		c_{ij \mid kl}(b p a q)
		=
		c_{ij \mid kl}(p b a q).
	\]

	It remains to prove that
	\( c_{ij \mid kl}(2 p) = 1 \). But we may also apply (\ref{HW}) to
	\( x_{jl}(p) \),
	\( x_{kj}(b) \),
	\( x_{ij}(a) \) and get
	\(
		c_{ij \mid kl}(a, b p)
		=
		c_{kj \mid il}(b, a p)
	\), so
	\[
		c_{ij \mid kl}(p)
		=
		c_{kl \mid ij}(-p)
		=
		c_{kj \mid il}(-p)
		=
		c_{ij \mid kl}(-p). \qedhere
	\]
\end{proof}

Now let
\[
	y_{ijk}(p, q)
	=
	\bigl\langle x_{ij}(p), x_{jk}(q) \bigr\rangle,
\]
so
\( \kappa(y_{ijk}(p, q)) = x_{ik}(p q) \).

\begin{lemma}
	\label{a3-3}
	The morphisms
	\( y_{ijk} \) are biadditive and satisfy the identity
	\[
		y_{ikl}(p q, r)
		=
		y_{ijl}(p, q r)\,
		c_{ik \mid jl}(p q^2 r).
	\]
\end{lemma}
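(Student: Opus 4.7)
The plan is to establish biadditivity first from the crossed pairing identities~(\ref{XP}) and then to extract the main identity from a single application of the Hall--Witt identity~(\ref{HW}) to the triple
\[
	x = x_{ij}(p), \qquad y = x_{jk}(q), \qquad z = x_{kl}(r).
\]
This triple is natural because the three pairwise commutators $[y^{-1}, z] = x_{jl}(-qr)$, $[z^{-1}, x] = 1$, and $[x^{-1}, y] = x_{ik}(-pq)$ produce exactly the symbols $y_{ijl}$ and $y_{ikl}$ appearing in the conclusion, while the middle Hall--Witt factor vanishes for free.

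For biadditivity in the first argument, (\ref{XP}) reduces the task to showing $\up{x_{ij}(p)}{y_{ijk}(p',q)} = y_{ijk}(p',q)$. Conjugating inside the bracket, $x_{ij}(p')$ is fixed and $x_{jk}(q)$ becomes $x_{ik}(pq)\, x_{jk}(q)$; a second application of (\ref{XP}) then splits the bracket as $\langle x_{ij}(p'), x_{ik}(pq) \rangle$, which is trivial by Lemma~\ref{a3-1}, times $\up{x_{ik}(pq)}{y_{ijk}(p',q)}$, which equals $y_{ijk}(p',q)$ because $x_{ik}(pq)$ commutes in $\stlin(\mathsf A_3, R)$ with both $x_{ij}(p')$ (matching first index) and $x_{jk}(q)$ (matching second index). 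Additivity in the second argument follows by the symmetric computation.

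For the main identity I would expand the two surviving factors of (\ref{HW}). The third factor reduces to $\langle x_{kl}(r), x_{ik}(-pq) \rangle$ because $x_{ij}(p)$ commutes with both of its arguments in $\stlin(\mathsf A_3, R)$, and antisymmetry combined with biadditivity identifies this with $y_{ikl}(pq, r)$. The first factor requires conjugating $x_{ij}(p)$ by $x_{jk}(q)$, which via $[x_{jk}(q), x_{ij}(p)] = x_{ik}(-pq)$ yields $x_{ik}(-pq)\, x_{ij}(p)$; one more application of (\ref{XP}) gives the product of $\up{x_{ik}(-pq)}{y_{ijl}(p,qr)^{-1}}$, which collapses to $y_{ijl}(p,qr)^{-1}$ because $x_{ik}(-pq)$ again commutes with both arguments, and $\langle x_{ik}(-pq), x_{jl}(-qr) \rangle = c_{ik \mid jl}(pq^2 r)$ by Lemma~\ref{a3-2}. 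Substituting these into (\ref{HW}) and using that $c_{ik \mid jl}$ kills $2$-torsion (so its values are self-inverse) yields the stated equation.

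The only real obstacle is bookkeeping: one must consistently distinguish those commutators that already vanish in $\stlin(\mathsf A_3, R)$, allowing conjugations to slide cleanly through $\langle -, = \rangle$, from those producing nontrivial central classes in the universal central extension, and every sign coming from antisymmetry of $\langle -, = \rangle$ or from the inversion in $[x_{jk}(q), x_{ij}(p)] = x_{ik}(-pq)$ must be tracked carefully. Once that is handled, the identity is essentially a corollary of Lemmas~\ref{a3-1} and~\ref{a3-2}.
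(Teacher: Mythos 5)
Your proof is correct and follows essentially the same route as the paper, which applies the Hall--Witt identity~(\ref{HW}) to the triple $x_{ij}(p)$, $x_{jk}(-q)$, $x_{kl}(r)$ together with Lemma~\ref{a3-2}, and uses~(\ref{XP}) for biadditivity. The only difference is your choice of $x_{jk}(q)$ rather than $x_{jk}(-q)$, a harmless cosmetic change that you correctly absorb using the $2$-torsion property of $c_{ik \mid jl}$.
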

\begin{proof}
	The first claim follows from (\ref{XP}), the second one --- from (\ref{HW}) applied to
	\( x_{ij}(p) \),
	\( x_{jk}(-q) \),
	\( x_{kl}(r) \) and lemma \ref{a3-2}.
\end{proof}

\begin{prop}
	\label{a3-schur}
	Let
	\( R \) be a weakly unital associative ring object in an infinitary pretopos
	\( \mathbf C \). Then the homomorphisms
	\( c_{ij \mid kl} = c \) coincide and factor through
	\( R_{2 \eps} \). The Schur multiplier
	\( \schur(\stlin(\mathsf A_3, R)) \) is the image of
	\( c \).
\end{prop}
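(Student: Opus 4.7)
The proposition has three assertions: permutation-independence of the $c_{ij \mid kl}$, factorization of the common map through $R_{2\eps}$, and the equality $\schur(\stlin(\mathsf A_3, R)) = \Image(c)$. I would address them in this order, drawing on lemmas \ref{a3-1}, \ref{a3-2}, \ref{a3-3}, and \ref{c-ext-univ}.

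For \textbf{permutation-independence}, lemma \ref{a3-2} identifies four of the $24$ orderings (namely those related by swapping the two pairs or by the internal swaps $i \leftrightarrow k$ and $j \leftrightarrow l$). A direct check shows that these four orderings span exactly two of the three perfect pairings of $\{1,2,3,4\}$, so only the identification $c_{ij \mid kl} = c_{ik \mid jl}$ (equating opposite pairings) remains. To obtain it I apply lemma \ref{a3-3} both in the stated form and with $j$ and $k$ interchanged, giving two expressions
\[
	c_{ik \mid jl}(p q^2 r)
	=
	y_{ikl}(pq, r)\, y_{ijl}(p, qr)^{-1},
	\qquad
	c_{ij \mid kl}(p q^2 r)
	=
	y_{ijl}(pq, r)\, y_{ikl}(p, qr)^{-1}.
\]
Specializing the second factor via weak unitality (substituting a weak-unit idempotent for $q$) lets me rewrite each $y_{ikl}(a,b)\,y_{ijl}(a,b)^{-1}$ as a value of $c_{ik \mid jl}$, and the resulting products cancel modulo $c(2\cdot) = 1$, forcing the two $c$'s to agree on the image of $(p,q,r) \mapsto p q^2 r$; using weak unitality again, this image exhausts $R$.

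For \textbf{factorization through $R_{2\eps}$}, lemma \ref{a3-2} together with weak unitality already supplies $c(2p) = 1$ and $c(pq) = c(qp)$, so only the Boolean-defect relation $c((p^2 - p)(q^2 - q)) = 1$ is new. I plan to produce it by comparing two different associativity paths in lemma \ref{a3-3}: computing the central correction between $y_{ikl}((pq)r, s)$ and $y_{ikl}(p(qr), s)$ in two ways produces two expressions that must agree, and the discrepancy, after quotienting by the two relations already secured, reduces to $c$ of the polynomial $(p^2 - p)(q^2 - q)$ multiplied by a product of weak-unit elements, which is handled by another invocation of weak unitality.

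The main obstacle is \textbf{$\Image(c) = \schur$}. The inclusion $\Image(c) \subseteq \schur$ is immediate from the definitions. For the reverse inclusion, by lemma \ref{c-ext-univ} the universal central extension $\widetilde{\stlin}$ is generated by the crossed pairings $\langle x, y \rangle$; via (\ref{XP}) these reduce to pairings $\langle x_{ij}(p), x_{kl}(q) \rangle$, which by lemmas \ref{a3-1} and \ref{a3-2} are trivial, equal to $y_{ijk}(p, q)$, or equal to $c_{ij \mid kl}(pq)$. Let $Z \leq \schur$ be the central subgroup generated by all $c_{ij \mid kl}(R)$. I would define, in $\widetilde{\stlin}/Z$, canonical lifts $\widetilde{x}_{ij}(p)$ of the Steinberg generators using the $y$-elements together with an auxiliary weak-unit element, and check independence of the auxiliary data modulo $Z$; the Chevalley commutator relations are then straightforward for the triangular cases (using lemma \ref{a3-3}) and for the disjoint cases (they hold in $\widetilde{\stlin}/Z$ precisely because the defect is the generator of $Z$). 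This exhibits $\stlin(\mathsf A_3, R)$ as a quotient of $\widetilde{\stlin}/Z$; comparing with the UCE map $\widetilde{\stlin}/Z \to \stlin(\mathsf A_3, R)$ and applying lemma \ref{c-ext-perf} forces this map to be an isomorphism, giving $Z = \schur$. The hardest part of this is verifying the Steinberg relations for the canonical lifts modulo $Z$ without any spurious central corrections, which is where lemma \ref{a3-3} is used most heavily.
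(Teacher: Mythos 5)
Your final step (showing the image of $c$ is the full Schur multiplier) matches the paper's argument: lift the Steinberg generators to $\widetilde{\stlin}(\mathsf A_3, R)/c(R)$, check the relations, and conclude with lemma \ref{c-ext-perf}. The gap is in your first step. You pair lemma \ref{a3-3} with its $j \leftrightarrow k$ swap and assert that, after substituting a weak unit for $q$, ``the resulting products cancel modulo $c(2\cdot) = 1$.'' That would be fine if $R$ were unital: setting $q = 1$ makes $y_{ikl}(p\cdot 1, r) = y_{ikl}(p, 1\cdot r)$, and plugging one identity into the other gives $c_{ik \mid jl}(pr)\, c_{ij \mid kl}(pr) = 1$. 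But a weak unit $\eps$ is neither an identity nor idempotent in general, so $y_{ikl}(p\eps, r)$ and $y_{ikl}(p, \eps r)$ are genuinely different elements; the four $y$-factors in your product never line up to cancel, and a further application of lemma \ref{a3-3} only returns the tautology $c_{ik \mid jl}\, c_{ij \mid kl} = c_{ik \mid jl}\, c_{ij \mid kl}$. The relation $c(2p) = 1$ alone does not yield $c_{ik \mid jl} = c_{ij \mid kl}$.

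In fact permutation-independence and the Boolean-defect relation cannot be separated, and the paper extracts both from a single consequence of lemma \ref{a3-3}, in the opposite order to what you propose. Iterating the lemma across a product of five variables gives
\[
c_{ik \mid jl}(p q r^2 s)
=
c_{ij \mid kl}\bigl( p q r s (q r s + q + s) \bigr),
\]
whose left side depends only on $qs$. Exploiting that dependence together with weak unitality first produces $c_{ij \mid kl}\bigl( p (u^2 - u)(v^2 - v) \bigr) = 1$, and only with this defect relation in hand can the double squares on both sides of the displayed identity be removed to conclude $c_{ik \mid jl} = c_{ij \mid kl}$. Your plan to obtain the defect relation afterward, citing permutation-independence among ``the two relations already secured,'' is therefore circular; the iterated-associativity computation you sketch in your second paragraph is the right engine, but it must be run before, not after, the permutation identification, and with one extra factor to leave room for the weak-unitality absorptions.
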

\begin{proof}
	We use the identities from lemma \ref{a3-2} without explicit references. By lemma \ref{a3-3} we have
	\begin{align*}
		y_{ikl}(p, q r s t)\,
		c_{ij \mid kl}(p q^2 r^2 s^2 t)
		&=
		y_{ijl}(p q r s, t)
		=
		y_{ikl}(p q r, s t)\,
		c_{ij \mid kl}(p q r s^2 t) \\
		&=
		y_{ijl}(p q, r s t)\,
		c_{ik \mid jl}(p q r^2 s t)\,
		c_{ij \mid kl}(p q r s^2 t) \\
		&=
		y_{ikl}(p, q r s t)\,
		c_{ij \mid kl}(p q^2 r s t)\,
		c_{ik \mid jl}(p q r^2 s t)\,
		c_{ij \mid kl}(p q r s^2 t)
	\end{align*}
	It follows (using weak unitality) that
	\[
		\tag{
			\( \spadesuit \)
		}
		\label{a3-tw-sym}
		c_{ik \mid jl}(p q r^2 s)\,
		=
		c_{ij \mid kl}(p q r s (q r s + q + s)).
	\]
	Since the left hand side depends only on
	\( q s \), we have
	\[
		c_{ij \mid kl}(p q r u s (q r u s + q u + s))
		=
		c_{ij \mid kl}(p q r u s (q r u s + q + u s)),
	\]
	and this together with weak unitality implies
	\[
		c_{ij \mid kl}\bigl(
			p (q^2 s - q s^2) (u^2 - u)
		\bigr)
		=
		1.
	\]
	Taking
	\( q = s v \) and
	\( s \colon \mathcal E \), we obtain
	\[
		\tag{
			\( \clubsuit \)
		}
		\label{a3-dbl-bool}
		c_{ij \mid kl}\bigl(
			p (u^2 - u) (v^2 - v)
		\bigr)
		=
		1
	\]
	by weak unitality. Now use (\ref{a3-dbl-bool}) to remove double squares in (\ref{a3-tw-sym}). After simplification using weak unitality, we get
	\[
		c_{ik \mid jl}(p) = c_{ij \mid kl}(p),
	\]
	so
	\( c = c_{ij \mid kl} \) is independent on the indices. It remains to prove that the Schur multiplier is the image of
	\( c \). Indeed, by lemma \ref{a3-3} we have
	\[
		y_{ikl}(p q r, s)
		=
		y_{ijl}(p q, r s)\, c(p q r^2 s)
		=
		y_{ikl}(p, q r s)\, c(p q r s (q + r)),
	\]
	so
	\[
		y_{ikl}(p q, r)
		=
		y_{ikl}(p, q r)\, c(p (q^2 - q) r)
	\]
	by weak unitality and the identities for
	\( c \). Again using weak unitality (first for
	\( p, q \colon \mathcal E \), then only for
	\( p \colon \mathcal E \)) we obtain a unique homomorphism
	\[
		y_{il}
		\colon
		R
		\to
		\widetilde \stlin(\mathsf A_3, R) / c(R)
	\]
	such that
	\( y_{ikl}(p, q) \bmod{c(R)} = y_{il}(p q) \). Again by weak unitality it follows that also
	\( y_{ijl}(p, q) \bmod{c(R)} = y_{il}(p q) \). Since the homomorphisms
	\( y_{ij} \) evidently satisfy all Steinberg relations, the epimorphism
	\(
		\widetilde \stlin(\mathsf A_3, R) / c(R)
		\to
		\stlin(\mathsf A_3, R)
	\) has a section. But a perfect central extension with a section is trivial by lemma \ref{c-ext-perf}.
\end{proof}

Now let
\[
	\Phi
	=
	\mathsf D_4
	=
	\{ \pm \e_i \pm \e_j \mid 1 \leq i < j \leq 4 \}.
\]
We embed this root system into
\[
	\textstyle \mathsf F_4
	=
	\mathsf D_4
	\sqcup
	\{ \pm \e_i \mid 1 \leq i \leq 4 \}
	\sqcup
	\bigl\{
		\frac 1 2
		(
			\eps_1 \e_1
			+
			\eps_2 \e_2
			+
			\eps_3 \e_3
			+
			\eps_4 \e_4
		)
		\mid
		\eps_i \in \{ -1, 1 \}
	\bigr\}.
\]
Recall that there are
\( 3 \) orbits of short roots of
\( \mathsf F_4 \) under the action of
\( \mathrm W(\mathsf D_4) \), namely,
\[
	\textstyle
	O_0 = \{ \pm \e_i \},
	\quad
	O_{+}
	=
	\bigl\{
		\frac 1 2 \sum_i \eps_i \e_i
		\mid
		\prod_i \eps_i = 1
	\bigr\},
	\quad
	O_{-}
	=
	\bigl\{
		\frac 1 2 \sum_i \eps_i \e_i
		\mid
		\prod_i \eps_i = -1
	\bigr\},
\]
and the outer automorphism group of
\( \mathsf D_4 \) may be identified with the group of permutations of these orbits. The whole automorphism group of
\( \mathsf D_4 \) is just
\( \mathrm W(\mathsf F_4) \).

Take a weakly unital commutative ring object
\( K \) in
\( \mathbf C \). For all orthogonal roots
\( \alpha, \beta \in \mathsf D_4 \) let
\[
	c_{\alpha \beta}(p, q)
	=
	\bigl\langle x_\alpha(p), x_\beta(q) \bigr\rangle.
\]
By proposition \ref{a3-schur} (applied to various root subsystems of type
\( \mathsf A_3 \)) this expression depends only on
\( p q \bmod{2 K} \), in particular, it is independent of the choice of the root homomorphisms in the group scheme
\( \mathbb{PSO}_8 \) over
\( \mathbb Z \). The independence of indices from the same proposition implies that
\[
	c_{\alpha \beta}(p, q)
	=
	c_{(\alpha + \beta) / 2}(p q)
\]
for unique homomorphisms
\[
	c_\gamma
	\colon
	K_{2 \eps} \to \schur(\stlin(\mathsf D_4, K))
\]
for
\( \gamma \in \mathsf F_4 \setminus \mathsf D_4 \) because any two decompositions of
\( \gamma \) lie in a common root subsystem of type
\( \mathsf A_3 \). Moreover, these
\( c_\gamma \) depend only on the orbit of
\( \gamma \) under the action of
\( \mathrm W(\mathsf D_4) \) because any two roots from the same orbit have decompositions in a common root subsystem of type
\( \mathsf A_3 \).

\begin{prop}
	\label{d4-schur}
	Let
	\( K \) be a weakly unital commutative ring object in an infinitary pretopos
	\( \mathbf C \). Then the Schur multiplier
	\( \schur(\stlin(\mathsf D_4, K)) \) is the product of the images of
	\[
		c_0, c_{+}, c_{-}
		\colon
		K_{2 \eps} \to \schur(\stlin(\mathsf D_4, K))
	\]
	constructed above. These homomorphisms also satisfy the relations
	\[
		c_0(p) = c_{+}(p) = c_{-}(p),
		\quad
		c_0(q)\, c_{+}(q)\, c_{-}(q) = 1
	\]
	for
	\( p \colon I_\eps \) and
	\( q \colon K_2 \), where
	\( K_{2 \eps} = I_\eps \rtimes K_2 \) is the canonical decomposition.
\end{prop}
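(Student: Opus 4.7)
The plan is to adapt the strategy of proposition \ref{a3-schur} to the $\mathsf{D}_4$-setting by exploiting the three classes of $\mathsf{A}_3$-subsystems inside $\mathsf{D}_4$ (corresponding to the three leaves of the Dynkin diagram). Concretely, removing the leaves $\alpha_1$, $\alpha_3$, $\alpha_4$ of the standard $\mathsf{D}_4$ base yields three $\mathsf{A}_3$-subsystems whose midpoints of orthogonal-pair commutators lie, respectively, in the three $\mathrm W(\mathsf{D}_4)$-orbits $O_0$, $O_+$, $O_-$. Since every orthogonal pair in $\mathsf{D}_4$ sits inside at least one such $\mathsf{A}_3$-subsystem, the discussion preceding the statement already shows that $c_0, c_+, c_-$ capture every pairing $\langle x_\alpha(p), x_\beta(q)\rangle$ with $\alpha\perp\beta$.

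To handle the remaining generators of $\schur(\stlin(\mathsf{D}_4, K))$, namely pairings $\langle x_\alpha(p), x_\beta(q)\rangle$ with $\alpha+\beta\in\mathsf{D}_4$, I would copy the last step of proposition \ref{a3-schur} verbatim inside each $\mathsf{A}_3$-subsystem: this produces, modulo the images of $c_0, c_+, c_-$, well-defined additive homomorphisms $y_{\alpha+\beta}\colon K\to \widetilde{\stlin}(\mathsf{D}_4, K)/\langle c_0, c_+, c_-\rangle$ such that $\langle x_\alpha(p), x_\beta(q)\rangle \equiv y_{\alpha+\beta}(pq)$. A routine check (using that the Steinberg relations for $\mathsf{D}_4$ reduce, for any prescribed pair of indices, to the corresponding $\mathsf{A}_3$-relations) verifies that the $y_\gamma$ satisfy all Steinberg relations and hence yield a section $\stlin(\mathsf{D}_4, K)\to \widetilde{\stlin}(\mathsf{D}_4, K)/\langle c_0, c_+, c_-\rangle$. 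Lemma \ref{c-ext-perf} then forces the quotient central extension to be trivial, establishing that $c_0, c_+, c_-$ generate the Schur multiplier.

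For the first relation, $c_0(p)=c_+(p)=c_-(p)$ for $p\colon I$, I would exploit that the ideal $I\leq K_{2\eps}$ is square-zero (the defining identity $(p^2-p)(q^2-q)=0$ kills $I^2$). Writing $p=a\eps$ with $\eps\colon\mathcal E$ via weak unitality and using that $a\cdot I=0$, the ``difference'' between any two $c_\bullet(p)$ can be expressed as a $c_{\bullet'}$-term evaluated on an element of $I\cdot I=0$. A compatible way to do this is via an auxiliary $\mathsf{A}_3$-embedding whose midpoint is in $O_\bullet$ for one choice and in $O_{\bullet'}$ for another (obtained by a Weyl conjugation between orthogonal pairs): the two expressions of $p$ as a product must then agree modulo trivial terms, forcing $c_\bullet(p)=c_{\bullet'}(p)$.

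The hardest step is the triangle relation $c_0(q)c_+(q)c_-(q)=1$ on the Boolean part $q\colon K_2$. The guiding configuration is the pairwise-orthogonal triple $\alpha=e_1+e_2$, $\beta=e_1-e_2$, $\gamma=e_3+e_4$ in $\mathsf{D}_4$, whose pairwise midpoints $(\alpha+\beta)/2=e_1$, $(\alpha+\gamma)/2=\tfrac12(e_1+e_2+e_3+e_4)$, $(\beta+\gamma)/2=\tfrac12(e_1-e_2+e_3+e_4)$ lie in $O_0$, $O_+$, $O_-$ respectively. A naive application of (\ref{HW}) to $x_\alpha(q), x_\beta(q), x_\gamma(q)$ yields only trivial inner commutators since $\alpha,\beta,\gamma$ are pairwise orthogonal, so one must instead apply Hall--Witt to a cleverly chosen triple in which each $g_i$ is a product of two root elements spanning a virtual short root, converting the three inner commutators into non-trivial elements whose images are precisely $c_0(q)$, $c_+(q)$, $c_-(q)$ up to terms killed by the idempotence $q^2=q$. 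Managing these ``virtual short root'' elements and verifying that the resulting Hall--Witt identity produces exactly the desired product is where the bulk of the computation lies, and this is the main obstacle in the proof.
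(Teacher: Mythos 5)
Your proposal correctly identifies the overall architecture (pairings $c_0, c_+, c_-$ arise from the three $\mathrm W(\mathsf D_4)$-orbits of $\mathsf A_3$-subsystems, and the generation step reduces to patching sections coming from proposition~\ref{a3-schur} across $\mathsf A_3$-subsystems), and your generation argument essentially matches the paper's. However, the two relations are where your plan has genuine gaps, and the central computational device of the paper's proof is missing from your sketch.

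The paper does not prove either relation by a square-zero argument on $I$, a Weyl-conjugation comparison, or a Hall--Witt identity applied to ``virtual short root'' elements. Instead it chains the single identity of lemma~\ref{a3-3}, namely $y_{\alpha+\beta,\gamma}(pq,r) = y_{\alpha,\beta+\gamma}(p,qr)^{\pm 1}\, c_{\alpha+\beta,\beta+\gamma}(pq^2r)$, four times through $\mathsf A_3$-subsystems whose orthogonal-pair midpoints successively visit $O_0$, $O_+$, $O_-$. This produces the key symmetrized identity (\ref{d4-schur-sym})
\[
	y_{\e_1+\e_2,-\e_2-\e_3}(qrs,p)
	= y_{\e_1+\e_2,-\e_2-\e_3}(p,qrs)\,
	c_0(pq^2r^2s)\, c_+(pqr^2s)\, c_-(pq^2rs),
\]
with the sign pinned down by evaluating in the split Chevalley group over $\mathbb Z$. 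Both relations then drop out of this one identity: noting that all terms except $c_\pm$ depend only on $qr$ gives, after substituting $q=r$ and applying weak unitality plus triality, the first relation $c_0(p(q^2-q))=c_+(p(q^2-q))=c_-(p(q^2-q))$; using that to collapse (\ref{d4-schur-sym}) to $y(q,p)=y(p,q)\,c_0(pq)\,c_+(pq)\,c_-(pq)$ and setting $p=q$ gives the triangle relation $c_0(p^2)c_+(p^2)c_-(p^2)=1$.

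Concretely, your proposal for the first relation does not give a mechanism: a Weyl conjugation would identify $c_\bullet$ with $c_{\bullet'}$ on all of $K_{2\eps}$, which is false (it would force $c_0^3=1$, hence $c_0\equiv 1$), so whatever argument you intend must genuinely see the restriction to $I$, and ``agree modulo trivial terms'' does not yet supply that. For the second relation you correctly observe that Hall--Witt on the pairwise-orthogonal triple $\e_1+\e_2$, $\e_1-\e_2$, $\e_3+\e_4$ yields only trivial inner commutators, but the proposed fix via products of root elements ``spanning a virtual short root'' is unspecified and would not have clean pairing formulae; you yourself flag this as the main obstacle. The missing ingredient in both cases is the chained application of lemma~\ref{a3-3} producing (\ref{d4-schur-sym}); without it the relations are not derivable from your plan.
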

\begin{proof}
	Let
	\[
		y_{\alpha \beta}(p, q)
		=
		\bigl\langle
			x_\alpha(p),
			x_\beta(q)
		\bigr\rangle
	\]
	for all roots
	\( \alpha, \beta \in \mathsf D_4 \) at the angle
	\( 2 \pi / 3 \). By lemma \ref{a3-3} we have
	\[
		y_{\alpha + \beta, \gamma}(p q, r)
		=
		y_{\alpha, \beta + \gamma}(p, q r)
			^{\eps_{\alpha \beta \gamma}}\,
		c_{\alpha + \beta, \beta + \gamma}(p q^2 r)
	\]
	for all bases
	\( (\alpha, \beta, \gamma) \) of root subsystems of type
	\( \mathsf A_3 \) (with
	\( \alpha \perp \gamma \)), where
	\( \eps_{\alpha \beta \gamma} \in \{ -1, 1 \} \) depends only on the structure constants. It follows that
	\begin{align*}
		y_{\e_1 + \e_2, -\e_2 - \e_3}(q r s, p)^{-\eps}\,
		c_0(p q^2 r^2 s)
		&=
		y_{-\e_2 - \e_3, \e_1 + \e_2}(p, q r s)^\eps\,
		c_0(p q^2 r^2 s)
		=
		y_{\e_1 - \e_2, \e_2 - \e_3}(p q r, s) \\
		&=
		y_{\e_1 - \e_4, \e_4 - \e_3}(p q, r s)^\eta\,
		c_{+}(p q r^2 s) \\
		&=
		y_{\e_1 + \e_2, -\e_2 - \e_3}(p, q r s)^\tau\,
		c_{+}(p q^2 r s)\,
		c_{-}(p q r^2 s),
	\end{align*}
	where
	\( \eps, \eta, \tau \in \{ -1, 1 \} \) depend only on the choice of structure constants. Taking
	\( \mathbf C = \Set \),
	\( K = \mathbb Z \), and evaluating the images of both sides in the Chevalley group we see that
	\( \eps = -\tau \), i.e.
	\[
		\tag{
			\( \heartsuit \)
		}
		\label{d4-schur-sym}
		y_{\e_1 + \e_2, -\e_2 - \e_3}(q r s, p)
		=
		y_{\e_1 + \e_2, -\e_2 - \e_3}(p, q r s)\,
		c_0(p q^2 r^2 s)\,
		c_{+}(p q r^2 s)\,
		c_{-}(p q^2 r s).
	\]
	Note that most terms depend only on
	\( q r \), so
	\[
		c_{+}(p (q u) r^2 s)\, c_{-}(p (q u)^2 r s)
		=
		c_{+}(p q (u r)^2 s)\, c_{-}(p q^2 (u r) s).
	\]
	Substituting
	\( q = r \) and applying weak unitality and the triality symmetry we get the first relation
	\[
		c_0(p (q^2 - q))
		=
		c_{+}(p (q^2 - q))
		=
		c_{-}(p (q^2 - q)).
	\]
	
	Now the relation (\ref{d4-schur-sym}) can be written as
	\[
		y_{\e_1 + \e_2, -\e_2 - \e_3}(q, p)
		=
		y_{\e_1 + \e_2, -\e_2 - \e_3}(p, q)\,
		c_0(p q)\,
		c_{+}(p q)\,
		c_{-}(p q),
	\]
	and taking
	\( p = q \) we obtain the second relation
	\[
		c_0(p^2)\, c_{+}(p^2)\, c_{-}(p^2) = 1.
	\]
	
	It remains to show that these morphisms generate the Schur multiplier. Let
	\(
		C
		=
		\bigl\langle
			c_0(K),
			c_{+}(K),
			c_{-}(K)
		\bigr\rangle
	\). We have well-defined homomorphisms
	\(
		y_\alpha
		\colon
		K
		\to
		\widetilde \stlin(\mathsf D_4, K) / C
	\) such that
	\(
		y_{\beta, \alpha - \beta}(p, q)
		=
		y_\alpha(p q)
	\) by proposition \ref{a3-schur} applied to various root subsystems of type
	\( \mathsf A_3 \) because any two decompositions of
	\( \alpha \) into a sum of two roots lie in a common root subsystem of type
	\( \mathsf A_3 \). Since they satisfy all Steinberg relations, we conclude as in proposition \ref{a3-schur} using lemma \ref{c-ext-perf}.
\end{proof}

Finally, we deal with the remaining simply laced root systems, i.e.
\( \mathsf A_\ell \),
\( \mathsf D_\ell \), and
\( \mathsf E_\ell \).
\begin{prop}
	\label{ade-schur}
	Let
	\( \Phi \) be an irreducible spherical simply laced root system of rank
	\( \geq 4 \) excluding
	\( \mathsf D_4 \) and
	\( R \) be a weakly unital
	\(\Phi\)-ring in
	an infinitary pretopos
	\( \mathbf C \). Then the Steinberg group
	\( \stlin(\Phi, R) \) is centrally closed.
\end{prop}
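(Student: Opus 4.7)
The plan is to reduce $\schur(\stlin(\Phi, R))$ to morphisms already parametrized in the previous two propositions---namely the morphism $c$ of proposition~\ref{a3-schur} coming from $\mathsf A_3$ subsystems and, whenever $\Phi \supseteq \mathsf D_4$, the morphisms $c_0, c_+, c_-$ of proposition~\ref{d4-schur} coming from $\mathsf D_4$ subsystems---then to kill each of them by writing instances of (\ref{HW}) that use a coordinate unavailable inside $\mathsf A_3$ or $\mathsf D_4$, and finally to imitate the section constructions in those two proofs. The hypothesis on $\Phi$ guarantees an $\mathsf A_4$ subsystem in every case, and a $\mathsf D_5$ subsystem whenever $\Phi \neq \mathsf A_\ell$.

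First, inside an $\mathsf A_4 \subseteq \Phi$ with indices $\{1, \ldots, 5\}$ I would apply (\ref{HW}) to $x_{12}(p), x_{35}(q), x_{54}(r)$: two of the three inner commutators vanish by disjointness of indices while $[x_{35}(-q), x_{54}(r)] = x_{34}(-qr)$, and since the resulting $\langle x_{12}(p), x_{34}(-qr)\rangle = c(-pqr)$ is central, (\ref{HW}) collapses to $c(-pqr) = 1$. Iterated weak unitality (writing $r = r \eps' \eps''$ after replacing the domain by a suitable cover) then forces $c = 1$ on $R$ and hence on $R_{2 \eps}$.

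Next, if $\Phi \supseteq \mathsf D_5$ with coordinates $\e_1, \ldots, \e_5$, I would apply (\ref{HW}) to $x_{\e_1 + \e_2}(p), x_{\e_3 + \e_5}(a), x_{\e_4 - \e_5}(b)$: the only nontrivial inner commutator is $[x_{\e_3 + \e_5}(-a), x_{\e_4 - \e_5}(b)] = x_{\e_3 + \e_4}(\pm ab)$, so $c_+(\pm pab) = 1$ and hence $c_+ = 1$ by weak unitality. The symmetric choice $x_{\e_3 - \e_5}(a), x_{\e_5 - \e_4}(b)$ kills $c_-$. The two identities of proposition~\ref{d4-schur}, i.e.\ $c_0|_I = c_+|_I$ and $c_0 c_+ c_- = 1$ on $K_2$, then force $c_0 = 1$ on the whole of $K_{2 \eps} = I \rtimes K_2$.

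Finally, with every twist morphism trivial, I would set $y_\alpha(pq) = \langle x_\beta(p), x_{\alpha - \beta}(q)\rangle$ for any decomposition of $\alpha$ as a sum of two roots at angle $2\pi/3$; independence of the decomposition follows because any two such decompositions lie in a common $\mathsf A_3$ or $\mathsf D_4$ subsystem in which the relevant discrepancy is now trivial. Weak unitality upgrades these to homomorphisms $y_\alpha \colon R \to \widetilde \stlin(\Phi, R)$ exactly as in propositions~\ref{a3-schur} and~\ref{d4-schur}, and checking that the $y_\alpha$ satisfy every Steinberg relation yields a section of the universal central extension, which by lemma~\ref{c-ext-perf} forces $\schur(\stlin(\Phi, R)) = 1$. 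The hard part will be the combinatorial bookkeeping in this last step, verifying simultaneous well-definedness of the whole family of $y_\alpha$ and all Steinberg relations across the full root system; but this follows the same subsystem-by-subsystem verifications already completed in those two propositions.
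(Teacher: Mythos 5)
Your proposal is correct and follows essentially the same approach as the paper: generate the Schur multiplier by the orthogonal pairings, kill the $\mathsf A_3$-type generator $c$ and (when present) the $\mathsf D_4$-type generators $c_\pm$ via Hall--Witt instances that exploit a fifth coordinate, deduce $c_0 = 1$ from the two relations in proposition~\ref{d4-schur}, and finish by constructing a section as in the two preceding propositions. The only cosmetic difference is that the paper organizes the Hall--Witt step as a Dynkin-diagram case analysis on basic roots (with the $\mathsf D_\ell$-fork as the single exceptional case) rather than writing out the explicit $\mathsf A_4$ and $\mathsf D_5$ triples, but the triples you produce are precisely the ones that analysis yields.
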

\begin{proof}
	The Schur multiplier is generated by
	\(
		c_{\alpha \beta}(p, q)
		=
		\bigl\langle
			x_\alpha(p),
			x_\beta(q)
		\bigr\rangle
	\) for all
	\( \alpha \perp \beta \) by the same argument as in the proof of proposition \ref{d4-schur}. In order to show that
	\( c_{\alpha \beta}(p, q) = 1 \) it suffices to consider the case of basic roots
	\( \alpha \) and
	\( \beta \), i.e. corresponding to nodes of the Dynkin diagram not connected by an edge. If
	\( \beta \) has a neighbor
	\( \gamma \) in the Dynkin diagram not connected with
	\( \alpha \), then by (\ref{HW}) applied to
	\( x_\alpha(p) \),
	\( x_{\beta + \gamma}(-q) \),
	\( x_{-\gamma}(r) \) we get
	\[
		c_{\alpha \beta}(
			p,
			N_{\beta + \gamma, -\gamma} q r
		)
		=
		1
	\]
	so
	\( c_{\alpha \beta}(p, q) = 1 \) by weak unitality. The same argument works if
	\( \alpha \) has a neighbor not connected with
	\( \beta \). The only remaining case is where
	\( \alpha \) and
	\( \beta \) are leafs of the Dynkin diagram of type
	\( \mathsf D_\ell \) connected to the same node
	\( \delta \). Let
	\( \gamma \) be the third node connected to
	\( \delta \). We have
	\[
		c_{\alpha \beta}(p)
		=
		c_{\alpha \beta}(p^2 - p)\,
		c_{\alpha \beta}(p^2)
		=
		c_{\alpha \gamma}(p^2 - p)\,
		c_{\alpha \gamma}(p^2)\,
		c_{\beta \gamma}(p^2)
		=
		1
	\]
	by proposition \ref{d4-schur} and the already proved cases.
\end{proof}

\subsection{%
	The case of
	\( \mathsf B_3 \)%
}

Let
\( \Phi = \mathsf B_3 \) and
\( (R, \Delta) \) be a weakly unital
\( \mathsf B_3 \)-ring in
\( \mathbf C \). Below we assume that all indices have distinct absolute values from
\( 1 \) to
\( \ell \) unless stated otherwise.

The next lemma shows that if the root subgroups with the roots
\( \alpha \) and
\( \beta \) commute in the Steinberg group, then their preimages commute in the universal central extension unless
\( \alpha \perp \beta \) have distinct lengths. In particular, the preimage of the root subgroup
\( G_\alpha \) in the universal central extension is abelian if
\( \alpha \) is long (i.e.
\( \alpha = \e_i - \e_j \)) and two-step nilpotent if
\( \alpha \) is short (i.e.
\( \alpha = \e_i \)). The next lemma will be used implicitly in calculations.

\begin{lemma}
	\label{b3-1}
	We have
	\begin{align*}
		\bigl\langle x_{ij}(p), x_j(u) \bigr\rangle
		&=
		1,
		&
		\bigl\langle x_{ij}(p), x_{ik}(q) \bigr\rangle
		&=
		1,
		\\
		\bigl\langle x_{ij}(p), x_{ij}(q) \bigr\rangle
		&=
		1,
		&
		\bigl\langle x_{ik}(p), x_{jk}(q) \bigr\rangle
		&=
		1,
		\\
		\bigl\langle x_i(u), x_i(\phi(p)) \bigr\rangle
		&=
		1,
		&
		\bigl\langle
			x_i(\phi(p)),
			x_{jk}(q)
		\bigr\rangle
		&=
		1,
		\\
		\bigl\langle x_i(u), x_j(\phi(p)) \bigr\rangle
		&=
		1.
		&&
	\end{align*}
\end{lemma}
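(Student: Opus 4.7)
The plan is to prove each of the seven identities by applying the Hall--Witt identity~(\ref{HW}) to a carefully chosen triple $(x, y, z)$ of generators of $\stlin(\mathsf B_3, R, \Delta)$, so that two of the three factors $[y^{-1}, z]$, $[z^{-1}, x]$, $[x^{-1}, y]$ vanish by the index-disjointness conditions of the Steinberg commutator relations (``$\{-i, j\} \cap \{k, -l\} = \varnothing$'' and ``$i \notin \{j, -k\}$''), while the third rewrites via one of the non-trivial relations $[x_{ab}(p), x_{bc}(q)] = x_{ac}(pq)$ or $[x_{-a, b}(p), x_{ba}(q)] = x_a(\phi(\lambda_a pq))$ as the commutator we want to control. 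The target commutator being trivial in the Steinberg group, the corresponding crossed pairing is central, the conjugations in~(\ref{HW}) drop out, and weak unitality (lemma~\ref{weak-unit}) promotes the conclusion from products $ab$ or $\lambda_a ab$ to arbitrary elements of $R$ or $\Delta$.

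For the four identities that decouple cleanly --- those for $\langle x_{ij}(p), x_{ij}(q) \rangle$, $\langle x_{ij}(p), x_j(u) \rangle$, $\langle x_i(u), x_i(\phi(p)) \rangle$, and $\langle x_i(u), x_j(\phi(p)) \rangle$ --- a single auxiliary index $k$ of the third absolute value is enough. For instance, for $\langle x_{ij}(p), x_j(u) \rangle$ I take $(x, y, z) = (x_{ik}(p), x_{kj}(-q), x_j(u))$: the factors $[x_{kj}(q), x_j(u)]$ and $[x_j(-u), x_{ik}(p)]$ vanish since $j \notin \{k, -j\}$ and $j \notin \{i, -k\}$, whereas $[x_{ik}(-p), x_{kj}(-q)] = x_{ij}(pq)$; this yields $\langle x_j(u), x_{ij}(pq) \rangle = 1$, and after inversion and weak unitality the asserted identity. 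The two short-root identities are analogous, using the decomposition $x_i(\phi(\lambda_i ab)) = [x_{-i, k}(a), x_{ki}(b)]$ together with the triple $(x_i(u)$ or $x_j(u),\, x_{-i, k}(-a),\, x_{ki}(b))$.

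The remaining three identities --- for $\langle x_{ij}(p), x_{ik}(q) \rangle$, $\langle x_{ik}(p), x_{jk}(q) \rangle$, and $\langle x_i(\phi(p)), x_{jk}(q) \rangle$ --- are coupled because $\mathsf B_3$ only provides three absolute values, so the ``fourth index'' $l$ required by a Hall--Witt argument must be a negatively signed repeat such as $l = -j$. Then the factor $[x_{ij}(-p), x_{i, -j}(-q)]$ does not vanish but instead equals a short-root element $x_{-i}(\phi(\cdot))$ via the $[x_{-a, b}(p), x_{ba}(q)] = x_a(\phi(\lambda_a pq))$ relation, so the Hall--Witt derivation of any one of the three identities produces one of the other two as an unavoidable side term.

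The main obstacle is therefore to close this cyclic dependency. My plan is to run the three Hall--Witt derivations in tandem, yielding three equations among the three unknown crossed pairings, and to supply an extra independent relation from the short-short decomposition $x_{ik}(-\lambda_{-i}^* \langle u, v \rangle) = [x_{-i}(u), x_k(v)]$: applying~(\ref{HW}) to $(x_{ij}(p), x_{-i}(-u), x_k(v))$ makes both off-target Hall--Witt factors vanish by the short-long index conditions and forces $\langle x_{ij}(p), x_{ik}(-\lambda_{-i}^* \langle u, v \rangle) \rangle = 1$ on the subgroup of $R$ generated by the values of the hermitian form. Together with the three coupled equations and a final pass of weak unitality (rewriting $\phi(p)$ via the epimorphism $R \times \mathcal E \to R$ from lemma~\ref{weak-unit}), this pins down each of the three crossed pairings as trivial.
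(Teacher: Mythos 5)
Your treatment of the four ``decoupled'' identities (the left column of the statement) is essentially the paper's argument: one Hall--Witt application with a third absolute value, two of the three factors killed by index-disjointness, weak unitality to strip off the spare factor. Your specific triples differ slightly from the paper's but are equally valid, and you correctly observe that those four do not depend on one another.

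The gap is in the second half, and it is a real one. For the coupled identities $\langle x_{ij}(p), x_{ik}(q)\rangle$, $\langle x_{ik}(p), x_{jk}(q)\rangle$, $\langle x_i(\phi(p)), x_{jk}(q)\rangle$, your only ``extra'' input beyond the coupled Hall--Witt system is the short--short commutator relation $[x_{-i}(u), x_k(v)] = x_{ik}(-\lambda_{-i}^*\langle u, v\rangle)$, which yields $\langle x_{ij}(p), x_{ik}(q)\rangle = 1$ only for $q$ in the image of the form $\langle{-},{=}\rangle \colon \Delta\times\Delta\to R$. That image can be far from all of $R$ and in fact can be zero: for the split $\mathsf B_3$-ring $(R,\Delta)=(K,K)$ with $\langle u,v\rangle = -2uv$, taking $K=\mathbb F_2$ makes $\langle{-},{=}\rangle \equiv 0$, and your extra relation is vacuous, yet the lemma must still hold (the Schur multiplier is $\mathrm C_2$ but the generators of it come from $c_{i\mid jk}$, not from these pairings). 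Your three-equation system by itself cannot pin down the pairings without an input that ranges over all of $R$; weak unitality applied to $\phi$ does not produce one, since $\phi$ is additive, not multiplicative. You also never write down the three coupled equations, so the claim that ``this pins down each of the three crossed pairings as trivial'' is not demonstrated.

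The missing idea, which the paper uses in its last three triples, is to take the \emph{first} entry of the Hall--Witt triple to be a short-root element with argument $\iota\cdot r\lambda_k^*$ and the second to be a long-root element of the form $x_{k,-i}(\lambda_k q)$, so that the non-vanishing commutator $[x^{-1},y]$ is governed by the short--long relation $[x_i(u), x_{ij}(p)] = x_{-i,j}(\lambda_i^*\rho(u)p)\,x_j(\dotminus u\cdot(-p))$. With $u=\iota\cdot r\lambda_k^*$ one has $\rho(u)=\lambda_k r^*r\lambda_k^*$, and after the $\lambda$'s cancel, the long-root factor carries the argument $r^*rq$. Unlike the image of $\langle{-},{=}\rangle$, the map $(r,q)\mapsto r^*rq$ is an epimorphism from $\mathcal E\times R$ (indeed for $r\colon\mathcal E$ one has $r^*=r$ and the map is $q\mapsto qr^2$, which is epi by lemma~\ref{weak-unit}), so this single triple establishes $\langle x_{ij}(p), x_{ik}(q)\rangle = 1$ for all $q$. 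The short-root byproduct $x_{-i}(\dotminus\iota\cdot(-rq))$ is harmless because its pairing with $x_{ij}(p)$ is one of the already-proved left-column identities (row one of the statement). The same trick, reflected, handles $\langle x_{ik}(p), x_{jk}(q)\rangle$, and the last identity $\langle x_i(\phi(p)), x_{jk}(q)\rangle$ then follows from an ordinary long--long--long Hall--Witt triple whose side term is the now-known $\langle x_{ik}(p), x_{jk}(q)\rangle$. So the paper's dependency chain is acyclic; no simultaneous system of coupled equations is needed.
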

\begin{proof}
	We combine weak unitality and (\ref{HW}) applied to the triples
	\begin{align*}
		&
		x_{-k, j}(p), x_{i, -k}(q), x_j(u);
		&
		&
		x_k(\dotminus \iota \cdot r \lambda_k^*),
		x_{k, -i}(\lambda_k q),
		x_{ij}(p);
		\\
		&
		x_{ij}(p), x_{kj}(q), x_{ik}(r);
		&
		&
		x_{-j}(\dotminus \iota \cdot r \lambda_{-j}^*),
		x_{-j, k}(\lambda_{-j} q),
		x_{ik}(p);
		\\
		&
		x_{-i, j}(\lambda_i^* p), x_i(u), x_{ji}(q);
		&
		&
		x_{-i, j}(-\lambda_i^* p),
		x_{ji}(q),
		x_{jk}(r);
		\\
		&
		x_i(u), x_{kj}(p), x_{-j, k}(\lambda_j^* q).
		&&
	\end{align*}
	For the identities from the right column use the previous ones.
\end{proof}

\begin{lemma}
	\label{b3-2}
	For all indices
	\( i \),
	\( j \),
	\( k \) there is a unique homomorphism
	\(
		c_{i \mid jk}
		\colon
		\Delta / \phi(R)
		\to
		\schur(\stlin(\mathsf B_3, R, \Delta))
	\) such that
	\[
		\bigl\langle x_i(u), x_{jk}(p) \bigr\rangle
		=
		c_{i \mid jk}(u \cdot p).
	\]
	Moreover,
	\begin{align*}
		c_{i \mid jk}(u \cdot p q r)
		&=
		c_{i \mid jk}(u \cdot q p r),
		\\
		c_{i \mid jk}(u \cdot \lambda^{\pm 1})
		&=
		c_{i \mid jk}(u),
		\\
		c_{i \mid jk}(u \cdot p^*)
		&=
		c_{i \mid jk}(u \cdot p),
		\\
		c_{i \mid jk}(u)
		=
		c_{k \mid ji}(\dotminus u)
		&=
		c_{i \mid -k, -j}(\dotminus u)
		=
		c_{-j \mid -i, k}(\dotminus u),
		\\
		c_{i \mid jk}\bigl(
			u \cdot \langle v, w \rangle
		\bigr)\,
		c_{i \mid jk}\bigl(
			v \cdot \langle w, u \rangle
		\bigr)\,
		c_{i \mid jk}\bigl(
			w \cdot \langle u, v \rangle
		\bigr)
		&=
		1,
		\\
		c_{i \mid jk}(u)^2
		&=
		c_{i \mid jk}\bigl(
			\iota \cdot \langle \iota, u \rangle
		\bigr)^2,
		\\
		c_{i \mid jk}(u)^6 &= 1.
	\end{align*}
\end{lemma}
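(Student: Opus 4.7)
The plan is to mimic the structure of the proof of lemma \ref{a3-2} while handling the richer algebraic structure of $\mathsf B_3$-rings. I would proceed in four stages, constructing the morphism step by step and then verifying the listed identities.

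First I would set $c_{i \mid jk}(u, p) = \bigl\langle x_i(u), x_{jk}(p) \bigr\rangle$, which lies in $\schur(\stlin(\mathsf B_3, R, \Delta))$ by lemma \ref{b3-1}. Using (\ref{XP}) and centrality of $\schur$ in $\widetilde \stlin$, this morphism is additive in $p$ and satisfies $c_{i \mid jk}(u \dotplus v, p) = c_{i \mid jk}(u, p)\, c_{i \mid jk}(v, p)$. Combining this with $u \dotplus v = v \dotplus u \dotminus \phi(\langle u, v \rangle)$ and the commutativity of $\schur$ yields $c_{i \mid jk}(\phi(\langle u, v \rangle), p) = 1$; together with the vanishing $c_{i \mid jk}(\phi(r), q) = 1$ from lemma \ref{b3-1}, this shows $c_{i \mid jk}$ descends to a biadditive morphism $\Delta/\phi(R) \times R \to \schur$.

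The key step is to show that $c_{i \mid jk}(u, p q)$ equals $c_{i \mid jk}(u \cdot p, q)$, so that by weak unitality and lemma \ref{weak-unit} the morphism factors through a well-defined $c_{i \mid jk} \colon \Delta/\phi(R) \to \schur$. The natural route is to apply (\ref{HW}) to a triple in which one of the three inner commutators realizes the Steinberg relation $[x_i(u), x_{ij}(p)] = x_{-i, j}(\lambda_i^* \rho(u) p)\, x_j(\dotminus u \cdot (-p))$, so that the operation $u \cdot p$ appears inside a $c$-term, while another inner commutator is the long-root product $[x_{ij}(p), x_{jk}(q)] = x_{ik}(p q)$ feeding into $\bigl\langle x_i(u), x_{ik}(p q) \bigr\rangle$ (which, after expanding the Steinberg relation for short-with-long roots, yields $c_{i \mid j'k'}$ terms). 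Triples like $x_i(u), x_{ij}(-p), x_{jk}(q)$ and $x_j(w), x_{ji}(-p), x_{ik}(q)$ are the natural candidates; the precise choice has to be made so that the spurious long-root contributions can be absorbed using lemma \ref{b3-1} or annihilated modulo $\phi(R)$. Once the identity $c_{i \mid jk}(u, p q) = c_{i \mid jk}(u \cdot p, q)$ is established, weak unitality yields the desired factorization exactly as in the $\mathsf A_3$ case.

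Having constructed $c_{i \mid jk}$, the symmetry identities under the Weyl group action ($c_{i \mid jk}(u) = c_{k \mid ji}(\dotminus u) = c_{i \mid -k, -j}(\dotminus u) = c_{-j \mid -i, k}(\dotminus u)$) and the invariances $c_{i \mid jk}(u \cdot \lambda^{\pm 1}) = c_{i \mid jk}(u \cdot p^*) = c_{i \mid jk}(u)$ follow by combining the presentation relation $x_{ij}(p) = x_{-j, -i}(-\lambda_j^* p^* \lambda_i)$ with conjugation by $\alpha$-Weyl elements (which reorder indices) and by applying (\ref{HW}) to suitably reflected triples. The ``triple associativity'' identity $c_{i \mid jk}(u \cdot p q r) = c_{i \mid jk}(u \cdot q p r)$ is the direct analogue of (\ref{a3-comm}) from lemma \ref{a3-2} and should drop out from two applications of the HW derivation above, or equivalently from lemma \ref{wunit-artin} applied to the subring generated by $\rho(u), p, q, r$. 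The Jacobi-like identity involving $\langle v, w \rangle$ comes from applying (\ref{HW}) to the three short-root elements $x_i(u), x_j(v), x_k(w)$ and using $[x_j(v), x_k(w)] = x_{-j, k}(-\lambda_j^* \langle v, w \rangle)$ to produce exactly the cyclic product of $c$-terms.

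The subtlest step is extracting the squaring relation $c_{i \mid jk}(u)^2 = c_{i \mid jk}(\iota \cdot \langle \iota, u \rangle)^2$ and the $6$-torsion $c_{i \mid jk}(u)^6 = 1$. I expect these to fall out by specializing the Jacobi-like identity: taking $v = w = \iota$ and using $\langle \iota, \iota \rangle = -1 - \lambda$ together with the $\lambda^{\pm 1}$-invariance should give the squaring identity after collecting terms, while setting $u = v = w$ and combining with the identities $\rho(u) + \rho(u)^* \lambda + \langle u, u \rangle = 0$ and $u \dotplus u \cdot (-1) = \phi(\rho(u))$ should produce a cube of $c(u)^2$, yielding the $6$-torsion. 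The principal obstacle throughout is paragraph two: controlling the exact shape of the Steinberg-relation outputs inside the Hall--Witt triples, so that every extra long-root contribution is either visibly central of the form $c_{i'\mid j'k'}(\text{something in }\phi(R))$ (and hence trivial) or can be cancelled against the corresponding term from a symmetric application, all while keeping careful track of the nuclear scalars $\lambda_i^*$, signs, and the involution $*$.
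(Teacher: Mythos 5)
The setup (biadditivity, vanishing on $\phi(R)$) and the later stages (the Jacobi-like identity from the all-short (HW) triple, and the specializations giving the squaring and $6$-torsion relations) are in the right spirit and essentially follow the paper. However, there is a genuine gap in your key step, the factorization through $u \cdot p$.

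Both triples you propose put the short-root element $x_i(u)$ (resp.\ $x_j(w)$) in the \emph{first} position. With $(x_i(u), x_{ij}(-p), x_{jk}(q))$ the Hall--Witt term $\up y {\langle x, [y^{-1}, z]\rangle}$ produces $\langle x_i(u), x_{ik}(pq)\rangle$, but the roots $\e_i$ and $\e_k - \e_i$ are at angle $3\pi/4$, so $[x_i(u), x_{ik}(pq)]$ is a nontrivial Steinberg element and $\langle x_i(u), x_{ik}(pq)\rangle$ lies in $\widetilde\stlin$ rather than in the center; it is the element called $z_{ik}(u, pq)$ later in the paper, not something that ``expands into $c_{i\mid j'k'}$ terms.'' Your second candidate suffers from the same problem with $\langle x_j(w), x_{jk}(pq)\rangle$. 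So neither candidate directly delivers the relation needed, and your suggestion that spurious contributions can be ``absorbed by lemma \ref{b3-1} or annihilated modulo $\phi(R)$'' does not apply to an element that is not even in the kernel of $\kappa$.

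The correct triple (the one the paper uses) puts the short-root element in the \emph{middle}: $x_{jk}(q)$, $x_k(u)$, $x_{ki}(-p)$. Then $[x_{jk}(-q), x_k(u)] = 1$ since $\e_k - \e_j$ and $\e_k$ are at angle $\pi/4$ with empty interval, so one of the three Hall--Witt factors vanishes outright. In the factor $\up y{\langle x, [y^{-1}, z]\rangle}$, the short-long commutator $[x_k(\dotminus u), x_{ki}(-p)]$ expands into a long-root element $x_{-k, i}(\cdots)$ and a short-root element $x_i(u\cdot p)$; the pairing of $x_{jk}(q)$ with the long-root piece is killed by lemma \ref{b3-1} (after rewriting $x_{-k,i}$ via the $*$-symmetry $x_{ij}(p) = x_{-j,-i}(-\lambda_j^* p^* \lambda_i)$), while the pairing with $x_i(u\cdot p)$ is exactly the $c$-term you want. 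The remaining Hall--Witt factor produces $c_{k\mid ji}(u, qp)$, and one arrives at
\[
c_{i\mid jk}(u\cdot p, q) = c_{k\mid ji}(u, qp)^{-1},
\]
which is the crucial relation transferring $p$ between the two slots. Only then does the iterated weak-unitality argument factor $c_{i\mid jk}(u, p)$ through $u\cdot p$, and this relation also immediately gives the symmetry $c_{i\mid jk}(u) = c_{k\mid ji}(\dotminus u)$. You would need to identify this triple (or an equivalent one with the short-root element in the middle) to close the gap; your proposed triples instead belong to the derivation of the $z$-related identities in lemma \ref{b3-4}.

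One smaller inaccuracy: for the $6$-torsion, the clean route is to substitute $u = \iota\cdot p$ into the already-obtained squaring relation and use $\langle\iota,\iota\rangle = -1-\lambda$ together with $\lambda$-invariance, rather than to set $u = v = w$ in the Jacobi-like identity as you suggest.
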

\begin{proof}
	Let
	\[
		c_{i \mid jk}(u, p)
		=
		\bigl\langle x_i(u), x_{jk}(p) \bigr\rangle
		\colon
		\schur(\stlin(\mathsf B_3, R, \Delta)),
	\]
	it is biadditive by (\ref{XP}),
	\( c_{i \mid jk}(\phi(p), q) = 1 \) by lemma \ref{b3-1}, and
	\(
		c_{i \mid jk}(u, p)
		=
		c_{i \mid -k, -j}(
			u,
			\lambda_k^* p^* \lambda_j
		)^{-1}
	\). Applying (\ref{HW}) to
	\( x_{jk}(q) \),
	\( x_k(u) \),
	\( x_{ki}(-p) \) we get
	\[
		c_{i \mid jk}(u \cdot p, q)
		=
		c_{k \mid ji}(u, q p)^{-1}.
	\]
	Then
	\[
		c_{i \mid jk}(u, r (p q))
		=
		c_{k \mid ji}(u \cdot p q, r)^{-1}
		=
		c_{i \mid jk}(u \cdot p, r q)
	\]
	(recall that
	\( R \) is non-associative in general), so by weak unitality
	\(
		c_{i \mid jk}(u \cdot p, \eps)
		=
		c_{i \mid jk}(u, p \eps)
	\) for
	\( \eps \colon \mathcal E \). Again by weak unitality (applied for
	\( p \colon \mathcal E \) and then for general
	\( p \)) it follows that there exists a unique homomorphism
	\[
		c_{i \mid jk}
		\colon
		\Delta / \phi(R)
		\to
		\schur(\stlin(\mathsf B_3, R, \Delta))
	\]
	such that
	\(
		c_{i \mid jk}(u, p)
		=
		c_{i \mid jk}(u \cdot p)
	\), it clearly satisfies the commutativity identity. The homomorphisms
	\( c_{i \mid jk}(u) \) satisfy the symmetry relations
	\[
		c_{i \mid jk}(u \cdot p)
		=
		c_{i \mid -k, -j}(
			u \cdot \lambda_k^* p^* \lambda_j
		)^{-1},
		\quad
		c_{i \mid jk}(u)
		=
		c_{k \mid ji}(u)^{-1}
	\]
	by weak unitality, so we may change the indices under a suitable action of
	\( \mathrm S_3 \). The left one implies that
	\[
		c_{i \mid jk}(
			u \cdot \lambda_{-j}^* (p q)^* \lambda_{-k}
		)
		=
		c_{i \mid -k, -j}(\dotminus u \cdot p q)
		=
		c_{i \mid jk}(
			u \cdot p \lambda_{-j}^* q^* \lambda_{-k}
		),
	\]
	i.e.
	\(
		c_{i \mid jk}(u \cdot p)
		=
		c_{i \mid jk}(u \cdot p^*)
	\)
	and, in particular,
	\(
		c_{i \mid jk}(u \cdot \lambda^{\pm 2})
		=
		c_{i \mid jk}(u)
	\) (using weak unitality). Moreover,
	\begin{align*}
		c_{i \mid jk}(u)
		&=
		c_{i \mid -k, -j}(
			u \cdot \lambda_j \lambda_k
		)^{-1}
		=
		c_{-j \mid -k, i}(u \cdot \lambda_j \lambda_k)
		=
		c_{-j \mid -i, k}(
			u \cdot \lambda_i \lambda_j \lambda
		)^{-1} \\
		&=
		c_{k \mid -i, -j}(
			u \cdot \lambda_i \lambda_j \lambda
		)
		=
		c_{k \mid ji}(u \cdot \lambda^3)^{-1}
		=
		c_{i \mid jk}(u \cdot \lambda^3),
	\end{align*}
	so
	\(
		c_{i \mid jk}(u \cdot \lambda)
		=
		c_{i \mid jk}(u \cdot \lambda^3)
		=
		c_{i \mid jk}(u)
	\).

	Finally, let us apply (\ref{HW}) to
	\( x_i(u) \),
	\( x_{-j}(v) \),
	\( x_k(w) \). After simplification we get
	\[
		c_{i \mid jk}\bigl(
			u \cdot \langle v, w \rangle
		\bigr)\,
		c_{i \mid jk}\bigl(
			v \cdot \langle w, u \rangle
		\bigr)\,
		c_{i \mid jk}\bigl(
			w \cdot \langle u, v \rangle
		\bigr)
		=
		1.
	\]
	Substituting
	\( v = w = \iota \cdot \eps \) for
	\( \eps \colon \mathcal E \), we get
	\(
		c_{i \mid jk}(u)^2
		=
		c_{i \mid jk}\bigl(
			\iota \cdot \langle \iota, u \rangle
		\bigr)^2
	\), so taking
	\( u = \iota \cdot p \) we obtain
	\( c_{i \mid jk}(\iota \cdot p)^6 = 1 \). It follows that
	\(
		c_{i \mid jk}(u)^6
		=
		c_{i \mid jk}\bigl(
			\iota \cdot \langle \iota, u \rangle
		\bigr)^6
		=
		1
	\).
\end{proof}

Since
\( c_{i \mid jk}(u)^6 = 1 \), it is useful to decompose
\( c_{i \mid jk}(u) \) into its
\( 2 \)- and
\( 3 \)-torsion components, i.e.
\[
	c_{i \mid jk}(u)_2 = c_{i \mid jk}(u)^3,
	\quad
	c_{i \mid jk}(u)_3 = c_{i \mid jk}(u)^4,
	\quad
	c_{i \mid jk}(u)_2^2 = c_{i \mid jk}(u)_3^3 = 1,
	\quad
	c_{i \mid jk}(u)
	=
	c_{i \mid jk}(u)_2\, c_{i \mid jk}(u)_3.
\]

\begin{lemma}
	\label{b3-3}
	For any index
	\( i \) there is a unique homomorphism
	\(
		y_{-i, i}
		\colon
		R
		\to
		\widetilde \stlin(\mathsf B_3, R, \Delta)
	\) such that
	\begin{align*}
		\kappa(y_{-i, i}(p)) &= x_i(\phi(p)),
		&
		y_{-i, i}((p q) r) &= y_{-i, i}(p (q r)),
		\\
		\bigl\langle
			x_{-i, j}(p),
			x_{ji}(q)
		\bigr\rangle
		&=
		y_{-i, i}(\lambda_i p q),
		&
		y_{-i, i}(p + p^* \lambda) &= 1,
		\\
		\bigl\langle x_i(u), x_i(v) \bigr\rangle
		&=
		y_{-i, i}\bigl(-\langle u, v \rangle\bigr).
		&&
	\end{align*}
\end{lemma}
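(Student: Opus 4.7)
The construction follows the template of lemmas \ref{a3-2} and \ref{b3-2}. First, set
\[
y_j(p, q) := \langle x_{-i,j}(p), x_{ji}(q) \rangle,
\]
which is biadditive by (\ref{XP}) and lifts $x_i(\phi(\lambda_i p q))$ along $\kappa$. The key step is to apply (\ref{HW}) to the triple $x_{-i,j}(p), x_{jk}(q), x_{ki}(r)$ with $i, j, k$ of pairwise distinct absolute values (possible since $\ell \geq 3$). The Steinberg commutator $[x_{ki}(-r), x_{-i,j}(p)]$ vanishes because the sum $2\e_i + \e_j - \e_k$ of the underlying roots is not a $\mathsf B_3$-root, and all three outer conjugations in (\ref{HW}) trivialise via lemma \ref{b3-1} since the relevant $\kappa$-images land in short root subgroups $x_i(\phi(\cdot))$. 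After simplification this produces the associativity-type identity
\[
y_j(p, qr) = y_k(pq, r).
\]

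From this one derives independence of the middle index by specialising $p$ or $r$ to $\mathcal E$ and invoking weak unitality, so that the two sides depend only on $k$ and only on $j$ respectively; write $y$ for the common value. Using the common-divisor property of $\mathcal E$ together with the associativity identity above one constructs, following exactly the scheme of the proof of lemma \ref{b3-2}, a homomorphism $y_{-i,i} \colon R \to \widetilde{\stlin}(\mathsf B_3, R, \Delta)$ characterised by $y_{-i,i}(\lambda_i p q) = y(p, q)$; the definition extends to all of $R$ since $\lambda_i$ is invertible and nuclear and $R = R\,\mathcal E$ by weak unitality. The alternative-law identity $y_{-i,i}((pq) r) = y_{-i,i}(p (qr))$ is then immediate from the associativity identity after multiplying by $\lambda_i$.

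For the hermitian identity $y_{-i,i}(p + p^* \lambda) = 1$, I apply the Steinberg symmetry $x_{ij}(p) = x_{-j, -i}(-\lambda_j^* p^* \lambda_i)$ to both arguments of $y_j(p, q)$. Unwinding the nuclear scalars with the identities $\lambda_i^* = \lambda_i^{-1}$, $\lambda_i \lambda_{-i} = \lambda$ and $(p q)^* = q^* p^*$, one checks that $\lambda_i \cdot (\lambda_i^* q^* \lambda_j) \cdot (\lambda_j^* p^* \lambda_{-i}) = (\lambda_i p q)^* \lambda$, and independence of the middle index together with the antisymmetry of $\langle\,,\,\rangle$ then yields $y_{-i,i}(a) = y_{-i,i}(a^* \lambda)^{-1}$ for $a = \lambda_i p q$. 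Since these $a$ exhaust $R$ by weak unitality, the claim follows.

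The subtlest step is the opposite-root identity $\langle x_i(u), x_i(v) \rangle = y_{-i,i}(-\langle u, v \rangle)$: both sides lift $x_i(\phi(-\langle u, v \rangle))$ along $\kappa$, so their ratio a priori lies in the Schur multiplier and must be shown to vanish. I would apply (\ref{HW}) to the triple $x_i(u), x_{ji}(\eta), x_j(v)$ with $\eta \in \mathcal E$. The Steinberg formulas $[x_j(v), x_{ji}(\eta)] = x_{-j, i}(\lambda_j^* \rho(v) \eta)\, x_i(\dotminus v \cdot (-\eta))$ and $[x_i(u), x_j(v)] = x_{-i, j}(-\lambda_i^* \langle u, v \rangle)$ produce, on the one hand, a contribution of the form $\langle x_i(u), x_i(v \cdot \eta) \rangle$, and on the other a contribution expressible through $y_j(-\lambda_i^* \langle u, v \rangle, \eta)$ (the ancillary $x_{-j, i}$ factors are absorbed by lemma \ref{b3-1} upon conjugation). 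After trivialising the outer conjugations via lemma \ref{b3-1} and stripping $\eta$ by weak unitality, the claimed identity falls out. Tracking the signs and the two-step nilpotency of the short root subgroup in this last step is the principal technical obstacle.
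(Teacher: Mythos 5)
The proof is correct and takes essentially the same route as the paper: apply the Hall--Witt identity to a triple of long-root elements to obtain the sliding identity between $y_j$ and $y_k$, construct $y_{-i,i}$ by weak unitality following the template of lemma~\ref{b3-2}, deduce the hermitian relation from the Steinberg symmetry $x_{ij}(p)=x_{-j,-i}(-\lambda_j^*p^*\lambda_i)$, and apply Hall--Witt again with a short-root element present for the opposite-root relation. The specific triples differ slightly from the paper's (you swap $j\leftrightarrow k$ in the first application and reorder the three entries and specialise $\eta\colon\mathcal E$ in the last one), but after the same bookkeeping with the structure constants and lemma~\ref{b3-1} — note that trivialising conjugations such as $\langle x_{-i,j}(p), x_i(\phi(\cdot))\rangle$ requires first rewriting $x_{-i,j}=x_{-j,i}(\cdots)$ so that lemma~\ref{b3-1} actually applies — the conclusions agree. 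One small imprecision: the vanishing of $[x_{ki}(-r),x_{-i,j}(p)]$ holds because the open interval $\interval{\e_i-\e_k}{\e_j+\e_i}$ contains no roots (equivalently, the index condition $\{-k,i\}\cap\{-i,-j\}=\varnothing$ is satisfied), not merely because the sum of the two roots fails to be a root; for two long roots this distinction is harmless, but it is worth stating the correct criterion.
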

\begin{proof}
	Let
	\[
		y_{-i, j, i}(p, q)
		=
		\bigl\langle
			x_{-i, j}(p),
			x_{ji}(q)
		\bigr\rangle,
	\]
	so
	\(
		\kappa(y_{-i, j, i}(p, q))
		=
		x_i(\phi(\lambda_i p q))
	\). Applying (\ref{HW}) to
	\( x_{-i, k}(p) \),
	\( x_{kj}(-q) \),
	\( x_{ji}(r) \) we get
	\[
		y_{-i, j, i}(p q, r)
		=
		y_{-i, k, i}(p, q r),
	\]
	so
	\(
		y_{-i, j, i}(\eps \eps' p, q)
		=
		y_{-i, k, i}(\eps \eps', p q)
		=
		y_{-i, j, i}(\eps, \eps' p q)
	\) for
	\( \eps, \eps' \colon \mathcal E \). From this identity (applied to
	\( \eps' p = p' \colon \mathcal E \) and to the general
	\( p' \)) using weak unitality we get a unique homomorphism
	\(
		y_{-i, i}
		\colon
		R
		\to
		\widetilde \stlin(\mathsf B_3, R, \Delta)
	\) such that
	\(
		y_{-i, j, i}(p, q)
		=
		y_{-i, i}(\lambda_i p q)
	\). Again by weak unitality,
	\(
		y_{-i, k, i}(p, q)
		=
		y_{-i, i}(\lambda_i p q)
	\), so
	\( y_{-i, i} \) is independent on the choices of
	\( j \) and
	\( k \). Moreover, it is ``associative''. The skew-symmetry relation follows from weak unitality and
	\begin{align*}
		y_{-i, i}(p q)
		&=
		y_{-i, j, i}(\lambda_i^* p, q)
		=
		\bigl\langle
			x_{-i, j}(\lambda_i^* p),
			x_{ji}(q)
		\bigr\rangle
		=
		\bigl\langle
			x_{-j, i}(-\lambda_j^* p^* \lambda),
			x_{-i, -j}(-\lambda_i^* q^* \lambda_j)
		\bigr\rangle \\
		&=
		y_{-i, -j, i}(
			\lambda_i^* q^* \lambda_j,
			\lambda_j^* p^* \lambda
		)^{-1}
		=
		y_{-i, i}(-q^* p^* \lambda)
	\end{align*}

	Finally, applying (\ref{HW}) to
	\( x_i(u) \),
	\( x_j(v) \),
	\( x_{ji}(-p) \) we get
	\[
		\bigl\langle x_i(u), x_i(v \cdot p) \bigr\rangle
		=
		y_{-i, i}\bigl( -\langle u, v \rangle p \bigr)
	\]
	and conclude by weak unitality.
\end{proof}

Now we introduce the remaining pairings of root elements with non-opposite roots,
\begin{align*}
	y_{ijk}(p, q)
	&=
	\bigl\langle x_{ij}(p), x_{jk}(q) \bigr\rangle,
	&
	\kappa(y_{ijk}(p, q))
	&=
	x_{ik}(p q),
	\\
	y_{i0j}(u, v)
	&=
	\bigl\langle x_{-i}(u), x_j(v) \bigr\rangle,
	&
	\kappa(y_{i0j}(u, v))
	&=
	x_{ij}\bigl(
		-\lambda_{-i}^* \langle u, v \rangle
	\bigr),
	\\
	z_{ij}(u, p)
	&=
	\bigl\langle x_i(u), x_{ij}(p) \bigr\rangle,
	&
	\kappa(z_{ij}(u, p))
	&=
	x_{-i, j}(\lambda_i^* \rho(u) p)\,
	x_j(\dotminus u \cdot (-p)).
\end{align*}

\begin{lemma}
	\label{b3-4}
	The morphisms
	\( y_{ijk} \) and
	\( y_{i0j} \) are biadditive. Also,
	\begin{align*}
		y_{ijk}(p, q)
		&=
		y_{-k, -j, -i}(
			-\lambda_k^* q^* \lambda_j,
			\lambda_j^* p^* \lambda_i
		),
		&
		z_{ij}(u \dotplus v, p)
		&=
		z_{ij}(v, p)\,
		y_{-i, 0, j}(u, \dotminus v \cdot (-p))\,
		z_{ij}(u, p),
		\\
		y_{i0j}(u, v) &= y_{-j, 0, -i}(v, u)^{-1},
		&
		z_{ij}(u, p + q)
		&=
		z_{ij}(u, p)\,
		y_{-j, j}(-p^* \rho(u) q)\,
		z_{ij}(u, q),
		\\
		1 &= y_{i0j}(u, \phi(p)).
		&&
	\end{align*}
	Moreover,
	\begin{align*}
		y_{ikj}\bigl( \langle u, v \rangle, p \bigr)\,
		y_{i0j}(u \cdot \lambda_{-i}^*, v \cdot p)
		&=
		c_{k \mid ij}\bigl(
			v \cdot \langle u, v \rangle p
		\bigr)\,
		c_{k \mid ij}(u \cdot \rho(v) p)^{-1},
		\\
		y_{ijk}(p \lambda_j^* \rho(u), q)
		&=
		y_{i, -j, k}(p, \lambda_j^* \rho(u) q)\,
		c_{k \mid ij}(u \cdot \rho(u) p q)\,
		c_{k \mid i, -j}(u \cdot p q)^{-2},
		\\
		z_{-i, j}(
			\phi(\lambda_{-i} p q \lambda_i),
			\lambda_i^* r
		)
		&=
		y_{-j, j}\bigl(
			(r^* \lambda p) (q r)
		\bigr)^{-1}\,
		y_{ikj}(p, q r)\,
		y_{i, -k, j}(
			\lambda^* q^* \lambda_k,
			\lambda_k^* p^* r
		)^{-1},
		\\
		z_{jk}(u \cdot p, q)\,
		y_{-j, i, k}(\lambda_j^* p^* \rho(u), p q)^{-1}
		&=
		z_{ik}(u, p q)\,
		y_{-i, j, k}(\lambda_i^* \rho(u) p, q)^{-1}\,
		c_{k \mid ij}(u \cdot p^2 q)^{-1}\,
		c_{k \mid -i, j}(u \cdot \rho(u) p^2 q)^2.
	\end{align*}
\end{lemma}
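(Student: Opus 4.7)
The plan is to derive every claim from the crossed-pairing identities (\ref{XP}), the Hall--Witt identity (\ref{HW}), the conjugation rule $\up g h = \langle g, \kappa(h)\rangle h$ that holds in any central extension, the vanishing results of Lemma \ref{b3-1}, and weak unitality. Most arguments are routine once the correct HW-triple is identified.

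For biadditivity of $y_{ijk}$, expanding $\langle x_{ij}(p)\, x_{ij}(p'),\, x_{jk}(q)\rangle$ via (\ref{XP}) leaves a conjugation factor $\up{x_{ij}(p)}{y_{ijk}(p', q)}$ whose correction $\langle x_{ij}(p),\, x_{ik}(p' q)\rangle$ vanishes by Lemma \ref{b3-1}; the second variable and the analogous argument for $y_{i0j}$ are symmetric. For $y_{i0j}$ one additionally needs $\langle x_{-i}(u), x_{ij}(q)\rangle = 1$, which follows by applying (\ref{HW}) to $\bigl(x_{-i}(u), x_{ij}(-p), x_{jk}(q)\bigr)$: two commutators vanish in the Steinberg group and the surviving pairing is central, hence fixed by the conjugation, and weak unitality then cancels the extra $p$. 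Once biadditivity is in hand, the symmetry $y_{ijk}(p, q) = y_{-k,-j,-i}(-\lambda_k^* q^* \lambda_j, \lambda_j^* p^* \lambda_i)$ is immediate from substituting $x_{ij}(p) = x_{-j,-i}(-\lambda_j^* p^* \lambda_i)$ into the definition together with $\langle a, b\rangle = \langle b, a\rangle^{-1}$; the symmetry $y_{i0j}(u, v) = y_{-j,0,-i}(v, u)^{-1}$ is pure antisymmetry; and $y_{i0j}(u, \phi(p)) = 1$ is a direct restatement of $\langle x_i(u), x_j(\phi(p))\rangle = 1$ from Lemma \ref{b3-1}.

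For the $z_{ij}$ additivity formulas, expand $\langle x_i(u)\, x_i(v),\, x_{ij}(p)\rangle$ via (\ref{XP}) and unwind the conjugation $\up{x_i(u)}{z_{ij}(v, p)}$ through $\kappa(z_{ij}(v, p)) = x_{-i,j}(\lambda_i^* \rho(v) p)\, x_j(\dotminus v \cdot (-p))$. Splitting the inner pairing via (\ref{XP}) once more and invoking the vanishings $\langle x_i(u), x_{-i,j}(q)\rangle = 1$ (just derived) and $\langle x_{-i,j}(a), x_{-i,j}(b)\rangle = 1$ from Lemma \ref{b3-1} reduces the correction precisely to $\langle x_i(u), x_j(\dotminus v \cdot (-p))\rangle = y_{-i,0,j}(u, \dotminus v \cdot (-p))$; the ability to reorder it between the two $z$-factors comes from the triviality of its commutator with $z_{ij}(v, p)$ (the relevant kappa-factors commute, and the associated pairings vanish by Lemma \ref{b3-1}). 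The $p$-additivity is entirely analogous, with the $y_{-j,j}(-p^* \rho(u) q)$ correction arising from the pairing of the $x_{-i,j}$-part of $\kappa(z_{ij}(u, p))$ with the $x_j$-part of $\kappa(z_{ij}(u, q))$ and rewritten via Lemma \ref{b3-3}.

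Each of the four complex identities is then produced by applying (\ref{HW}) to a judiciously chosen triple and normalizing the resulting nine-fold product by Lemmas \ref{b3-1}--\ref{b3-3}, the symmetries of Lemma \ref{b3-2}, and the additivity formulas just established. Natural candidates are $\bigl(x_i(u), x_{-k}(v), x_j(v' \cdot p)\bigr)$ for the first (whose three pairings yield the $y_{ikj}$, $y_{i0j}$, and $c_{k \mid ij}$ terms in turn), $\bigl(x_{ij}(p), x_j(u), x_{jk}(q)\bigr)$ for the second (where the $\rho(u)$-factor appears through $[x_j(u), x_{jk}(q)] = x_{-j,k}(\lambda_j^* \rho(u) q)\, x_k(\dotminus u \cdot (-q))$), an analogous triple coupled to $[x_{-i,j}, x_{ji}] = x_i(\phi(\cdot))$ for the third, and $\bigl(x_i(u), x_{ij}(p), x_{jk}(q)\bigr)$ for the fourth. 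The principal obstacle is the fourth identity: its right-hand side pairs two distinct $z$-morphisms $z_{jk}$ and $z_{ik}$ across different root indices with two inequivalent $c$-factors (one squared). Reconciling the signs and the ordering of the central correction terms will require repeated use of the $\mathrm S_3$-type symmetries from Lemma \ref{b3-2} and the torsion decomposition $c = c_2 \cdot c_3$ to absorb any stray factor via the sixth-power triviality of $c_{k \mid ij}$.
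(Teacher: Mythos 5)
The overall strategy is right — routine biadditivity from (\ref{XP}) plus Lemma \ref{b3-1}, the antisymmetry and $\phi$-vanishing as direct restatements, the $z$-additivity from (\ref{XP}) with the stated corrections, and the four long identities from (\ref{HW}) applied to suitable triples — and your arguments for the first three groups of claims are essentially the paper's. But two of your four proposed Hall--Witt triples cannot yield the identities they are supposed to prove.

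For the first identity, you propose the triple $\bigl(x_i(u), x_{-k}(v), x_j(v' \cdot p)\bigr)$. All three are short-root elements, so every pairwise commutator $[\cdot,\cdot]$ is a long-root element $x_{\pm a, \pm b}(\cdot)$, and hence every pairing in (\ref{HW}) is of the form $\langle x_a(\cdot), x_{bc}(\cdot)\rangle$, i.e.\ a $c_{\cdot\mid\cdot\cdot}$-type generator. This cannot produce either the long-long pairing $y_{ikj}$ or the short-short pairing $y_{i0j}$ appearing on the left-hand side. The correct triple must mix a long-root element with two short-root ones; the paper uses $x_k(v),\; x_{kj}(-p),\; x_{-i}(u\cdot\lambda_{-i}^*)$, where $[x_{kj}(p), x_{-i}(\cdot)] = 1$, $[x_{-i}(\cdot)^{-1}, x_k(v)]$ is a long-root element paired against $x_{kj}$ to produce $y_{ikj}$, and $[x_k(v)^{-1}, x_{kj}(-p)]$ is a product of a long- and a short-root element whose pairing with $x_{-i}$ splits by (\ref{XP}) into the $c$ and $y_{i0j}$ contributions.

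For the second identity, your triple $\bigl(x_{ij}(p), x_j(u), x_{jk}(q)\bigr)$ does put the $\rho(u)$-factor into the commutator $[x_j(\dotminus u), x_{jk}(q)] = x_{-j,k}(\lambda_j^*\rho(\dotminus u)q)\, x_k(\cdot)$, but the pairing $\langle x_{ij}(p), x_{-j,k}(\cdot)\rangle$ then vanishes by Lemma \ref{b3-1}: rewriting $x_{ij}(p) = x_{-j,-i}(-\lambda_j^* p^*\lambda_i)$ one sees both factors share the first index $-j$. Thus the $\rho(u)$-term disappears from the Hall--Witt expansion, and one cannot recover an identity whose right-hand side depends on $\rho(u)$. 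Moreover $[x_{ij}(-p), x_j(u)] = 1$ (the root $\e_j$ is not in $\{i, -j\}$), so the third (\ref{HW}) pairing also drops out; what remains is a pure relation among $c$'s, not the asserted one. The paper instead uses $x_j(u),\; x_{i,-j}(p),\; x_{jk}(q)$, where the angle-$2\pi/3$ pairing $\langle x_{i,-j}(p), x_{-j,k}(\cdot)\rangle = y_{i,-j,k}(\cdot,\cdot)$ survives and correctly carries the $\rho(u)$.

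For the third identity your description is too vague to evaluate; the paper uses the triple $x_{ik}(p),\; x_{k,-i}(-q\lambda_i),\; x_{-i,j}(\lambda_i^* r)$, in which the needed $\phi$-term arises from the commutator $[x_{ik}, x_{k,-i}] = x_{-i}(\phi(\cdot))$. Your triple for the fourth identity matches the paper's (up to a sign in the second slot), which is the correct one, and the subsequent two-step reduction you sketch is essentially what the paper records explicitly as the intermediate identity before final simplification. The remark about absorbing stray factors via $c_{k\mid ij}(\cdot)^6 = 1$ is unnecessary if the right triples are used; the paper closes the bookkeeping using Lemmas \ref{b3-1}--\ref{b3-3} and the $\mathrm S_3$-symmetries of Lemma \ref{b3-2} without any appeal to the sixth-power triviality.
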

\begin{proof}
	The first part easily follows from (\ref{XP}) and lemma \ref{b3-3}. The last four identities follow from (\ref{HW}) applied to the triples
	\begin{align*}
		&
		x_k(v),\,
		x_{kj}(-p)\,
		x_{-i}(u \cdot \lambda_{-i}^*);
		&
		&
		x_{ik}(p),\,
		x_{k, -i}(-q \lambda_i),\,
		x_{-ij}(\lambda_i^* r);
		\\
		&
		x_j(u),\,
		x_{i, -j}(p),\,
		x_{jk}(q);
		&
		&
		x_i(u),
		x_{ij}(-p),
		x_{jk}(q).
	\end{align*}
	During the calculations we may also use (\ref{XP}) to expand expressions such as
	\( \up{x_{jk}(q)}{z_{ij}(u, p)} \). For the last identity we first obtain
	\begin{align*}
		y_{-j, -i, k}&(
			\lambda_j^* p^* \lambda_i, 
			\lambda_i^* \rho(u) p q
		)\,
		z_{ik}(u, p q)\,
		c_{k \mid ij}(u \cdot p^2 q) \\
		&=
		y_{-i, 0, k}(u, u \cdot p q)\,
		z_{jk}(u \cdot p, q)\,
		y_{-i, j, k}(
			-\lambda_i^* \rho(u)^* \lambda p,
			q
		)\,
		c_{k \mid -i, j}(u \cdot \rho(u) p^2 q)^2
	\end{align*}
	and then apply previously obtained identities and lemma \ref{b3-2} for further simplification.
\end{proof}

\begin{lemma}
	\label{b3-5}
	Suppose that
	\( R \) is associative. Then for all indices
	\( i \),
	\( j \),
	\( j' \),
	\( k \) with
	\( |j| = |j'| \) there exists a unique biadditive morphism
	\[
		b_{ijj'k}
		\colon
		R \times R
		\to
		\schur(\stlin(\mathsf B_3, R, \Delta))
	\]
	such that
	\(
		y_{ij'k}(p q, r)
		=
		y_{ijk}(p, q r)\, b_{ijj'k}(q, p^* r)
	\), it satisfies the following identities.
	\begin{align*}
		b_{ijj'k}(p, q r s t) &= b_{ijj'k}(p, q s r t),
		\\
		b_{ijj'k}(p, q)^2 &= 1,
		\\
		b_{ijj''k}(p q, r)
		&=
		b_{ijj'k}(p, q r)\,
		b_{ij'j''k}(q, r p^*),
		\\
		b_{ijj'k}(p, q)
		&=
		b_{-k, -j', -j, -i}(
			\lambda_{j'}^* p^* \lambda_j,
			\lambda_i
			\lambda_j^*
			\lambda_{j'}^*
			\lambda_k
			q
		),
		\\
		b_{ijj'k}\bigl(
			p,
			\langle u, v \rangle
		\bigr)
		&=
		c_{k \mid ij}\bigl(
			u \cdot \langle u, v \rangle p
		\bigr)_2\,
		c_{k \mid ij}(v \cdot \rho(u) p)_2\,
		c_{k \mid ij'}\bigl(
			u \cdot \langle u, v \rangle p^2
		\bigr)_2\,
		c_{k \mid ij'}(v \cdot \rho(u) p^2)_2,
		\\
		b_{i, -j, j, k}(\lambda_j^* \rho(u), p)
		&=
		c_{k \mid ij}(u \cdot \rho(u) p)_2,
		\\
		b_{ijj'k}\bigl(
			\langle u, v \rangle,
			p
		\bigr)
		&=
		c_{k \mid i, -j}\bigl(
			u \cdot \langle u, v \rangle p
		\bigr)_2\,
		c_{k \mid i, -j}(v \cdot \rho(u) p)_2\,
		c_{k \mid ij'}\bigl(
			v \cdot \langle u, v \rangle p
		\bigr)_2\,
		c_{k \mid ij'}(u \cdot \rho(v) p)_2,
		\\
		b_{ijj'k}(p, q)
		&=
		b_{i, -j', -j, k}(
			\lambda_{j'}^* p^* \lambda_j,
			\lambda \lambda_j^* \lambda_{j'}^* q
		),
		\\
		b_{-i, j', j, k}&(q r, \lambda_i p s)\,
		b_{-i, -j', j, k}(
			\lambda_{-j'} p r,
			\lambda_i \lambda_{j'}^* q s
		)
		=
		b_{-j, i', i, k}(q, \lambda_j p r^2 s)\,
		b_{-j, -i', i, k}(
			\lambda_{-i'} p,
			\lambda_{i'}^* \lambda_j q r^2 s
		),
		\\
		b_{-i, j', j, k}&(
			q r,
			\lambda_i \rho(u)^* p^* s
		)\,
		b_{-i', j', j, k}(
			q r,
			\lambda_{i'} \rho(u)^* p p^* q s
		) \\
		&=
		b_{-j, i, i', k}(
			p q,
			\lambda_j \rho(u)^* p q r^2 s
		)\,
		b_{-j', i, i', k}(
			p q,
			\lambda_{j'} \rho(u)^* p r s
		) \\
		&\cdot
		c_{k \mid ij}(u \cdot p^2 q^2 r^2 s)_2\,
		c_{k \mid ij'}(u \cdot p^2 q r s)_2\,
		c_{k \mid i'j}(u \cdot p q r^2 s)_2\,
		c_{k \mid -i', -j'}(u \cdot p q^2 r s)_2.
	\end{align*}
	Also,
	\begin{align*}
		c_{k \mid ij'}\bigl(
			u \cdot \langle u, v \rangle p^2
		\bigr)_3\,
		c_{k \mid ij}(v \cdot \rho(u) p)_3\,
		&=
		c_{k \mid ij}\bigl(
			u \cdot \langle u, v \rangle p
		\bigr)_3\,
		c_{k \mid ij'}(v \cdot \rho(u) p^2)_3,
		\\
		c_{k \mid ij}(u \cdot \rho(u) p)_3
		&=
		c_{k \mid i, -j}(u \cdot p)_3^{-1},
		\\
		c_{k \mid ij}\bigl(
			u \cdot \langle u, v \rangle p
		\bigr)_3\,
		c_{k \mid ij'}(u \cdot \rho(v) p)_3
		&=
		c_{k \mid ij}(v \cdot \rho(u) p)_3\,
		c_{k \mid ij'}\bigl(
			v \cdot \langle u, v \rangle p
		\bigr)_3,
		\\
		c_{k \mid ij}(u \cdot p^2 q^2 r^2)_3\,
		c_{k \mid -i, -j'}&(
			u \cdot p^2 q r
		)_3\,
		c_{k \mid -i', -j}(
			u \cdot p^3 q^3 r^2
		)_3\,
		c_{k \mid -i', -j'}(u \cdot p q^2 r)_3 \\
		&=
		c_{k \mid -i, -j}(
			u \cdot p^2 q^2 r^2
		)_3\,
		c_{k \mid ij'}(u \cdot p^2 q r)_3\,
		c_{k \mid i'j}(u \cdot p q r^2)_3\,
		c_{k \mid -i', -j'}(
			u \cdot p^3 q^2 r
		)_3.
	\end{align*}
\end{lemma}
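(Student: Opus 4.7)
The plan is to build $b_{ijj'k}$ as the discrepancy between the two lifts $y_{ij'k}(pq, r)$ and $y_{ijk}(p, qr)$ of the common Steinberg element $x_{ik}(pqr)$, and then derive every listed identity by feeding carefully chosen substitutions into (\ref{XP}), (\ref{HW}), and the relations already extracted in lemmas \ref{b3-2}, \ref{b3-3}, and \ref{b3-4}.

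First I would prove existence, uniqueness and biadditivity. The ratio $y_{ij'k}(pq, r)\, y_{ijk}(p, qr)^{-1}$ is central because its image in $\stlin(\mathsf B_3, R, \Delta)$ is trivial, so it lies in $\schur(\stlin(\mathsf B_3, R, \Delta))$. Associativity of $R$ together with the weak-unit trick from the proof of lemma \ref{b3-2} (rescale the outer arguments by $\mathcal E$ and push everything past weak units) shows that this ratio depends only on the pair $(q, p^* r)$, yielding a unique biadditive morphism $b_{ijj'k}$ satisfying the stated defining relation. Biadditivity follows at once from (\ref{XP}) applied to each of the two $y$-factors, and the commutativity identity $b_{ijj'k}(p, q r s t) = b_{ijj'k}(p, q s r t)$ is immediate from weak unitality, exactly as in lemma \ref{a3-2}.

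Next I would derive the structural identities. The cocycle relation $b_{ijj''k}(pq, r) = b_{ijj'k}(p, qr)\, b_{ij'j''k}(q, r p^*)$ comes from composing the defining relation for three successive middle indices $j, j', j''$. Specialising $j'' = j$ gives $b_{ijj'k}\, b_{ij'jk} = 1$, and a further specialisation $j' = j$ forces $b_{ijjk} = 1$, so $b_{ijj'k}(p, q)^2 = 1$. The two $*$-symmetry identities collapse to one computation after substituting $x_{ij}(p) = x_{-j, -i}(-\lambda_j^* p^* \lambda_i)$ into each $y$-factor. For the identities expressing $b_{ijj'k}(p, \langle u, v \rangle)$, $b_{i, -j, j, k}(\lambda_j^* \rho(u), p)$, and $b_{ijj'k}(\langle u, v \rangle, p)$ in terms of the $c_{k \mid \bullet\bullet}$-morphisms, I would expand one of the arguments as $\rho(u) p$ or $\langle u, v \rangle p$ and apply the last three identities of lemma \ref{b3-4} to convert the resulting $z_{ij}$- and $y_{i0j}$-contributions into the appropriate $c_2$-terms, throwing away the $3$-torsion pieces using $b^2 = 1$.

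The final two ``quadratic'' identities are extracted by applying (\ref{HW}) to a configuration witnessing simultaneously all four sign choices of the middle index ($\pm j$ and $\pm j'$) and then rearranging the output by the already established cocycle relation. I expect the main obstacle to be tracking all the signs and factors of $\lambda^{\pm 1}$ through these calculations; the cleanest way to pin the normalisations down is the naturality trick from proposition \ref{d4-schur}, namely to reduce modulo the initial unital $\mathsf B_3$-ring in $\mathbf C = \Set$ and compare to the explicit Chevalley-group computation of \cite{schur-mult}. Finally, the four $c_3$-identities at the end drop out by projecting the corresponding $b$-identities onto the $3$-primary component of $\schur(\stlin(\mathsf B_3, R, \Delta))$, where $b$ vanishes and only the $c_3$-terms survive.
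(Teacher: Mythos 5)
Your plan for existence, uniqueness, biadditivity, the cocycle relation, and the commutativity identity is essentially the paper's, and the final remark that the $c_3$-identities are the $3$-primary projections of raw identities is also correct. But the central step is wrong: you claim that specialising $j'' = j' = j$ in the cocycle relation forces $b_{ijjk} = 1$ and hence $b_{ijj'k}(p,q)^2 = 1$. That specialisation only yields the Leibniz rule $b_{ijjk}(pq, r) = b_{ijjk}(p, qr)\,b_{ijjk}(q, rp^*)$, which implies nothing about triviality, and indeed $b_{ijjk}$ is \emph{not} trivial in general — lemma \ref{b3-8} subsequently decomposes it as $b_4\bigl((p+p^*)q\bigr)\,b_\eps\bigl((p^2-p)q\bigr)$, which is genuinely non-zero for, e.g., $R = \mathbb F_2[\eps]$. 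Since every identity listed after $b^2 = 1$ implicitly uses $2$-torsion to collapse inverses, this error poisons the rest of your argument.

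The paper's actual route to $b^2 = 1$ is more delicate. From the raw (sign-carrying) relation labelled $(\aleph)$ one gets $b_{ijj'k}(p,q) = b_{i,-j',-j,k}(\lambda_{j'}^*p^*\lambda_j, \lambda\lambda_{j'}^*\lambda_j^*q)^{-1}$, whose specialisation at $j' = -j$ reads $b_{i,-j,j,k}(p,q) = b_{i,-j,j,k}(\lambda_j^*p^*\lambda_{-j}, q)^{-1}$. Independently, specialising the $\rho$-relation $(\daleth)$ at $u = \phi(\lambda_j q)$ (so that all $c_{k\mid \bullet\bullet}$-terms vanish by lemma \ref{b3-1} while $\lambda_j^*\rho(u) = q - \lambda_j^*q^*\lambda_{-j}$) gives $b_{i,-j,j,k}(q,p) = b_{i,-j,j,k}(\lambda_j^*q^*\lambda_{-j}, p)$ with \emph{no} inverse. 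These two relations together force $b_{i,-j,j,k}^2 = 1$, and then the cocycle $(\beth)$ propagates the $2$-torsion to all $b_{ijj'k}$. You also will not find the two quadratic identities by a single $(\mathrm{HW})$ substitution; the paper derives them by comparing two different chains of expansions of $z_{jk}(\phi(r^*pqr), s)\,y_{-j,i,k}(\cdots)$ and of $z_{jk}(u\cdot pqr, s)$ via lemma \ref{b3-4}, not by the Chevalley-group normalisation trick used in proposition \ref{d4-schur}.
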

\begin{proof}
	Let
	\[
		a_{ijj'k}(p, q, r)
		=
		y_{ij'k}(p q, r)\,
		y_{ijk}(p, q r)^{-1}
	\]
	for
	\( p, q, r \colon R \), it is triadditive. Simplifying
	\[
		z_{-i, k}\bigl(
			\phi(\lambda_{-i} p q r \lambda_i),
			\lambda_i^* s
		\bigr)\,
		y_{-k, k}(s^* \lambda p q r s)
	\]
	in two ways, we get
	\[
		\tag{
			\( \aleph \)
		}
		\label{b3-5-swap}
		a_{ijj'k}(p, q, r s)
		=
		a_{i, -j', -j, k}(
			\lambda^* r^* \lambda_{j'},
			\lambda_{j'}^* q^* \lambda_j,
			\lambda_j^* p^* s
		)^{-1},
	\]
	so
	\[
		a_{ijj'k}(p q, r, s t)
		=
		a_{i, -j', -j, k}(
			\lambda^* s^* \lambda_{j'},
			\lambda_{j'}^* r^* \lambda_j,
			\lambda_j^* q^* p^* t
		)^{-1}
		=
		a_{ijj'k}(q, r, s p^* t).
	\]
	By weak unitality there exists a unique biadditive morphism
	\( b_{ijj'k}(p, q) \) such that
	\[
		a_{ijj'k}(p, q, r) = b_{ijj'k}(q, p^* r),
		\quad
		b_{ijj'k}(p, q r s t)
		=
		b_{ijj'k}(p, q s r t).
	\]
	The identity (\ref{b3-5-swap}) takes the form
	\[
		b_{ijj'k}(p, q)
		=
		b_{i, -j', -j, k}(
			\lambda_{j'}^* p^* \lambda_j,
			\lambda \lambda_{j'}^* \lambda_j^* q
		)^{-1}.
	\]
	Also,
	\[
		\tag{
			\( \beth \)
		}
		\label{b3-5-cocycle}
		b_{ijj''k}(p q, r)
		=
		b_{ijj'k}(p, q r)\,
		b_{ij'j''k}(q, r p^*)
	\]
	by transforming
	\( y_{ij''k}(p q r, s) \) to
	\( y_{ijk}(p, q r s) \) in two ways and applying weak unitality.
	
	By lemmas \ref{b3-2} and \ref{b3-4} we easily have
	\begin{align*}
		a_{ijj'k}(p, q, r)
		&=
		a_{-k, -j', -j, -i}(
			\lambda_k^* r^* \lambda_{j'},
			\lambda_{j'}^* q^* \lambda_j,
			\lambda_j^* p^* \lambda_i
		),
		\\
		a_{ijj'k}\bigl( \langle u, v \rangle, p, q \bigr)
		&=
		c_{k \mid ij}\bigl(
			v \cdot \langle u, v \rangle p q
		\bigr)\,
		c_{k \mid ij}(u \cdot \rho(v) p q)^{-1}\,
		c_{k \mid ij'}\bigl(
			v \cdot \langle u, v \rangle p^2 q
		\bigr)^{-1}\,
		c_{k \mid ij'}(u \cdot \rho(v) p^2 q),
		\\
		a_{i, -j, j, k}(p, \lambda_j^* \rho(u), q)
		&=
		c_{k \mid ij}(u \cdot \rho(u) p q)\,
		c_{k \mid i, -j}(u \cdot p q)^{-2},
		\\
		a_{ijj'k}\bigl(
			p^*,
			\langle u, v \rangle,
			q
		\bigr)
		&=
		c_{k \mid i, -j}\bigl(
			u \cdot \langle u, v \rangle p q
		\bigr)\,
		c_{k \mid i, -j}(v \cdot \rho(u) p q)^{-1}\,
		c_{k \mid ij'}\bigl(
			v \cdot \langle u, v \rangle p q
		\bigr)^{-1}\,
		c_{k \mid ij'}(u \cdot \rho(v) p q),
	\end{align*}
	so
	\(
		b_{ijj'k}(p, q)
	\) satisfy
	\begin{align*}
		b_{ijj'k}(p, q)
		&=
		b_{-k, -j', -j, -i}(
			\lambda_{j'}^* p^* \lambda_j,
			\lambda_i
			\lambda_j^*
			\lambda_{j'}^*
			\lambda_k
			q
		),
		\\
		b_{ijj'k}\bigl( p, \langle u, v \rangle \bigr)
		&=
		c_{k \mid ij}\bigl(
			u \cdot \langle u, v \rangle p
		\bigr)\,
		c_{k \mid ij}(v \cdot \rho(u) p)^{-1}\,
		c_{k \mid ij'}\bigl(
			u \cdot \langle u, v \rangle p^2
		\bigr)^{-1}\,
		c_{k \mid ij'}(v \cdot \rho(u) p^2),
		\\
		\tag{
			\( \daleth \)
		}
		\label{b3-5-rho}
		b_{i, -j, j, k}(\lambda_j^* \rho(u), p)
		&=
		c_{k \mid ij}(u \cdot \rho(u) p)\,
		c_{k \mid i, -j}(u \cdot p)^{-2},
		\\
		b_{ijj'k}\bigl( \langle u, v \rangle, p \bigr)
		&=
		c_{k \mid i, -j}\bigl(
			u \cdot \langle u, v \rangle p
		\bigr)\,
		c_{k \mid i, -j}(v \cdot \rho(u) p)^{-1}\,
		c_{k \mid ij'}\bigl(
			v \cdot \langle u, v \rangle p
		\bigr)^{-1}\,
		c_{k \mid ij'}(u \cdot \rho(v) p).
	\end{align*}
	Note that (\ref{b3-5-rho}) implies the identity
	\(
		b_{i, -j, j, k}(q, p)
		=
		b_{i, -j, j, k}(\lambda_j^* q^* \lambda_{-j}, p)
	\)
	using the substitution
	\( u = \phi(\lambda_j q) \). On the other hand,
	\(
		b_{i, -j, j, k}(p, q)
		=
		b_{i, -j, j, k}(
			\lambda_j^* p^* \lambda_{-j},
			q
		)^{-1}
	\) as a particular case of (\ref{b3-5-swap}). It follows that
	\( b_{i, -j, j, k}(p, q)^2 = 1 \),
	so by (\ref{b3-5-cocycle}) all morphisms
	\(
		b_{i j j' k}
	\) are
	\( 2 \)-torsion.
	
	Simplifying both sides of
	\[
		z_{jk}\bigl( \phi(r^* p q r), s \bigr)\,
		y_{-j, i, k}\bigl(
			\lambda_j^* r^* (q^* p^* \lambda - p q),
			r s
		\bigr)
		=
		z_{ik}(\phi(p q), r s)\,
		y_{-i, j, k}\bigl(
			\lambda_i^* (q^* p^* \lambda - p q) r,
			s
		\bigr)
	\]
	using lemma \ref{b3-4} we obtain
	\[
		b_{-i, j', j, k}(q r, \lambda_i p^* s)\,
		b_{-j, -i', i, k}(
			\lambda_{i'}^* p^* \lambda,
			\lambda_{i'}^* \lambda_j q r^2 s
		)
		=
		b_{-j, i', i, k}(
			q,
			\lambda_j p^* r^2 s
		)\,
		b_{-i, -j', j, k}(
			\lambda_{j'}^* p^* \lambda r,
			\lambda_i \lambda_{j'}^* q s
		).
	\]
	Finally, comparing results of transforms
	\[
		z_{jk}(u \cdot p q r, s)
		=
		z_{ik}(u, p q r s)\, \cdots
		\text{ and }
		z_{jk}(u \cdot p q r, s)
		=
		z_{i'k}(u \cdot p q, r s)\, \cdots
		=
		z_{j'k}(u \cdot p, q r s)\, \cdots
		=
		z_{ik}(u, p q r s)\, \cdots
	\]
	from lemma \ref{b3-4} and using lemma \ref{b3-2} we get
	\begin{align*}
		b_{-i, j', j, k}&(
			q r,
			\lambda_i \rho(u)^* p^* s
		)\,
		b_{-j, i, i', k}(
			p q,
			\lambda_j \rho(u)^* p q r^2 s
		) =
		b_{-i', j', j, k}(
			q r,
			\lambda_{i'} \rho(u)^* p p^* q s
		)\,
		b_{-j', i, i', k}(
			p q,
			\lambda_{j'} \rho(u)^* p r s
		) \\
		&\cdot
		c_{k \mid ij}(u \cdot p^2 q^2 r^2 s)^{-1}\,
		c_{k \mid -i, j}(
			u \cdot \rho(u) p^2 q^2 r^2 s
		)^2\,
		c_{k \mid ij'}(u \cdot p^2 q r s)\,
		c_{k \mid -i, j'}(
			u \cdot \rho(u) p^2 q r s
		)^{-2} \\
		&\cdot
		c_{k \mid i'j}(u \cdot p q r^2 s)\,
		c_{k \mid -i', j}(
			u \cdot \rho(u) p^3 q^3 r^2 s
		)^{-2}\,
		c_{k \mid -i', -j'}(u \cdot p q^2 r s)^{-1}\,
		c_{k \mid -i', j'}(
			u \cdot \rho(u) p^3 q^2 r s
		)^2.
	\end{align*}
	All identities from the statement follow by taking
	\( 2 \)- and
	\( 3 \)-torsion components.
\end{proof}

\begin{lemma}
	\label{b3-6}
	Suppose that
	\( R \) is associative. Then the
	\( 3 \)-torsion homomorphisms
	\(
		c_{i \mid jk}({-})_3
		\colon
		\Delta
		\to
		\schur(\stlin(\mathsf B_3, R, \Delta))
	\) factor through
	\( \Delta_3 \), where
	\( (R_3, \Delta_3) \) is one of the distinguished factor-algebras of
	\( (R, \Delta) \). They satisfy the symmetry relation
	\[
		c_{\sigma(i) \mid \sigma(j) \sigma(k)}(u)_3
		=
		c_{i \mid jk}(u)_3^{\det(\sigma)}
		\text{ for }
		\sigma \in \mathrm W(\mathsf B_3).
	\]
\end{lemma}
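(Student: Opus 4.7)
The plan is to prove the lemma in two stages: (I) establishing that $c_{i \mid jk}({-})_3$ factors through $\Delta_3$, and (II) verifying the Weyl-group symmetry. For (I), most defining relations of the variety of $(R_3, \Delta_3)$ are already supplied by lemma \ref{b3-2}: commutativity of the right $R$-action follows from $c(u \cdot p q r) = c(u \cdot q p r)$; the triviality of $\lambda^{\pm 1}$ and invariance under $p \mapsto p^*$ are explicit there; vanishing on $\phi(R)$ is built into the domain $\Delta / \phi(R)$; and $3$-torsion holds by construction. The axiom $u = \iota \cdot \langle \iota, u \rangle$ becomes trivial in the $3$-torsion part because $({-})_3 = (({-})^2)^2$ and lemma \ref{b3-2} supplies $c(u)^2 = c(\iota \cdot \langle \iota, u \rangle)^2$. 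The only substantial remaining axiom is $p^3 = p$, i.e.\ $c_{i \mid jk}(u \cdot (p^3 - p))_3 = 1$.

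To derive the $p^3 = p$ relation, I would start from the $3$-torsion identity
\[
	c_{k \mid ij}(u \cdot \rho(u) p)_3
	=
	c_{k \mid i, -j}(u \cdot p)_3^{-1}
\]
of lemma \ref{b3-5}, substitute $u \mapsto u \cdot q$, and use $\rho(u \cdot q) = q^* \rho(u) q$ together with the axiom $(u \cdot p) \cdot q = u \cdot p q$ and the commutativity and involution-invariance of the $3$-torsion component from lemma \ref{b3-2} to rewrite the left-hand side as $c_{k \mid ij}(u \cdot \rho(u) q^3 p)_3$. Dividing by the original identity applied to $(u, q p)$ yields $c_{k \mid ij}(u \cdot \rho(u) (q^3 - q) p)_3 = 1$. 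Specialising $u = \iota \cdot s$ so that $\rho(u) = s^* s$, expanding $(\iota \cdot s) \cdot X = \iota \cdot s X$, and using lemma \ref{weak-unit} to represent arbitrary elements of $R$ in the required form reduces this to $c_{i \mid jk}(\iota \cdot r (q^3 - q))_3 = 1$ for all $r, q$. Finally, the identity $\langle \iota, v \cdot p \rangle = \langle \iota, v \rangle p$ combined with $c(v)_3 = c(\iota \cdot \langle \iota, v \rangle)_3$ produces $c_{i \mid jk}(v \cdot (q^3 - q))_3 = 1$ for arbitrary $v \colon \Delta$.

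For (II), the identities $c_{i \mid jk}(u) = c_{k \mid ji}(u)^{-1} = c_{i \mid -k, -j}(u)^{-1} = c_{-j \mid -i, k}(u)^{-1}$ of lemma \ref{b3-2} correspond to Weyl elements of determinant $-1$ but together generate only the $\mathrm S_3$-subgroup of $\mathrm W(\mathsf B_3)$ acting by position permutations, not single-coordinate sign flips. The missing sign flip is recovered by reinterpreting the same identity of lemma \ref{b3-5} after (I) is in place: with $u = \iota \cdot s$, both sides collapse in $\Delta_3$ (using $s^3 = s$ and $s^* = s$) to $c_{k \mid ij}(\iota \cdot s p)_3 = c_{k \mid i, -j}(\iota \cdot s p)_3^{-1}$, which by weak unitality and the surjectivity of $r \mapsto \iota \cdot r$ onto $\Delta_3$ extends to $c_{k \mid ij}(v)_3 = c_{k \mid i, -j}(v)_3^{-1}$ for every $v \colon \Delta_3$. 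This is a sign flip on a single coordinate of determinant $-1$, matching $c \mapsto c^{-1}$; combining it with the $\mathrm S_3$ symmetries supplies sign flips on every coordinate, and together these generate all of $\mathrm W(\mathsf B_3)$. Since every generator inverts $c$ and has determinant $-1$, the sign assignment on compositions is precisely $\det(\sigma)$.

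The main obstacle is step (I), specifically extracting the bare $p^3 = p$ relation from the derived identity with the $\rho(u)$ factor: the reduction depends on a careful interplay of weak unitality, the non-unital $\iota$-operations, and the commutativity and involution invariance from lemma \ref{b3-2}, all internal to the infinitary pretopos. Once factorisation through $\Delta_3$ is achieved, the Weyl symmetry follows almost formally from the same identity of lemma \ref{b3-5}.
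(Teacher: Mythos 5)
The overall architecture of your argument matches the paper's: both derive the key $p^3 = p$ relation and the single-coordinate sign flip from the $3$-torsion identity $c_{k\mid ij}(u\cdot\rho(u)p)_3 = c_{k\mid i,-j}(u\cdot p)_3^{-1}$ of lemma~\ref{b3-5}, and both combine the sign flip with the lemma~\ref{b3-2} symmetries to generate $\mathrm W(\mathsf B_3)$ with $\det$-signs. Your derivation of $c(u)_3 = c(\iota\cdot\langle\iota,u\rangle)_3$ by squaring the lemma~\ref{b3-2} relation $c(u)^2 = c(\iota\cdot\langle\iota,u\rangle)^2$ and using $c_3 = (c^2)^2$ is a genuine simplification; the paper instead linearizes the longer $3$-torsion identity of lemma~\ref{b3-5} on $p$.

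However, there is a gap in stage~(I). Verifying the axioms of the variety (commutativity, $\lambda^{\pm1}$, involution-invariance, $\phi(R)$, char~$3$, $p^3=p$, $u = \iota\cdot\langle\iota,u\rangle$) is \emph{not} enough to conclude that $c(-)_3$ factors through $\Delta_3$. The kernel of $\Delta\to\Delta_3$ contains not only the axiom instances but also elements forced by propagation through $R$: since $\rho(u) = \langle\iota,u\rangle^2$ and $\langle u,v\rangle = \langle\iota,u\rangle\langle\iota,v\rangle$ hold in $R_3$ (as easy corollaries derived \emph{using} the $\Delta$-axiom), the elements $w\cdot\bigl(\rho(u) - \langle\iota,u\rangle^2\bigr)$ and $w\cdot\bigl(\langle u,v\rangle - \langle\iota,u\rangle\langle\iota,v\rangle\bigr)$ lie in $\ker(\Delta\to\Delta_3)$ without lying in the kernel cut out by the $R$-axioms alone. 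A concrete illustration: take $R=\mathbb F_3$ (so the $R$-axiom ideal vanishes) and $\Delta = \mathbb F_3^2$ with $\rho(u,v) = u^2+v^2$, $\iota = (1,0)$; then $\rho - \langle\iota,-\rangle^2$ is nonzero, $R_3 = 0$, and $\Delta_3 = 0$, yet your verifications would only force $c(-)_3$ to factor through $\mathbb F_3 \neq \Delta_3$. The paper's final paragraph proves precisely the needed identities $c(u\cdot\rho(v))_3 = c(u\cdot\langle\iota,v\rangle^2)_3$ and its linearization, using the sign-flip symmetry together with the longer $3$-torsion identity of lemma~\ref{b3-5} (with $j'=j$ and $j'=-j$). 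You already have the sign flip, so this can be repaired, but the missing step must be supplied before the factorization claim is justified.

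A smaller imprecision: the three symmetries from lemma~\ref{b3-2} are not literally position permutations --- two of them involve double sign flips --- but they do generate a subgroup isomorphic to $\mathrm S_3$ (it is $\mathrm W(\mathsf A_2)$ for the roots $\e_i - \e_k$, $\e_j + \e_k$, $\e_i + \e_j$), so your conclusion that one additional single-coordinate sign flip completes $\mathrm W(\mathsf B_3)$ is correct.
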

\begin{proof}
	Most identities are proved in lemma \ref{b3-2}. The identity
	\(
		c_{i \mid jk}(u \cdot \rho(u) p)_3
		=
		c_{i \mid j, -k}(u \cdot p)_3^{-1}
	\) from lemma \ref{b3-5} implies
	\[
		c_{i \mid jk}(u \cdot p q)_3
		=
		c_{i \mid j, -k}(u \cdot \rho(u) p^3 q)_3^{-1}
		=
		c_{i \mid jk}(u \cdot p^3 q)_3,
		\quad
		c_{i \mid jk}(u \cdot p^3)_3
		=
		c_{i \mid jk}(u \cdot p)_3.
	\]
	The symmetry relation follows from lemma \ref{b3-2} and
	\[
		c_{i \mid jk}\bigl(
			\iota \cdot p \langle \iota, u \rangle
		\bigr)_3
		=
		c_{i \mid j, -k}\bigl(
			\iota \cdot p^3 \langle \iota, u \rangle
		\bigr)_3^{-1}
		=
		c_{i \mid j, -k}\bigl(
			\iota \cdot p \langle \iota, u \rangle
		\bigr)_3^{-1}.
	\]
	Finally, linearizing the identity
	\[
		c_{k \mid ij'}\bigl(
			u \cdot \langle u, v \rangle p^2
		\bigr)_3\,
		c_{k \mid ij}(v \cdot \rho(u) p)_3\,
		=
		c_{k \mid ij}\bigl(
			u \cdot \langle u, v \rangle p
		\bigr)_3\,
		c_{k \mid ij'}(v \cdot \rho(u) p^2)_3
	\]
	from lemma
	\ref{b3-5} by
	\( p \) we get
	\[
		c_{i \mid jk}(v \cdot \rho(u))_3\,
		=
		c_{i \mid jk}\bigl(
			u \cdot \langle u, v \rangle
		\bigr)_3,
	\]
	so
	\[
		c_{i \mid jk}(u \cdot \rho(v))_3
		=
		c_{i \mid jk}\bigl(
			\iota \cdot \langle \iota, u \rangle \rho(v)
		\bigr)_3
		=
		c_{i \mid jk}\bigl(
			v
			\cdot
			\langle \iota, u \rangle
			\langle \iota, v \rangle
		\bigr)_3
		=
		c_{i \mid jk}\bigl(
			u \cdot \langle \iota, v \rangle^2
		\bigr)_3
	\]
	and the similar relation with
	\( \langle v, w \rangle \) instead of
	\( \rho(v) \) is its linearization.
\end{proof}

\begin{lemma}
	\label{b3-7}
	Again suppose that
	\( R \) is associative. Then
	\begin{align*}
		c_{i \mid jk}\bigl(
			u \cdot (p^2 - p) (q^2 - q)
		\bigr)_2
		&=
		1,
		&
		c_{i \mid jk}(u)_2
		&=
		c_{w(i) \mid w(j) w(k)}(u)_2
		\text{ for }
		w \in \mathrm W(\mathsf D_3),
		\\
		c_{i \mid jk}(u)_2\,
		c_{i \mid jk}\bigl(
			\iota \cdot \langle \iota, u \rangle
		\bigr)_2
		&=
		c_{-i \mid jk}(u)_2,
		&
		c_{i \mid jk}(\iota \cdot p)_2
		&=
		c_{w(i) \mid w(j) w(k)}(\iota \cdot p)_2
		\text{ for }
		w \in \mathrm W(\mathsf B_3),
		\\
		c_{i \mid jk}\bigl(
			\iota \cdot \langle \iota, u \rangle (p^2 - p)
		\bigr)_2
		&=
		1,
		&
		c_{i \mid jk}\bigl( u \cdot (p^2 - p) \bigr)_2
		&=
		c_{w(i) \mid w(j) w(k)}\bigl
			(u \cdot (p^2 - p)
		\bigr)_2
		\text{ for }
		w \in \mathrm W(\mathsf B_3).
	\end{align*}
	Also,
	\[
		b_{ijj'k}(p, \lambda^{\pm 1} q)
		=
		b_{ijj'k}(p, q)\,
		c_{k \mid ij}\bigl(
			\iota \cdot (p^2 - p) q
		\bigr)_2,
		\quad
		b_{ijj'k}(p \lambda^{\pm 1} q, r)
		=
		b_{ijj'k}(p q, r)\,
		c_{k \mid ij}\bigl(
			\iota \cdot (p^2 q^2 - p q) r
		\bigr)_2.
	\]
\end{lemma}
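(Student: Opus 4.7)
The plan is to derive all seven identities directly from the 2-torsion specializations of the formulas in lemma \ref{b3-5}, combined with the symmetries already established in lemma \ref{b3-2}. The crucial ingredients are the cocycle relation $b_{ijj''k}(pq, r) = b_{ijj'k}(p, qr)\, b_{ij'j''k}(q, rp^*)$, the swap symmetry $b_{ijj'k}(p,q) = b_{i,-j',-j,k}(\lambda_{j'}^*p^*\lambda_j, \lambda\lambda_j^*\lambda_{j'}^*q)$, and the explicit formula $b_{i,-j,j,k}(\lambda_j^*\rho(u), p) = c_{k \mid ij}(u \cdot \rho(u)p)_2$.

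First I would handle the Weyl-group symmetries. The four symmetry relations of lemma \ref{b3-2} reduce modulo 2-torsion to relations of the form $c_{i \mid jk}(u)_2 = c_{\sigma(i) \mid \sigma(j)\sigma(k)}(u)_2$ for $\sigma$ running through a generating set; a direct check shows that this set generates exactly $\mathrm W(\mathsf D_3)$, giving identity 4. The extra $i \mapsto -i$ needed to extend to $\mathrm W(\mathsf B_3)$ is furnished by identity 2, which I would prove by feeding the $\rho$-identity above with $u$ replaced by $\dotminus\iota$ (so $\rho(u)$ acquires a $\lambda$) into the $b$-swap symmetry to reverse the roles of $j$ and $-j$; the discrepancy between $c_{-i \mid jk}$ and $c_{i \mid jk}$ is precisely the $\iota\cdot\langle\iota,u\rangle$ correction term. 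The two specialized symmetries (identities 5 and 6) then follow from identity 2 once the correction term is shown to vanish: for $u = \iota\cdot p$ we have $\langle\iota, u\rangle = \langle\iota,\iota\rangle\, p = (-1-\lambda)\,p$, and $c_{i \mid jk}(\iota\cdot(-1-\lambda)p)_2$ simplifies to $c_{i \mid jk}(\iota\cdot p)_2^2 = 1$ using the $\lambda^{\pm 1}$-invariance of lemma \ref{b3-2} and the 2-torsion; for $u\cdot(p^2-p)$ one invokes identity 3.

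Next I would prove the two Boolean-type vanishings (identities 1 and 3) by the strategy of proposition \ref{a3-schur}. Expanding $b_{ijj'k}(pq, \langle u, v\rangle)_2$ in two different ways using the cocycle relation (choosing an intermediate index on either side, then using the formula for $b_{ijj'k}\bigl(p, \langle u, v\rangle\bigr)_2$ from lemma \ref{b3-5}) and equating the resulting expressions, weak unitality absorbs the auxiliary variables and produces a vanishing of the form $c_{k \mid ij}\bigl(u\cdot\langle u,v\rangle \cdot p(q^2-q)\bigr)_2 = 1$. Iterating this procedure and specializing $u, v$ to scaled copies of $\iota$ (so $\langle u, v\rangle$ reduces to a product of ``scalars'') yields identity 1; specializing $u = \iota$ and using $\langle\iota, v\rangle$ yields identity 3.

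Finally the two $\lambda^{\pm 1}$ identities for $b_{ijj'k}$ at the bottom are obtained by expanding $\lambda = \rho(\dotminus\iota)$ (from $\rho(\iota)=1$ and $\rho(\dotminus u) = \rho(u)^*\lambda$), then applying the $\rho$-identity of lemma \ref{b3-5} with $u = \iota$ to rewrite the $\lambda$-insertion as a $c_{k \mid ij}(\iota\cdot(\cdots))_2$ correction, and distributing via the $b$-cocycle; the already-proved Boolean-square identities simplify the correction to the stated form.

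The main obstacle is bookkeeping: the $\mathsf B_3$-ring axioms involve a large system of signs, index conventions ($\lambda_i \in \{1, \lambda\}$), an involution, and the maps $\phi$, $\rho$, $\langle-,=\rangle$, and the permutation of indices required for each identity must be tracked carefully through the cocycle and swap relations of lemma \ref{b3-5}. The conceptual core, however, is the double-square cancellation (identity 1), which requires the same ``swap two arguments and subtract'' trick used in the $\mathsf A_3$-case of proposition \ref{a3-schur}.
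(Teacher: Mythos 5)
The central error is in the claim that the $\mathrm W(\mathsf D_3)$-symmetry (your ``identity 4'') follows directly from Lemma~\ref{b3-2}. Modulo $2$-torsion the chain of equalities in Lemma~\ref{b3-2} gives three index-symmetries, namely $(i,j,k)\mapsto(k,j,i)$, $(i,j,k)\mapsto(i,-k,-j)$, and $(i,j,k)\mapsto(-j,-i,k)$. Writing these as $a$, $b$, $c$ one checks that $a=bcb$, so they generate $\langle b,c\rangle$, which has order $6$ (it is a dihedral $S_3$: $b^2=c^2=(bc)^3=1$), not $|\mathrm W(\mathsf D_3)|=24$. For instance $(2,1,3)$ does not lie in the orbit of $(1,2,3)$ under $\langle a,b,c\rangle$, even though the transposition $(12)$ is in $\mathrm W(\mathsf D_3)$. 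So the ``direct check'' fails: the $\mathrm W(\mathsf D_3)$-invariance for general $u$ is not a consequence of Lemma~\ref{b3-2} alone. In the paper one first establishes the single-sign-change correction (your identity 2) and the identity $c_{k\mid ij}(\iota\cdot p)_2=c_{k\mid i,-j}(\iota\cdot p)_2$; applying the correction twice then kills itself (it is $2$-torsion), and only this extends the order-$6$ symmetry group to all of $\mathrm W(\mathsf D_3)$. Your plan therefore has the logical dependency backwards, and more importantly has no independent source for the missing symmetries.

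The derivation you sketch for identity 2 is also too thin to be trusted. Substituting $u=\dotminus\iota$ in $b_{i,-j,j,k}(\lambda_j^*\rho(u),p)=c_{k\mid ij}(u\cdot\rho(u)p)_2$ produces only $c_{k\mid ij}(\iota\cdot p)_2$ on the right, with no trace of the general correction $c_{k\mid ij}(\iota\cdot\langle\iota,u\rangle)_2$: the quantity $\langle\iota,u\rangle$ never appears if you only ever put $\iota$ (or its negative) into $u$. The paper instead expands $b_{i,-j,j,k}(\lambda_j^*\rho(u),\langle v,w\rangle)$ in two ways, once via the $\rho$-formula and once via the $\langle{-},{=}\rangle$-formula, and it is the second argument $\langle v,w\rangle$ that, after a chain of weak-unitality specializations, generates the $\langle\iota,u\rangle$-term. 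A similar remark applies to identities 1 and 3: the paper gets these from the shift-invariance $b_{ijj'k}(p,\langle u\cdot q,v\rangle)=b_{ijj'k}(p,\langle u,v\cdot q^*\rangle)$ fed into the $\langle{-},{=}\rangle$-formula of Lemma~\ref{b3-5}, which is genuinely different from your proposal to re-expand the cocycle relation with two different intermediate indices, and the latter has not been checked to close.
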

\begin{proof}
	Recall the relations
	\begin{align*}
		\tag{
			\( \blacklozenge \)
		}
		\label{b3-7-pi-r}
		b_{ijj'k}\bigl( p, \langle u, v \rangle \bigr)
		&=
		c_{k \mid ij}\bigl(
			u \cdot \langle u, v \rangle p
		\bigr)_2\,
		c_{k \mid ij}(v \cdot \rho(u) p)_2\,
		c_{k \mid ij'}\bigl(
			u \cdot \langle u, v \rangle p^2
		\bigr)_2\,
		c_{k \mid ij'}(v \cdot \rho(u) p^2)_2,
		\\
		\tag{
			\( \S \)
		}
		\label{b3-7-rho}
		b_{i, -j, j, k}(\lambda_j^* \rho(u), p)
		&=
		c_{k \mid ij}(u \cdot \rho(u) p)_2,
		\\
		\tag{
			\( \surd \)
		}
		\label{b3-7-pi-l}
		b_{ijj'k}\bigl( \langle u, v \rangle, p \bigr)
		&=
		c_{k \mid i, -j}\bigl(
			u \cdot \langle u, v \rangle p
		\bigr)_2\,
		c_{k \mid i, -j}(v \cdot \rho(u) p)_2\,
		c_{k \mid ij'}\bigl(
			v \cdot \langle u, v \rangle p
		\bigr)_2\,
		c_{k \mid ij'}(u \cdot \rho(v) p)_2
	\end{align*}
	from lemma \ref{b3-5}. From (\ref{b3-7-pi-r}) and lemma \ref{b3-2} we get
	\begin{align*}
		b_{ijj'k}\bigl(
			p,
			\langle u \cdot q, v \rangle
		\bigr)
		&=
		b_{ijj'k}\bigl(
			p,
			\langle u, v \cdot q^* \rangle
		\bigr),
		\\
		c_{k \mid ij}\bigl(
			u \cdot \langle u, v \rangle p (q^2 - q)
		\bigr)_2\,
		c_{k \mid ij}\bigl(
			v \cdot \rho(u) p (q^2 - q)
		\bigr)_2
		&=
		c_{k \mid ij'}\bigl(
			u \cdot \langle u, v \rangle p^2 (q^2 - q)
		\bigr)_2\,
		c_{k \mid ij'}\bigl(
			v \cdot \rho(u) p^2 (q^2 - q)
		\bigr)_2,
		\\
		c_{k \mid ij}\bigl(
			\iota \cdot p (q^2 - q) r
		\bigr)_2
		&=
		c_{k \mid ij'}\bigl(
			\iota \cdot p^2 (q^2 - q) r
		\bigr)_2,
		\\
		c_{k \mid ij}\bigl(
			\iota \cdot (p^2 - p) (q^2 - q) r
		\bigr)_2
		&=
		1,
		\\
		c_{k \mid ij}\bigl(
			\iota \cdot p (q^2 - q)
		\bigr)_2
		&=
		c_{k \mid i, -j}\bigl(
			\iota \cdot p (q^2 - q)
		\bigr)_2,
		\\
		c_{k \mid ij}\bigl(
			v \cdot p (q^2 - q)
		\bigr)_2
		&=
		c_{k \mid ij'}\bigl(
			v \cdot p^2 (q^2 - q)
		\bigr)_2,
		\\
		c_{k \mid ij}\bigl(
			u \cdot (p^2 - p) (q^2 - q)
		\bigr)_2
		&=
		1,
		\\
		c_{k \mid ij}\bigl( u \cdot (p^2 - p) \bigr)_2
		&=
		c_{k \mid i, -j}\bigl( u \cdot (p^2 - p) \bigr)_2.
	\end{align*}
	Simplifying
	\(
		b_{i, -j, j, k}\bigl(
			\lambda_j^* \rho(u),
			\langle v, w \rangle
		\bigr)
	\) in two ways via (\ref{b3-7-pi-r}) and (\ref{b3-7-rho}) we get
	\begin{align*}
		c_{k \mid ij}\bigl(
			v \cdot \langle v, w \rangle \rho(u)^2
		\bigr)_2\,
		c_{k \mid ij}(w \cdot \rho(v) \rho(u)^2)_2\,
		c_{k \mid ij}\bigl(
			u \cdot \rho(u) \langle v, w \rangle
		\bigr)_2
		&=
		c_{k \mid i, -j}\bigl(
			v \cdot \langle v, w \rangle \rho(u)
		\bigr)_2\,
		c_{k \mid i, -j}(w \cdot \rho(v) \rho(u))_2,
		\\
		c_{k \mid ij}(\iota \cdot p \rho(u)^2)_2
		&=
		c_{k \mid i, -j}(\iota \cdot p \rho(u))_2,
		\\
		c_{k \mid ij}(\iota \cdot p)_2
		&=
		c_{k \mid i, -j}(\iota \cdot p)_2,
		\\
		c_{k \mid ij}\bigl(
			\iota \cdot p (\rho(u)^2 - \rho(u))
		\bigr)_2
		&=
		1,
		\\
		c_{k \mid ij}(w \cdot p^2 q^2)_2\,
		c_{k \mid ij}\bigl(
			\iota \cdot \langle \iota, w \rangle p q^3
		\bigr)_2
		&=
		c_{k \mid i, -j}(w \cdot p^2 q^2)_2,
		\\
		c_{k \mid i, -j}(u)_2
		&=
		c_{k \mid ij}(u)_2\,
		c_{k \mid ij}\bigl(
			\iota \cdot \langle \iota, u \rangle
		\bigr)_2,
		\\
		c_{k \mid ij}\bigl(
			\iota \cdot \langle \iota, u \rangle (p^2 - p)
		\bigr)_2
		&=
		1.
	\end{align*}
	Now the symmetry relations follows from lemma \ref{b3-2}. The last two identities follow from by substitution
	\( u = \iota \cdot q \),
	\( v = \iota \cdot r \) in (\ref{b3-7-pi-r}) and (\ref{b3-7-pi-l}).
\end{proof}

\begin{lemma}
	\label{b3-8}
	Still suppose that
	\( R \) is associative. Then there are unique
	\(
		b_4
		\colon
		I_4 \to \schur(\stlin(\mathsf B_3, R, \Delta))
	\) and
	\(
		b_\eps
		\colon
		R_{2 \eps}
		\to
		\schur(\stlin(\mathsf B_3, R, \Delta))
	\) such that
	\[
		b_{ijjk}(p, q)
		=
		b_4\bigl( (p + p^*) q \bigr)\,
		b_\eps\bigl( (p^2 - p) q \bigr),
		\quad
		b_{i, -j, j, k}(p, q)
		=
		b_{ijjk}(p, q)\,
		c_{i \mid jk}(\iota \cdot p q)_2,
		\quad
		b_\eps(p^2) = 1.
	\]
	Moreover,
	\(
		c_{i \mid jk}\bigl(\iota \cdot (p^2 - p) q \bigr)
		=
		1
	\).
\end{lemma}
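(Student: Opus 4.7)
The plan is to construct $b_\eps$ and $b_4$ independently, then verify the three claimed identities. By lemma \ref{b3-5} the morphism $b_{ijjk}(p, q)$ is biadditive and $2$-torsion, so its image lies in a $2$-torsion abelian group and all manipulations proceed additively modulo $2$.

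First I would construct $b_\eps$. The key is the $\lambda$-twist identity
\[
b_{ijjk}(p, \lambda^{\pm 1} q) = b_{ijjk}(p, q)\, c_{k \mid ij}\bigl(\iota \cdot (p^2 - p) q\bigr)_2
\]
from lemma \ref{b3-7}, which exposes the ``idempotent deviation'' $(p^2 - p) q$ as a biadditive $c$-value. Combined with the annihilations $c_{k \mid ij}(\iota \cdot (p^2 - p)(q^2 - q) r)_2 = 1$ and $c_{k \mid ij}(\iota \cdot \langle \iota, u \rangle (p^2 - p))_2 = 1$ from the same lemma, this extracts a biadditive homomorphism $b_\eps \colon R_{2\eps} \to \schur(\stlin(\mathsf B_3, R, \Delta))$ on the universal factor in the variety imposing $p = p^*$, $2p = 0$, $pq = qp$, $(p^2 - p)(q^2 - q) = 0$. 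Recall that $R_{2\eps} = I_\eps \rtimes R_2$ where the projection to $I_\eps$ is the polynomial $p^2 - p$ and the Boolean section is $p \mapsto p^2$; the normalization $b_\eps(p^2) = 1$ is equivalent to saying that $b_\eps$ factors through the projection $R_{2\eps} \to I_\eps$, which is precisely how $b_\eps$ arises from the $\lambda$-twist identity.

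For $b_4$ I would use the symmetry $b_{ijj'k}(p, q) = b_{i, -j', -j, k}(\lambda_{j'}^* p^* \lambda_j, \lambda \lambda_j^* \lambda_{j'}^* q)$ of lemma \ref{b3-5} with $j' = j$, combined with the second claimed identity $b_{i, -j, j, k}(p, q) = b_{ijjk}(p, q)\, c_{i \mid jk}(\iota \cdot pq)_2$, to relate $b_{ijjk}(p^*, q)$ to $b_{ijjk}(p, q)$ modulo $c$-terms. In characteristic $2$ the quantity $b_{ijjk}(p, q)\, b_{ijjk}(p^*, q)$ then depends only on $(p + p^*) q$. Since $I_4 = \Image([\![ p(q - q^*) ]\!])$ and $(p + p^*) q = q(p + p^2) \in I_4$ after passing to $R_4$, this canonically defines $b_4 \colon I_4 \to \schur(\stlin(\mathsf B_3, R, \Delta))$ on generators; well-definedness on $I_4$ uses the cocycle identity $b_{ijj''k}(pq, r) = b_{ijj'k}(p, qr)\, b_{ij'j''k}(q, rp^*)$ from lemma \ref{b3-5} together with the $\lambda$-twist identities of lemma \ref{b3-7} to absorb the $c$-corrections into $b_\eps$.

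The main obstacle is bookkeeping: verifying that every $c$-correction appearing in the identities of lemmas \ref{b3-5} and \ref{b3-7} is precisely matched when $b_4((p + p^*) q)\, b_\eps((p^2 - p) q)$ is substituted for $b_{ijjk}(p, q)$. The second claimed identity should emerge by specializing the cocycle identity with $(j', j'') = (j, -j)$ and rewriting the resulting $b_{i, j, -j, k}$ via the $\rho$-identity $b_{i, -j, j, k}(\lambda_j^* \rho(u), p) = c_{k \mid ij}(u \cdot \rho(u) p)_2$ from lemma \ref{b3-5}. Uniqueness is immediate from weak unitality, since the subobjects $[\![ (p + p^*) q ]\!]$ and $[\![ (p^2 - p) q ]\!]$ together with the squares generate $I_4$ and $R_{2\eps}$ respectively.
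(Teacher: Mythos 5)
Your construction of $b_\eps$ does not work. The $\lambda$-twist identity from lemma \ref{b3-7} expresses the \emph{difference} $b_{ijjk}(p, \lambda q)\, b_{ijjk}(p, q)^{-1}$ as a $c$-value, whereas the lemma's $b_\eps\bigl((p^2-p)q\bigr)$ is a genuine constituent of $b_{ijjk}(p, q)$ in the multiplicative decomposition $b_{ijjk}(p,q) = b_4\bigl((p+p^*)q\bigr)\, b_\eps\bigl((p^2-p)q\bigr)$; these are not the same object. In fact the paper shows (relation (\ref{b3-8-i2})) that the $c$-value $c_{i\mid jk}\bigl(\iota\cdot(p^2-p)q\bigr)_2$ appearing in the $\lambda$-twist identity vanishes entirely, so that identity is ultimately an \emph{invariance} statement and cannot ``expose'' a nontrivial $b_\eps$. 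The actual construction defines an index-independent $b_{=}(p,q)$, proves $b_{=}(p,q^*r)=b_{=}(p,q^2r)$ and the commutativity $b_{=}(pqrs,t)=b_{=}(prqs,t)$ from the longest identity of lemma \ref{b3-5} (which you do not invoke), and then splits $b_{=}(p,q)$ into $b_4(p,q)=b_{=}(p^2,q)$ and $b_\eps(p,q)=b_{=}(p^2-p,q)$. Without the vanishing (\ref{b3-8-i2}) and the commutativity of $b_{=}$ you cannot establish well-definedness of $b_4$ on $I_4$ or of $b_\eps$ on $R_{2\eps}$.

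There is also a second gap: you propose deriving the identity $b_{i,-j,j,k}(p,q) = b_{ijjk}(p,q)\, c_{i\mid jk}(\iota\cdot pq)_2$ by ``specializing the cocycle identity with $(j', j'')=(j,-j)$,'' but that substitution produces $b_{i,j,-j,k}$ on both sides and does not yield the claim; moreover the $\rho$-identity concerns $b_{i,-j,j,k}$ and not $b_{i,j,-j,k}$, so the two pieces do not interlock. The paper instead derives this identity from the quadratic relation (\ref{b3-8-quad}) with the substitution $i'=-i$, $j=j'$, $q=\eps^4$, followed by the $\rho$-identity. You also use this identity earlier as an ingredient in constructing $b_4$, which is circular since it is part of the lemma's conclusion.
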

\begin{proof}
	Recall the Leibniz rule
	\[
		b_{ijj''k}(p q, r)
		=
		b_{ijj'k}(p, q r)\,
		b_{ij'j''k}(q, r p^*)
	\]
	from lemma \ref{b3-5}. First taking
	\( j = j' = j'' \),
	\( p = q = \eps \colon \mathcal E \)
	and then taking
	\( j' = j'' \) and
	\( q = \eps^2 \) for
	\( \eps \colon \mathcal E \) we get
	\[
		b_{ijjk}(\eps^2, p) = 1,
		\quad
		b_{ijj'k}(p \eps^2, q)
		=
		b_{ijj'k}(p, \eps^2 q).
	\]
	Next look at the identity
	\[
		\tag{
			\( \blacksquare \)
		}
		\label{b3-8-quad}
		b_{-i, j', j, k}(q r, \lambda_i p s)\,
		b_{-i, -j', j, k}(p r, \lambda_{-i} q s)
		=
		b_{-j, i', i, k}(q, \lambda_j p r^2 s)\,
		b_{-j, -i', i, k}(p, \lambda_{-j} q r^2 s)
	\]
	from lemma \ref{b3-5} (we simplified this identity using lemma
	\ref{b3-7}). Substituting
	\( i = i' \),
	\( j = j' \),
	\( q = \eps^2 \),
	\( r = \eta^2 \) for
	\( \eps, \eta \colon \mathcal E \) we get
	\begin{align*}
		b_{-i, -j, j, k}(p, \lambda_{-i} \eta^2 s)
		&=
		b_{-j, -i, i, k}(p, \lambda_{-j} \eta^4 s),
		\\
		b_{-i, -j, j, k}(p, \lambda_{-i} \eps^2 \eta^2 s)
		&=
		b_{-j, -i, i, k}(p, \lambda_{-j} \eps^4 \eta^2 s)
		=
		b_{-i, -j, j, k}(p, \lambda_{-i} \eps^4 \eta^4 s),
		\\
		b_{-i, -j, j, k}(p, \eps^2 s)
		&=
		b_{-i, -j, j, k}(p, \eps^4 s),
		\\
		b_{-i, -j, j, k}(p, \lambda_{-i} s)
		&=
		b_{-j, -i, i, k}(p, \lambda_{-j} s).
	\end{align*}
	Taking
	\( i = i' \),
	\( j = j' \),
	\( r = \eps^2 \) we get
	\begin{align*}
		b_{-i, j, j, k}(q, \lambda_i \eps^2 s)
		&=
		b_{-j, i, i, k}(q, \lambda_j \eps^4 s),
		\\
		b_{-i, j, j, k}(q, \lambda_i \eps^2 \eta^2 s)
		&=
		b_{-j, i, i, k}(q, \lambda_j \eps^4 \eta^2 s)
		=
		b_{-i, j, j, k}(q, \lambda_i \eps^4 \eta^4 s),
		\\
		b_{-i, j, j, k}(p, \eps^2 q)
		&=
		b_{-i, j, j, k}(p, \eps^4 q),
		\\
		b_{-i, j, j, k}(p, \lambda_i q)
		&=
		b_{-j, i, i, k}(p, \lambda_j q).
	\end{align*}
	Now let
	\( i = i' \),
	\( j = j' \),
	\( q = \eps^2 \), so
	\begin{align*}
		b_{-i, -j, j, k}(p r, s)
		&=
		b_{-i, j, j, k}(r, \lambda p s)\,
		b_{-i, -j, j, k}(p, r^2 s).
	\end{align*}
	Then the result of a more general substitution
	\( i = i' \),
	\( j = j' \) can be simplified to get
	\begin{align*}
		b_{ijjk}(q r, s)
		&=
		b_{ijjk}(q, r^2 s)\,
		b_{ijjk}(r, q s),
		\\
		b_{ijjk}(r, (q - q^*) s)
		&=
		b_{ijjk}(q, (r^2 - r) s).
	\end{align*}
	Next take
	\( i' = -i \),
	\( j = j' \),
	\( q = \eps^4 \) and apply the identity
	\[
		b_{i, -j, j, k}(\lambda_j^* \rho(u), p)
		=
		c_{k \mid ij}(u \cdot \rho(u) p)_2
	\]
	from lemma \ref{b3-5}, so
	\[
		b_{i, -j, j, k}(p, q)
		=
		b_{i, j, j, k}(p, q)\,
		c_{i \mid jk}(\iota \cdot p q).
	\]
	Then
	\begin{align*}
		b_{ijjk}(p, q r)\,
		b_{ijjk}(q, p^* r)
		=
		b_{ijjk}(p q, r)
		&=
		b_{i, j, -j, k}(p, q r)\,
		b_{ijjk}(q, p^* r)\,
		c_{i \mid jk}(\iota \cdot p q r)_2,
		\\
		b_{ijjk}(p, q)\,
		&=
		b_{i, j, -j, k}(p, q)\,
		c_{i \mid jk}(\iota \cdot p q)_2,
		\\
		b_{i, -j, -j, k}(p, q)
		&=
		b_{ijjk}(p, q).
	\end{align*}
	
	Note that the identity
	\(
		b_{ijj'k}(p, q)
		=
		b_{i, -j', -j, k}(p^*, \lambda q)
	\) from lemma \ref{b3-5} means that
	\[
		b_{ijjk}(p^*, q)
		=
		b_{ijjk}(p, q)\,
		c_{i \mid jk}(\iota \cdot (p^2 - p) q)_2.
	\]
	Combining this with
	\(
		b_{ijjk}(p, \lambda_i q)
		=
		b_{jiik}(p, \lambda_j q)
	\) and the identity
	\(
		b_{ijj'k}(p, q)
		=
		b_{-k, -j', -j, -i}(p^*, \lambda_i \lambda_k q)
	\) from the lemma \ref{b3-5} we see that
	\[
		b_{=}(p, q)
		=
		b_{ijjk}(p, \lambda_i q)
		=
		b_{i, -j, j, k}(p, \lambda_i q)\,
		c_{i \mid jk}(\iota \cdot p q)_2
	\]
	is independent of the indices.
	
	Now take the longest identity from lemma \ref{b3-5}, after simplification it takes the form
	\[
		b_{-i, j', j, k}(q r, \lambda_i p^* s)\,
		b_{-i', j', j, k}(q r, \lambda_{i'} p p^* q s)
		=
		b_{-j, i, i', k}(p q, \lambda_j p q r^2 s)\,
		b_{-j', i, i', k}(p q, \lambda_{j'} p r s)
	\]
	because the remaining four factors
	\[
		c_{k \mid ij}(u \cdot p^2 q^2 r^2 s)_2\,
		c_{k \mid ij'}(u \cdot p^2 q r s)_2\,
		c_{k \mid i'j}(u \cdot p q r^2 s)_2\,
		c_{k \mid -i', -j'}(u \cdot p q^2 r s)_2
	\]
	cancel out by lemma \ref{b3-8}. Expressing everything in terms of
	\( b_{=}({-}, {=}) \) we get
	\begin{align*}
		\tag{
			\( \blacktriangle \)
		}
		\label{b3-8-i2}
		c_{i \mid jk}\bigl(
			\iota \cdot (p^2 - p) q
		\bigr)_2 &= 1,
		\\
		b_{=}(q r, p^* s)\,
		b_{=}(q r, p p^* q s)
		&=
		b_{=}(p q, p q r^2 s)\,
		b_{=}(p q, p r s).
	\end{align*}
	Expand the last identity with
	\( q = \eps^2 \).
	\begin{align*}
		b_{=}(r, p^* s)\,
		b_{=}(r, p p^* s)
		&=
		b_{=}\bigl( p, p (r^2 - r) s \bigr)
		=
		b_{=}\bigl( r, p (p - p^*) s \bigr)
		\\
		b_{=}(p, q^* r)
		&=
		b_{=}(p, q^2 r).
	\end{align*}
	The last identity together with the two forms of the Leibniz rule mean that
	\( b_{=}(p q, r) = b_{=}(q p, r) \) and, more generally,
	\[
		b_{=}(p q r s, t) = b_{=}(p r q s, t).
	\]
	
	Now let
	\[
		b_4(p, q) = b_{=}(p^2, q),
		\quad
		b_\eps(p, q) = b_{=}(p^2 - p, q),
	\]
	so
	\( b_{=}(p, q) = b_4(p, q)\, b_\eps(p, q) \). The morphism
	\( b_4 \) is well defined on
	\( R_4 \times R_4 \) because
	\begin{align*}
		b_4\bigl( p (q^2 - q^*), r \bigr)
		&=
		b_{=}\bigl( p^2 (q^2 - q^*), (q^2 - q^*) r \bigr)\,
		b_{=}\bigl(
			q^2 - q^*,
			p^{* 2} (q^2 - q^*)^* r
		\bigr)
		=
		1,
		\\
		b_4\bigl(
			p,
			\langle u, v \rangle
		\bigr)
		&=
		b_{=}\bigl(
			p^2,
			\langle u, v \rangle
		\bigr)
		=
		1,
		\\
		b_4\bigl( p, \rho(\iota \cdot \rho(u)) q \bigr)
		&=
		b_{=}(p^2, \rho(u)^2 q)
		=
		b_{=}(p^2, \rho(u)^* q)
		=
		b_4(p, \rho(u) q),
		\\
		b_4(\rho(u), p)
		&=
		b_{=}\bigl(
			\rho(u),
			p (\rho(u) + \rho(u)^*)
		\bigr)
		=
		1,
		\\
		b_4\bigl( \langle u, v \rangle, q \bigr) &= 1,
		\\
		b_4\bigl( p \rho(\iota \cdot \rho(u)), q \bigr)
		&=
		b_4\bigl( p, \rho(\iota \cdot \rho(u)) q \bigr)
		=
		b_4(p, \rho(u) q)
		=
		b_4(p \rho(u), q).
	\end{align*}
	We claim that there is a well defined morphism
	\[
		b_4
		\colon
		I_4 \to \schur(\stlin(\mathsf B_3, R, \Delta)),
		\quad
		b_4(p, q) = b_4((p + p^*) q).
	\]
	Indeed,
	\(
		[\![ (p + p^*) q ]\!]
		\colon
		R_4 \times R_4 \to I_4
	\) is an epimorphism and if
	\( (p + p^*) q = (r + r^*) s \colon R_4 \), then
	\begin{align*}
		b_4(p, q)
		&=
		b_{=}\bigl( p, (p + p^*) q \bigr)
		=
		b_{=}\bigl( p, (p + p^*)^2 q \bigr)
		=
		b_4\bigl( p, (p + p^*) q \bigr),
		\\
		b_4\bigl( p, (q + q^*) r \bigr)
		&=
		b_{=}\bigl( p^2, (q^2 + q^{* 2}) r \bigr)
		=
		b_{=}\bigl( q^2, (p^2 + p^{* 2}) r \bigr)
		=
		b_4\bigl( q, (p + p^*) r \bigr),
		\\
		b_4(p, q)
		&=
		b_4(p, (p + p^*) q)
		=
		b_4(p, (r + r^*) s)
		=
		b_4(r, (p + p^*) (r + r^*) s) \\
		&=
		b_4(r, (p + p^*) q)
		=
		b_4(r, (r + r^*) s)
		=
		b_4(r, s).
	\end{align*}
	Since any
	\( p, q \colon I_4 \) are of the form
	\( p = p' (r + r^*) \),
	\( q = q' (r + r^*) \), the morphism
	\(
		b_4
		\colon
		I_4 \to \schur(\stlin(\mathsf B_3, R, \Delta))
	\) is a group homomorphism.
	
	Next let us deal with
	\( b_\eps(p, q) \). This morphism factors through
	\( R_{2 \eps} \times R_{2 \eps} \) because
	\begin{align*}
		b_\eps\bigl( p, (q - q^*) r \bigr)
		&=
		b_{=}\bigl( p^2 - p, (q - q^*) r \bigr)
		=
		b_{=}\bigl( q, (p + p^* + p^2 + p^{* 2}) r \bigr)
		=
		1,
		\\
		b_\eps\bigl( p (q - q^*), r \bigr)
		&=
		b_{=}\bigl( p^2 (q^2 - q^{* 2}), r \bigr)\,
		b_{=}\bigl( p (q - q^*), r \bigr) \\
		&=
		b_{=}\bigl( p^2, (q^2 - q^{* 2}) r \bigr)\,
		b_{=}(q^2 - q^{* 2}, p^{* 2} r)\,
		b_{=}\bigl( p, (q - q^*) r \bigr)\,
		b_{=}(q - q^*, p^* r)
		=
		1,
		\\
		b_\eps\bigl( p (q^2 - q) (r^2 - r), s \bigr)
		&=
		b_{=}\bigl(
			p (q^2 - q^*) (r^2 - r^*),
			\bigl(
				p (q^2 - q^*) (r^2 - r^*)
				+
				p^* (q^{* 2} - q) (r^{* 2} - r)
			\bigr)
			s
		\bigr) \\
		&\cdot
		b_{=}\bigl( p (q^2 - q^*), (r^2 - r^*) s \bigr)\,
		b_{=}\bigl( r^2 - r^*, p^* (q^{* 2} - q) s \bigr)
		=
		1.
	\end{align*}
	Recall that
	\( R_{2 \eps} = I_2 \rtimes R_2 \), where
	\( I_2 = \langle p^2 - p \rangle \leqt R_{2 \eps} \) and
	\( R_2 \) is the largest Boolean factor-ring of
	\( R_{2 \eps} \). There is a well defined homomorphism
	\[
		b_\eps
		\colon
		R_{2 \eps}
		\to
		\schur(\stlin(\mathsf B_3, R, \Delta)),\,
		b_\eps(p, q) = b_\eps\bigl( (p^2 - p) q \bigr),\,
		b_\eps(p^2) = 1.
	\]
	Indeed, the equality
	\( (p^2 - p) q = (r^2 - r) s \colon R_{2 \eps} \) implies
	\begin{align*}
		b_\eps(p^2, q)
		&=
		b_{=}(p^4 - p^2, q)
		=
		b_{=}\bigl(
			p,
			(
				p
				+
				p^*
				+
				p^3
				+
				p^2 p^*
				+
				p p^{* 2}
				+
				p^{* 3}
			)
			q
		\bigr)\,
		=
		1,
		\\
		b_\eps\bigl( (p^2 - p) q, r \bigr)
		&=
		b_{=}(p^2 q + p q + p^4 q^2 + p^2 q^2, r)
		=
		b_{=}(p^4, q^* r)\,
		b_{=}\bigl( p^2, (q + q^*) r \bigr)\,
		b_{=}(p, q r) \\
		&\quad \cdot
		b_{=}\bigl( q, (p + p^*) r \bigr)\,
		b_{=}\bigl( q^2, (p + p^*) r \bigr)
		=
		b_{=}(p, (1 + p + p^*) q r)
		=
		b_\eps(p^2 - p, q r),
		\\
		b_\eps(p, q r)
		&=
		b_{=}(p^2 - p, q r)
		=
		b_{=}(p^2 - p, q^{* 2} r)
		=
		b_\eps(p, q^2 r),
		\\
		b_\eps(p, q)
		&=
		b_\eps\bigl( (p^2 - p) q, q \bigr)
		=
		b_\eps\bigl( (r^2 - r) s, q \bigr)
		=
		b_\eps\bigl( (r^2 - r) s q, s \bigr)
		=
		b_\eps\bigl( (p^2 - p) q, s \bigr) \\
		&=
		b_\eps\bigl( (r^2 - r) s, s \bigr)
		=
		b_\eps(r, s),
	\end{align*}
	and any
	\( p, q \colon I_\eps \) are of the type
	\( p = ({p'}^2 - p') r \),
	\( q = ({q'}^2 - q') r \).
	
	Finally, the uniqueness claim follows from
	\( b_4((p + p^*) q) = b_{ijjk}(p, (p + p^*) q) \).
\end{proof}

\begin{lemma}
	\label{b3-9}
	If
	\( R \) is associative, then
	\( b_\eps(p) \) factors through
	\( R_{2 \eps \delta} \) (with the additional property
	\( b_\eps(p^2) = 1 \)) and
	\(
		d(u)
		=
		c_{1 \mid 23}(u)_2\,
		b_\eps\bigl( \langle \iota, u \rangle \bigr)
	\) factors through
	\( \Delta_{2 \mathrm b} \).
\end{lemma}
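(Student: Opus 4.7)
The plan is to verify that the additional defining relations of each target variety lie in the kernel of the relevant morphism, using the identities already established in lemmas~\ref{b3-2}, \ref{b3-5}, \ref{b3-7}, and \ref{b3-8}.

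For the first claim, since $b_\eps$ already factors through $R_{2 \eps}$ by lemma~\ref{b3-8}, only two additional axioms of the fifth variety need to be verified on the image: $\langle \iota, u \rangle \rho(u) = \rho(u)^2 - \rho(u)$ and $\langle u, v \rangle^2 + \langle u, v \rangle^2 \langle \iota, u \rangle^2 + \langle \iota, v \rangle^2 \rho(u)^2 = 0$. For the first we start from the identity $b_{i, -j, j, k}(\lambda_j^* \rho(u), p) = c_{k \mid ij}(u \cdot \rho(u) p)_2$ of lemma~\ref{b3-5}, substitute $b_{i, -j, j, k}(p, q) = b_{ijjk}(p, q) \cdot c_{i \mid jk}(\iota \cdot p q)_2$ from lemma~\ref{b3-8}, and decompose $b_{ijjk} = b_4 \cdot b_\eps$. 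The $b_4$-contribution vanishes because $\rho(u) + \rho(u)^* = 2 \rho(u) = 0$ in $R_{2 \eps}$, and once the $c_2$-terms are brought to a common index using the symmetries of lemma~\ref{b3-7}, what remains is precisely $b_\eps\bigl( \bigl(\langle \iota, u \rangle \rho(u) - \rho(u)^2 + \rho(u)\bigr) p \bigr) = 1$, which gives the first relation after weak unitality. The quartic relation is then obtained by computing $b_{i, -j, j, k}\bigl(\lambda_j^* \rho(u), \langle v, w \rangle\bigr)$ in two ways via the $\rho$- and $\langle {-}, {=} \rangle$-formulas of lemma~\ref{b3-5}, then simplifying with the cubic identity just proved.

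For the second claim, the additional axioms defining $\Delta_{2 \mathrm b}$ beyond the ambient second variety are $p^2 = p$, $\langle \iota, u \rangle \rho(u) = 0$, and $u \cdot \langle \iota, u \rangle \dotplus \iota \cdot \rho(u) = u$. The combination $d(u) = c_{1 \mid 23}(u)_2 \cdot b_\eps(\langle \iota, u \rangle)$ is engineered so that the index-flipping symmetry $c_{-i \mid jk}(u)_2 = c_{i \mid jk}(u)_2 \cdot c_{i \mid jk}(\iota \cdot \langle \iota, u \rangle)_2$ from lemma~\ref{b3-7} is absorbed into the second factor, and so that each $\Delta_{2 \mathrm b}$-axiom translates, under $d$, into a $b_\eps$-identity that is guaranteed by the first part of the present lemma. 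Verifying the decomposition axiom proceeds by expanding $d\bigl(u \cdot \langle \iota, u \rangle \dotplus \iota \cdot \rho(u)\bigr)$ via the biadditivity of $c_{1 \mid 23}({-})_2$ on $\Delta / \phi(R)$ combined with the relation $c_{i \mid jk}(u)^2 = c_{i \mid jk}\bigl(\iota \cdot \langle \iota, u \rangle\bigr)^2$ from lemma~\ref{b3-2}; the Boolean axiom $p^2 = p$ follows from $b_\eps(p^2) = 1$ in lemma~\ref{b3-8}, and the vanishing $\langle \iota, u \rangle \rho(u) = 0$ is subsumed by the first part.

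The main obstacle is the combinatorial bookkeeping in the quartic identity, where many cancellations emerge only after the same $b$-expression is expanded along two genuinely different routes, forcing one to carefully distinguish cancellations coming from $2$-torsion and from $b_\eps(p^2) = 1$ from those that invoke previously-proved $c_2$-symmetries. A secondary subtlety is sequencing: the verifications for the second claim require first passing through $R_{2 \eps \delta}$ as established in the first part, so the two claims are genuinely layered rather than independent.
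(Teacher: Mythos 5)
Your plan for the first claim has a real gap. You propose to derive $b_\eps\bigl((\langle\iota,u\rangle\rho(u) - \rho(u)^2 + \rho(u))p\bigr) = 1$ by substituting $b_{i,-j,j,k}(p,q) = b_{ijjk}(p,q)\,c_{i\mid jk}(\iota\cdot pq)_2$ into $b_{i,-j,j,k}(\lambda_j^*\rho(u),p) = c_{k\mid ij}(u\cdot\rho(u)p)_2$, killing the $b_4$-contribution, and then letting the $c_2$-terms cancel after index-normalization. But they do not cancel. After the index symmetries of lemma~\ref{b3-7} bring everything to a single triple $(k\mid ij)$, what you actually obtain is
\[
b_\eps\bigl((\rho(u)^2 - \rho(u))p\bigr)
=
c_{k\mid ij}(u\cdot\rho(u)p)_2\,c_{k\mid ij}(\iota\cdot\rho(u)p)_2 ,
\]
and the right-hand side is a product of two $c_2$-values with distinct $\Delta$-arguments, not a $b_\eps$-expression. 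Plugging in $c_{k\mid ij}(v)_2 = d(v)\,b_\eps(\langle\iota,v\rangle)$, the right-hand side becomes
\[
d\bigl(u\cdot\rho(u)p \dotplus \iota\cdot\rho(u)p\bigr)\,b_\eps\bigl(\langle\iota,u\rangle\rho(u)p\bigr),
\]
so the relation you want for $b_\eps$ is equivalent to $d\bigl(u\cdot\rho(u)p \dotplus \iota\cdot\rho(u)p\bigr) = 1$. That is precisely one of the $d$-identities the paper establishes through a chain of consequences of $(\sphericalangle)$ and $(\blacktriangledown)$, and it does not come for free; it is the heart of the argument. In other words, you cannot derive the $R_{2\eps\delta}$-factoring of $b_\eps$ \emph{before} the key $d$-identities, because the $c_2$-residue in the computation is exactly a $d$-value.

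This inverts the actual logical dependence: the paper first extracts $d$-identities (using specializations such as $u = \iota\cdot q^2$ and $v = \iota\cdot q^2$ in the two distinguished relations, then linearizing), and only \emph{then} concludes the two defining axioms for $b_\eps$ from those. Your sequencing paragraph states the dependence as ``the second claim requires the first,'' which is the wrong direction. Your quartic computation via $b_{i,-j,j,k}(\lambda_j^*\rho(u),\langle v,w\rangle)$ in two ways would face the same issue: the $c_2$-discrepancies do not collapse into pure $b_\eps$-terms and must be interpreted through $d$. The rest of your proposal (what axioms characterize the target varieties, which lemmas supply the raw identities) is correctly oriented, but the core mechanism --- trading $c_2$-differences for the explicitly defined morphism $d$ and establishing the $d$-identities first --- is missing.
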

\begin{proof}
	By lemmas \ref{b3-7} and \ref{b3-8} we may assume that
	\( R = R_{2 \eps} \). Let
	\[
		d(u)
		=
		c_{1 \mid 23}(u)_2\,
		b_\eps\bigl( \langle \iota, u \rangle \bigr).
	\]
	We have the identities
	\begin{align*}
		b_\eps(p^2) &= 1,
		\\
		d(\iota \cdot p^2 q) &= d(\iota \cdot p q),
		\\
		\tag{
			\( \sphericalangle \)
		}
		\label{b3-9-pi-eps}
		b_\eps\bigl(
			\bigl(
				\langle u, v \rangle
				+
				\langle \iota, u \rangle
				\langle u, v \rangle
				+
				\langle \iota, v \rangle
				\rho(u)
			\bigr)
			(p^2 - p)
		\bigr)
		&=
		d\bigl(
			u \cdot \langle u, v \rangle (p^2 - p)
			\dotplus
			v \cdot \rho(u) (p^2 - p)
		\bigr),
		\\
		b_\eps\bigl(
			\rho(u) p^2
			+
			\langle \iota, u \rangle \rho(u) p
		\bigr)
		&=
		d(
			\iota \cdot \rho(u) p
			\dotplus
			u \cdot \rho(u) p
		),
		\\
		\tag{
			\( \blacktriangledown \)
		}
		\label{b3-9-pi-idem}
		b_\eps\bigl(
			\langle u, v \rangle p^2
			+
			\langle \iota, u \rangle \rho(v) p
			&+
			\langle \iota, v \rangle \rho(u) p
			+
			\langle \iota, u \rangle
			\langle u, v \rangle
			p
			+
			\langle \iota, v \rangle
			\langle u, v \rangle
			p
		\bigr) \\
		=
		d\bigl(
			\iota \cdot \langle u, v \rangle p
			&\dotplus
			u \cdot \rho(v) p
			\dotplus
			v \cdot \rho(u) p
			\dotplus
			u \cdot \langle u, v \rangle p
			\dotplus
			v \cdot \langle u, v \rangle p
		\bigr)
	\end{align*}
	from lemmas \ref{b3-5} (they already appeared as (\ref{b3-7-pi-r}), (\ref{b3-7-rho}), (\ref{b3-7-pi-l}) in lemma \ref{b3-7}), \ref{b3-7}, and \ref{b3-8}. Note that (\ref{b3-9-pi-eps}) with
	\( u = \iota \cdot q^2 \) reduces to
	\( d(v \cdot p^2) = d(v \cdot p) \).
	
	Next take
	\( v = \iota \cdot q^2 \) in (\ref{b3-9-pi-idem}) and get
	\[
		d(u)
		=
		d(\iota \cdot \rho(u))\,
		d\bigl( u \cdot \langle \iota, u \rangle \bigr).
	\]
	Linearizing this we further obtain
	\[
		d\bigl( u \cdot \langle \iota, v \rangle \bigr)\,
		d\bigl( v \cdot \langle \iota, u \rangle \bigr)
		=
		d\bigl( \iota \cdot \langle u, v \rangle \bigr).
	\]
	Also,
	\begin{align*}
		d\bigl(
			\iota
			\cdot
			\rho(u)
			\langle \iota, u \rangle
		\bigr)
		&=
		d\bigl( u \cdot \langle \iota, u \rangle \bigr)\,
		d\bigl( u \cdot \langle \iota, u \rangle^3 \bigr)
		=
		1,
		\\
		d\bigl(
			u
			\cdot
			\rho(u)
			\langle \iota, u \rangle
		\bigr)
		&=
		b_\eps\bigl(
			\rho(u) \langle \iota, u \rangle^2
			+
			\rho(u) \langle \iota, u \rangle^2
		\bigr)
		=
		1,
		\\
		d(u \cdot \rho(u) \dotplus \iota \cdot \rho(u))
		&=
		d\bigl(
			u \cdot \rho(u) \langle \iota, u \rangle
		\bigr)
		=
		1.
	\end{align*}
	The last identity admits the linearizations
	\begin{align*}
		d\bigl(
			u \cdot \rho(v)
			\dotplus
			v \cdot \rho(u)
			\dotplus
			u \cdot \langle u, v \rangle
			\dotplus
			v \cdot \langle u, v \rangle
			\dotplus
			\iota \cdot \langle u, v \rangle
		\bigr)
		&=
		1,
		\\
		d\bigl(
			u \cdot \langle v, w \rangle
			\dotplus
			v \cdot \langle w, u \rangle
			\dotplus
			w \cdot \langle u, v \rangle
		\bigr)
		&=
		1.
	\end{align*}
	We still have to check that
	\[
		d\bigl( w \cdot \langle u, v \rangle \bigr)
		=
		d\bigl(
			w
			\cdot
			\langle u, v \rangle
			\langle \iota, u \rangle
		\bigr)\,
		d\bigl(
			w \cdot \langle \iota, v \rangle \rho(u)
		\bigr).
	\]
	Indeed, for
	\( w = \iota \cdot p \) we have
	\[
		d\bigl(
			\iota
			\cdot
			\langle \iota, v \rangle
			\rho(u)
			p
		\bigr)
		=
		d\bigl( u \cdot \langle \iota, v \rangle p \bigr)\,
		d\bigl(
			u
			\cdot
			\langle \iota, u \rangle
			\langle \iota, v \rangle
			p
		\bigr)
		=
		d\bigl( \iota \cdot \langle u, v \rangle p \bigr)\,
		d\bigl(
			\iota
			\cdot
			\langle u, v \rangle
			\langle \iota, u \rangle
			p
		\bigr).
	\]
	In general case let
	\[
		w''
		=
		u
		\cdot
		\bigl(1 + \langle \iota, v \rangle \bigr)
		\bigl( 1 + \langle \iota, w \rangle \bigr)
		\dotplus
		v \cdot \bigl( 1 + \langle \iota, w \rangle \bigr)
		\dotplus
		w,
		\quad
		w' = w'' \cdot \langle \iota, w'' \rangle,
	\]
	so
	\( w' \equiv w' \cdot \langle \iota, w' \rangle \) and
	\(
		\langle \iota, \mu \rangle
		\langle \iota, w' \rangle
		\equiv
		\langle \iota, \mu \rangle
	\) modulo the ideal
	\( \langle p^2 - p \rangle \) for
	\( \mu \in \{ u, v, w \} \). Then
	\begin{align*}
		d\bigl( w' \cdot \langle u, v \rangle p \bigr)
		&=
		d\bigl( u \cdot \langle w', v \rangle p \bigr)\,
		d\bigl( v \cdot \langle w', u \rangle p \bigr)
		=
		d\bigl(
			w'
			\cdot
			\bigl(
				\langle \iota, u \rangle
				\langle w', v \rangle
				+
				\langle \iota, v \rangle
				\langle w', u \rangle
			\bigr)
			p
		\bigr),
		\\
		d(w' \cdot \rho(u) p)
		&=
		d(u \cdot \rho(w') \langle \iota, w' \rangle p)\,
		d\bigl( w' \cdot \langle w', u \rangle p \bigr)\,
		d\bigl(
			u
			\cdot
			\langle w', u \rangle
			\langle \iota, w' \rangle
			p
		\bigr)\,
		d(\iota \cdot \langle w', u \rangle p) \\
		&=
		d(w' \cdot \rho(w') \langle \iota, u \rangle p)\,
		d\bigl( w' \cdot \langle w', u \rangle p \bigr)\,
		d\bigl(
			w'
			\cdot
			\langle w', u \rangle
			\langle \iota, u \rangle
			p
		\bigr),
		\\
		d\bigl(
			w
			\cdot
			\bigl(
				\langle u, v \rangle
				&+
				\langle u, v \rangle
				\langle \iota, u \rangle
				+
				\langle \iota, v \rangle \rho(u)
			\bigr)
		\bigr) \\
		&=
		d\bigl(
			w
			\cdot
			\langle \iota, w \rangle
			\bigl(
				\langle u, v \rangle
				+
				\langle u, v \rangle
				\langle \iota, u \rangle
				+
				\langle \iota, v \rangle \rho(u)
			\bigr)
		\bigr) \\
		&=
		d\bigl(
			w'
			\cdot
			\langle \iota, w \rangle
			\bigl(
				\langle u, v \rangle
				+
				\langle u, v \rangle
				\langle \iota, u \rangle
				+
				\langle \iota, v \rangle \rho(u)
			\bigr)
		\bigr) \\
		&=
		d\bigl(
			w'
			\cdot
			\langle \iota, v \rangle
			\langle \iota, w \rangle
			\bigl(
				\langle w', u \rangle
				+
				\langle \iota, u \rangle
				\langle w', u \rangle
				+
				\rho(w') \langle \iota, u \rangle
				+
				\langle w', u \rangle
				+
				\langle \iota, u \rangle
				\langle w', u \rangle
			\bigr)
		\bigr)
		=
		1.
	\end{align*}
	
	It follows that the homomorphism
	\( b_\eps \) satisfies the identities from lemma \ref{b3-8} and
	\begin{align*}
		b_\eps\bigl(
			\langle u, v \rangle^2
			p
			+
			\langle u, v \rangle^2
			\langle \iota, u \rangle^2
			p
			+
			\langle \iota, v \rangle^2
			\rho(u)^2
			p
		\bigr)
		=
		1,
		\quad
		b_\eps\bigl(
			\rho(u) \langle \iota, u \rangle p
		\bigr)
		=
		b_\eps\bigl(
			\rho(u)^2 p + \rho(u) p
		\bigr).
	\end{align*}
	The linearizations of the right identity on
	\( u \) are
	\begin{align*}
		b_\eps\bigl(
			\rho(u) \langle \iota, v \rangle p
			+
			\rho(v) \langle \iota, u \rangle p
			+
			\langle u, v \rangle \langle \iota, u \rangle p
			+
			\langle u, v \rangle \langle \iota, v \rangle p
		\bigr)
		&=
		b_\eps\bigl(
			\bigl(
				\langle u, v \rangle^2
				+
				\langle u, v \rangle
			\bigr)
			p
		\bigr),
		\\
		b_\eps\bigl(
			\langle u, v \rangle \langle \iota, w \rangle
			\dotplus
			\langle v, w \rangle \langle \iota, u \rangle
			\dotplus
			\langle w, u \rangle \langle \iota, v \rangle
		\bigr)
		&=
		1.
	\end{align*}
	Finally, we have
	\begin{align*}
		b_\eps\bigl(
			\rho\bigl(
				u \cdot \langle \iota, v \rangle
				&\dotplus
				v \cdot \langle \iota, u \rangle
				\dotplus
				\iota \cdot \langle u, v \rangle
			\bigr)
			p
		\bigr)
		=
		b_\eps\bigl(
			\langle \iota, v \rangle^2 \rho(u) p
			+
			\langle \iota, u \rangle^2 \rho(v) p
			+
			\langle \iota, u \rangle
			\langle \iota, v \rangle
			\langle u, v \rangle
			p
			+
			\langle u, v \rangle^2 p
		\bigr) \\
		&=
		b_\eps\bigl(
			\langle \iota, v \rangle^2 \rho(u) p
			+
			\langle \iota, u \rangle^2 \rho(v) p
			+
			\langle \iota, u \rangle X p
			+
			\langle u, v \rangle^2 p
		\bigr) \\
		&=
		b_\eps\bigl(
			\langle \iota, v \rangle^2 \rho(u) p
			+
			\langle \iota, u \rangle
			\langle \iota, v \rangle
			\rho(u)
			p
			+
			\langle \iota, u \rangle^2
			\langle u, v \rangle^2
			p
			+
			\langle u, v \rangle^2 p
		\bigr) \\
		&=
		b_\eps\bigl(
			\bigl(
				\langle \iota, v \rangle^2
				-
				\langle \iota, v \rangle
			\bigr)
			(\rho(u)^2 - \rho(u))
			p
		\bigr)
		=
		1,
	\end{align*}
	where
	\(
		X
		=
		\langle \iota, u \rangle
		\langle u, v \rangle
		+
		\langle \iota, u \rangle \rho(v)
		+
		\langle \iota, v \rangle \rho(u)
		+
		\langle u, v \rangle^2
		+
		\langle u, v \rangle
	\).
\end{proof}

\begin{prop}
	\label{b3-schur}
	Let
	\( (R, \Delta) \) be a weakly unital
	\( \mathsf B_3 \)-ring in an infinitary pretopos. Then the Schur multiplier
	\( \schur(\stlin(\mathsf B_3, R, \Delta)) \) is the product of the images of
	\begin{align*}
		c_{1 \mid 23}({-})_3
		&\colon
		\Delta_3
		\to
		\schur(\stlin(\mathsf B_3, R, \Delta)),
		&
		b_4
		&\colon
		I_4
		\to
		\schur(\stlin(\mathsf B_3, R, \Delta)),
		\\
		b_\eps
		&\colon
		I_{2 \eps \delta}
		\to
		\schur(\stlin(\mathsf B_3, R, \Delta)),
		&
		d
		\colon
		&\Delta_{2 \mathrm b}
		\to
		\schur(\stlin(\mathsf B_3, R, \Delta)),
	\end{align*}
	where
	\[
		y_{123}(p q, r)\, y_{123}(p, q r)
		=
		b_4\bigl( (q + q^*) p^* r \bigr)\,
		b_\eps\bigl( (q^2 - q) p^* r \bigr),
		\quad
		d(u)
		=
		c_{1 \mid 23}(u)_2\,
		b_\eps\bigl(
			\langle \iota, u \rangle^2
			-
			\langle \iota, u \rangle
		\bigr)
	\]
	for
	\( [p, q, r] = 0 \).
\end{prop}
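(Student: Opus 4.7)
The plan is to follow the template set by propositions \ref{a3-schur} and \ref{d4-schur}. Let \( C \leq \schur(\stlin(\mathsf B_3, R, \Delta)) \) be the subgroup generated by the images of the four listed morphisms. The objective is to construct a section of the epimorphism \( \widetilde \stlin(\mathsf B_3, R, \Delta) / C \to \stlin(\mathsf B_3, R, \Delta) \); once that is done, lemma \ref{c-ext-perf} forces \( C \) to equal the whole Schur multiplier. Since the defining relation of \( b_4 \) and \( b_\eps \) in the statement is imposed only under \( [p, q, r] = 0 \), lemma \ref{wunit-artin} lets us pass to an associative \( \mathsf B_3 \)-subring of \( R_{X'} \) on a suitable cover, reducing all verifications to the associative regime of lemmas \ref{b3-5}--\ref{b3-9}.

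Next I would enumerate the generators of the Schur multiplier. By (\ref{XP}) it is generated by the commutators \( \langle x_\alpha(\cdot), x_\beta(\cdot) \rangle \); lemma \ref{b3-1} kills all commuting pairs, leaving the morphisms \( c_{i \mid jk} \), \( y_{-i, i} \), \( y_{ijk} \), \( y_{i0j} \), and \( z_{ij} \). The \( 3 \)-torsion parts \( c_{i \mid jk}(\cdot)_3 \) reduce to \( c_{1 \mid 23}(\cdot)_3 \) on \( \Delta_3 \) by lemma \ref{b3-6}. The \( 2 \)-torsion parts are expressible in terms of \( d \) and \( b_\eps \) via the formula \( d(u) = c_{1 \mid 23}(u)_2\, b_\eps(\langle \iota, u \rangle^2 - \langle \iota, u \rangle) \) of lemma \ref{b3-9} combined with the symmetry relations of lemma \ref{b3-7}. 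The remaining pairings are not themselves in \( C \), but their failure to descend to well-defined maps of the form \( y_{ij}(p q) \) (and analogues) is measured by the \( b_{ijj'k} \), which split as a product of \( b_4 \), \( b_\eps \), and a \( 2 \)-torsion \( c_{i \mid jk} \)-factor by lemma \ref{b3-8}.

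Finally I would construct the section. Using weak unitality together with the identities from lemmas \ref{b3-3} and \ref{b3-4}, define \( y_{ij}(p) \in \widetilde \stlin(\mathsf B_3, R, \Delta) / C \) so that \( y_{ijk}(p, q) \equiv y_{ij}(p q) \) and \( y_{i0j}(u, v) \equiv y_{ij}(-\lambda_{-i}^* \langle u, v \rangle) \) modulo \( C \), and similarly lift the short-root generators \( x_i(u) \) using the \( z_{ij} \). Then check that these lifts satisfy all Steinberg relations for \( \mathsf B_3 \) modulo \( C \); every obstruction appears as a \( b_{ijj'k} \)- or \( c_{i \mid jk} \)-term that by the previous step already lies in \( C \). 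The main obstacle is exactly this last step: because the \( 2 \)-torsion of \( c_{1 \mid 23} \) is only accessible modulo \( b_\eps \), the simultaneous decomposition into \( b_4 \), \( b_\eps \), \( d \), and \( c_{1 \mid 23}(\cdot)_3 \) must be juggled consistently, and the long/short interaction mediated by \( z_{ij} \) produces a mix of all four types of error terms, each of which has to be identified with the correct distinguished morphism via the explicit identities of lemmas \ref{b3-5}, \ref{b3-7}, \ref{b3-8}, and \ref{b3-9}.
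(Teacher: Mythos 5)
Your outline tracks the paper's own proof closely: same reduction via lemma~\ref{wunit-artin}, same enumeration of generators via lemma~\ref{b3-1}, same splitting into the $3$-torsion ($c_{1\mid 23}(\cdot)_3$) and $2$-torsion ($b_4$, $b_\eps$, $d$) pieces, and the same concluding section construction followed by lemma~\ref{c-ext-perf}. So you have the right shape.

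However, there is a genuine gap in the middle step, which you compress into ``lemma~\ref{wunit-artin} lets us pass to an associative $\mathsf B_3$-subring\ldots reducing all verifications to the associative regime.'' Lemma~\ref{wunit-artin} only gives an associative subring containing a \emph{fixed} finite list of morphisms, after passing to a cover; it does not make $R$ itself associative, nor does it automatically promote the locally defined $b_{ijj'k}$ from lemma~\ref{b3-5} to homomorphisms on the quotient objects $I_4$ and $I_{2\eps\delta}$, whose construction is intrinsic to $R$ rather than to a chosen subring. The paper's proof fills this hole by an explicit argument: it defines $b_{ijj'k}(p, \eps q)$ for $\eps \colon \mathcal E$ on a cover, then linearizes the lemma~\ref{b3-8} identities $b_{ijjk}(p, q^2) = b_{ijjk}(p, q^*)$ and $b_{ijjk}(p, q^3) = b_{ijjk}(p, qq^*)$ and invokes the alternative-ring identity $[pq,r] + [qr,p] + [rp,q] = 3[p,q,r]$ to show, by induction, that each $b_{ijjk}$ factors through the maximal associative factor-ring. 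Only then are $b_4$ and $b_\eps$ well defined on $(R_{2*}, \Delta_{2*})$, which is itself weakly unital, after which lemmas~\ref{b3-8} and~\ref{b3-9} apply globally. Without this factorization argument, the ``lift $y_{ij}$, check Steinberg relations modulo $C$'' step cannot be carried out, because the error terms you wish to absorb into $C$ would not yet be expressible via the distinguished morphisms on their stated domains.
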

\begin{proof}
	We have to check that the homomorphisms from the statement are well defined because
	the ring
	\( R \) is assumed to be associative in lemmas \ref{b3-5}--\ref{b3-9}. For
	\( c_{i \mid jk}({-})_3 \) this follows from lemmas \ref{wunit-artin} and \ref{b3-6}. Let
	\[
		b_{ijj'k}(p, \eps q)
		=
		y_{ijk}(\eps, p q)^{-1}\,
		y_{ij'k}(\eps p, q)
	\]
	for
	\( \eps \colon \mathcal E \), this is well defined by lemmas \ref{wunit-artin} and \ref{b3-5}. By the same lemmas
	\(
		y_{ij'k}(p q, r)
		=
		y_{ijk}(p, q r)\,
		b_{ijj'k}(q, p^* r)
	\) for
	\( [p, q, r] = 1 \). Moreover,
	\( b_{ijj'k}({-}, {=}) \) is
	\( 2 \)-torsion and a homomorphism on each variable.
	
	Linearize the identities
	\( b_{ijjk}(p, q^2) = b_{ijjk}(p, q^*) \) and
	\( b_{ijjk}(p, q^3) = b_{ijjk}(p, q q^*) \) from lemmas \ref{wunit-artin} and \ref{b3-8}.
	\begin{align*}
		b_{ijjk}(p, q r) &= b_{ijjk}(p, r q),
		\\
		b_{ijjk}(p, q^2 r + q r^2)
		&=
		b_{ijjk}(p, q r^* + r q^*),
		\\
		b_{ijjk}(p, (q r) s)
		&=
		b_{ijjk}(p, (r q) s).
	\end{align*}
	Now recall the identity
	\[
		\tag{%
			\( \# \)%
		}
		\label{b3-schur-ass}
		[p q, r] + [q r, p] + [r p, q]
		=
		[p, q, r] + [q, r, p] + [r, p, q]
		=
		3 [p, q, r]
	\]
	valid in all alternative rings. By easy induction
	\( b_{ijjk}({-}, {=}) \) factors through the maximal associative factor-ring on the second argument.
	
	Next let
	\[
		b_4(p, q) = b_{1223}(p^2, q),
		\quad
		b_\eps(p, q) = b_{1223}(p^2 - p, q)
	\]
	as in the proof of lemma \ref{b3-8}. By this lemma (and lemma \ref{wunit-artin}) we have
	\[
		b_4(p^2, q) = b_4(p, q),
		\quad
		b_4(p^3, q) = 1,
		\quad
		b_\eps(p, q \eps)
		=
		b_\eps\bigl( (p^2 - p) q, \eps \bigr),
		b_\eps(p q, \eps)
		=
		b_\eps(q p, \eps)
		=
		b_\eps(p^* q, \eps)
	\]
	for
	\( \eps \colon \mathcal E \). Linearize the identities for
	\( b_4({-}, {=}) \).
	\begin{align*}
		b_4(p q, r) = b_4(q p, r),
		b_4(p q^2, r) = b_4(p^2 q, r),
		b_4((p q) r, s) = b_4((q p) r, s),
	\end{align*}
	so
	\( b_4({-}, {=}) \) factors through the maximal associative factor-ring on the first argument. For
	\( b_\eps({-}, {=}) \) note that
	\(
		b_\eps\bigl( (p q) r, \eps \bigr)
		=
		b_\eps\bigl( p (q r), \eps \bigr)
	\) by (\ref{b3-schur-ass}), so
	\[
		b_\eps\bigl( (p q) r, \eps \bigr)
		=
		b_\eps\bigl( p^* (q r), \eps \bigr)
		=
		b_\eps\bigl( (p^* q) r, \eps \bigr)
		=
		b_\eps\bigl( (q^* p) r, \eps \bigr)
		=
		b_\eps\bigl( q (p r), \eps \bigr)
		=
		b_\eps\bigl( (q p) r, \eps \bigr)
	\]
	and we conclude by induction as above.
	
	Now it is easy to see using lemmas \ref{wunit-artin}, \ref{b3-8}, and \ref{b3-9} that
	\( c_{i \mid jk}({-})_2 \),
	\( b_4({-}, {=}) \), and
	\( b_\eps({-}, {=}) \) are well defined on
	\( (R_{2 *}, \Delta_{2 *}) \). Since this
	\( \mathsf B_3 \)-factor-ring is also weakly unital, we conclude by lemmas \ref{b3-8} and \ref{b3-9}.
	
	It remains to prove that the morphisms from the statement generate the Schur multiplier. Indeed, take the factor-group
	\[
		G
		=
		\widetilde \stlin(\mathsf B_3, R, \Delta)
		/
		\bigl(
			c_{1 \mid 23}(\Delta_3)_3\,
			b_4(I_4)\,
			b_\eps(I_{2 \eps \delta})\,
			d(\Delta_{2 \mathrm b})
		\bigr).
	\]
	In this factor-group we have
	\( y_{ij'k}(p q, r) = y_{ijk}(p, q r) \) for
	\( [p, q, r] = 0 \), so there exists a unique homomorphism
	\( y_{ik} \colon R \to G \) such that
	\( y_{ijk}(p, q) = y_{ik}(p q) \) and
	\(
		y_{i0j}(u, v)
		=
		y_{ij}(-\lambda_{-i}^* \langle u, v \rangle)
	\) (the second identity follows from lemma \ref{b3-4}). Let
	\(
		y_{(i) j}(u, p)
		=
		z_{ij}(u, p)\,
		y_{-i, j}(\lambda_i^* \rho(u) p)^{-1}
	\), so
	\(
		\kappa(y_{(i) j}(u, p))
		=
		x_j(\dotminus u \cdot (-p))
	\) and
	\begin{align*}
		y_{(i) j}(u \dotplus v, p)
		&=
		y_{(i) j}(v, p)\,
		y_{(i) j}(u, p)
		&
		y_{(-i) j}(\phi(p), q) &= y_{-j, j}(q^* p q)^{-1},
		\\
		y_{(i) j}(u, p + q)
		&=
		y_{(i) j}(u, p)\,
		y_{-j, j}(-p^* \rho(u) q)\,
		y_{(i) j}(u, q)
		&
		y_{(j) k}(u \cdot p, q) &= y_{(i) k}(u, p q)
	\end{align*}
	by lemmas \ref{b3-3} and \ref{b3-4}. It follows that there exists a unique homomorphism
	\( y_j \colon \Delta \to G \) such that
	\( y_{(i) j}(u, p) = y_j(\dotminus u \cdot (-p)) \) and
	\( y_{-j, j}(p) = y_j(\phi(p)) \). It is easy to see that
	\( y_{ij}(p) \) and
	\( y_i(u) \) satisfy all Steinberg relations, so the homomorphism
	\(
		\kappa
		\colon
		G \to \stlin(\mathsf B_3, R, \Delta)
	\) admits a section and
	\( G \cong \stlin(\mathsf B_3, R, \Delta) \).
\end{proof}

\subsection{Other doubly laced cases}

\begin{prop}
	\label{b-schur}
	Let
	\( (R, \Delta) \) be a weakly unital
	\( \mathsf B_\ell \)-ring in an infinitary pretopos,
	\( \ell \geq 4 \). Then the Steinberg group
	\( \stlin(\mathsf B_\ell, R, \Delta) \) is centrally closed.
\end{prop}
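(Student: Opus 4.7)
Every triple of indices $\{i, j, k\}$ with distinct absolute values spans a $\mathsf{B}_3$-subsystem of $\mathsf{B}_\ell$, so by proposition~\ref{b3-schur} the Schur multiplier $\schur(\stlin(\mathsf{B}_\ell, R, \Delta))$ is generated by the images of the four families $c_{i \mid jk}({-})_3$, $b_4$, $b_\eps$, and $d$ constructed in those subsystems. The strategy is to kill each family by exploiting a fourth index: since $\ell \geq 4$, given any triple $(i,j,k)$ with distinct $|i|, |j|, |k|$ we may pick $l$ with $|l|$ also distinct, and decompose $x_{jk}(p' p'') = [x_{jl}(p'), x_{lk}(p'')]$ in the Steinberg group.

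For $c_{i \mid jk}$: the short root element $x_i(u)$ commutes in $G$ with both $x_{jl}(p')$ and $x_{lk}(p'')$ (the required conditions $i \neq j, -l$ and $i \neq l, -k$ all follow from the disjointness of absolute values), so the pairings $\langle x_i(u), x_{jl}(p') \rangle$ and $\langle x_i(u), x_{lk}(p'') \rangle$ lie in the central Schur multiplier. The crossed pairing identities~(\ref{XP}) therefore collapse (actions of $G$ on central elements are trivial), and $\langle x_i(u), [x_{jl}(p'), x_{lk}(p'')] \rangle$ telescopes to the identity. Hence $c_{i \mid jk}(u \cdot p' p'') = 1$, and weak unitality lets us express every argument of $c_{i \mid jk}$ in that form, yielding $c_{i \mid jk} = 1$ on $\Delta_3$.

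For $b_4$ and $b_\eps$, one expands $y_{ijk}(p, q r) = \langle x_{ij}(p), x_{jk}(q r) \rangle$ by inserting the same commutator decomposition $x_{jk}(q r) = [x_{jl}(q), x_{lk}(r)]$. Among the resulting pairings, $\langle x_{ij}(p), x_{lk}(r) \rangle$ is central (the two long roots are orthogonal and non-adjacent) while $\langle x_{ij}(p), x_{jl}(q) \rangle = y_{ijl}(p, q)$ lifts $x_{il}(p q)$ whose supporting indices are disjoint from the remaining factor $x_{lk}(r)$. A parallel computation treats $y_{ij'k}(p q, r)$, and comparing the two expansions modulo the now-trivial $c$'s forces $b_{ijj'k} = 1$, hence $b_4 = b_\eps = 1$. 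Finally $d = c_{1 \mid 23}({-})_2 \cdot b_\eps(\ldots)$ from proposition~\ref{b3-schur} becomes trivial.

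The main obstacle will be the comparison step for $b_4$ and $b_\eps$: the non-associativity of $R$ together with the sign corrections imposed by the involution $*$ (needed when relating $j$ to $j' = -j$) will force repeated use of lemma~\ref{wunit-artin} to localize to associative subrings, plus careful bookkeeping of Weyl-element conjugations in the larger root system. Once all four families vanish the Schur multiplier is trivial and $\stlin(\mathsf{B}_\ell, R, \Delta)$ is centrally closed.
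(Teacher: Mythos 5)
Your plan is on the right track and mirrors the paper's basic idea (exploit a fourth index to kill the $\mathsf B_3$-subsystem contributions), but it has a concrete gap: you never address the pairing $c_{ij \mid kl}(p,q) = \langle x_{ij}(p), x_{kl}(q) \rangle$ of two \emph{long} root subgroups at angle $\frac{\pi}{2}$ with four distinct absolute values $|i|,|j|,|k|,|l|$. This term does not arise inside any $\mathsf B_3$-subsystem (there are only three index slots there, and orthogonal long pairs in $\mathsf B_3$ fall under lemma~\ref{b3-1}), so it is invisible to proposition~\ref{b3-schur}; nonetheless it appears as a genuine generator of $\schur(\stlin(\mathsf B_\ell, R, \Delta))$ once $\ell \geq 4$, and your own expansion for $b_4, b_\eps$ secretly produces exactly such terms — you note that "$\langle x_{ij}(p), x_{lk}(r) \rangle$ is central" but never show it is trivial. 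For $\ell = 4$ you cannot kill it by the same trick of writing $x_{kl}$ as a commutator of two long-root elements both commuting with $x_{ij}$: with only four positive indices available, any such decomposition $x_{kl}(q'q'') = [x_{km}(q'), x_{ml}(q'')]$ forces $|m| \in \{|i|,|j|\}$, destroying the commutation hypothesis. A short-root commutant ($x_{kl}$ as a commutator of two short-root elements) also fails in general since the image of $\langle {-},{=} \rangle \colon \Delta \times \Delta \to R$ need not be weakly surjective (for instance $\langle u,v\rangle = -2uv$ in characteristic $2$). The paper's resolution is a second application of~(\ref{HW}) to the \emph{mixed} triple $x_{ij}(p),\, x_{-k}(u),\, x_{-k,l}(q)$ — a short root is essential — and then instead of grinding through a direct comparison for $b_4, b_\eps$, it invokes proposition~\ref{a3-schur} on $\mathsf A_3$-subsystems (legitimate once $c_{ij\mid kl} = 1$) to produce middle-index-independent lifts $y_{ij}$, whereupon $b_{ijj'k} = y_{ij'k}(pq,r)\,y_{ijk}(p,qr)^{-1} = 1$ for free and proposition~\ref{b3-schur} on $\mathsf B_3$-subsystems supplies the $y_i$ and a full section. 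Your "parallel computation" sketch would, with care, reproduce this, but as written it leans on the unproved vanishing of $c_{ij\mid kl}$; supply that step via a short-root Hall–Witt application and the rest of your plan goes through.
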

\begin{proof}
	Note that
	\[
		c_{i \mid jk}(u, p)
		=
		\bigl \langle x_i(u), x_{jk}(p) \bigr \rangle
		=
		1,
		\quad
		c_{ij \mid kl}(p, q)
		=
		\bigl \langle x_{ij}(p), x_{kl}(q) \bigr \rangle
		=
		1
	\]
	by (\ref{HW}) applied to the triples
	\[
		x_i(u),
		x_{jl}(p),
		x_{lk}(q);
		\quad
		x_{ij}(p),
		x_{-k}(u),
		x_{-k, l}(q).
	\]
	By proposition \ref{a3-schur} applied to root subsystems of type
	\( \mathsf A_3 \) there are unique homomorphisms
	\(
		y_{ij}
		\colon
		R
		\to
		\widetilde \stlin(\mathsf B_\ell, R, \Delta)
	\) such that
	\( \kappa(y_{ij}(p)) = x_{ij}(p) \) and
	\(
		\bigl\langle x_{ij}(p), x_{jk}(q) \bigr\rangle
		=
		y_{ik}(p q)
	\). But then proposition \ref{b3-schur} applied to root subsystems of type
	\( \mathsf B_3 \) means that there are unique homomorphisms
	\(
		y_i
		\colon
		\Delta
		\to
		\widetilde \stlin(\mathsf B_\ell, R, \Delta)
	\) such that
	\( y_{ij} \) and
	\( y_i \) determine a section of
	\( \kappa \).
\end{proof}

It remains to deal with the root system
\( \mathsf F_4 \). It is convenient to distinguish its long and short roots, so all root subsystems of the spherical type
\( \mathsf B_3 \) have the crystallographic type
\( \mathsf B_3 \) or
\( \mathsf C_3 \), and below we use only the crystallographic type. Let
\( (R, S) \) be a weakly unital
\( \mathsf F_4 \)-ring. By (\ref{HW}) applied to
\( x_\alpha(p) \),
\( x_\beta(q) \),
\( x_\gamma(u) \) for
\( |\alpha| = |\beta| \neq |\gamma| \),
\( \alpha \perp \gamma \perp \beta \),
\( \angle(\alpha, \beta) = \frac {2 \pi} 3 \) we get
\[
	\bigl\langle
		x_{\alpha + \beta}(p),
		x_\gamma(u)
	\bigr\rangle
	=
	1.
\]
In other words,
\( c_{i \mid jk}(u) = 1 \) for all root subsystems of types
\( \mathsf B_3 \) and
\( \mathsf C_3 \).

Now let
\( x_{ij} \colon R \to \stlin(\mathsf F_4, R, S) \) and
\( x_i \colon S \to \stlin(\mathsf F_4, R, S) \) be the root homomorphisms for the standard root subsystem
\( \mathsf B_3 \subseteq \mathsf F_4 \), and
\( x'_{ij} \colon S \to \stlin(\mathsf F_4, R, S) \) and
\( x'_i \colon R \to \stlin(\mathsf F_4, R, S) \) be the root homomorphisms for the standard root subsystem
\( \mathsf C_3 \subseteq \mathsf F_4 \), see \cite[\S 8]{root-graded} for details. We have
\begin{align*}
	x_1(u) &= x'_{1 2}(u),
	&
	x_{-1}(u) &= x'_{-1, -2}(\lambda u),
	&
	x_{1 2}(p) &= x'_1(p),
	&
	x_{-1, -2}(p) &= x'_{-1}(\lambda p),
	\\
	x_2(u) &= x'_{-1, 2}(\lambda u),
	&
	x_{-2}(u) &= x'_{1, -2}(\lambda u),
	&
	x_{-1, 2}(p)
	&=
	x'_2(\lambda p),
	&
	x_{1, -2}(p)
	&=
	x'_{-2}(\lambda p).
\end{align*}

By lemma \ref{b3-3} there are unique homomorphisms
\(
	y_{-i, i}
	\colon
	R \to \widetilde \stlin(\mathsf F_4, R, S)
\) for
\( 1 \leq |i| \leq 3 \) such that
\begin{align*}
	\kappa(y_{-i, i}(p)) &= x_i(\phi(p)),
	&
	y_{-i, i}((p q) r) &= y_{-i, i}(p (q r)),
	\\
	\bigl\langle x_{-i, j}(p), x_{ji}(q) \bigr\rangle
	&=
	y_{-i, i}(\lambda_i p q),
	&
	y_{-i, i}(p + \lambda p^*) = 1,
	\\
	1 = \bigl\langle x_i(u), x_i(v) \bigr\rangle
	&=
	y_{-i, i}(\phi(u v^*))
\end{align*}
and similarly for
\(
	y'_{-i, i}
	\colon
	S \to \widetilde \stlin(\mathsf F_4, R, S)
\). In particular,
\begin{align*}
	y_{1, 0, 2}(u, v) &= y'_{-1, 1}(v u^*),
	&
	y_{-1, 0, -2}(u, v) &= y'_{1, -1}(v u^*),
	\\
	y_{1, 0, -2}(u, v) &= y'_{2, -2}(v^* u),
	&
	y_{-1, 0, 2}(u, v) &= y'_{-2, 2}(v^* u),
	\\
	y_{-1, 1}(q p^*) &= y'_{1, 0, 2}(p, q),
	&
	y_{1, -1}(q p^*) &= y'_{-1, 0, -2}(p, q),
	\\
	y_{2, -2}(q^* p) &= y'_{1, 0, -2}(p, q),
	&
	y_{-2, 2}(q^* p) &= y'_{-1, 0, 2}(p, q).
\end{align*}
Also,
\begin{align*}
	z_{12}(u, p) &= z'_{12}(p, u)^{-1},
	&
	z_{1, -2}(u, p)
	&=
	z'_{-2, -1}(\lambda p, -u^*)^{-1},
	\\
	z_{-1, 2}(u, p)
	&=
	z'_{21}(\lambda p, -\lambda u^*)^{-1},
	&
	z_{-1, -2}(u, p)
	&=
	z'_{-1, -2}(\lambda p, \lambda u)^{-1},
	\\
	z_{21}(u, p)
	&=
	z'_{-1, 2}(-\lambda p^*, \lambda u)^{-1},
	&
	z_{2, -1}(u, p) &= z'_{-2, 1}(-p^*, -u^*)^{-1},
	\\
	z_{-2, 1}(u, p) &= z'_{2, -1}(-p^*, -u^*)^{-1},
	&
	z_{-2, -1}(u, p) &= z'_{1, -2}(-p^*, \lambda u)^{-1}.
\end{align*}
Next note that
\( y_{ij}(p, q) = y_{ikj}(p, q) \) is independent of the sign of
\( k \) by lemma \ref{b3-8} and similarly for
\( y'_{ij}(u, v) = y'_{ikj}(u, v) \). Lemma \ref{b3-4} implies
\begin{align*}
	y_{12}(\phi(u), p) &= y'_{-1, 1}(u^* \rho(p)),
	&
	y_{1, -2}(\phi(u), p) &= y'_{2, -2}(u \rho(p)),
	\\
	y_{-1, 2}(\phi(u), p) &= y'_{-2, 2}(u \rho(p)),
	&
	y_{-1, -2}(\phi(u), p) &= y'_{1, -1}(u^* \rho(p)),
	\\
	y_{-1, -2}(p, \phi(u)) &= y'_{1, -1}(\rho(p) u),
	&
	y_{1, -2}(p, \phi(u)) &= y'_{2, -2}(\rho(p) u^*),
	\\
	y_{-1, 2}(p, \phi(u)) &= y'_{-2, 2}(\rho(p) u^*),
	&
	y_{12}(p, \phi(u)) &= y'_{-1, 1}(\rho(p) u),
	\\
	y_{-1, 1}(p^* \rho(u)) &= y'_{12}(\phi(p), u),
	&
	y_{2, -2}(p \rho(u)) &= y'_{1, -2}(\phi(p), u),
	\\
	y_{-2, 2}(p \rho(u)) &= y'_{-1, 2}(\phi(p), u),
	&
	y_{1, -1}(p^* \rho(u)) &= y'_{-1, -2}(\phi(p), u),
	\\
	y_{1, -1}(\rho(u) p) &= y'_{-1, -2}(u, \phi(p)),
	&
	y_{2, -2}(\rho(u) p^*) &= y'_{1, -2}(u, \phi(p)),
	\\
	y_{-2, 2}(\rho(u) p^*) &= y'_{-1, 2}(u, \phi(p)),
	&
	y_{-1, 1}(\rho(u) p) &= y'_{12}(u, \phi(p)),
\end{align*}
so
\( y_{-i, i}(p q) = y_{-i, i}(q p) \). Below we use homomorphisms
\begin{align*}
	b_4
	&\colon
	I_4 \to \schur(\stlin(\mathsf F_4, R, S)),
	&
	b_\eps
	&\colon
	R_{2 \eps \delta}
	\to
	\schur(\stlin(\mathsf F_4, R, S)),
	\\
	b'_4
	&\colon
	J_{4'} \to \schur(\stlin(\mathsf F_4, R, S)),
	&
	b'_\eps
	&\colon
	S'_{2 \eps \delta'}
	\to
	\schur(\stlin(\mathsf F_4, R, S)),
\end{align*}
where
\( J_{4'} \leqt S_{4'} \) and
\( S'_{2 \eps \delta'} \) are the symmetric objects to
\( I_4 \) and
\( R_{2 \eps \delta} \).

\begin{lemma}
	\label{f4-1}
	There are unique homomorphisms
	\[
		e
		\colon
		S_{44} \to \schur(\stlin(\mathsf F_4, R, S)),
		\quad
		e'
		\colon
		R_{44} \to \schur(\stlin(\mathsf F_4, R, S))
	\]
	such that
	\[
		z_{12}(u, p)\,
		y_{-1, 2}(\rho(u), p)^{-1}\,
		y'_{-1, 2}(\rho(p), u)
		=
		e(u \rho(p))\, e'(\rho(u) (p^3 + p))
		=
		e(\rho(p) (u^3 + u))\, e'(p \rho(u)).
	\]
	Also,
	\[
		b_\eps(p) = b'_\eps(u) = 1,
		\quad
		b_4(p (q^2 - q)) = e'(p (q^2 - q)),
		\quad
		b'_4(u (v^2 - v)) = e(u (v^2 - v)),
		\quad
		e(\rho(p)) = e'(p^3).
	\]
\end{lemma}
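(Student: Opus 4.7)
The approach is to compare two interpretations of the same commutator. In the $\mathsf F_4$-Steinberg group the element $z_{12}(u,p) = \langle x_1(u), x_{12}(p) \rangle$ (from the standard $\mathsf B_3$ subsystem) also equals $z'_{12}(p,u)^{-1}$ via the $\mathsf B_3$-versus-$\mathsf C_3$ identifications listed just above the lemma. Applying proposition \ref{b3-schur} to the $\mathsf B_3$ side expresses $z_{12}(u,p)$ as a visible Steinberg contribution (involving $y_{-1,2}$ and $x_2$) together with a central part built from the $\mathsf B_3$-generators $b_4$, $b_\eps$, $d$; the remaining generator $c_{1\mid 23}(\cdot)_3$ vanishes in the $\mathsf F_4$-context by the Hall--Witt consequence noted in the preamble to the lemma. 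Symmetrically, the $\mathsf C_3$ analysis expresses the same element through $y'_{-1,2}$, $x'_2$, and $b'_4$, $b'_\eps$, $d'$.

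First I would compute $z_{12}(u,p) \cdot y_{-1,2}(\rho(u),p)^{-1}$ by applying the expansion rule from lemma \ref{b3-4} for $z_{jk}(u \cdot p, q)$ together with the triviality $c = 1$ forced by the $\mathsf F_4$-commutator, and similarly $z'_{12}(p,u)^{-1} \cdot y'_{-1,2}(\rho(p),u)$ on the $\mathsf C_3$ side. The master combination from the statement is then a product of central elements drawn from both interpretations, and the residual terms organize naturally by arguments of the form $u\rho(p)$ and $\rho(u)(p^3+p)$, defining partial morphisms $e$ and $e'$.

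Next, I would establish that $e$ and $e'$ extend to homomorphisms out of $S_{44}$ and $R_{44}$. Biadditivity on the defining arguments follows from (\ref{XP}) together with the additivity of $z_{12}$, and the defining axioms of the varieties (namely $u^* = u^2$, $\lambda u = 0$, associativity, commutativity, $2u = 0$) are checked by reducing associator identities to the associative case via lemma \ref{wunit-artin} and then invoking lemma \ref{b3-8}. The collapses $b_\eps(p) = 1$ and $b'_\eps(u) = 1$ emerge by specializing one argument (e.g.\ $u = \iota \cdot q$) in the master equation: such substitutions convert the $b_\eps$-contribution into an expression that the symmetric primed interpretation shows to be trivial, crucially using that $c$ vanishes. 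The identities $b_4(p(q^2-q)) = e'(p(q^2-q))$ and $b'_4(u(v^2-v)) = e(u(v^2-v))$ match the residual $b_4$- and $b'_4$-parts to the appropriate hybrid generators, while $e(\rho(p)) = e'(p^3)$ is the comparison of the two stated forms of the right-hand side, specialized so that the chosen argument makes $\rho(u) = 1$.

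The main obstacle is the intricate bookkeeping required to isolate the purely central part of $z_{12}(u,p)\, y_{-1,2}(\rho(u),p)^{-1}$ simultaneously in the $\mathsf B_3$ and $\mathsf C_3$ interpretations. In particular one must carefully track the auxiliary $b_{ijj'k}$-contributions from lemma \ref{b3-5} and verify that after the cancellations forced by $c = 1$ in the $\mathsf F_4$-setting the two sides of the comparison reorganize into exactly the two prescribed generators $e$ and $e'$ with the stated domains.
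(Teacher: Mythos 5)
Your proposal matches the paper's proof in both strategy and key steps: defining the master combination $e(u,p) = z_{12}(u,p)\, y_{-1,2}(\rho(u),p)^{-1}\, y'_{-1,2}(\rho(p),u)$ and exploiting the symmetry $e'(p,u)=e(u,p)^{-1}$ coming from the $\mathsf B_3$/$\mathsf C_3$ identification, using lemma \ref{b3-4} with the vanishing of the $c$-terms to derive biadditivity and the factoring through $S_{44}$ and $R_{44}$, and extracting the relations among $e$, $e'$, $b_4$, $b'_4$, $b_\eps$, $b'_\eps$ from the two expansions. One small caveat: the triviality of $b_\eps$ and $b'_\eps$ is not obtained by a single specialization of the master equation but by a short chain of substitutions (passing through $b'_\eps(v)=b_\eps(\phi(v)^3+\phi(v))$ and $\phi(\phi(p))=0$), so that part of your sketch would need to be unpacked, but the underlying mechanism you describe is the correct one.
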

\begin{proof}
	Let
	\begin{align*}
		e(u, p)
		&=
		z_{12}(u, p)\,
		y_{-1, 2}(\rho(u), p)^{-1}\,
		y'_{-1, 2}(\rho(p), u)
		\colon
		\schur(\stlin(\mathsf F_4, R, S)),
		\\
		e'(p, u)
		&=
		z'_{12}(p, u)\,
		y'_{-1, 2}(\rho(p), u)^{-1}\,
		y_{-1, 2}(\rho(u), p)
		=
		e(u, p)^{-1}.
	\end{align*}
	By lemma \ref{b3-4} we have
	\begin{align*}
		e(u + v, p) &= e(u, p)\, e(v, p),
		\\
		e(u, p + q) &= e(u, p)\, e(u, q),
		\\
		e(\phi(p), q)
		&=
		b_4\bigl( (p^2 + p) q \bigr)\,
		b_\eps\bigl( (p^2 + p) q \bigr),
		\\
		z_{32}(u, p q)\,
		y_{-3, 2}(\rho(u) p, q)^{-1}
		&=
		y'_{-1, 2}\bigl( \rho(q), u \rho(p) \bigr)^{-1}\,
		e(u \rho(p), q)\,
		b_4\bigl( (p^3 + p^2) q \rho(u) \bigr)\,
		b_\eps\bigl( (p^3 + p^2) q \rho(u) \bigr),
		\\
		z_{32}(u, \eps^2 q)
		&=
		y'_{-1, 2}\bigl( \rho(q), u \rho(\eps^2) \bigr)^{-1}\,
		y_{-3, 2}(\rho(u) \eps^2, q)\,
		e(u \rho(\eps^2), q)
		\text{ for }
		\eps \colon \mathcal E,
		\\
		e(u \rho(p), q)
		&=
		e(u, p q)\,
		b'_\eps\bigl(
			\bigl( \rho(p)^2 - \rho(p) \bigr) \rho(q) u
		\bigr)\,
		b_4\bigl( (p^3 + p) q \rho(u) \bigr)\,
		b_\eps\bigl( (p^3 + p) q \rho(u) \bigr).
	\end{align*}
	Then there is a unique homomorphism
	\(
		e \colon S \to \schur(\stlin(\mathsf F_4, R, S))
	\) such that
	\[
		e(u, p)
		=
		e(u \rho(p))\,
		b'_\eps\bigl(
			\bigl( \rho(p)^2 - \rho(p) \bigr) u
		\bigr)\,
		b_4\bigl( (p^3 + p) \rho(u) \bigr)\,
		b_\eps\bigl( (p^3 + p) \rho(u) \bigr),
	\]
	so
	\begin{align*}
		e(\phi(p q q^*))
		&=
		b_\eps\bigl( (p^2 - p) q \bigr)\,
		b'_\eps\bigl(
			\bigl( \rho(q)^2 + \rho(q) \bigr) \phi(p)
		\bigr)\,
		b_4\bigl( q^3 (p + p^2) \bigr),
		\\
		\tag{%
			\( \Omega \)%
		}
		\label{f4-1-phi}
		e(\phi(p)) &= b_\eps(p^2 + p)\, b_4(p^2 + p).
	\end{align*}
	By symmetry, there is a unique homomorphism
	\(
		e' \colon R \to \schur(\stlin(\mathsf F_4, R, S))
	\) such that
	\[
		e(u, p)^{-1}
		=
		e'(p \rho(u))\,
		b_\eps\bigl(
			\bigl( \rho(u)^2 - \rho(u) \bigr) p
		\bigr)\,
		b'_4\bigl( (u^3 + u) \rho(p) \bigr)\,
		b'_\eps\bigl( (u^3 + u) \rho(p) \bigr).
	\]
	But this means that
	\begin{align*}
		e(u \rho(p))
		&=
		e'(-p \rho(u))\,
		b_\eps\bigl( \rho(u)^2 p + \rho(u) p^3 \bigr)\,
		b_4\bigl( \rho(u) (p^3 + p) \bigr)\,
		b'_\eps\bigl( \rho(p)^2 u + \rho(p) u^3 \bigr)\,
		b'_4\bigl( \rho(p) (u^3 + u) \bigr),
		\\
		e(u v v^*)
		&=
		e(u v)\,
		b_\eps\bigl(
			\rho(u) \bigl( \rho(v)^3 + \rho(v)^2 \bigr)
		\bigr)\,
		b'_\eps\bigl( u (v^3 + v) \bigr)\,
		b'_4\bigl( u (v^3 + v) \bigr),
		\\
		e(2 u) &= 1,
		\\
		e(\lambda u) &= e(u),
		\\
		e'(p)
		&=
		e(\rho(p))\,
		b_\eps(p^3 + p)\,
		b_4(p^3 + p)\,
		b'_\eps(\rho(p)),
		\\
		e(u v^*)
		&=
		e(u v)\,
		b'_4\bigl( u (v^2 + v) \bigr).
	\end{align*}
	
	Now combine the linearization of the identity for
	\( e'(p) \) with (\ref{f4-1-phi}), so
	\[
		b'_\eps(\phi(p)) = 1.
	\]
	Using this and the dual identity we linearize the expression for
	\( e(u v v^*) \).
	\begin{align*}
		e\bigl( u (v w* + w v*) \bigr)
		&=
		b'_4\bigl( u (v^2 w + v w^2) \bigr)\,
		b'_\eps\bigl( u (v^2 w + v w^2) \bigr),
		\\
		b'_\eps\bigl( u (v^2 + v) \bigr)
		&=
		1,
		\\
		b'_\eps(u) &= b_\eps(p) = 1,
		\\
		e(u v^2) &= e(u v)\, b'_4(u (v^2 + v)),
		\\
		e(u (v w)) &= e(u (w v)).
	\end{align*}
	The relation (\ref{f4-1-phi}) and its dual take the form
	\[
		e(\phi(p)) = b_4(p^2 + p),
		\quad
		e(u + u^*) = b'_4(u^2 + u).
	\]
	
	As in the proof of proposition \ref{b3-schur} we see that
	\( e \) factors through
	\( S_{44} \). It follows that
	\( b'_4(u (v^2 + v)) = e(u (v^2 + v)) \), so we do not need the generators
	\( b'_4 \). Similarly,
	\( e' \) factors through
	\( R_{44} \) and allows us to eliminate
	\( b_4 \).
\end{proof}

\begin{prop}
	\label{f4-schur}
	Let
	\( (R, S) \) be a weakly unital
	\( \mathsf F_4 \)-ring in an infinitary pretopos. Then the Schur multiplier
	\( \schur(\stlin(\mathsf F_4, R, S)) \) is the product of the images of
	\[
		e
		\colon
		S_{44} \to \schur(\stlin(\mathsf F_4, R, S)),
		\quad
		e'
		\colon
		R_{44} \to \schur(\stlin(\mathsf F_4, R, S))
	\]
	satisfying the relation
	\( e(\rho(p)) = e'(p^3) \) (equivalently,
	\( e(u^3) = e'(\rho(u)) \)).
\end{prop}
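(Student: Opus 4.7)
The plan is to mimic the proof of proposition \ref{b3-schur}: show that every commutator $\langle x_\alpha(p), x_\beta(q) \rangle$ in $\widetilde\stlin(\mathsf F_4, R, S)$ lies in the subgroup $C := \langle e(S_{44}), e'(R_{44}) \rangle$, then pass to the quotient $G := \widetilde\stlin(\mathsf F_4, R, S)/C$ and exhibit a section of $\kappa \colon G \to \stlin(\mathsf F_4, R, S)$. The well-definedness of $e$, $e'$ and the relation $e(\rho(p)) = e'(p^3)$ are already supplied by lemma \ref{f4-1}.

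First I would collect the inputs. The discussion immediately preceding lemma \ref{f4-1} establishes $c_{i \mid jk}(u) = 1$ for every rank-$3$ crystallographic subsystem of $\mathsf F_4$ of type $\mathsf B_3$ or $\mathsf C_3$. Lemma \ref{f4-1} then gives $b_\eps(p) = b'_\eps(u) = 1$ and absorbs $b_4(p(q^2 - q))$ into $e'(R_{44})$ and $b'_4(u(v^2 - v))$ into $e(S_{44})$; since in $R_4$ (respectively in the dual $\mathsf C_3$-reduction) the involution satisfies $p^* = p^2$, the identity $p(q - q^*) = -p(q^2 - q)$ extends these absorptions to all of $I_4$ and $J_{4'}$. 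With $c_{1 \mid 23}(u)_2$ and $b_\eps$ both trivial, the remaining generator $d(u) = c_{1 \mid 23}(u)_2\, b_\eps(\langle \iota, u \rangle^2 - \langle \iota, u \rangle)$ vanishes. Consequently every one of the five families of Schur generators listed in proposition \ref{b3-schur} lies in $C$ when restricted to any embedded $\mathsf B_3$ or $\mathsf C_3$; since every pair of non-proportional roots of $\mathsf F_4$ is contained in such a rank-$3$ subsystem, this proves $\schur(\stlin(\mathsf F_4, R, S)) \subseteq C$.

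To obtain the reverse inclusion I would build a section of $\kappa$ over $G$. Proposition \ref{b3-schur} applied inside each rank-$3$ subsystem produces lifts $y_\alpha$ of the root maps $x_\alpha$ in $G$; the compatibility of these lifts across two overlapping subsystems amounts to identifying a short-root lift arising from a $\mathsf B_3$ with the same lift arising from a $\mathsf C_3$. The hard part will be exactly this gluing, together with the verification of the mixed-length commutator relations at angle $3\pi/4$, in which $[x_\alpha(p), x_\beta(q)]$ expands as a product of three root elements coupling $R$ and $S$ via $\rho$. This is precisely where the relation $e(\rho(p)) = e'(p^3)$ is needed and cannot be avoided: two descriptions of a short-root lift (one from a $\mathsf B_3$, one from a $\mathsf C_3$) will differ by $e'(p^3)\, e(\rho(p))^{-1}$, which is identified with $1$ in $G$ by construction. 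Once the $y_\alpha$ satisfy all Steinberg relations they define a section of $\kappa$, and lemma \ref{c-ext-perf} then forces $\kappa$ to be an isomorphism, giving $\schur(\stlin(\mathsf F_4, R, S)) = C$.
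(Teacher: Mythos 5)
Your high-level blueprint — form $G := \widetilde\stlin(\mathsf F_4, R, S)/C$, kill the embedded $\mathsf B_3$/$\mathsf C_3$ Schur generators, construct a section of $\kappa \colon G \to \stlin(\mathsf F_4, R, S)$, and finish with lemma~\ref{c-ext-perf} — is indeed the strategy the paper follows, and your reading of lemma~\ref{f4-1} as providing the well-definedness of $e$, $e'$ and the relation $e(\rho(p)) = e'(p^3)$ is right. However, there are two substantive gaps.

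First, lemma~\ref{f4-1} identifies $b_4$ with $e'$ (and $b'_4$ with $e$) only for the \emph{standard} $\mathsf B_3 \subset \mathsf F_4$. You then assert that all generators from proposition~\ref{b3-schur} vanish in $G$ ``when restricted to any embedded $\mathsf B_3$ or $\mathsf C_3$,'' but you give no argument for non-standard subsystems, and this is not automatic: the corresponding $b_4$ homomorphisms depend on choices of root parametrizations (they may be precomposed with the involution). The paper handles this with a geometric argument: the long root $\e_1 + \e_2$ has four decompositions into sums of two long roots, any two of which lie in a common crystallographic $\mathsf B_3$ subsystem, and every $\mathsf B_3 \subseteq \mathsf F_4$ contains a root $\pm\e_i \pm \e_j$ with $i, j \neq 4$; propagating through these intersections is what forces every $b_4$ to vanish in $G$. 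Without this (or an explicit appeal to transitivity of $\mathrm W(\mathsf F_4)$ on such subsystems together with functoriality of the $e$, $e'$), the step is incomplete.

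Second, the section-building step — which you yourself flag as ``the hard part'' — is left as a sketch, and the specific claim you make there is off. You say two descriptions of a short-root lift ``will differ by $e'(p^3)\,e(\rho(p))^{-1}$, which is identified with $1$ in $G$ by construction.'' But in $G$ both $e$ and $e'$ are annihilated separately, so $e'(p^3)\,e(\rho(p))^{-1}$ is trivially $1$; that observation does not carry the argument, and the relation $e(\rho(p)) = e'(p^3)$ describes the kernel of $(S_{44} \times R_{44}) \twoheadrightarrow \schur$, not an obstruction to building the section. What the paper actually does is observe that, once the long-long and short-short Steinberg relations are established for the lifts $y_\alpha$, the only relations still to verify are the mixed-length ones at angle $3\pi/4$; the defining identity of $e$ from lemma~\ref{f4-1} shows one such relation holds in $G$, an auxiliary identity (again from the proof of lemma~\ref{f4-1}, or directly from lemma~\ref{b3-4}) transfers it to an adjacent pair of roots, and a connectivity argument for the bipartite graph whose vertices are roots of $\mathsf F_4$ and whose edges are pairs at angle $\pi/4$ completes the verification. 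You would need to supply this propagation; as written, the proposal names the difficulty without resolving it. Relatedly, note that showing $\schur \subseteq C$ and exhibiting a section of $\kappa$ are equivalent for perfect central extensions, so the two halves of your argument are one and the same claim rather than complementary steps.
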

\begin{proof}
	It remains to check that these homomorphisms generate the whole Schur multiplier. Let
	\( G \) be the factor-group of
	\( \widetilde \stlin(\mathsf F_4, R, S) \) by the product of their images.
	
	For any root subsystem
	\( \Psi \subseteq \mathsf F_4 \) of crystallographic type
	\( \mathsf B_3 \) there is a well defined homomorphism
	\( b_4 \colon I_4 \to G \), determined up to composition with the involution. If two such subsystems have a common root subsystem of type
	\( \mathsf A_2 \), then the corresponding homomorphisms coincide or differ by the involution. Also, by construction this homomorphism is trivial for the standard root subsystem
	\( \Psi_0 \) of crystallographic type
	\( \mathsf B_3 \). Now note that every
	\( \Psi \) contains a root
	\( \alpha = \pm \e_i \pm \e_j \) with
	\( i, j \neq 4 \). This root has two decompositions into a sum of long roots, one inside
	\( \Psi \) and another inside
	\( \Psi_0 \). But it is easy to see that every two such decompositions lie in a common root subsystem
	\( \Psi_1 \) of crystallographic type
	\( \mathsf B_3 \). So both
	\( \Psi_0 \cap \Psi_1 \) and
	\( \Psi \cap \Psi_1 \) contain root subsystems of type
	\( \mathsf A_2 \) and the homomorphism
	\( b_4 \) associated with
	\( \Psi \) is trivial. Similarly, Schur multipliers of root subsystems of crystallographic type
	\( \mathsf C_3 \) have trivial images in
	\( G \).
	
	It follows that there are well-defined homomorphisms
	\( y_\alpha \colon R \to G \) for long
	\( \alpha \) and
	\( y_\alpha \colon S \to G \) for short
	\( \alpha \) satisfying all Steinberg relations except possibly the ones with two roots of distinct lengths at the angle
	\( \frac {3 \pi} 4 \). We already know that one of these non-trivial relations holds.
	
	By the proof of lemma \ref{f4-1} (or directly by lemma \ref{b3-4})
	\[
		z_{32}(u, p q)
		=
		y_{-3, 2}(\rho(u) p, q)\,
		y'_{-1, 2}\bigl( \rho(q), u \rho(p) \bigr)^{-1}
		\colon
		G,
	\]
	so the required relation also holds for another pair of roots. Now consider the bipartite graph of all roots, where edges are the pairs of roots at the angle
	\( \frac \pi 4 \). If the relation holds for an edge, then we may assume that this edge corresponds to
	\( z_{12}(u, p) \), so the relation holds for all adjacent edges with the same short vertex. By symmetry, the relation holds for all edges in a connected component of this graph. But it is easy to see that the graph is connected.
\end{proof}

\section{Non-triviality of Schur multipliers}

\subsection{Root graded case}

Recall that Schur multipliers of finite simple groups of Lie type are known. We list all non-trivial Schur multipliers of elementary subgroups of simply laced reductive groups of isotropic rank
\( \geq 3 \) over finite fields following \cite{griess, steinberg-2}. In the table below
\( \Phi \) denotes an ordinary or twisted root system and
\( q \) is the order of the base field (for twisted groups it is a perfect square). It is known that the simply laced elementary group coincides with the Steinberg group and has
\( p \)-torsion Schur multiplier, its center is
\( p' \)-torsion, where
\( p \) is the characteristic of the base field. So the Schur multiplier of the Steinberg group is the
\( p \)-torsion of the Schur multiplier of the corresponding simple group
\( \Phi(q) \) of Lie type, i.e. the adjoint elementary group.

\begin{center}
	\begin{tabular}{|c||c|c|c|c|c|c|c|c|}
		\hline
		
		\( \Phi \)
		&
		\( \mathsf A_3 \)
		&
		\( \mathsf B_3 \)
		&
		\( \mathsf B_3 \)
		&
		\( \mathsf C_3 \)
		&
		\( \mathsf D_4 \)
		&
		\( \mathsf F_4 \)
		&
		\( \up 2 {\mathsf A_5} \)
		&
		\( \up 2 {\mathsf E_6} \)
		\\ \hline
		
		\( q \)
		&
		\( 2 \)
		&
		\( 2 \)
		&
		\( 3 \)
		&
		\( 2 \)
		&
		\( 2 \)
		&
		\( 2 \)
		&
		\( 4 \)
		&
		\( 4 \)
		\\ \hline
		
		\( \schur(\Phi(q))_p \)
		&
		\( \mathrm C_2 \)
		&
		\( \mathrm C_2 \)
		&
		\( \mathrm C_3 \)
		&
		\( \mathrm C_2 \)
		&
		\( \mathrm C_2 \times \mathrm C_2 \)
		&
		\( \mathrm C_2 \)
		&
		\( \mathrm C_2 \times \mathrm C_2 \)
		&
		\( \mathrm C_2 \times \mathrm C_2 \)
		\\ \hline
		
	\end{tabular}
\end{center}

The last two columns correspond to
\( \stlin(\mathsf B_3, \mathbb F_4, \mathbb F_2) \) and
\(
	\stlin(\mathsf F_4, \mathbb F_2, \mathbb F_4)
	\cong
	\stlin(\mathsf F_4, \mathbb F_4, \mathbb F_2)
\), also there is an isomorphism
\(
	\stlin(
		\mathsf B_3,
		\mathbb F_2,
		\mathbb F_2 \times \mathbb F_2
	)
	\cong
	\stlin(\mathsf D_4, \mathbb F_2)
\). Moreover, it is known \cite[theorems (2.6), (2.8), (2.10)]{schur-mult} that
\[
	\schur(\stlin(\mathsf A_3, \mathbb F_2[\eps]))
	\cong
	\schur(\stlin(\mathsf B_3, \mathbb F_2[\eps]))
	\cong
	\mathrm C_2 \times \mathrm C_2,
	\quad
	\schur(\stlin(\mathsf D_4, \mathbb F_2[\eps]))
	\cong
	\mathrm C_2 \times \mathrm C_2 \times \mathrm C_2,
\]
where
\( \mathbb F_2[\eps] \cong \mathbb F_2[X] / (X^2) \) is the algebra of dual numbers over
\( \mathbb F_2 \).

\begin{theorem}
	\label{schur-graded}
	Let
	\( \Phi \) be an irreducible spherical root system of rank at least
	\( 3 \). Let also
	\( A \) be a unital
	\( \Phi \)-ring. Then the Steinberg group
	\( \stlin(\Phi, A) \) is centrally closed unless
	\(
		\Phi
		\in
		\{
			\mathsf A_3,
			\mathsf B_3,
			\mathsf D_4,
			\mathsf F_4
		\}
	\), in these exceptional cases the Schur multuplier is given by generators and relations from propositions
	\ref{a3-schur}--\ref{f4-schur} (i.e. there are no other relations).
\end{theorem}
\begin{proof}
	First of all recall that
	\[
		\stlin(\mathsf H_3, A)
		\cong
		\stlin(\mathsf D_6, K),
		\quad
		\stlin(\mathsf H_4, A)
		\cong
		\stlin(\mathsf E_8, K)
	\]
	by
	\cite{h-graded} for a suitable commutative unital ring
	\( K \), so these groups are centrally closed. So from now on we can assume that
	\( \Phi \) is one of the four exceptional root systems.
	
	Without loss of generality
	\( A \) is finitely generated and lies in one of the distinguished varieties, so it is finite. Propositions from the previous section give us a surjective homomorphism
	\( X \to \schur(\stlin(\Phi, A)) \), where
	\( X \) is a certain explicit abelian group. In order to check that no
	\( 0 \neq x \in X \) maps to
	\( 1 \), it suffices to find a further quotient of
	\( A \) with known Schur multiplier of the Steinberg group and with non-trivial image of
	\( x \). By the cited results this is easy in all cases except possibly
	\( \Phi = \mathsf B_3 \) and the generators
	\(
		b_\eps
		\colon
		I_{2 \eps \delta}
		\to
		\schur(\stlin(\mathsf B_3, R, \Delta))
	\).
	
	Let
	\(
		(R, \Delta)
		=
		(R_{2 \eps \delta}, \Delta_{2 \eps \delta})
	\) be a unital
	\( \mathsf B_3 \)-ring from the corresponding variety, where
	\( R \) is indecomposable into direct product. Then
	\( R = V \rtimes \mathbb F_2 \) for some finite vector field
	\( V \) over
	\( \mathbb F_2 \). If there is
	\( v \in \Delta \) such that
	\( \langle \iota, v \rangle = 1 \), then we can replace
	\( v \) by
	\( v \dotplus \iota \cdot p \) in such a way that
	\( \rho(v) \) becomes
	\( 0 \) and the axioms imply that
	\( \Delta = \iota \cdot R \dotoplus v \cdot R \) is completely determined. In this case
	\(
		\stlin(\mathsf B_3, R, \Delta)
		\cong
		\stlin(\mathsf D_4, R)
	\), so the Schur multiplier is known. So from now on we may assume that
	\( \langle \iota, u \rangle^2 = 0 \) for all
	\( u \in \Delta \), then
	\( \langle u, v \rangle^2 = 0 \) for all
	\( u, v \in \Delta \). Also, for every
	\( u \) either
	\( \rho(u) = 0 \) or
	\( \rho(u) = 1 + \langle \iota, u \rangle \).
	
	Let
	\( \Delta_0 = \{ u \in \Delta \mid \rho(u) = 0 \} \). It follows that
	\( \Delta_0 \) is a subgroup,
	\( \langle u, v \rangle = 0 \) for all
	\( u, v \in \Delta_0 \), and
	\(
		\Delta
		=
		\Delta_0 \sqcup (\Delta_0 \dotplus \iota)
	\). The operations on
	\( \Delta \) are completely determined by the maps
	\[
		g \colon V \to \Delta_0,\,
		g(p) = \iota \cdot p,
		\quad
		f \colon \Delta_0 \to V,\,
		f(u) = \langle \iota, u \rangle,
	\]
	and the
	\( R \)-module structure on
	\( \Delta_0 \). The map
	\( f \) is
	\( R \)-linear and
	\( f(g(p)) = 0 \). Replacing
	\( \Delta \) by
	\( \Delta / \Ker(f) \) we can assume that
	\( f \) is injective and
	\( \Delta_0 \cdot V = g(V) = \dot 0 \). Moreover, by taking a suitable factor-algebra of
	\( (R, \Delta) \) and enlarging
	\( \Delta_0 \) if necessary we can assume that
	\( R = \mathbb F_2[\eps] \) and
	\(
		\Delta
		=
		\iota \cdot \mathbb F_2
		\dotoplus
		v \cdot \eps \mathbb F_2
	\), where
	\( \eps^2 = 0 \),
	\( \rho(v) = 0 \), and
	\( \langle \iota, v \rangle = 1 \). This
	\( \mathsf B_3 \)-ring is a factor-algebra of
	\[
		(
			R,
			\iota \cdot R
			\dotoplus
			v \cdot \eps \mathbb F_2
		)
		\subseteq
		(R, \iota \cdot R \dotoplus v \cdot R).
	\]
	By lemma \ref{b3-4} there are unique homomorphisms
	\(
		y_i
		\colon
		\iota \cdot \eps \mathbb F_2
		\to
		\widetilde \stlin(
			\mathsf B_3,
			R,
			\iota \cdot R \dotoplus v \cdot R
		)
	\) such that
	\(
		\bigl\langle
			x_i(\iota \cdot p),
			x_{ij}(q)
		\bigr\rangle
		=
		y_j(\iota \cdot p q)
	\) for
	\( p \in \eps \mathbb F_2 \) and
	\( q \in R \). Now it suffices to check that
	\( b_\eps(\eps) \) does not lie in the normal subgroup
	\[
		N
		\leqt
		\Image\bigl(
			\widetilde \stlin(
				\mathsf B_3,
				R,
				\iota \cdot R \dotoplus v \cdot \eps \mathbb F_2
			)
			\to
			\widetilde \stlin(
				\mathsf B_3,
				R,
				\iota \cdot R \dotoplus v \cdot R
			)
			/
			d(
				\iota \cdot \mathbb F_2
				\dotoplus
				v \cdot \mathbb F_2
			)
		\bigr)
	\]
	generated by all
	\( y_i(\iota \cdot \eps) \).
	
	Recall that
	\(
		\stlin(
			\mathsf B_3,
			R,
			\iota \cdot R \dotoplus v \cdot R
		)
		\cong
		\stlin(\mathsf D_4, R)
	\) in such a way that the long roots
	\( \pm \e_i \pm \e_j \) of
	\( \mathsf B_3 \) correspond to themselves in
	\( \mathsf D_4 \) with the same parametrizations of root subgroups by
	\( R \) and the roots
	\( \pm \e_i \) correspond to the pairs
	\( (\pm \e_i - \e_4, \pm \e_i + \e_4) \). Actually, there are two such isomorphisms, choose any of them. Let us identify these groups. Combining notations from proposition \ref{d4-schur} and \ref{b3-schur} we see that
	\( b_\eps(p) = c_0(p^2 - p) \),
	\(
		c_{i \mid jk}(\iota \cdot p \dotplus v \cdot q)
		=
		c_0(p^2 + q^2 - q)\, c_\pm(q^2)
	\), and
	\(
		d(\iota \cdot p \dotplus v \cdot q)
		=
		c_0(p^2)\, c_\pm(q^2)
	\), where the signs depend on the indices and the choice of the isomorphism.
	
	Now we need the explicit construction of the factor-group
	\(
		G
		=
		\widetilde \stlin(\mathsf D_4, R)
		/
		\bigl( c_-(\mathbb F_2)\, c_+(\mathbb F_2) \bigr)
	\) from \cite[\S 3]{schur-mult}. Namely,
	\[
		G
		=
		M
		\rtimes
		\mathrm{Spin}(8, \mathbb F_2),
	\]
	where the second factor coincides with the simply connected elementary group
	\( \elem^{\mathrm{sc}}(\mathsf D_4, \mathbb F_2) \) and the Steinberg group
	\( \stlin(\mathsf D_4, \mathbb F_2) \). The first factor is generated by elements
	\( C \),
	\( n_\alpha \),
	\( d_\alpha \) for
	\( \alpha \in \mathsf D_4 \) with the relations
	\begin{align*}
		[n_\alpha, n_\beta] &= 1
		\text{ for }
		\alpha \neq -\beta,
		&
		[n_\alpha, n_{-\alpha}] &= C,
		&
		C^2 &= 1,
		\\
		d_\alpha &= d_\beta
		\text{ for }
		\angle(\alpha, \beta)
		\in
		\{ 0, \frac \pi 2, \pi \},
		&
		[n_\alpha, d_\beta] &= 1,
		&
		n_\alpha^2 &= 1,
		\\
		d_{\e_1 + \e_2} d_{\e_2 + \e_3}
		&=
		d_{\e_1 + \e_3} C,
		&
		[d_\alpha, d_\beta]
		&=
		C^{\frac{2 (\alpha, \beta)}{(\alpha, \alpha)}},
		&
		d_\alpha^2 &= C,
	\end{align*}
	so there is an exact sequence
	\(
		1
		\to
		\mathrm C_2
		\to
		M
		\to
		\mathrm C_2^{26}
		\to
		1
	\). The generators
	\( n_\alpha \) maps to
	\(
		t_\alpha(\eps)
		\in
		\mathrm{Spin}(8, \mathbb F_2[\eps])
	\). The action of
	\( \stlin(\mathsf D_4, \mathbb F_2) \) on
	\( M \) is given by
	\begin{align*}
		\up{x_\alpha(1)}{n_\beta}
		&=
		n_\beta n_{\alpha + \beta} C_{ki} C_{jl}
		\text{ for }
		\alpha = \sigma \e_i + \tau \e_j,
		\beta = -\tau \e_j + \rho \e_k;
		\\
		\up{x_\alpha(1)}{n_\beta} &= n_\beta C
		\text{ for }
		\alpha \perp \beta;
		\\
		\up{x_\alpha(1)}{n_\beta} &= n_\beta
		\text{ for }
		(\alpha, \beta) > 0;
		\\
		\up{x_\alpha(1)}{n_{-\alpha}}
		&=
		n_{-\alpha} d_\alpha n_\alpha;
		\\
		\up {x_\alpha(1)} C &= C;
		\\
		\up{x_\alpha(1)}{d_\gamma}
		&=
		d_\gamma
		n_\alpha^{
			2 \frac{(\alpha, \gamma)}{(\alpha, \alpha)}
		}
		C^{2 \frac{(\beta, \gamma)}{(\alpha, \alpha)}}
		\text{ for }
		\alpha = \sigma \e_i + \tau \e_j,
		\beta = \e_j + \e_k,
		i < j,
		k < l;
	\end{align*}
	where
	\( i j k l \) is a permutation of
	\( \{ 1, 2, 3, 4 \} \),
	\( C_{ij} = C \) for
	\( i < j \),
	\( C_{ij} = 1 \) for
	\( i > j \) (so in the last identity the root
	\( \beta \) is uniquely determined by
	\( \alpha \)), and
	\( \sigma \),
	\( \tau \),
	\( \rho \) are signs.
	
	It is easy to see that
	\(
		y_i(\iota \cdot \eps)
		=
		n_{\e_i + \e_4} n_{\e_i - \e_4}
	\), where
	\( \e_{-i} = -\e_i \). The relations above imply that
	\( N \cong \mathrm C_2^6 \) is an abelian group generated by
	\( y_i(\iota \cdot \eps) \) and it does not contain
	\( b_\eps(\eps) = C \).
\end{proof}

\subsection{Tits indices}

We are ready to compute Schur multipliers of globally isotropic Steinberg groups. Let
\( K \) be a unital commutative ring and
\( G \) be a reductive group scheme over
\( K \) with isotropic pinning
\( (T, \Phi) \). For simplicity (see the next subsection for justification) we also assume that the root datum of the split form of
\( G \) is constant, its root system
\( \widetilde \Phi \) is irreducible, and the map
\( \widetilde \Phi \to \Phi \sqcup \{ 0 \} \) comes from an irreducible Tits index as in \cite{diophantine}. The only irreducible Tits indices with the rank of
\( \Phi \) at least
\( 3 \) are the following.
\begin{itemize}
	
	\item
	\( \up 1 {\mathsf A_{n, r}^{(d)}} \) for
	\( d (r + 1) = n + 1 \) and
	\( n \geq r \geq 3 \) with
	\( \widetilde \Phi = \mathsf A_n \) and
	\( \Phi = \mathsf A_r \), where
	\( R \) is an Azumaya algebra over
	\( K \) of degree
	\( d \).
	
	\item
	\( \up 2 {\mathsf A_{n, r}^{(d)}} \) for
	\( d \mid n + 1 \),
	\( 2 r d \leq n + 1 \), and
	\( n \geq r \geq 3 \) with
	\( \widetilde \Phi = \mathsf A_n \) and
	\( \Phi = \mathsf B_r \), where
	\( R \) is an Azumaya algebra over a \'etale quadratic extension of
	\( K \) of degree
	\( d \). The morphism
	\(
		\langle {-}, {=} \rangle
		\colon
		\Delta \times \Delta \to R
	\) generates
	\( R \) as an ideal for
	\( 2 r d \leq n \). If
	\( d = 1 \) (i.e.
	\( R \) itself is an \'etale quadratic extension of
	\( K \)) and
	\( 2 r d = n + 1 \), then the involution is standard,
	\( \lambda = -1 \),
	\( \Delta = K \),
	\( \rho \colon \Delta \to R \) is the embedding
	\( K \subseteq R \),
	\( \phi(p) = p + p^* \), and
	\( u \cdot p = u p p^*\).
	
	\item
	\( \mathsf B_{n, r} \) for
	\( n \geq r \geq 3 \) with
	\( \widetilde \Phi = \mathsf B_n \) and
	\( \Phi = \mathsf B_r \), where
	\( R = K \) with trivial involution and
	\( \lambda = 1 \),
	\( \Delta = M \) is a projective right
	\( K \)-module of rank
	\( 2 n - 2 r + 1 \) with quasi-regular quadratic form
	\( \rho \colon M \to K \),
	\( \phi(p) = 0 \),
	\( m \cdot p = m p \).
	
	\item
	\( \mathsf C_{n, r}^{(d)} \) for
	\( d = 2^k \mid 2 n \),
	\( r d \leq n \),
	\( n = r \) in the case
	\( d = 1 \),
	\( n \geq r \geq 3\) with
	\( \widetilde \Phi = \mathsf B_n \) and
	\( \Phi = \mathsf B_r \), where
	\( R \) is an Azumaya algebra over
	\( K \) of degree
	\( d \). If
	\( d = 1 \) (i.e.
	\( R = K \)), then the involution is trivial,
	\( \lambda = -1 \), and there is a canonical subgroup
	\( \phi(K) \leq \Delta^0 \leq \Delta \) such that
	\( \Delta^0 \cdot K \subseteq \Delta^0 \),
	\( \langle \Delta^0, \Delta \rangle = 0 \), the
	\( K \)-module
	\( \Delta / \Delta^0 \) is finitely generated projective of rank
	\( 2 n - 2 r d \), and
	\(
		\langle {-}, {=} \rangle
		\colon
		\Delta / \Delta_0 \times \Delta / \Delta_0
		\to
		K
	\) is a symplectic form (i.e. a non-degenerate alternating form). Moreover,
	\( \Delta^0 \) is also a
	\( K \)-module,
	\( u \cdot p = u p^2 \) for
	\( u \in \Delta^0 \), and
	\( \rho|_{\Delta^0} \colon \Delta^0 \to K \) is an isomorphism of
	\( K \)-modules.
	
	\item
	\( \up 1 {\mathsf D_{n, n}^{(1)}} \) for
	\( n \geq 4 \) with
	\( \widetilde \Phi = \Phi = \mathsf D_n \) and
	\( \Phi = \mathsf B_r \), where
	\( R = K \).
	
	\item
	\( \up 1 {\mathsf D_{n, r}^{(d)}} \) and
	\( \up 2 {\mathsf D_{n, r}^{(d)}} \) for
	\( d = 2^k \mid 2 n \),
	\( r d \leq n \),
	\( n \geq 4 \),
	\( r \geq 3 \), and
	\( r d < n \) or
	\( d = 1 \) with
	\( \widetilde \Phi = \mathsf D_n \) and
	\( \Phi = \mathsf B_r \), where
	\( R \) is an Azumaya algebra over
	\( K \) of degree
	\( d \). If
	\( d = 1 \) (i.e.
	\( R = K \)), then the involution is trivial,
	\( \lambda = 1 \),
	\( \Delta = M \) is a projective right
	\( K \)-module of rank
	\( 2 n - 2 r \) with regular quadratic form
	\( \rho \colon M \to K \),
	\( \phi(p) = 0 \),
	\( m \cdot p = m p \).
	
	\item
	\( \mathsf E^{28}_{7, 3} \) with
	\( \widetilde \Phi = \mathsf E_7 \) and
	\( \Phi = \mathsf B_3 \), where
	\( R \) is an octonion algebra over
	\( K \) with the standard involution and
	\( \lambda = -1 \),
	\( \Delta = K \) with
	\( \iota = 1 \),
	\( \phi(p) = p + p^* \),
	\( \rho(u) = u \),
	\( \langle u, v \rangle = 0 \),
	\( u \cdot p = u p p^* \).
	
	\item
	\( \mathsf F^0_{4, 4} \) with
	\( \widetilde \Phi = \Phi = \mathsf F_4 \), where
	\( R = S = K \) with trivial involutions,
	\( \phi(p) = 0 \) and
	\( \rho(p) = p \) for
	\( p \in R \),
	\( \phi(u) = 2 u \) and
	\( \rho(u) = u^2 \) for
	\( u \in S \).
	
	\item
	\( \up 2 {\mathsf E^2_{6, 4}} \) with
	\( \widetilde \Phi = \mathsf E_6 \) and
	\( \Phi = \mathsf F_4 \), where
	\( R = K \) with trivial involution,
	\( S \) is a \'etale quadratic extension of
	\( K \) with the standard involution,
	\( \phi(p) = 0 \) and
	\( \rho(p) = p \) for
	\( p \in R \),
	\( \phi(u) = u + u^* \) and
	\( \rho(u) = u u^* \) for
	\( u \in S \).
	
	\item
	\( \mathsf E^9_{7, 4} \) with
	\( \widetilde \Phi = \mathsf E_7 \) and
	\( \Phi = \mathsf F_4 \), where
	\( R = K \) with trivial involution,
	\( S \) is a quaternion algebra over
	\( K \) with the standard involution,
	\( \phi(p) = 0 \) and
	\( \rho(p) = p \) for
	\( p \in R \),
	\( \phi(u) = u + u^* \) and
	\( \rho(u) = u u^* \) for
	\( u \in S \).
	
	\item
	\( \mathsf E^{28}_{8, 4} \) with
	\( \widetilde \Phi = \mathsf E_8 \) and
	\( \Phi = \mathsf F_4 \), where
	\( R = K \) with trivial involution,
	\( S \) is an octonion algebra over
	\( K \) with the standard involution,
	\( \phi(p) = 0 \) and
	\( \rho(p) = p \) for
	\( p \in R \),
	\( \phi(u) = u + u^* \) and
	\( \rho(u) = u u^* \) for
	\( u \in S \).
	
	\item
	\( \up 1 {\mathsf E^0_{6, 6}} \) with
	\( \widetilde \Phi = \Phi = \mathsf E_6 \) and
	\( R = K \).
	
	\item
	\( \mathsf E^0_{7, 7} \) with
	\( \widetilde \Phi = \Phi = \mathsf E_7 \) and
	\( R = K \).
	
	\item
	\( \mathsf E^0_{8, 8} \) with
	\( \widetilde \Phi = \Phi = \mathsf E_8 \) and
	\( R = K \).
	
\end{itemize}
For simplicity we write the spherical types of the root systems
\( \widetilde \Phi \) and
\( \Phi \). Definitions and lists of Tits indices are given in
\cite{index-loc, index-field}. The claims about
\( \Phi \)-rings in these cases can be proved as follows. They trivially hold in the split case, for classical cases by explicit calculations and for exceptional cases see
\cite[examples 3 and 6]{root-graded}. In the non-split case note that the above descriptions are preserved under descent and independent of the choices of Weyl elements (cf.
\cite[theorems 5 and 7]{root-graded}). See also
\cite{odd-petrov} and
\cite{twisted-forms} for descriptions of classical cases as odd unitary groups.

For a unital commutative ring
\( K \) let
\( K_2 = K / \langle 2, p^2 - p \rangle \),
\(
	K_{2 \eps}
	=
	K / \bigl\langle 2, (p^2 - p) (q^2 - q) \bigr\rangle
\), and
\( K_3 = K / \langle 3, p^3 - p \rangle \). If
\( R \) is an \'etale quadratic algebra over
\( K \), then
\( R_4 = R \otimes_K K_2 \) induces a decomposition
\(
	K_2
	=
	K_{2 \mathrm s} \times K_{2 \mathrm a}
\) into a ``\textbf split'' and ``\textbf anisotropic'' parts such that
\(
	R_4 \otimes_{K_2} K_{2 \mathrm s}
	\cong
	K_{2 \mathrm s} \times K_{2 \mathrm s}
\) and
\(
	R_4 \otimes_{K_2} K_{2 \mathrm a}
	\cong
	K_{2 \mathrm a} \otimes_{\mathbb F_2} \mathbb F_4
\) (though these isomorphisms are non-canonical). Let also
\(
	K_{2 \mathrm s \eps}
	=
	K_{2 \eps} \otimes_{K_2} K_{2 \mathrm s}
\) (over this ring
\( R \) still splits) and
\(
	K_{2 \mathrm s \eps}
	=
	K_{2 \eps} \otimes_{K_2} K_{2 \mathrm s}
\) (over this ring
\(
	R
	\cong
	K_{2 \mathrm s \eps} \otimes_{\mathbb F_2} \mathbb F_4
\)).

\begin{theorem}
	\label{schur-tits}
	Let
	\( G \) be a reductive group scheme over a unital commutative ring
	\( K \) with isotropic pinning
	\( (T, \Phi) \) such that the map
	\( \widetilde \Phi \to \Phi \sqcup \{ 0 \} \) comes from an irreducible Tits index. Then the Steinberg group
	\( \stlin_G(K) \) is centrally closed with the following exceptions.
	\begin{center}
		\begin{tabular}{|c||c|c|c|c|c|c|c|c|}
			\hline
			
			Tits index
			&
			\( \up 1 {\mathsf A^{(1)}_{3, 3}} \)
			&
			\( \up 2 {\mathsf A^{(1)}_{5, 3}} \)
			&
			\( \mathsf B_{3, 3} \)
			&
			\( \mathsf C^{(1)}_{3, 3} \)
			&
			\( \up 2 {\mathsf D_{4, 3}^{(1)}} \)
			&
			\( \up 1 {\mathsf D_{4, 4}^{(1)}} \)
			&
			\( \mathsf F^0_{4, 4} \)
			&
			\( \up 2 {\mathsf E^2_{6, 4}} \)
			\\ \hline
			
			\( \schur(\stlin_G(K)) \)
			&
			\( K_{2 \eps} \)
			&
			\( K_{2 \mathrm a} \times K_{2 \mathrm a} \)
			&
			\( K_3 \times K_{2 \eps} \)
			&
			\( K_2 \)
			&
			\(
				K_{2 \mathrm s}
				\times
				K_{2 \mathrm s \eps}
			\)
			&
			\( K_2 \times K_{2 \eps} \)
			&
			\( K_2 \)
			&
			\( K_{2 \mathrm a} \times K_{2 \mathrm a} \)
			\\ \hline
			
		\end{tabular}
	\end{center}
\end{theorem}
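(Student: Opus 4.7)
The plan is to combine Propositions \ref{a3-schur}--\ref{f4-schur}, which exhibit explicit generators of $\schur(\stlin(\Phi, A))$ as images of universal factor-algebras of the $\Phi$-ring $A$, with Theorem \ref{schur-graded}, which asserts that these generators satisfy no further relations. For each Tits index in the list preceding the statement, the associated $\Phi$-ring $(R, \Delta)$ (or $(R, S)$ when $\Phi = \mathsf F_4$) is already recorded there, so the task reduces to computing the invariants $R_{2\eps}$, $\Delta_3$, $\Delta_{2\mathrm{b}}$, $I_4$, $I_{2\eps\delta}$, $S_{44}$, $R_{44}$ in each case and checking that the relations between generators from the propositions together with the relations defining these invariants yield exactly the abelian groups listed in the table.

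The high-rank cases are immediate: whenever $\Phi$ is simply laced of rank $\geq 4$ with $\Phi \neq \mathsf D_4$, Proposition \ref{ade-schur} gives central closedness, and for $\Phi = \mathsf B_\ell$ with $\ell \geq 4$ the same follows from Proposition \ref{b-schur}. This disposes of all Tits indices with $\Phi$-rank $\geq 4$ except those of types $\mathsf D_4$ and $\mathsf F_4$. For $\up 1 {\mathsf A^{(1)}_{3,3}}$ apply Proposition \ref{a3-schur} with $R = K$ directly. For $\up 1 {\mathsf D^{(1)}_{4,4}}$ apply Proposition \ref{d4-schur} with $R = K$: the generators $c_0, c_{+}, c_{-}\colon K_{2\eps} \to \schur$ modulo the relations $c_0|_I = c_{+}|_I = c_{-}|_I$ and $c_0(q) c_{+}(q) c_{-}(q) = 1$ on $K_2$ produce $K_2 \times K_{2\eps}$. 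For $\mathsf F^0_{4,4}$ with $R = S = K$, Proposition \ref{f4-schur} gives $S_{44} = R_{44} = K_2$ identified via $e(\rho(p)) = e'(p^3)$, leaving $K_2$; for $\up 2 {\mathsf E^2_{6,4}}$ the étale quadratic $S$ induces the splitting $K_2 = K_{2\mathrm{s}} \times K_{2\mathrm{a}}$ under which the split part cancels by the identification relation while the anisotropic part survives in both $S_{44}$ and $R_{44}$, giving $K_{2\mathrm{a}}^2$; for $\mathsf E^9_{7,4}$ and $\mathsf E^{28}_{8,4}$ the non-commutativity of $S$ forces $S_{44} = R_{44} = 0$ and hence central closedness.

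The $\mathsf B_3$-graded Tits indices require the full strength of Proposition \ref{b3-schur}. For $\mathsf B_{3,3}$ ($R = \Delta = K$, trivial involution, $\lambda = 1$, $\phi = 0$) the ideal $I_4$ vanishes and $\Delta_{2\mathrm{b}}$ is absorbed into $b_\eps$ via the defining relation for $d$, leaving $\Delta_3 = K_3$ and $R_{2\eps\delta} = K_{2\eps}$. For $\mathsf C^{(1)}_{3,3}$ ($\lambda = -1$, $\phi(p) = 2p$, $u \cdot p = u p^2$) the axioms cutting out $\Delta_3$ and $R_{2\eps\delta}$ force both to vanish, while $\Delta_{2\mathrm{b}} = K_2$ remains. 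For $\up 2 {\mathsf A^{(1)}_{5,3}}$ and $\up 2 {\mathsf D^{(1)}_{4,3}}$ the étale quadratic structure (on $R$ or $\Delta$ respectively) induces the splitting $K_2 = K_{2\mathrm{s}} \times K_{2\mathrm{a}}$ and analogous reductions yield $K_{2\mathrm{a}}^2$ and $K_{2\mathrm{s}} \times K_{2\mathrm{s}\eps}$. All remaining $\mathsf B_3$-Tits indices have either a non-commutative Azumaya $R$ of degree $\geq 2$ or a Hermitian module summand in $\Delta$ of positive rank, and in both situations all four invariants collapse to zero. Non-triviality in the exceptional cases is inherited from Theorem \ref{schur-graded} by specialization to the initial algebras of the varieties from Section 2.6. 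The main obstacle is the technical book-keeping in the $\mathsf B_3$ setting, where one must track the simultaneous interaction between the involution, the operation $\rho$, and the four distinct universal factor-algebras defining the generators; the non-split Tits indices add further complexity by requiring careful decomposition of $K_2$ into split and anisotropic components and verifying that the identification relations respect this splitting.
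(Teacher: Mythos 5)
Your proposal is correct and follows the same approach as the paper: the paper's own proof is the one-line instruction to apply Theorem \ref{schur-graded} to the explicit $\Phi$-rings recorded in the preceding list, noting only that the multiplier vanishes when $R$ is a non-commutative Azumaya algebra or an octonion algebra. You have simply carried out that instruction in detail, identifying the relevant universal factor-algebras case by case; the one phrase about non-triviality being ``inherited from Theorem \ref{schur-graded} by specialization'' is slightly redundant (the theorem already asserts no further relations, so no separate non-triviality argument is needed), but it is not an error.
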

\begin{proof}
	Just apply theorem \ref{schur-graded} to the above list of irreducible Tits indices. Note that the Schur multiplier is trivial if
	\( R \) is a non-commutative Azumaya algebra or an octonion algebra.
\end{proof}

\subsection{Locally isotropic case}

Recall that
\( \R \in \mathbf P_K \) is the affine line over
\( K \) considered as a presheaf of rings. We need its factor-rings
\( \R_2 \colon R \mapsto R_2 \),
\( \R_3 \colon R \mapsto R_3 \),
\( \R_{2 \eps} \colon R \to R_{2 \eps} \),
and similarly for the subscripts
\( 2 \mathrm s \),
\( 2 \mathrm a \),
\( 2 \mathrm s \eps \),
\( 2 \mathrm a \eps \). The symbols
\( \mathrm s \) and
\( \mathrm a \) correspond to the loci where the group scheme
\( G \) splits or not.

\begin{theorem}
	\label{schur-loc}
	Let
	\( G \) be an adjoint semisimple group scheme over a unital commutative ring
	\( K \) such that its local isotropic rank is at least
	\( 3 \) and the root system
	\( \widetilde \Phi \) of its split form is constant and irreducible. Then the Steinberg group object
	\( \stlin_G(\R) \in \mathbf U_K \) is centrally closed with the following exceptions.
	\begin{center}
		\begin{tabular}{|c||c|c|c|c|c|c|c|}
			\hline
			
			\( \widetilde \Phi \)
			&
			\( \mathsf A_3 \)
			&
			\( \mathsf A_5 \)
			&
			\( \mathsf B_3 \)
			&
			\( \mathsf C_3 \)
			&
			\( \mathsf D_4 \)
			&
			\( \mathsf F_4 \)
			&
			\( \mathsf E_6 \)
			\\ \hline
			
			\( \schur(\stlin_G(\R)) \)
			&
			\( \R_{2 \eps} \)
			&
			\( \R_{2 \mathrm a} \times \R_{2 \mathrm a} \)
			&
			\( \R_3 \times \R_{2 \eps} \)
			&
			\(
				\R_2
			\)
			&
			\(
				\R_{2 \mathrm s}
				\times
				\R_{2 \mathrm s \eps}
			\)
			&
			\( \R_2 \)
			&
			\( \R_{2 \mathrm a} \times \R_{2 \mathrm a} \)
			\\ \hline
			
		\end{tabular}
	\end{center}
	The group
	\( \mathrm{Aut}(G)(K) \) acts non-trivially on this Schur multiplier in the case
	\( \mathsf B_3 \) by the spin norm (recall that
	\(
		\mathrm{SO}(7, \mathbb F_3)
		/
		\mathrm{EO}(7, \mathbb F_3)
		\cong
		\mathrm C_2
	\)), in the cases
	\( \mathsf A_5 \) and
	\( \mathsf E_6 \) by the permutation action of
	\( \mathrm{Out}(G)(\mathbb F_2) \cong \mathrm S_2 \), and in the case
	\( \mathsf D_4 \) by the standard representation of
	\(
		\mathrm{Out}(G)(\mathbb F_2)
		\cong
		\mathrm S_3
		\cong
		\mathrm{GL}(2, 2)
	\) on
	\( \R_{2 \mathrm s} \times \R_{2 \mathrm s} \).
\end{theorem}
\begin{proof}
	Denote the functor appearing in the low line of above table by
	\( ({-})_* \) (and
	\( ({-})_* = 0 \) for other root systems), i.e.
	\( \R_* \) is either a factor-ring of
	\( \R \) or a product of two such factor-rings. Take
	\( s \in K \) such that
	\( G_s \) has an isotropic pinning of rank at least
	\( 3 \) and the corresponding map between root systems comes from an irreducible Tits index. We claim that the epimorphism
	\[
		\bigl( \R^{(s^\infty)} \bigr)_*
		\to
		\schur\bigl(
			\stlin_G\bigl( \R^{(s^\infty)} \bigr)
		\bigr)
	\]
	from propositions \ref{a3-schur}--\ref{f4-schur} and its composition with
	\[
		\schur\bigl(
			\stlin_G\bigl( \R^{(s^\infty)} \bigr)
		\bigr)
		\to
		\schur\bigl(
			\stlin_G\bigl( \R_s \bigr)
		\bigr)
	\]
	are isomorphisms. Indeed, note that
	\[
		\bigl( \R^{(s^\infty)} \bigr)_*
		\cong
		\bigl( \R_* \bigr)^{(s^\infty)}
		\to
		(\R_*)_s
		\cong
		(\R_s)_*
	\]
	are isomorphisms, because both localization and co-localization in the middle just take the direct factor determined by an idempotent power of
	\( s \) (namely, by
	\( s \) itself or
	\( s^2 \)). Finally,
	\( (\R_s)_* \to \schur\bigl( \stlin_G(\R_s) \bigr) \) is an isomorphism of presheaves by theorem \ref{schur-tits}.
	
	The action of
	\( \mathrm{Aut}(G)(K_s) \) on the Schur multiplier
	\(
		\schur\bigl( \stlin_G(\R^{(s^\infty)}) \bigr)
		\cong
		\schur\bigl( \stlin_G(\R_s) \bigr)
	\) can be calculated using lemma \ref{c-ext-xmod} locally in Zariski topology. Recall that every element of
	\( G(K_s) \) locally decomposes into a product of root elements and an element of
	\( L(K_s) \), the actions of all factors can be calculated explicitly. Moreover, every element of
	\( \mathrm{Aut}(G)(K_s) \) decomposes into a product of element of
	\( G(K_s) \) and a standard outer automorphism if
	\( K_s \) is a finite field or
	\( \mathbb F_2[\eps] \).
	
	Now take a partition of unity
	\( K = \sum_{i = 1}^n K s_i \) such that
	\( G_{s_i} \) have isotropic pinnings of rank at least
	\( 3 \) and the maps from
	\( \widetilde \Phi \) come from irreducible Tits indices. We have a diagram consisting of homomorphisms
	\(
		(\R_*)^{((s_i s_j)^\infty)}
		\to
		(\R_*)^{(s_i^\infty)}
	\), its objects the Schur multipliers of Steinberg groups of colocalizations of
	\( \R \). Dually, there is a similar diagram
	\( (\R_*)_{s_i} \to (\R_*)_{s_i s_j} \) of the Schur multipliers of Steinberg groups of localizations of
	\( \R \). Both diagrams consist of finitely generated projective modules over
	\( \R_3 \times \R_{2 \eps} \) (the maps are linear by the description of the action of
	\( \mathrm{Aut}(G)(K_{s_i s_j}) \)) and the corresponding objects of these diagrams are canonically isomorphic. Clearly, the diagram for the localizations has the limit
	\( \R_* \) because the gluing homomophisms satisfy the cocycle condition (recall that the Schur mulipliers for
	\( \R_{s_i s_j s_k} \) are known) and all finitely generated projective modules over
	\( \R_3 \times \R_{2 \eps} \) decompose into direct sums of free modules over various direct factors of this ring. It follows that the diagram for colocalizations can be continued to a cocone with the vertex
	\( \R_* \), and this vertex is the colimit by \cite[lemma 5]{iso-st-k2}.

	The resulting homomorphism
	\( \R_* \to \schur(\stlin_G(\R)) \) is an epimorphism by lemma \ref{c-ext-xmod}. On the other hand, the composition
	\(
		\R_*
		\to
		\schur(\stlin_G(\R))
		\to
		\schur(\stlin_G(\R_{s_i}))
	\) coincides with the projection
	\( \R_* \to (\R_*)_{s_i} \) for all
	\( i \), so our homomorphism is a monomorphism.
\end{proof}

\section{Existence of locally isotropic Steinberg groups}

We need several obvious lemmas to reduce arbitrary locally isotropic reductive groups to the groups considered in theorem \ref{schur-tits}. In these lemmas
\( K \) is a unital commutative ring and
\( G \) is a reductive group scheme over
\( K \) of local isotropic rank at least
\( 3 \).

\begin{lemma}
	\label{ring-prod}
	If
	\( K = \prod_{i = 1}^n K_i \), then
	\(
		\stlin_G(\R)
		\to
		\prod_{i = 1}^n \stlin_G(\R_i)
	\) is an isomorphism, where
	\( \R_i(E) = E \otimes_K K_i \).
\end{lemma}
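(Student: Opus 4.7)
The plan is to apply the cosheaf property of locally isotropic Steinberg groups, phrased as crossed modules over \( G(\R) \), to an open cover of \( \Spec(K) \) adapted to the product decomposition. Let \( e_1, \ldots, e_n \in K \) be the orthogonal idempotents corresponding to \( K = \prod_i K_i \), so \( e_i e_j = \delta_{ij} e_i \) and \( \sum_i e_i = 1 \). Starting from any cover \( \Spec(K) = \bigcup_\alpha \mathcal D(s_\alpha) \) such that each \( G_{s_\alpha} \) admits an isotropic pinning of rank \( \geq 3 \) (which exists by the local isotropic rank hypothesis), I would refine it to the finer cover \( \{\mathcal D(s_\alpha e_i)\}_{\alpha, i} \), whose elements now each lie in a single factor \( K_i \).

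Next, I would invoke the cosheaf property to write \( \stlin_G(\R) \) as a finite Čech-type colimit built from the colocalized Steinberg groups \( \stlin_G(\R^{((s_\alpha e_i)^\infty)}) \) and their overlap colocalizations along products \( (s_\alpha e_i)(s_\beta e_{i'}) \). The key observation is that for \( i \neq i' \) this product vanishes, so the overlap rings are the zero ring and the corresponding Steinberg groups are trivial crossed modules. Consequently the Čech diagram decomposes as a disjoint union of sub-diagrams indexed by \( i \), with no arrows between distinct strata, and its colimit in crossed modules over \( G(\R) \) reduces to the product of the per-\( i \) colimits.

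For each fixed \( i \), the sub-diagram indexed by \( \alpha \) is, after base change \( K \to K_i \), exactly the Čech diagram of the induced cover \( \{\mathcal D(s_\alpha e_i)\}_\alpha \) of \( \Spec(K_i) \); since colocalization commutes with the projection \( K \to K_i \), each term \( \stlin_G(\R^{((s_\alpha e_i)^\infty)}) \) identifies with \( \stlin_{G_i}(\R_i^{((s_\alpha e_i)^\infty)}) \). Its colimit therefore computes \( \stlin_{G_i}(\R_i) \) by the same cosheaf formula applied over \( K_i \), and assembling the factors yields the asserted isomorphism \( \stlin_G(\R) \cong \prod_i \stlin_{G_i}(\R_i) \).

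The main subtlety will be the step ``disjoint Čech diagram yields a product of colimits'': this requires that the cosheaf sends the empty open to the trivial crossed module, and that the colimit in crossed modules over \( G(\R) \) of a diagram which splits as a disjoint union coincides with the categorical product of the two sub-colimits. Both are formal properties of the infinitary pretopos structure and reflect the observation recalled in the excerpt that Steinberg groups over a finite product of rings form the product, not the free product, of the Steinberg groups over the factors.
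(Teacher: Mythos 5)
The paper states this lemma (together with the three lemmas that follow it) without any proof, declaring the whole group "obvious," so there is no official argument to compare against. Your Čech-cosheaf proof is a valid way to make the claim precise and, given the tools the paper develops, is probably the intended route: the cosheaf property for \( \stlin_G(\R^{(s^\infty)}) \) as crossed modules over \( G(\R) \) is exactly the structure the paper sets up in order to define \( \stlin_G(\R) \), and your refinement to the cover \( \{ \mathcal D(s_\alpha e_i) \} \) and the observation that the cross-overlaps \( (s_\alpha e_i)(s_\beta e_{i'}) = 0 \) yield trivial colocalizations are correct.

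The one step you gloss over is the passage from a disjoint-union diagram to a direct product. A colimit of a diagram that splits as a disjoint union is a priori the \emph{coproduct} of the per-\(i\) colimits, not their product. The identification of that coproduct with the product is not a purely formal fact about infinitary pretopoi: it uses that the crossed modules \( \stlin_{G_i}(\R_i) \to G(\R) = \prod_j G(\R_j) \) are pairwise orthogonal, i.e.\ each has image inside the \( i \)-th factor, the \( j \)-th factor for \( j \neq i \) acts trivially on it, and consequently (by the Peiffer identity) the images in any target crossed module must commute. That is precisely why the coproduct in crossed modules over \( G(\R) \) collapses to a direct product here. You do gesture at this by quoting the paper's remark that Steinberg groups over a product of rings form a product rather than a free product, so the idea is present; only the attribution of this step to pure formality is slightly off.
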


\begin{lemma}
	\label{isogeny}
	The canonical homomorphism
	\( \stlin_G(\R) \to \stlin_{G / \Cent(G)}(\R) \) is an isomorphism, where
	\( G / \Cent(G) \) is the scheme factor-group (a semisimple group scheme of adjoint type).
\end{lemma}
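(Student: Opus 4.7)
The plan is to reduce to the locally pinned case and then observe that the Steinberg presentation is insensitive to quotienting by the center. First, using the cosheaf property of $\stlin_G$ in the Zariski topology recalled in \S 2.3, it suffices to check the claim after replacing $K$ by each $K_{s_i}$ in a cover on which both $G_{s_i}$ and $(G/\Cent(G))_{s_i}$ carry isotropic pinnings of rank $\geq 3$. Such a common cover exists because the quotient $\pi \colon G \to G/\Cent(G)$ sends any split torus $T \leq G$ to the split torus $T / \Cent(G)$, and the character lattice of $T/\Cent(G)$ contains $\Phi$ as the set of non-zero $T/\Cent(G)$-weights of the Lie algebra. Moreover, the root subspaces of $\mathfrak g$ and of $\mathfrak{g}/\mathrm{Lie}(\Cent(G))$ coincide, and $\pi$ induces an isomorphism on every root subgroup (since $\Cent(G) \leq T$ meets the unipotent root subgroups only in the identity). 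Consequently $(T/\Cent(G), \Phi)$ is an isotropic pinning of $G/\Cent(G)$ of the same rank as $(T, \Phi)$.

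Fix such a pinning locally. The second step is to compare the two root gradings on $G(K)$ and $(G/\Cent(G))(K)$. For every $\alpha \in \Phi$ the root subgroup $G_\alpha(K) \leq G(K)$ maps isomorphically onto its image in $(G/\Cent(G))(K)$, and the commutator formulas and the action of Weyl elements $n_\alpha \in G_\alpha G_{-\alpha} G_\alpha$ are preserved under $\pi$ (again because $\Cent(G)$ is central and contained in $T$). Thus the generators $x_\alpha(p)$ and all defining relations of $\stlin_G$ from \cite{iso-st-k2} match exactly those of $\stlin_{G/\Cent(G)}$. This yields an isomorphism between the unrelativized Steinberg groups in either $\Set$ or, more generally, in any infinitary pretopos applied to the colocalized rings $\R^{(s_i^\infty)}$.

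The third step is to upgrade the isomorphism of unrelativized Steinberg groups to an isomorphism of crossed modules $\stlin_G(\R^{(s_i^\infty)}) \to \stlin_{G/\Cent(G)}(\R^{(s_i^\infty)})$ over $G(\R)$. The crossed module structures are determined by the action on the root generators, which is again preserved by $\pi$ since the center acts trivially on each root subspace. Finally, applying the cosheaf property of $\stlin_{({-})}(\R)$ (in the category of crossed modules over $G(\R) = (G/\Cent(G))(\R)$ up to this center, handled using lemma \ref{ring-prod} and the compatibility with localization) the local isomorphisms glue to the desired global one.

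The only substantive point is the verification that the quotient map $\pi$ really induces an isomorphism on each root subgroup and preserves Weyl elements in a way compatible with the distinguished families; once that is granted, the rest is a formal check that the generators-and-relations presentation and the crossed-module structure are unchanged. I expect no genuine obstacle here beyond bookkeeping, since the center of a reductive group scheme lies inside every maximal torus and hence intersects every unipotent root subgroup trivially.
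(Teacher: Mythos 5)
The paper states this lemma without proof, treating it as one of "several obvious lemmas," so there is no proof in the paper to compare against. Your plan is the natural one and identifies the right mechanism: $\Cent(G)$ is of multiplicative type, hence meets each unipotent root subgroup $U_\alpha$ trivially, so $\pi$ restricts to an isomorphism on each root subgroup; the commutator formula and Weyl-element conjugation are preserved; the Steinberg presentation of the colocalized groups is therefore unchanged; and the crossed-module structures agree, so the colimits do too. This is correct in spirit and would satisfy the level of rigor the paper expects.

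Two points are stated loosely and would need tightening if this were written out in full. First, you write "$T/\Cent(G)$," but the paper only requires $T$ to be a split torus, not a maximal one, so $\Cent(G) \le T$ need not hold; the image $\pi(T) = T/(T \cap \Cent(G))$ is still a split torus and all your downstream claims about weights, root spaces and root subgroups go through verbatim, but the notation should reflect this. Second, and more substantively, the parenthetical "$G(\R) = (G/\Cent(G))(\R)$ up to this center, handled using lemma \ref{ring-prod}" is not right: $G(\R) \to (G/\Cent(G))(\R)$ need be neither injective nor surjective, and lemma \ref{ring-prod} concerns products of base rings, not central quotients of $G$. The honest way to make the gluing step precise is to observe that the boundary map $\delta \colon \stlin_{G/\Cent(G)}(\R^{(s_i^\infty)}) \to (G/\Cent(G))(\R)$ lands in the (colocalized) elementary subgroup, which is generated by root subgroups, each of which lifts canonically and isomorphically to $G$; this lets one regard each local piece as a crossed module over $G(\R)$, isomorphic there to $\stlin_G(\R^{(s_i^\infty)})$, after which the cosheaf colimit over $G(\R)$ gives the claimed isomorphism. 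None of this is a genuine obstruction, but as written the last step conflates two different base group objects.
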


\begin{lemma}
	\label{group-prod}
	If
	\( G = \prod_{i = 1}^n G_i \), then
	\(
		\stlin_G(\R)
		\to
		\prod_{i = 1}^n \stlin_{G_i}(\R)
	\) is an isomorphism.
\end{lemma}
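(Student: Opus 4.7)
The plan is to reduce to the pinned case via the cosheaf construction of $\stlin_G(\R)$ and then observe that the Steinberg presentation factors along the product. First I would pick elements $s \in K$ such that every $G_{i,s}$ admits an isotropic pinning of rank $\geq 3$ over $K_s$; this is possible because (i) the product $G$ has local isotropic rank $\geq 3$ only if each non-trivial factor $G_i$ does, by the ``minimum over components'' definition of the rank of a pinning, and (ii) since there are only finitely many factors, one can refine to a cover by opens over which all $G_i$ are simultaneously pinned. By construction $\stlin_G(\R)$ is assembled from the colocalized objects $\stlin_G(\R^{(s^\infty)})$ by a finite colimit (cosheaf property), and the same finite colimit is preserved by the product functor $\prod_i \stlin_{G_i}(-)$, so it suffices to establish the isomorphism after replacing $\R$ by each $\R^{(s^\infty)}$.

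Over such a common pinning locus the maximal split torus is $T = \prod_i T_i$ and the corresponding root system decomposes as the disjoint union $\Phi = \bigsqcup_i \Phi_i$. For $\alpha \in \Phi_i$ the root subgroup $G_\alpha \leq G$ is the root subgroup of the $i$-th factor pulled back along the projection. For $\alpha \in \Phi_i$ and $\beta \in \Phi_j$ with $i \neq j$, the roots are orthogonal and the interval $\interval \alpha \beta$ is empty, so the Steinberg commutator relation degenerates to $[G_\alpha, G_\beta] = 1$. Moreover, an $\alpha$-Weyl element for $\alpha \in \Phi_i$ can be chosen inside the $i$-th factor, so it acts trivially on $G_\beta$ for $\beta \in \Phi_j$, $j \neq i$, and preserves the block decomposition of the Weyl action.

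Consequently the generators-and-relations presentation of $\stlin_G(\R^{(s^\infty)})$ partitions into $n$ blocks indexed by $i$, each block being the full presentation of $\stlin_{G_i}(\R^{(s^\infty)})$, with the only cross-block relations asserting pairwise commutativity of generators. It is then a general formal consequence that the group object thus presented is the direct product $\prod_i \stlin_{G_i}(\R^{(s^\infty)})$, and this direct product carries the componentwise action of $G(\R^{(s^\infty)}) = \prod_i G_i(\R^{(s^\infty)})$, giving an isomorphism of crossed modules. The one point requiring verification, which I expect to be the main (though minor) obstacle, is that this block decomposition is compatible with the cosheaf restriction maps between different $s$ on Zariski double intersections; this follows automatically from functoriality of the whole construction in $s$. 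Passing back to the global $\R$ via the cosheaf property then yields the claimed isomorphism.
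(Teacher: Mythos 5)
The paper gives no proof of this lemma (it is listed as one of ``several obvious lemmas''), and your outline — reduce to the colocal pinned case, observe the Steinberg presentation factors into commuting blocks since $\Phi=\bigsqcup_i\Phi_i$ with empty cross intervals, then glue via the cosheaf property — is the natural one and is correct. The one step that should not be passed off as automatic is the assertion that ``the same finite colimit is preserved by the product functor.'' The cosheaf colimit lives in the category of crossed modules over $G(\R)$, and colimits there do not in general commute with external products of group objects; this is exactly the phenomenon the paper flags when it notes that the colocal Steinberg groups form a cosheaf of crossed modules but not of group objects. What rescues the commutation is the Peiffer identity together with the trivial cross-action: for a cone of crossed-module morphisms $\phi_j\colon\prod_i\stlin_{G_i}(\R^{(s_j^\infty)})\to Y$, elements coming from the $i$-th and $i'$-th blocks ($i\neq i'$) have commuting images in $Y$ because $\phi(x)\,\phi(y)\,\phi(x)^{-1}=\up{\delta(\phi(x))}{\phi(y)}$ with $\delta(\phi(x))\in G_i(\R)$, and $G_i(\R)$ acts trivially on the $G_{i'}$-part; one also uses that the embedding of crossed modules over $G_i(\R)$ into crossed modules over $G(\R)$ with trivial complementary action is a left adjoint, hence preserves the colimit. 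In contrast, the point you single out as the ``main obstacle'' — compatibility of the block decomposition with the restriction maps — is indeed automatic from functoriality, as you say. So your proof is essentially right, but the justification should be redistributed: the crossed-module commutation is the step that needs an argument, not the functoriality.
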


\begin{lemma}
	\label{weyl-restr-rk}
	Suppose that
	\( G = \mathrm R_{K' / K}(G') \) (a Weyl restriction), where
	\( K' / K \) is a finite \'etale extension and
	\( G' \) is an adjoint semisimple group scheme over
	\( K' \). Then the local isotropic rank of
	\( G \) is at most the local isotropic rank of
	\( G' \).
\end{lemma}
\begin{proof}
	Without loss of generality
	\( K \) is local and
	\( G' \) is indecomposable into a direct product. Choose a maximal isotropic pinning
	\( (T, \Phi) \) of
	\( G \). It is easy to see that the homomorphism
	\( T_{K'} \to G' \) of group schemes over
	\( K' \) has trivial kernel (because
	\( G' \) is indecomposable), so it is a closed embedding. The root system of the action of
	\( T_{K'} \) on the Lie algebra
	\( \mathfrak g' \) of
	\( G' \) is precisely
	\( \Phi \) again using that
	\( G' \) is indecomposable, and the rank of
	\( (T_{K'}, \Phi) \) is the same as of
	\( (T, \Phi) \).
\end{proof}

\begin{lemma}
	\label{weyl-restr}
	Suppose that
	\( G = \mathrm R_{K' / K}(G') \), where
	\( K' / K \) is a finite \'etale extension and
	\( G' \) is an adjoint semisimple group scheme over
	\( K' \). Then
	\( \stlin_G(\R) \cong \stlin_{G'}(\R') \), where
	\( \R'(E) = E \otimes_K K' \).
\end{lemma}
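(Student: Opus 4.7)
The plan is to reduce to the trivial étale case $K' \cong \prod_{j=1}^d K$ via the Zariski cosheaf property, then finish using lemmas \ref{ring-prod} and \ref{group-prod}. In the trivial case, the Weil restriction decomposes as $G = \mathrm R_{K'/K}(G') \cong \prod_{j=1}^d G'_j$, where $G'_j$ is the base change of $G'$ along the $j$-th projection $K' \twoheadrightarrow K$, and simultaneously $\R'(E) = E \otimes_K K' \cong \prod_{j=1}^d E$ naturally in $E$. Applying lemma \ref{group-prod} to the left-hand side and lemma \ref{ring-prod} to the right-hand side yields
\[
	\stlin_G(\R) \cong \prod_{j=1}^d \stlin_{G'_j}(\R) \cong \stlin_{G'}(\R'),
\]
so in this case the isomorphism is immediate.

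For the general case, I would choose a Zariski cover $\Spec K = \bigcup_i \mathcal D(s_i)$ refined enough that on each piece $K'_{s_i}$ decomposes as a product of connected finite étale local $K_{s_i}$-algebras; such a decomposition exists Zariski-locally because the primitive idempotents of the semilocalization $K'_{\mathfrak m}$ at any maximal ideal $\mathfrak m \subset K$ extend to an open neighbourhood of $\mathfrak m$. Passing further to the covers of $\Spec K'$ that carry the isotropic pinnings of $G'$, each connected factor becomes a trivial étale extension over a suitable localization, placing us in the situation of the first paragraph. The resulting local isomorphisms are natural in the ring and group arguments, hence compatible with the restriction and colocalization maps on overlaps, and the Zariski cosheaf property of locally isotropic Steinberg groups recalled in subsection 2.3 glues them into a global isomorphism $\stlin_G(\R) \cong \stlin_{G'}(\R')$ in $\mathbf U_K$.

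The main obstacle is the Zariski-versus-étale tension: a connected nontrivial finite étale extension $K'/K$ cannot be trivialized Zariski-locally on $\Spec K$ alone. The rescue is that $\stlin_{G'}(\R')$, as an object of $\mathbf U_K$, is already built from a cosheaf on $\Spec K'$ transported through the finite map $\Spec K' \to \Spec K$, while $\stlin_G(\R)$ is built from a cosheaf on $\Spec K$; both cosheaves reduce to the same local data via the Weil restriction identity $G(E) = G'(E \otimes_K K')$ at the level of point groups, so compatibility follows from naturality of lemmas \ref{ring-prod} and \ref{group-prod}, without requiring genuine Zariski-triviality of $K'/K$.
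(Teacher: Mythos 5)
Your reduction in the second paragraph breaks down. After refining $\Spec K = \bigcup_i \mathcal D(s_i)$ so that each $K'_{s_i}$ is a finite product of connected finite \'etale $K_{s_i}$-algebras, those connected factors are still in general \emph{nontrivial} \'etale extensions of $K_{s_i}$, and no further Zariski localization of $K_{s_i}$ will split them: already $\mathbb F_4 / \mathbb F_2$ is connected finite \'etale and $\mathbb F_2$ admits no proper localizations. Refining the cover on $\Spec K'$ does not help either, since lemma \ref{ring-prod} decomposes along product decompositions of the \emph{base} ring, and the Zariski cosheaf computing $\stlin_G(\R)$ lives over $\Spec K$; the step ``each connected factor becomes a trivial \'etale extension over a suitable localization'' is therefore false. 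Your third paragraph correctly names this obstruction, but the proposed rescue --- that ``both cosheaves reduce to the same local data'' --- is exactly the statement that still needs to be proved, not an argument for it.

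The missing idea is a \emph{direct} comparison of root graded structures that does not require splitting $K'/K$. When $L / K_{s_i}$ is finite \'etale with $\Spec L$ connected and $G'_L$ carries an isotropic pinning $(T', \Phi')$, the adjunction unit embeds $T'$ (viewed over $K_{s_i}$) as the maximal split torus of $\mathrm R_{L/K_{s_i}}(G'_L)$ with the same character lattice and the same root system $\Phi'$, and for each $\alpha \in \Phi'$ the root subgroup of $\mathrm R_{L/K_{s_i}}(G'_L)$ is precisely $\mathrm R_{L/K_{s_i}}$ of the root subgroup of $G'_L$. Hence the root graded structure on $G(E) = G'(E \otimes_K K')$ is literally the same data whether read off from $G$ over $K_{s_i}$ or from $G'$ over $K'_{s_i}$; a disconnected $K'_{s_i}$ contributes a disjoint union of root systems and a direct product of Steinberg groups on both sides, matching your first paragraph. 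Since $\stlin(\Phi, A)$ is built from root graded data by generators and relations, the colocalized pieces $\stlin_G(\R^{(s_i^\infty)})$ and $\stlin_{G'}((\R')^{(s_i^\infty)})$ agree, and the Zariski cosheaf colimit gives the asserted isomorphism --- your first paragraph and the cosheaf gluing survive, but the intermediate trivialization step must be replaced by this identification.
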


\begin{theorem}
	\label{compactness}
	Let
	\( K \) be a unital commutative ring and
	\( G \) be a reductive group scheme over
	\( K \) of local isotropic rank at least
	\( 3 \). Then the Steinberg group object
	\( \stlin_G(\R) \) lies in
	\( \Ex(\Ind(\mathbf P_K)) \) up to isomorphism, so the Steinberg group functor
	\[
		\stlin_G \colon \Ring_K \to \Group,\,
		E \mapsto \ev_E(\stlin_G(\R))
	\]
	is well defined.
\end{theorem}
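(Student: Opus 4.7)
The plan is to combine the reduction lemmas with the Schur multiplier computation of Theorem \ref{schur-loc}. First, I apply Lemmas \ref{ring-prod}, \ref{isogeny}, \ref{group-prod}, and \ref{weyl-restr} to replace $G$ and $K$ so that we may assume $G$ is adjoint, absolutely simple, and has constant irreducible root datum of its split form. Each of these lemmas identifies the Steinberg group of the original datum with that of a simpler one, so the property of lying in $\Ex(\Ind(\mathbf P_K))$ transfers cleanly across these reductions (using that $\Ex(\Ind(\mathbf P_{K'}))$ is preserved under the evident base-change functors). Under these assumptions the hypotheses of Theorem \ref{schur-loc} are met, so $\schur(\stlin_G(\R))$ is an explicit object of $\mathbf P_K$.

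Next, consider the elementary subgroup $\elem_G(\R) := \Image(\stlin_G(\R) \to G(\R))$. Since $G(\R) \in \mathbf P_K$ and subobjects of presheaf objects computed in the ambient infinitary pretopos $\mathbf U_K$ coincide with their subobjects in $\mathbf P_K$, we obtain $\elem_G(\R) \in \mathbf P_K$. Both $\stlin_G(\R)$ and $\elem_G(\R)$ are perfect: for $\stlin_G(\R)$ this holds locally (since locally it is a root graded Steinberg group of rank $\geq 3$) and is preserved by the finite colimit defining $\stlin_G(\R)$ from its colocal constituents, and $\elem_G(\R)$ is perfect as a quotient. Apply Lemma \ref{c-ext-univ} to construct the universal central extension $\widetilde{\elem_G(\R)}$ by generators $\langle x, y \rangle$ indexed by $\elem_G(\R) \times \elem_G(\R)$ and finitely many finitary relations. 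Since the generating object lies in $\mathbf P_K$ and the construction uses only finite-ary algebraic operations, we get $\widetilde{\elem_G(\R)} \in \Ex(\Ind(\mathbf P_K))$.

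Finally, the crossed module property from \cite{iso-st-k2} ensures that $\stlin_G(\R) \to \elem_G(\R)$ is a central extension, so the universal property yields a canonical homomorphism $f \colon \widetilde{\elem_G(\R)} \to \stlin_G(\R)$. By Lemma \ref{c-ext-perf} the map $f$ is itself a central extension, and by uniqueness of universal central extensions it realizes $\widetilde{\elem_G(\R)}$ also as the universal central extension of $\stlin_G(\R)$. Hence $\ker(f) = \schur(\stlin_G(\R))$, which lies in $\mathbf P_K$ by Theorem \ref{schur-loc}. We conclude
\[
	\stlin_G(\R)
	\;\cong\;
	\widetilde{\elem_G(\R)} \,/\, \schur(\stlin_G(\R)),
\]
a quotient of an $\Ex(\Ind(\mathbf P_K))$-object by a $\mathbf P_K$-subobject, hence itself in $\Ex(\Ind(\mathbf P_K))$.

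The main obstacle I anticipate is verifying perfectness of $\elem_G(\R)$ and the containment $\elem_G(\R) \in \mathbf P_K$ cleanly in the infinitary pretopos setting, particularly when $G$ admits no global isotropic pinning; this requires tracing how the finite-colimit definition of $\stlin_G(\R)$ interacts with image formation in $\mathbf U_K$. Once these structural points are settled, the argument reduces to routine invocations of the universal central extension machinery already developed in Section~2.
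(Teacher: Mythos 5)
Your overall strategy matches the paper's: reduce via Lemmas \ref{ring-prod}, \ref{isogeny}, \ref{group-prod}, \ref{weyl-restr} to the adjoint, irreducible case, then present $\stlin_G(\R)$ as the universal central extension of the elementary subgroup modulo the Schur multiplier, and invoke Theorem \ref{schur-loc}.

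However, there is a genuine gap in the step where you claim $\elem_G(\R) \in \mathbf P_K$. Your justification — that ``subobjects of presheaf objects computed in the ambient infinitary pretopos $\mathbf U_K$ coincide with their subobjects in $\mathbf P_K$'' — is false. The inclusion $\mathbf P_K \hookrightarrow \mathbf U_K = \Ex(\Ind(\Pro(\mathbf P_K)))$ does not have this property: an object of $\mathbf P_K$ can acquire new subobjects in the pro- and ind-completions (think of a cofiltered intersection of sub-presheaves, which yields a pro-object that is generally not a presheaf). The image of $\stlin_G(\R) \to G(\R)$ is an epi–mono factorization in $\mathbf U_K$, and $\stlin_G(\R)$ is a priori a complicated colimit of objects built from pro-presheaves, so nothing formal forces the image down into $\mathbf P_K$. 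What the paper actually uses is weaker but substantive: $\elem_G(\R) \in \Ind(\mathbf P_K)$, and this is not a category-theoretic triviality — it is cited as theorem 1 of \cite{iso-elem}, a standalone result about elementary subgroups of locally isotropic reductive groups. Without invoking that external theorem your argument does not close, even though you flag this step as a potential obstacle in your last paragraph. Once $\elem_G(\R) \in \Ind(\mathbf P_K)$ is granted, the rest of your argument (perfectness, Lemma \ref{c-ext-univ}, the crossed module property from \cite{iso-st-k2}, and Lemma \ref{c-ext-perf}) goes through and agrees with the paper.
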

\begin{proof}
	By lemma \ref{ring-prod} we can assume that the root datum of the split form of
	\( G \) is constant. Next apply lemma \ref{isogeny} to reduce to the case when
	\( G \) is adjoint semisimple. Further by lemma \ref{group-prod} we can assume that all components of the root system of the split form of
	\( G \) are of the same type. By \cite[XXIV, proposition 5.9]{sga3}
	\( G \cong \mathrm R_{K' / K}(G') \) is a Weyl restriction for a finite \'etale extension
	\( K' / K \) and some semisimple group scheme
	\( G' \) over
	\( K' \), still adjoint and of local isotropic rank at least
	\( 3 \) by lemma
	\ref{weyl-restr-rk}, so using lemma \ref{weyl-restr} we can assume that the root system of the split form of
	\( G \) is irreducible. We have
	\[
		\stlin_G(\R)
		=
		\widetilde \elem_G(\R)
		/
		\schur(\widetilde \stlin_G(\R)),
	\]
	where both
	\( \widetilde \elem_G(\R) \) and
	\( \schur(\widetilde \stlin_G(\R)) \) lie in
	\( \Ex(\Ind(\mathbf P_K)) \). Indeed, the first one is the universal central extension of the elementary subgroup, and
	\( \elem_G(\R) \) lies in
	\( \Ind(\mathbf P_K) \) by \cite[theorem 1]{iso-elem}. On the other hand, the Schur multiplier lies in
	\( \mathbf P_K \) by theorem \ref{schur-loc}.
\end{proof}

Theorem \ref{schur-loc} and the proof of theorem \ref{compactness} (namely, the lemmas from this section) allow us to explicitly compute the Schur multiplier
\( \schur(\stlin_G(K)) \) under assumptions of the last theorem.

\bibliographystyle{plain}
\bibliography{references}

\end{document}